\def\AA{{\mathbb{A}}}
\def\RR{{\mathbb{R}}}
\def\CC{{\mathbb{C}}}
\def\QQ{{\mathbb{Q}}}
\def\NN{{\mathbb{N}}}
\def\ZZ{{\mathbb{Z}}}
\let \cedilla =\c
\renewcommand{\a}{{\frak{a}}}
\renewcommand{\o}{{\mathcal O}}
\newcommand{\mld}{{\mathrm{mld}}}
\newcommand{\mldmj}{{\mathrm{mld_{MJ}}}}
\renewcommand{\j}{{\mathcal{J}}}
\newcommand{\cont}{{\mathrm{Cont}}}
\newcommand{\codim}{{\mathrm{codim}}}
\newcommand{\ord}{{\mathrm{ord}}}
\newcommand{\mult}{{\mathrm{mult}}}
\newcommand{\spec}{{\mathrm{Spec \ }}}
\newcommand{\hk}{{\widehat{k}}}
\newcommand{\val}{{\mathrm{val}}}
\newcommand{\amj}{{a_{\operatorname{MJ}}}}
\newcommand{\sing}{{\mathrm{Sing}}}
\newcommand{\emb}{{\mathrm{emb}}}
\newcommand{\Fitt}{{\operatorname{Fitt}}}
\newcommand{\pp}{{\bf p}}
\newcommand{\bx}{{\bf x}}
\newcommand{\mm}{{\bf m}}
\newcommand{\height}{{\mathrm{ht}}}
\newcommand{\ba}{{\bold a}}
\newcommand{\bb}{{\bold b}}
\newcommand{\ini}{{\mathrm{in}}}
\newcommand{\he}{{\widetilde E}}
\newcommand{\hs}{{hypersurface}}
\newcommand{\resp}{{resp.}}
\newcommand{\lla}{{\langle\langle}}
\newcommand{\rra}{{\rangle\rangle}}
\newtheorem{thm}{Theorem}[section]
\newtheorem{cor}[thm]{Corollary}
\newtheorem{Corollary-Definition}[thm]{Corollary-Definition}
\newtheorem{prop-def}[thm]{Proposition-Definition}
\newtheorem{prop}[thm]{Proposition}
\newtheorem{lem}[thm]{Lemma}
\newtheorem{conj}[thm]{Conjecture}
\theoremstyle{definition}
\newtheorem{defn}[thm]{Definition}
\newtheorem{exmp}[thm]{Example}
\newtheorem{rem}[thm]{Remark}
\newtheorem{Proposition-Definition}[thm]{Proposition-Definition}
\begin{document}

\title[Mather-Jacobian-log discrepancies]{Finite determination conjecture \\ for Mather-Jacobian minimal  log discrepancies\\
and its applications}

\thanks{Mathematical Subject Classification: 14B05,14E18, 14B07\\
Key words: singularities in positive characteristic, jet schemes, minimal log discrepancy\\
The author is partially supported by Grant-In-Aid (c) 1605089 of JSPS in Japan.}

\author{Shihoko Ishii }
\maketitle
\begin{abstract} In this paper we study singularities in arbitrary characteristic.
We propose Finite Determination Conjecture for Mather-Jacobian minimal log discrepancies in terms of jet schemes of a singularity.
The conjecture is equivalent to 
  the boundedness of the number of the blow-ups to obtain a prime divisor which computes the Mather-Jacobian minimal log
discrepancy.
We also show that this conjecture yields some basic properties of singularities;
{\it eg.,} openness of Mather-Jacobian (log) canonical singularities, 
stability of these 
singularities under small deformations and 
lower semi-continuity of Mather-Jacobian minimal log discrepancies,  which are already  known in characteristic 0 and
open for positive characteristic case.
We show some evidences of the conjecture:
 for 
example, for
non-degenerate \hs  \ of any dimension in arbitrary characteristic 
and 
 $2$-dimensional singularities in characteristic not $2$. 
  We also give 
  a bound of the number of the blow-ups to obtain a prime divisor which
 computes the Mather-Jacobian minimal log discrepancy.

\end{abstract}
\section{Introduction}
\noindent
Studies of singularities with respect to ``discrepancies"  on a variety over a field of characteristic $0$ are developed 
based on resolutions of singularities, generic smoothness (Strong Bertini  Theorem)
and vanishing theorems of cohomologies of Kodaira type.
However these are not available for varieties over a field of positive characteristic.
This is the reason why the study  of singularities in positive characteristic case did not develop 
in the same direction.

On the other hand, 
a singularity in positive characteristic has been studied from a different view point; in terms of
Frobenius map which is specific for positive characteristic case.
Then,
a surprising correspondence between  singularities in characteristic zero with respect to ``discrepancies" and singularities in positive characteristic with respect to Frobenius map started to be unveiled by  N.  Hara and K-i. Watanabe \cite{hw}, and then many beautiful 
results in this direction are discovered by the contributions of many   people.
We do not cite all references about these results here, because it is not the main theme of this paper.

The standing point of this paper is apart from theirs. 
We will try to study singularities in positive characteristic in the same line as in the characteristic
0 case.
More precisely, we study singularities in terms of ``discrepancies" which are common for 
any characteristic.
For this sake, lack of resolutions of the singularities, generic smoothness or vanishing 
theorems would cause problems.
In order to avoid these problems, we propose to use jet schemes.

To explain the background, remember that
there are two kinds of discrepancies at a prime divisor $E$ over a variety $X$:
\begin{enumerate}
\item[$\bullet$]
 the usual log discrepancy $a(E; X)= k_E + 1$, where $k_E$ is the coefficient of the
 relative canonical divisor at the prime divisor $E$ over $X$;
 \item[$\bullet$]
 Mather-Jacobian log discrepancy $\amj(E;X)=\hk_E- j_E +1$, where $\hk_E$ is the 
 Mather discrepancy and $j_E$ is the order of the Jacobian ideal of $X$ at the prime 
 divisor $E$ over $X$.
 \end{enumerate}
 The usual log discrepancy is defined for a $\QQ$-Gorenstein variety $X$ and we say that 
 $X$ is log canonical (\resp\ canonical) at a point $x\in X$ if for every prime divisor $E$
 over $X$ with the center containing $x$ satisfies $a(E;X)\geq 0$ (\resp\  $a(E;X)\geq 1$).
 The minimal log discrepancy $\mld (x; X)$ is defined as the infimum of $a(E;X)$ for 
 every prime divisor with the center $x$. 
 This discrepancy plays an important role in the minimal model problem.
 
 The second discrepancy is defined for a reduced equidimensional scheme $X$ of finite type over 
 the base field $k$ and we say that $X$ is Mather-Jacobian (MJ, for short)-log canonical
 (\resp\ MJ-canonical ) at a point $x\in X$ if for every prime divisor $E$
 over $X$ with the center containing $x$ satisfies $\amj(E;X)\geq 0$ (\resp\  $\amj(E;X)\geq 1$).
 The MJ-minimal log discrepancy $\mldmj (x; X)$ is defined as the infimum of $\amj(E;X)$ for 
 every prime divisor with the center $x$.

When the base field $k$ is of characteristic $0$, the following natural properties hold:
\begin{enumerate}
\item[(P1)] Log canonicity, canonicity, MJ-log canonicity and MJ-canonicity are all open conditions. I.e., if $(X, x)$ is one of these singularities, then there is an open neighborhood
$U\subset X$  of $x$ such that $U$ has  singularities of the same type at every point.

\item[(P2)] Canonicity, MJ-log canonicity and MJ-canonicity are stable 
under a small deformation. (So is log canonicity if the 
total space is $\QQ$-Gorenstein.  )

\item[(P3)] The map $X\to \ZZ; x\mapsto \mldmj (x; X)$ is lower semi-continuous. 
(On the other hand, lower semi-continuity of $ \mld(x; X)$ is not yet proved in general
even in characteristic 0.)

\end{enumerate}
Resolutions of singularities played essential roles in the proofs of (P1)--(P3) for characteristic 0 case.
Therefore, none of them are proved in positive characteristic case in general.
At present, we do not have a systematic way to prove them for the usual log canonical or canonical singularities.
However, focusing on MJ-version,
we propose a potentially effective way  to prove them.
This is based on the fact 
that MJ-singularities are well described in terms of local jet schemes  at the singular point and do not need existence of a resolution of 
the singularities.

Actually  in arbitrary characteristic, MJ-minimal log discrepancy of $d$-dimensional variety $X$ at a point $x$ is represented as:
$$\mldmj(x; X)=\inf_{m\in \NN}\{(m+1)d - \dim X_m(x)\},$$
where $X_m(x)$ is the local $m$-jet scheme of $X$ at $x$ (this is proved in \cite{dd}, \cite{is} for characteristic 
0 and in \cite{ir2} for arbitrary characteristic).
Let $$s_m(X,x):=(m+1)d - \dim X_m(x).$$  

We also have the formula of $\mldmj(x;X)$ as follows (\cite{dd} and \cite{is} for characteristic $0$ case, and \cite{ir2} for arbitrary characteristic case):

Let $(X,x)\subset (A,x)$ be a closed immersion into a non-singular variety $A$ with the 
codimension $c$ and $I_X$ the ideal of $X$ in $A$, then
$$\mldmj(x;X)=\mld(x;A, I_X^c).$$
In this paper we may think that this formula is the definition of $\mldmj(x;X)$.

We pose the following conjecture for every $d\in \NN$:
\begin{conj}[$C_d$] 
\label{Cd}
Let $d$ be a positive integer.
There exists $N_d\in \NN$ depending only on $d$ such that 
for every  closed  point $x\in X$ of any $d$-dimensional variety $X$, there exists
$m\leq N_d$ satisfying either 
$$\left\{\begin{array}{l}
s_m(X,x)=\mldmj(x; X)\geq 0, \mbox{or}\\
\\
s_m(X,x)<0, \ \ \mbox{when}\  \mldmj(x; X)=-\infty \\
\end{array} \right. $$

\end{conj}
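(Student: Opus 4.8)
The plan is to read $C_d$ as the assertion that the infimum in $\mldmj(x;X)=\inf_{m\in\NN}s_m(X,x)$ is attained, or exposed as $-\infty$ by some $s_m<0$, at a level $m$ bounded in terms of $d$ alone, and to carry this out in two stages: first reduce to closed points whose discrete local invariants are bounded by explicit functions of $d$ (absorbing most of the $\mldmj=-\infty$ alternative into small-level computations), and then run a resolution-type induction on the remaining bounded-complexity singularities. Throughout we may assume $X$ is singular at $x$, since otherwise $s_m(X,x)=d$ for all $m$ and $\mldmj(x;X)=d$.

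For the first stage I would compute the first few $s_m$ by hand. As $X_1(x)$ is the Zariski tangent space, $s_1(X,x)=2d-\emb_x X$ with $\emb_x X=\dim_k\m_x/\m_x^2$. Fix a minimal embedding $(X,x)\subset(A,x)$, so $\dim A=\emb_x X=d+c$ with $c=\codim_A X\ge 1$ and $I_X\subseteq\m_x^2$; the exceptional divisor $E_0$ of the blow-up of $x$ in $A$ has $a(E_0;A,I_X^c)=(d+c)-c\cdot\ord_x I_X\le d-c$, so $\mldmj(x;X)\le d-c=s_1(X,x)$. By the standard dichotomy ($\mld$ is $\ge 0$ or $-\infty$), $\emb_x X>2d$ therefore forces $s_1(X,x)<0$ and $\mldmj(x;X)=-\infty$ at once, settling $C_d$ with $m=1$ in that range; so we may assume $\emb_x X\le 2d$. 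Likewise, if $\ord_x I_X=r$ then for every $m<r$ no jet condition is imposed and $X_m(x)=A_m(x)$, whence $s_m(X,x)=(m+1)d-m(d+c)=d-mc$; together with $\mldmj(x;X)\le(d+c)-cr$ this shows $\ord_x I_X\ge d+2$ forces $s_{d+1}(X,x)=d-(d+1)c<0$ and $\mldmj(x;X)=-\infty$, again at a level bounded in terms of $d$. We are thus reduced to closed points with $\emb_x X\le 2d$ and $\ord_x I_X\le d+1$; after a generic linear projection one may further hope to assume $X$ is a hypersurface in $\AA^{d+1}$ of multiplicity bounded in terms of $d$, provided one first checks that such a projection preserves $\mldmj$ and does not raise the level at which it is computed.

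For the second stage I would use the reformulation recorded in the abstract: on this bounded-complexity locus, $C_d$ is precisely the statement that a prime divisor computing $\mldmj(x;X)$ can be extracted by a number of blow-ups bounded in terms of $d$. The natural attempt is an embedded-resolution-type induction tracking only the data needed to evaluate $\mldmj$: blow up an appropriate (smooth, or weighted) center, note that on each chart the strict transform is again reduced, equidimensional and $d$-dimensional, with a companion invariant — multiplicity of the ideal, or a Hironaka-style tuple — that strictly drops, and bound the number of steps by a recursion in $d$ and that invariant. An alternative route is to prove that the generating series $\sum_{m}\dim X_m(x)\,T^m$, equivalently the associated local motivic zeta function, is rational with numerator and denominator of degree bounded in terms of $d$; this would make $s_m(X,x)$ eventually quasi-polynomial with bounded period, and hence its infimum attained at a bounded level.

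The principal obstacle is exactly this second stage. Both the blow-up induction and the rationality statement are at present available only in characteristic $0$, where they rest on resolution of singularities, which is not known in positive characteristic in the generality required by $C_d$. A genuinely characteristic-free proof would have to bound $\dim X_m(x)$ combinatorially, reading it off from bounded jets of the defining equations without ever invoking a resolution. This is why the conjecture can currently be proved only in settings where such an input is supplied by other means: non-degenerate hypersurfaces of any dimension, where the Newton polyhedron yields a toric resolution and a closed formula for the $s_m$, and two-dimensional singularities in characteristic $\neq 2$, where resolution of surfaces is available and the induction can be run explicitly.
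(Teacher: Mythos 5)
The statement you are addressing is a \emph{conjecture} in the paper, not a theorem: the paper offers no proof of $C_d$ in general, only equivalences ($C_d\Leftrightarrow D_d\Leftrightarrow U_d$, and the splitting into $C_{d,\delta}$), consequences (PMJ1--PMJ3), and verifications in special cases ($d=1$; $d=2$ in characteristic $\neq 2$; non-degenerate hypersurfaces; maximal type). Your proposal, accordingly, is a strategy rather than a proof, and you say so yourself. Your first stage is correct and in fact reproduces reductions the paper already performs in its partial results: the observation that $\emb(X,x)>2d$ forces $s_1(X,x)<0$ and $\mldmj(x;X)=-\infty$ appears verbatim in the proof of the equivalence $C_d\Leftrightarrow D_d$, and the observation that $\ord_xI_X\geq d+2$ forces $s_{d+1}(X,x)<0$ (via the truncation lemma $X_m(x)=A_m(x)$ for $m<\ord_xI_X$) is the opening move in the proof for non-degenerate hypersurfaces. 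Your computations here check out: $s_1(X,x)=2d-\emb(X,x)$, $a(E_0;A,I_X^c)=(d+c)-c\cdot\ord_xI_X\leq d-c=s_1(X,x)$ for the point blow-up in a minimal embedding, and $s_m(X,x)=d-mc$ for $m<\ord_xI_X$. These settle $C_{d,d}$ and the two unbounded regimes, but nothing more.

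The genuine gap is your second stage, and it is not a repairable oversight but the entire open content of the conjecture. Reducing to bounded embedding dimension and bounded multiplicity does not produce a finite or otherwise tame family of singularities: within $\emb(X,x)\leq 2d$ and $\ord_xI_X\leq d+1$ one still has, for instance, all hypersurface double points $x^2+h(y,z)$, and the paper's Case 6 for surfaces shows that even there one must stratify by the factorization type of $\ini h$ and, in the degenerate strata, either construct explicit log-resolutions or trap $\Gamma_+(f)$ inside a Newton polygon with $\mld\,\Gamma=-\infty$ --- none of which follows from the numerical bounds of your first stage. Two further specific cautions: the generic-projection step to a hypersurface rests on \cite{in}, which is a characteristic-zero result (it uses generic smoothness), so it cannot be assumed in the setting of $C_d$; and the equivalence of $C_d$ with bounding the number of blow-ups is itself a nontrivial assertion proved in the paper via the identity $\nu(X,x)=\mu(X,x)$ (Lemma on maximal divisorial sets), so invoking it requires that argument, not just the abstract. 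In short, your proposal correctly maps the terrain and correctly locates the obstruction, but it does not prove the statement, and no proof of it exists in the paper either.
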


     This conjecture can be split into the following  conjectures. Let $\delta $ be an integer with 
     $ \delta \leq d$.
    Note that $\delta$ can be negative.
\begin{conj}[$C_{d,\delta}$] 
\label{split}
Let $d$ be a positive integer and $\delta$ an integer such that $\delta\leq d$.
 There exists $N_{d, \delta}\in \NN$ depending only on $d$ and $\delta$  such that 
\begin{enumerate}
\item[]
for every  closed point $x\in X$ of any $d$-dimensional variety $X$, \\
if $\mldmj(x;X)<\delta$, then there exists $m\leq N_{d,\delta}$
with the property \\
$s_m(X,x)< \delta$.
\end{enumerate}
\end{conj}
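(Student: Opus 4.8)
The plan is to pass from the jet--scheme invariant $s_m(X,x)$ to divisorial valuations, reduce $C_{d,\delta}$ to a uniform bound on the ``complexity'' of a valuation witnessing $\mldmj(x;X)<\delta$, and then control that complexity in terms of $d$ and $\delta$ alone. First I would use the equality $\mldmj(x;X)=\inf_{m}s_m(X,x)$ together with the valuative description of $\mldmj(x;X)$ as $\inf_E\amj(E;X)$, the infimum running over prime divisors $E$ over $X$ centered at $x$. The hypothesis $\mldmj(x;X)<\delta$ then produces a single prime divisor $E$ with $\amj(E;X)=\hk_E-j_E+1<\delta$, so the whole problem becomes: among all such witnessing divisors, can one be chosen whose contribution is already visible at a jet level bounded by a function of $d$ and $\delta$? (When $\delta\le 0$ the hypothesis forces $\mldmj(x;X)=-\infty$, so $s_m(X,x)$ is unbounded below and the task is to bound how quickly it descends; the cases $\delta=0,1$ recover MJ-log canonicity and MJ-canonicity.)

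The bridge between a divisor and a jet level is the theory of maximal divisorial sets. For a prime divisor $E$ over $X$ the associated maximal divisorial set in the space of arcs has codimension controlled by the data $\hk_E$ and $j_E$ entering $\amj(E;X)$, and its image in the $m$-jet scheme $X_m(x)$ attains the expected dimension once $m$ exceeds a level measured by the number $\ell$ of point blow-ups needed to extract $E$ together with $\ord_E(\m_x)$. Carrying out this computation yields an explicit $m_0=m_0(\ell,\ord_E(\m_x))$ with $s_m(X,x)<\delta$ for some $m\le m_0$. Thus $C_{d,\delta}$ reduces to the statement highlighted in the abstract: a divisor realizing $\amj(E;X)<\delta$ can always be found after a number of blow-ups bounded solely in terms of $d$ and $\delta$.

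This last reduction is the main obstacle, and it is exactly where characteristic $0$ methods break down. In characteristic $0$ one resolves $(X,x)$, reads the MJ-log discrepancies off the exceptional divisors, and bounds the blow-up number using discrepancy and ACC-type estimates; none of this is available in positive characteristic, where resolutions are not known to exist. I would therefore attack the bound intrinsically and first in the two accessible regimes. For a non-degenerate \hs \ I would read the witnessing divisors off the Newton polyhedron: the MJ-mld is computed by weighted toric blow-ups whose weights are lattice points of a region cut out by $\delta$, so the blow-up number is bounded by the combinatorics of lattice points in a simplex of size controlled by $d$ and $\delta$. For surfaces in characteristic $\ne 2$ I would use that resolutions of surface singularities do exist and that the relevant part of the dual graph, namely the divisors with $\amj(E;X)<\delta$, lies within a bounded distance of the strict transform, giving the bound directly. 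The genuine difficulty in full generality is to replace these structural inputs by a resolution-free argument showing that the low-discrepancy valuations cannot hide arbitrarily deep in the blow-up tree; this is precisely the content that keeps $C_{d,\delta}$ a conjecture rather than a theorem.
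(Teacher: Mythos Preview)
The statement you were asked to prove is a \emph{conjecture}, and the paper does not prove it in general; it only establishes the equivalence of $C_{d,\delta}$ (collectively, $C_d$) with the blow-up boundedness conjectures $D_d$ and $U_d$, and then verifies special cases. Your proposal is not a proof either, and you explicitly acknowledge this in the final sentence, so on that level there is nothing wrong: you have correctly identified that the general statement remains open and that the reduction to a uniform bound on the number of blow-ups (your ``complexity of a witnessing valuation'') is the real content. That reduction is exactly what the paper formalizes as the equivalence $C_d \Leftrightarrow D_d \Leftrightarrow U_d$ (Propositions~\ref{C=D} and~\ref{C=U}), proved via the maximal divisorial set machinery you invoke.

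Where your sketch diverges from the paper is in the handling of the special cases. For non-degenerate hypersurfaces you propose to bound the weights of toric divisors by lattice-point combinatorics in a simplex cut out by $\delta$; the paper instead argues by contradiction, assuming an infinite sequence of Newton polygons with $\nu\to\infty$ and using a Noetherian ordering argument to extract a common limiting polygon (Theorem~\ref{non-deg}). For surfaces in characteristic $\neq 2$ you propose to invoke resolution of surface singularities and bound the portion of the dual graph with $\amj(E;X)<\delta$; the paper does something quite different, namely a case-by-case analysis on $\emb(X,x)$ and $\ord_x I_X$, reducing the delicate cases to the ``maximal type'' situation (Proposition~\ref{reduce}, which in turn relies on the already-established $C_1$) and to explicit Newton-polygon computations for hypersurface double points (Theorem~\ref{surface}). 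Your dual-graph idea is plausible in spirit but would need a separate argument to show the relevant divisors sit at bounded depth, and the paper's route has the advantage of producing explicit numerical bounds ($N_2\leq 41$, $B_2\leq 39$).
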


In this paper we prove the following:

\begin{thm} If Conjecture $C_d$ holds (equivalently, Conjectures $C_{d,\delta}$ holds 
for $\delta \leq d$), then the following hold:
\begin{enumerate}
\item[(PMJ1)]  Let $X$ be a $d$-dimensional variety.
If $(X, x)$ is an MJ-log canonical (\resp\ MJ-canonical) singularity, then there is an open neighborhood
$U\subset X$  of $x$ such that $U$ has MJ-log canonical (\resp\ MJ-canonical) singularity at every point of $U$.
\item[(PMJ2)] Let $\mathcal X \to \Delta$ be a surjective morphism to a smooth curve $\Delta$  with the equidimensional reduced fibers of dimension $d$. Denote the fiber of this morphism 
of a point $t\in \Delta$ by $\mathcal X_t$.  If $(X, x)=(\mathcal X_0, x)$ 
is MJ-log canonical (\resp\ MJ-canonical), then there is an open neighborhoods $\Delta'\subset
\Delta$  and $U\subset \mathcal X$ of $0$ and $x$, respectively, such that  
all fibers of $U\to \Delta'$ have MJ-log canonical (\resp\  MJ-canonical) singularities.

\item[(PMJ3)] For a $d$-dimensional variety $X$, the map 
$$\{\mbox{closed\ points\ of\ }X\} \to \ZZ;\ \  x\mapsto \mldmj (x; X)$$ is lower semi-continuous. 

\end{enumerate}
\end{thm}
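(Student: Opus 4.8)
The plan is to deduce all three statements from one principle: under the conjecture the function $x\mapsto\mldmj(x;X)$ is locally a \emph{finite} minimum of lower semi-continuous functions, hence lower semi-continuous. First I would record that each $s_m(-,-)$ is lower semi-continuous. The truncation $\pi_m\colon X_m\to X$ of the $m$-jet scheme is of finite type and its fibre over $x$ is the local $m$-jet scheme $X_m(x)$; thus $E_n:=\{y\in X_m:\dim_y\pi_m^{-1}(\pi_m(y))\ge n\}$ is closed in $X_m$. Since the $\mathbb G_m$-action scaling the jet parameter contracts $X_m(x)$ onto the constant jet $\sigma(x)$, the section $\sigma\colon X\to X_m$ meets a top-dimensional component of every $X_m(x)$, so $\{x:\dim X_m(x)\ge n\}=\pi_m(E_n)=\sigma^{-1}(E_n)$ is closed in $X$. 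Hence $x\mapsto\dim X_m(x)$ is upper semi-continuous, $x\mapsto s_m(X,x)=(m+1)d-\dim X_m(x)$ is lower semi-continuous, and so is $\min_{m\le N}s_m(X,-)$ for each $N$. I would also note $s_0(X,x)=d$, so $\mldmj(x;X)=\inf_m s_m(X,x)$ lies in $\ZZ\cup\{-\infty\}$ and is $\le d$.

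Next I would show that every sublevel set $\{x\in X:\mldmj(x;X)<\delta\}$, $\delta\in\ZZ$, is closed. For $\delta>d$ it is all of $X$. For $\delta\le d$, combining $\mldmj(x;X)\le s_m(X,x)$ (valid for all $m$) with Conjecture $C_{d,\delta}$ — which, whenever $\mldmj(x;X)<\delta$, supplies some $m\le N_{d,\delta}$ with $s_m(X,x)<\delta$ — gives
\[
\{x\in X:\mldmj(x;X)<\delta\}=\{x\in X:\min\nolimits_{m\le N_{d,\delta}}s_m(X,x)<\delta\},
\]
and the right-hand set is the complement of the open set $\{x:\min_{m\le N_{d,\delta}}s_m(X,x)\ge\delta\}$, hence closed. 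This is (PMJ3). For (PMJ1) I would note that the non-MJ-log-canonical (resp. non-MJ-canonical) locus of $X$ is $\{\mldmj(\cdot;X)<0\}$ (resp. $\{\mldmj(\cdot;X)<1\}$), closed by the above, and that MJ-log canonicity (resp. MJ-canonicity) at a non-closed point is implied by the same property at any closed point of its closure, since fewer prime divisors have center through a larger subvariety; the open complements are the neighbourhoods required.

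For (PMJ2) I would run the same argument in families. Writing $f\colon\mathcal X\to\Delta$, the local $m$-jet schemes of the fibres of $f$ are the fibres of a finite-type morphism $J_m(\mathcal X/\Delta)\to\mathcal X$ — one may take $J_m(\mathcal X/\Delta)=\mathcal X_m\times_{\Delta_m}\Delta$ via the constant $m$-jet section $\Delta\to\Delta_m$, whose fibre over $y\in\mathcal X$ is $(\mathcal X_{f(y)})_m(y)$ — and this morphism again carries a $\mathbb G_m$-action and a constant-jet section, so $y\mapsto\dim(\mathcal X_{f(y)})_m(y)$ is upper semi-continuous on $\mathcal X$ and $y\mapsto s_m(\mathcal X_{f(y)},y)$ is lower semi-continuous. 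As every fibre $\mathcal X_t$ is a reduced equidimensional scheme of dimension $d$, applying Conjecture $C_{d,\delta}$ to the fibres exactly as above shows that $g\colon\mathcal X\to\ZZ\cup\{-\infty\}$, $g(y):=\mldmj(y;\mathcal X_{f(y)})$, has closed sublevel sets. Then $W:=\{g\ge0\}$ (resp. $\{g\ge1\}$) is open in $\mathcal X$ and contains $x$ by the hypothesis on $(\mathcal X_0,x)$; with $U=W$ (intersected with $f^{-1}(\Delta')$ for a small open $\Delta'\ni0$ if desired), every closed point $y$ of a fibre $U_t=U\cap\mathcal X_t$ has $\mldmj(y;\mathcal X_t)=g(y)\ge0$ (resp. $\ge1$), so $U_t$ is MJ-log canonical (resp. MJ-canonical). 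This gives (PMJ2).

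The semi-continuity bookkeeping above and the passage from closed to arbitrary points are routine. The step I expect to require genuine care is the construction of the relative jet scheme $J_m(\mathcal X/\Delta)$ and the identification of its fibre over $y$ with the local $m$-jet scheme of the fibre $\mathcal X_{f(y)}$ at $y$; and, more mildly, checking that the constant-jet section lands in a top-dimensional component of every local jet scheme, which is what promotes upper semi-continuity on the jet space to upper semi-continuity on the base. Once these geometric points are settled, the conjecture finishes the proof by letting a minimum over $m\le N_{d,\delta}$ stand in for the infimum over all $m$.
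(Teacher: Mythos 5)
Your treatment of (PMJ3), and of the MJ-log canonical halves of (PMJ1) and (PMJ2), is essentially correct and close to the paper's own route: the paper likewise deduces lower semi-continuity of $\mldmj$ from the upper semi-continuity of $x\mapsto\dim X_m(x)$ (citing \cite[4.11]{ei} where you re-derive it via the $\mathbb{G}_m$-contraction onto the constant jet) together with the conjecture, which replaces $\inf_m s_m$ by $\min_{m\le N_d}s_m$; and for MJ-log canonicity the equivalence ``MJ-log canonical at $\eta$ $\iff$ $\mldmj(\eta;X)\ge 0$'' plus your generization remark does close the argument. Your relative jet scheme in (PMJ2) agrees with the one the paper borrows from \cite{ei}.

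The genuine gap is in the MJ-canonical halves of (PMJ1) and (PMJ2). You identify the non-MJ-canonical locus with $\{\mldmj(\cdot;X)<1\}$, but only the inclusion $\{\mldmj<1\}\subset\{\mbox{non-MJ-canonical}\}$ holds; the paper warns explicitly in Section 2 that $\mldmj(\eta;X)\ge 1$ does \emph{not} imply MJ-canonicity at $\eta$. The point is that MJ-canonicity at a closed point $x$ requires $\amj(E;X)\ge 1$ also for divisors $E$ whose center is a positive-dimensional irreducible subvariety $W\ni x$, i.e.\ $\mldmj(\eta_W;X)\ge 1$ for every such $W$, and the only a priori relation to closed points is $\mldmj(x;X)\le\mldmj(\eta_W;X)+\dim W$ (Lemma \ref{additive}), which from $\mldmj(x;X)\ge 1$ yields merely $\mldmj(\eta_W;X)\ge 1-\dim W$. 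For instance, a threefold with a curve $C$ along which every closed point has $\mldmj(x;X)=1$ lies entirely in your open set $\{\mldmj\ge 1\}$, yet $\mldmj(\eta_C;X)$ can equal $0$, so $X$ is MJ-canonical at no point of $C$. This is exactly why the paper routes the argument through the MJ-stratification $\{X(\delta)\}$ and the criterion $\delta\ge\dim X(\delta)^{(j)}+1$ (Lemma \ref{characterization} and Corollary \ref{global}), whose converse direction uses the \emph{equality} case of Lemma \ref{additive} at general points of each stratum --- itself a consequence of the conjecture. To repair your proof you would have to replace the sublevel set $\{\mldmj<1\}$ by the locus of points lying in the closure of some stratum component with $\dim X(\delta)^{(j)}\ge\delta$, and carry the analogous modification through the relative setting of (PMJ2); your argument as written only proves the MJ-log canonical statements.
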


  When we think of Minimal Model Problem over positive characteristic base field,
  we have to study singularities in the view point of  ``usual log discrepancy".
  Singularities with respect to MJ-log discrepancy is different from the usual one, 
  but  useful also for ``usual one".
  One reason is the fact that for singularities of locally a complete intersection 
  ``usual log discrepancy" coincides with MJ-log discrepancy.
  So the study of singularities with respect to MJ-log discrepancy is the first step to study
   singularities with respect to the usual discrepancy over positive characteristic base field.
   
  One good example is Sato-Takagi's result (\cite{sata})
  that for a quasi-projective 3-fold $X$ with canonical singularities over an algebraically closed field of characteristic
$p > 0$, a general hyperplane section of $X$ has also canonical singularities.
 They proved this by making use of a result about MJ-canonical 
  singularities  proved in \cite{ir2}.

Here, we propose a conjecture from another view point.
A similar problem for $\mld$ in characteristic zero is considered in \cite{mn}.

\begin{conj}[$D_d$] 
\label{Dd}
 For an integer $d\geq 1$, there exists $M_d\in \NN$ depending only on $d$
such that 
for any $d$-dimensional variety $X$
and a closed point $x\in X$ with a closed immersion $X\subset A$  around $x$ into a
non-singular variety $A$ of dimension $N\leq 2d$ there exists a prime divisor $E$ over $A$ with the
center at $x$ 
and $k_E \leq M_d$ such that
$$\left\{\begin{array}{l}
a(E;  A, I_X^c)= \mld (x; A, I_X^c)=\mldmj (x; X)\geq 0,  \ {\mbox or}\\
a (E; A, I_X^c)< 0\ \ {\mbox if }\ \   \mld (x; A, I_X^c)=\mldmj(x;X)= - \infty.\\
 \end{array}
 \right. $$
 Here, $c=N-d$.

\end{conj}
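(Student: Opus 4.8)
The plan is to show that Conjecture $D_d$ is a divisorial reformulation of Conjecture $C_d$, and in particular is implied by it; the hypothesis $N\le 2d$, i.e. $c\le d$, is precisely what makes the resulting bound $M_d$ depend on $d$ alone. By the embedding formula $\mldmj(x;X)=\mld(x;A,I_X^c)$, which is independent of the chosen embedding, I would fix a closed immersion $X\subset A$ into a nonsingular $A$ with $c=N-d\le d$ and work throughout with the pair $(A,I_X^c)$. Since $A$ is nonsingular, the Mather discrepancy $\hk_E$ of a prime divisor $E$ over $A$ coincides with the usual discrepancy $k_E$, and a bound on $k_E$ bounds the length of a sequence of blow-ups of nonsingular centers that extracts $E$ (each such blow-up strictly raises the discrepancy of the newly created exceptional divisor lying below $E$). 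So it suffices to produce, under $C_d$, a prime divisor $E$ over $A$ with center $x$ realizing the dichotomy of $D_d$ and with $k_E$ bounded in terms of $d$.

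Assume $C_d$ and let $m\le N_d$ be the jet level it provides, so that either $s_m(X,x)=\mldmj(x;X)\ge 0$, or $s_m(X,x)<0$ and $\mldmj(x;X)=-\infty$. Pick a component $C\subset X_m(x)$ with $\dim C=\dim X_m(x)$. By the correspondence between fat components of local jet schemes and divisorial valuations (\cite{dd}, \cite{is} in characteristic $0$, \cite{ir2} in arbitrary characteristic), the generic point of $C$, read as a fat arc, determines a divisorial valuation $v=q\,\ord_E$ over $A$ with center $x$ and $q\cdot a(E;A,I_X^c)=s_m(X,x)$. Since $a(E;A,I_X^c)\ge \mld(x;A,I_X^c)=\mldmj(x;X)$ by definition of the minimal log discrepancy, and $q\ge 1$, this forces $a(E;A,I_X^c)=\mldmj(x;X)$ when $\mldmj(x;X)\ge 0$, and $a(E;A,I_X^c)<0$ when $s_m(X,x)<0$, which is exactly the required dichotomy. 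To bound $k_E=\hk_E$, note that the maximal divisorial set $C_v\subset J_\infty A$ of $v$ contains the cylinder over $C$, so on the nonsingular $A$
$$\hk_E+1\ \le\ q(\hk_E+1)\ =\ \codim\bigl(C_v,\,J_\infty A\bigr)\ \le\ \codim\bigl(\pi_m^{-1}(C),\,J_\infty A\bigr)\ =\ s_m(X,x)+(m+1)c .$$
As $m\le N_d$, $c\le d$ and $s_m(X,x)\le d$ (with $s_m(X,x)<0$ in the remaining case), we obtain $k_E\le (N_d+2)d-1=:M_d$, a bound depending only on $d$.

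For the full equivalence one also checks the converse $D_d\Rightarrow C_d$: a prime divisor $E$ over $A$ with $k_E\le M_d$ realizing (or witnessing) the minimal log discrepancy is extracted by boundedly many blow-ups, so its maximal divisorial set, and the corresponding component of the local jet scheme of $X$, already appear at a jet level $m$ bounded in terms of $M_d$ and $N\le 2d$; one takes this bound as $N_d$.

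The real difficulty is twofold. First, $C_d$ is itself open, so this argument yields $D_d$ unconditionally only in the cases where $C_d$ is available --- non-degenerate hypersurfaces in arbitrary characteristic, and surface singularities in characteristic $\ne 2$. Second, and more delicate, the passage from a component of $X_m(x)$ to a divisor with controlled discrepancy must be handled with care: the valuation naively read off from a generic arc can have large $k_E$ --- for the nonsingular curve $\{x=y^N\}\subset\AA^2$ the generic arc gives the valuation with $\ord_E(x)=N$, $\ord_E(y)=1$ and $k_E=N$, although a single blow-up of the origin already computes $\mldmj=1$ --- so one must exploit the factor $q$ together with the maximal divisorial set, as above, to see that the codimension bookkeeping still outputs a divisor with $k_E\le M_d$. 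Making the identity $q\cdot a(E;A,I_X^c)=s_m(X,x)$ and the surrounding codimension estimates precise in positive characteristic, where resolution of singularities is not available, is the technical heart and rests on the jet-theoretic machinery of \cite{ir2}.
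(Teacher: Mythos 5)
Your proposal follows essentially the same route as the paper: since $D_d$ is a conjecture, the paper's content is the equivalence with $C_d$ (Proposition 3.6 via Lemma 3.5), and like the paper you derive $D_d$ from $C_d$ by passing from a dimension-realizing component of $X_m(x)$ to a maximal divisorial set $C_A(q\cdot\val_E)$ with $\codim=q(k_E+1)$, arriving at the same bound $M_d\le d(N_d+2)-1$. The only quibble is that your displayed relation $q\cdot a(E;A,I_X^c)=s_m(X,x)$ is a priori only the inequality $q\cdot a(E;A,I_X^c)\le s_m(X,x)$ (since $q\cdot\val_E(I_X)\ge m+1$), with equality forced afterwards by the sandwich $q\,a(E)\le \mld\le a(E)$ exactly as in the paper's Lemma 3.5 --- your surrounding argument already supplies this, so the conclusion stands.
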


Note that this conjecture is not yet proved in general even for characteristic zero
(this is viewed as a special case of Conjecture 1.1 in\cite{mn}).

This conjecture claims the boundedness of necessary number of blow-ups to obtain 
a prime divisor computing the MJ-minimal log discrepancy.

In order to explain ``necessary number of blow-ups",
 we quote the basic theorem 
founded by Zariski (see, for example \cite[VI.1, 1.3]{ko}).

\begin{prop-def} 
\label{zar}
Let $X$ be an irreducible variety and $E$ a prime divisor over $X$.
Then there is a sequence of blow-ups
$$X^{(n)}\stackrel{\varphi_n}\longrightarrow X^{(n-1)}\to\cdots\to
X^{(1)}\stackrel{\varphi_1}\longrightarrow X^{(0)}=X$$
such that 
\begin{enumerate}
    \item $E$ appears on $X^{(n)}$, i.e., the 
    center of $E$ on $X^{(n)}$ is of codimension 1 and 
    $X^{(n)}$ is normal at the generic point $p_n$ of  $E$,  
  
    \item $\varphi_i(p_i)=p_{i-1}$ for $1\leq i\leq n$, and
    \item $\varphi_i$ is the blow-up with the center $\overline{\{p_{i-1}\}}$.
\end{enumerate}
The minimal such number $n$ is denoted by $\widetilde b(E)$ and the minimal number
 $i$ such that $\codim \overline{\{p_{i}\}}=1$ is denoted by $b(E)$.
 
 Note that  $\widetilde b(E)\geq b(E)$ in general 
 and the equality holds if $X$ is non-singular.
\end{prop-def}

\begin{conj}[$U_d$] 
   For every $d\in \NN$ there is an integer $B_d\in \NN$ depending only on $d$
   such that for every singularity $(X,x)$ of dimension $d$ 
   embedded into a non-singular variety $A$ with $\dim A=\emb(X,x)$,
   there is a prime divisor $E$ over $A$ computing $\mldmj(x;X)$ and satisfying 
   $\widetilde b(E)=b(E)\leq B_d$.
   Here, $\emb(X,x)$ is the embedding dimension of $X$ at $x$.
   
\end{conj}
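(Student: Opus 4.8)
The plan is to prove that $U_d$ is equivalent to the Finite Determination Conjecture $C_d$, and thereby to reduce $U_d$ to $C_d$; so one obtains $U_d$ unconditionally exactly in the cases where $C_d$ is available (non-degenerate hypersurfaces in every dimension, and $2$-dimensional singularities in characteristic $\neq 2$), and the general case remains open. Both implications rest on the dictionary between the local jet schemes $X_m(x)$ and divisorial valuations over the non-singular ambient $A$: a prime divisor $E$ over $A$ with center $x$, realized by the Zariski blow-up sequence of Proposition-Definition~\ref{zar}, produces a cylinder $C_A(\val_E)\subset J_\infty A$ of codimension $\hk_E+1=k_E+1$ whose truncations sit inside the jet schemes $J_m A$, and $\mldmj(x;X)=\mld(x;A,I_X^c)$ is the minimum of $k_E+1-c\,\val_E(I_X)$ over such $E$, while $s_m(X,x)=(m+1)d-\dim X_m(x)$ records the same minimum taken only over the divisors ``visible at level $m$''. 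Note at the outset that, $A$ being non-singular, the equality $\widetilde b(E)=b(E)$ required in $U_d$ holds automatically by Proposition-Definition~\ref{zar}, so only the bound $b(E)\le B_d$ has content.

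\textbf{The implication $C_d\Rightarrow U_d$.} Assume $s_{m_0}(X,x)=\mldmj(x;X)$ for some $m_0\le N_d$ (or $s_{m_0}(X,x)<0$ if $\mldmj(x;X)=-\infty$). I would pick an irreducible component $W\subseteq X_{m_0}(x)$ realizing $\dim X_{m_0}(x)=(m_0+1)d-s_{m_0}(X,x)$ and not contained in $(\sing X)_{m_0}$ (such a $W$ exists, since a component inside $(\sing X)_{m_0}$ has strictly smaller dimension), and follow its images down the truncation tower to get a flag $W=W_{m_0}\to W_{m_0-1}\to\cdots\to W_0=\{x\}$. Blowing up $A$ successively along the (projectivized) images of the $W_i$ — at most $m_0+1$ blow-ups — yields a prime divisor $E$ over $A$, and one verifies that $E$ computes $\mld(x;A,I_X^c)$ exactly because $s_{m_0}$ attains the minimum $\mldmj(x;X)$, the flag being arranged so that $k_E+1-c\,\val_E(I_X)=s_{m_0}(X,x)$. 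Then $b(E)\le m_0+1\le N_d+1$, and one may take $B_d=N_d+1$. In the $-\infty$ case it suffices to produce one divisor $E$ with $a(E;A,I_X^c)<0$ and $b(E)\le B_d$, which is easier: for instance, when the order of $I_X^c$ at $x$ is large enough the single blow-up of $x$ already suffices.

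\textbf{The implication $U_d\Rightarrow C_d$.} Conversely, let $E$ be a computing divisor with $b(E)\le B_d$. Its blow-up sequence has bounded length, so $k_E$, $\val_E(\m_x)$ and $\val_E(I_X)$ are all bounded by an explicit function of $B_d$ — the multiplicities occurring along the sequence are themselves controlled by $b(E)$, being the successive multiplicities of the center of $E$. A divisor of such bounded complexity already appears with the correct codimension inside some $J_m A$, hence inside $X_m(x)$, for an $m$ bounded by a function of $B_d$ and $d$; for that $m$ we get $s_m(X,x)=\mldmj(x;X)$ (respectively $s_m(X,x)<0$), which is $C_d$ with $N_d$ that function.

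\textbf{The main obstacle.} The genuine difficulty is $C_d$ itself, which is open in general: for non-degenerate hypersurfaces one runs the argument above through the Newton polyhedron, which makes both the relevant jet level and the corner divisors fully explicit, and for surfaces in characteristic $\neq 2$ one bounds the length of the blow-up sequence via the classification of the possible resolution graphs. The second, subtler, point in the step $C_d\Rightarrow U_d$ is the verification that the divisor extracted from the jet flag really does compute the discrepancy: one cannot simply take the valuation of the generic arc of the cylinder over $W$, as that valuation may require arbitrarily many blow-ups (already over a smooth surface the $(1,n)$-weighted valuation has $\val_E(\m_x)=1$ but $b(E)=n$), so the component $W$ and the centers along the flag must be chosen so as to land on a minimizer of bounded complexity.
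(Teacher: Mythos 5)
Your overall strategy---prove $U_d$ equivalent to $C_d$ and thereby inherit $U_d$ in exactly the cases where $C_d$ is established---is the same as the paper's (the paper does this in Propositions \ref{C=D} and \ref{C=U}, passing through the intermediate Conjecture $D_d$). The converse direction $U_d\Rightarrow C_d$ is sketched in essentially the paper's way: a bound on $\widetilde b(E)$ bounds $k_E$ (the paper gets $k_E\leq 2^{\widetilde b(E)-1}(2d-1)$ from $\ord_{E_i}K_{A_i/A_{i-1}}\leq 2d-1$), and a bound on $k_E$ bounds $\val_E(I_X)$ and hence the jet level via Lemma \ref{nu=mu}. Your observation that $\widetilde b(E)=b(E)$ is automatic over the non-singular ambient $A$ is also correct.

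The gap is in $C_d\Rightarrow U_d$. Your construction---choose a top-dimensional component $W$ of $X_{m_0}(x)$, follow the flag $W_{m_0}\to\cdots\to W_0$ down the truncation tower, and blow up $A$ along ``the (projectivized) images of the $W_i$'' to produce $E$ with $b(E)\leq m_0+1\leq N_d+1$---is not justified: the $W_i$ live in the jet schemes, not in $A$ or its blow-ups, and there is no argument that the divisorial valuation attached to the relevant component (via the maximal divisorial set $C_A(q\cdot\val_E)$ of \cite[Corollary 3.16]{ir2}) is realized by a blow-up sequence of length at most $m_0+1$. You yourself point out the obstruction with the $(1,n)$-valuation example, but you do not resolve it, and the claimed bound $B_d=N_d+1$ is not what a correct argument gives. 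The paper's fix is to decouple the two bounds: first, $C_d$ plus Lemma \ref{nu=mu} yields a computing divisor $E$ with $\val_E(I_X)\leq N_d+1$, hence $k_E=\mldmj(x;X)-1+c\cdot\val_E(I_X)\leq d(N_d+2)-1=:M_d$ (this is $C_d\Rightarrow D_d$); second, in the Zariski sequence for $E$ each center has codimension $c_i\geq 2$ with $c_1=\emb(X,x)\geq d+1$, so $k_E\geq\sum_i(c_i-1)\geq\widetilde b(E)+d-1$, giving $\widetilde b(E)\leq M_d-d+1$. The resulting bound is $B_d\leq d(N_d+2)-d$, linear in $d\cdot N_d$ rather than $N_d+1$. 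Without this (or some equivalent) passage through a bound on $k_E$, your forward implication does not close.
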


\begin{prop}  Conjecture $C_d$, Conjecture $D_d$ and Conjecture $U_d$ are equivalent.

\end{prop}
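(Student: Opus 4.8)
The plan is to prove $C_d\Rightarrow D_d\Rightarrow U_d\Rightarrow C_d$ by reading each conjecture as the boundedness, in terms of $d$ only, of one of three ``complexity measures'' of a $d$-dimensional singularity $(X,x)$, which we may assume singular at $x$ (the smooth case being immediate, since then $s_0(X,x)=d=\mldmj(x;X)$): the least $m$ with $s_m(X,x)=\mldmj(x;X)$, or with $s_m(X,x)<0$ when $\mldmj(x;X)=-\infty$; the coefficient $k_E$ of a prime divisor $E$ over a smooth ambient $A$ of $X$ with $a(E;A,I_X^{c})=\mldmj(x;X)$ (or $<0$); and the length $\widetilde b(E)=b(E)$ of a Zariski tower as in Proposition-Definition \ref{zar} producing such an $E$. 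Two inputs are used throughout. First, for $X\subset A$ with $A$ smooth, $\mldmj(x;X)=\mld(x;A,I_X^{c})=\inf_m s_m(X,x)$, and the arc-space proof of this (\cite{ir2}; \cite{dd}, \cite{is} in characteristic $0$) yields a two-way dictionary: a maximal-dimensional component of $X_m(x)$ gives, via its generic arc and the associated maximal divisorial set in $A_\infty$, a divisor $E$ over $A$ with center $x$ and $a(E;A,I_X^{c})\le s_m(X,x)$, with equality when $s_m(X,x)=\mldmj(x;X)$; conversely, a divisor $E$ over $A$ with center $x$ becomes visible in $X_m(x)$, i.e.\ $s_m(X,x)\le a(E;A,I_X^{c})$, once $m$ is large relative to $\ord_E(I_X)$ and $\ord_E(\m_x)$. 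Second, as $A$ is smooth, $\widetilde b(E)=b(E)$ for all $E$ over $A$ (the remark in Proposition-Definition \ref{zar}), and over a smooth variety $\widetilde b(E)$ and $k_E$ bound each other in terms of $\dim A$ (cf.\ \cite{ko}): each blow-up in a minimal Zariski tower has center of codimension $\ge 2$ until $E$ appears, so it increases the relative canonical coefficient of $E$, whence $\widetilde b(E)\le k_E+1$, while one blow-up changes $k$ only by a factor and summand bounded in $\dim A$.

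First I would dispose of the ``very singular'' points. Because $X_1(x)=T_xX$, $s_1(X,x)=2d-\emb(X,x)$. In the minimal embedding $A=\AA^{\emb(X,x)}$ one has $I_X\subset\m_x^{\ord_x(I_X)}$ with $\ord_x(I_X)\ge 2$, so $X_m(x)=A_m(x)$ and $s_m(X,x)=d-m(\emb(X,x)-d)$ for $m<\ord_x(I_X)$, and the exceptional $E_1$ of the blow-up of $x$ has $a(E_1;A,I_X^{c})=\emb(X,x)-(\emb(X,x)-d)\ord_x(I_X)$, $\widetilde b(E_1)=b(E_1)=1$. Hence if $\emb(X,x)>2d$, or if $\ord_x(I_X)\ge d+2$, then $s_m(X,x)<0$ for some $m\le d+1$ and $a(E_1;A,I_X^{c})<0$, so $\mldmj(x;X)=-\infty$ and all three conjectures hold at $(X,x)$ (with $m\le d+1$; with $\widetilde b(E_1)=b(E_1)=1$; and, when $\emb(X,x)\le 2d$, with $k_{E_1}\le 2d-1$, the case $\emb(X,x)>2d$ being vacuous for $D_d$). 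For a non-minimal embedding $X\subset A$ one has $\ord_x(I_X)=1$, and passing to a smooth $A'\subset A$ with $\dim A'=\emb(X,x)$ and transporting divisors along $A'\subset A$ — which preserves $k_E$, $\ord_E(I_X)$ and $\ord_E(\m_x)$ — reduces the statements of $D_d$ and $U_d$ for $A$ to the minimal case. Thus from now on $\emb(X,x)\le 2d$ and, in the minimal embedding, $\ord_x(I_X)\le d+1$, and I work in $A=\AA^{\emb(X,x)}\subset\AA^{2d}$; all constants below depend only on $d$.

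The cycle now closes by comparing the three measures in this bounded regime. $C_d\Rightarrow D_d$: take $m\le N_d$ with $s_m(X,x)=\mldmj(x;X)$ (or $<0$); the dictionary gives $E$ over $A$ with $a(E;A,I_X^{c})=s_m(X,x)$ and $\ord_E(I_X),\ord_E(\m_x)$ bounded in terms of $N_d$; since $|s_m(X,x)|\le (m+1)d$ and $k_E=a(E;A,I_X^{c})+(\emb(X,x)-d)\ord_E(I_X)-1$, this bounds $k_E\le M_d$. $D_d\Rightarrow U_d$: such an $E$ with $k_E\le M_d$ has $\widetilde b(E)=b(E)\le k_E+1\le M_d+1=:B_d$. $U_d\Rightarrow C_d$: from $\widetilde b(E)=b(E)\le B_d$ one bounds $k_E$, hence $\ord_E(\m_x)$, in terms of $B_d$ and $\dim A$; also $\ord_E(I_X)$ is bounded, because along the first blow-up center it is at most $\ord_x(I_X)\le d+1$ and the order of the transform of $I_X$ does not increase under blow-up, so each of the $\le B_d$ steps enlarges it boundedly; hence $E$ is visible at some $m\le N_d$, giving $s_m(X,x)\le a(E;A,I_X^{c})=\mldmj(x;X)$ (or $<0$), and $\mldmj(x;X)=\inf_m s_m(X,x)$ turns this into $C_d$. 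The cases $\mldmj(x;X)=-\infty$ are handled by running the argument through the split conjectures $C_{d,\delta}$, $\delta\le d$, as in the equivalence of $C_d$ with $\{C_{d,\delta}\}_{\delta\le d}$ stated above.

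The main obstacle is the quantitative two-way dictionary between jet level and divisorial invariants used here: one must go through the proof of $\mldmj(x;X)=\inf_m s_m(X,x)$ in \cite{ir2} and make it effective, showing that a maximal-dimensional component of $X_m(x)$ produces a divisor $E$ over $A$ with $a(E;A,I_X^{c})\le s_m(X,x)$ and with $\ord_E(I_X),\ord_E(\m_x)$ — hence $k_E$ — bounded by explicit functions of $m$ and $\dim A$, and conversely that a divisor with such bounded invariants is visible at bounded jet level. The other ingredients — the reductions of the second step, the elementary bound $\widetilde b(E)\le k_E+1$ over a smooth ambient, and the transport of divisors between embeddings — are routine once the dictionary is available.
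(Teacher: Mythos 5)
Your plan reproduces the skeleton of the paper's argument: the ``two-way dictionary'' that you isolate as the main obstacle is precisely Lemma \ref{nu=mu} ($\nu(X,x)=\mu(X,x)$), which the paper proves by identifying a maximal-dimensional component of $\cont^{\geq\nu}(I_X)\cap(\pi^A)^{-1}(x)$ with a maximal divisorial set $C_A(q\cdot\val_E)$ of codimension $q(k_E+1)$; the comparisons between $k_E$ and $\widetilde b(E)$ are the same elementary blow-up estimates as in Proposition \ref{C=U}, and the reductions (disposing of $\emb(X,x)>2d$, passing to the minimal embedding) match Proposition \ref{C=D}. Since you defer the dictionary entirely, the substantive content of the equivalence is absent from your write-up; that alone makes this a plan rather than a proof. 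But there is also a concrete error in how you propose to use the dictionary whenever $\mldmj(x;X)=-\infty$, and these are exactly the cases that cannot be waved away.

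In both $C_d\Rightarrow D_d$ and $U_d\Rightarrow C_d$ you rely on $\ord_E(I_X)$ being bounded in terms of $d$. In the $C_d\Rightarrow D_d$ direction with $\mldmj(x;X)=-\infty$, the component of the contact locus gives $q\cdot\val_E(I_X)\geq\nu$, which is a \emph{lower} bound on $\val_E(I_X)$, not an upper bound; the bound on $k_E$ must instead be read off from $q(k_E+1)-c\nu<0$, i.e.\ $k_E<c\nu-1$, as in Case 2 of Lemma \ref{nu=mu}. In the $U_d\Rightarrow C_d$ direction your justification (``the order of the transform of $I_X$ does not increase under blow-up'') is not available: the centers $\overline{\{p_{i-1}\}}$ of a Zariski tower as in Proposition-Definition \ref{zar} need not be smooth, need not be points, and need not lie in the equimultiple locus of the transform, so the classical non-increase of order does not apply; and a divisor computing $-\infty$ can have $\val_E(I_X)$ much larger than any function of $k_E$ and $d$. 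The correct mechanism, which bounding $\ord_E(I_X)$ is meant to substitute for, is that one tests the jet scheme not at level $\val_E(I_X)-1$ but at level $\mu-1$ where $\mu\leq\lceil (k_E+1)/c\rceil +1$ is the minimal $m$ with $k_E+1-cm<0$: since $\mu\leq\val_E(I_X)$, the inclusion $C_A(\val_E)\subset\cont^{\geq\mu}(I_X)\cap(\pi^A)^{-1}(x)$ gives $s_{\mu-1}(X,x)\leq k_E+1-c\mu<0$ directly. Your appeal to the split conjectures $C_{d,\delta}$ does not supply this step; it only converts a bound for $\delta=0$ into bounds for negative $\delta$, which is a different issue.
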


As evidences for Conjecture $C_d$, we obtain the following:
\begin{prop}
 Conjectures $C_{d, d}$ and $C_{d, d-1}$ holds for every $d\geq 1$ and 
$ N_{d,d}=1$ and $N_{d, d-1}=5$.
\end{prop}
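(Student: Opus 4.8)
The plan is to argue directly with the quantities $s_m(X,x)=(m+1)d-\dim X_m(x)$, using three elementary facts about local jet schemes: (a) $X_0(x)$ is the reduced point $x$, so $s_0(X,x)=d$ for every $X$; (b) $X_1(x)$ is the Zariski tangent space $T_xX$, so $s_1(X,x)=2d-\emb(X,x)$; (c) if $X$ is nonsingular at $x$, then $X_m(x)\cong\AA^{md}$ for all $m$, hence $s_m(X,x)=d$ for all $m$ and $\mldmj(x;X)=d$. By (c) the hypotheses of $C_{d,d}$ and $C_{d,d-1}$ are empty at a nonsingular point, so in both conjectures we may assume $x$ is a singular point of $X$, i.e. $\emb(X,x)\ge d+1$. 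For $C_{d,d}$ this already suffices: $s_1(X,x)=2d-\emb(X,x)\le d-1<d$, so $m=1$ always works and $N_{d,d}=1$ is admissible; it is also optimal by (a), since $s_0(X,x)=d$ can never be $<d$.

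For $C_{d,d-1}$, assume $\mldmj(x;X)<d-1$, so $x$ is singular. If $\emb(X,x)\ge d+2$ then $s_1(X,x)=2d-\emb(X,x)\le d-2<d-1$ and $m=1$ works. So we may assume $\emb(X,x)=d+1$, i.e. near $x$ the scheme $X$ is a hypersurface $\{f=0\}$ in a nonsingular $(d+1)$-dimensional variety $A$; put $\mu=\mult_xf\ge2$. Since $f\in\m_x^{\mu}$, the pullback of $f$ along any jet based at $x$ lies in $(t^{\mu})$, so for $m<\mu$ every jet of $A$ based at $x$ is already a jet of $X$; thus $X_m(x)=A_m(x)\cong\AA^{m(d+1)}$ and $s_m(X,x)=d-m$. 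In particular, if $\mu\ge3$ then $s_2(X,x)=d-2<d-1$, so $m=2$ works. The essential remaining case is $\mu=2$.

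So let $X=\{f=0\}$ be a double point and write $f=q+f_3+f_4+\cdots$ with $q=f_2\ne0$ the tangent-cone equation. Substituting a jet $\gamma(t)=a_1t+\cdots+a_mt^m$ into $f$, the vanishing of the coefficients of $t^2,\dots,t^m$ in $f(\gamma(t))$ is an explicit system in $a_1,\dots,a_m$, whose first members are $q(a_1)$, then a term linear in $a_2$ plus $f_3(a_1)$, then a term involving $a_3$, $q(a_2)$, $f_3$ and $f_4$, and so on. The claim to be established is that if $\mldmj(x;X)<d-1$, then one of $s_3(X,x),s_4(X,x),s_5(X,x)$ is $<d-1$; combined with the above this yields $N_{d,d-1}=5$. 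The plan is to normalise $f$ by a coordinate change splitting off the nondegenerate part of $q$: if $q$ is nondegenerate, or of corank $1$ with $d\ge2$, then $X$ is of $cA$-type and $\mldmj(x;X)=d-1$, contradicting the hypothesis; otherwise $q$ is degenerate enough that one of the jet conditions at level $\le5$ fails to be independent, and $\dim X_m(x)$ can be bounded from below at a suitable $m\le5$. The bounding case is $X$ the cusp times a nonsingular factor, $f=y_1^2+y_2^3$ read in the $d+1$ variables: using $s_m(Y\times\AA^{k},0)=s_m(Y,0)+k$ and the explicit cusp computation, one gets $s_0(X,x)=d$, $s_1(X,x)=\cdots=s_4(X,x)=d-1$ and $s_5(X,x)=d-2$, while $\mldmj(x;X)=-\infty$; this simultaneously forces $N_{d,d-1}\ge5$ and explains the value $5$.

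The main obstacle is exactly this double-point estimate, which must be uniform in $d$ and valid in every characteristic. The two delicate points are: (i) characteristic $2$, where ``the quadratic part of $f$'' and the reducedness of $\{f=0\}$ must be handled with care before the splitting argument applies (for instance a corank $1$ double point need not be reduced in characteristic $2$); and (ii) the quantitative claim that, once $q$ is degenerate enough to force $\mldmj(x;X)<d-1$, the drop of $s_m(X,x)$ below $d-1$ already occurs at some $m\le5$ and not later. A convenient way to organise (ii) is to reduce, via the splitting argument, to the essential one- or two-variable part of $f$ and there carry out the finite explicit computation of $\dim X_m$ for $m\le5$, the cusp being the extremal case.
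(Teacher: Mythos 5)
Your treatment of $C_{d,d}$ is correct and is essentially the paper's own argument: at a singular point $\emb(X,x)=\dim X_1(x)\ge d+1$ forces $s_1(X,x)=2d-\emb(X,x)<d$, while at a nonsingular point $\mldmj(x;X)=d$, so $N_{d,d}=1$. Your preliminary reductions for $C_{d,d-1}$ are also sound as far as they go: $\emb(X,x)\ge d+2$ gives $s_1(X,x)<d-1$; in the hypersurface case $\mult_xf\ge 3$ gives $s_2(X,x)=d-2$ by Proposition \ref{truncation}; and the cusp $x^2+y^3$ (times a smooth factor) correctly shows that $N_{d,d-1}=5$ cannot be improved.

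However, the core of $C_{d,d-1}$ --- the hypersurface double point case $\emb(X,x)=d+1$, $\mult_xf=2$ --- is not proved in your write-up; it is only announced. You state that ``the claim to be established is'' that $\mldmj(x;X)<d-1$ forces one of $s_3,s_4,s_5$ below $d-1$, and you yourself identify the two places where the argument is missing: characteristic $2$, where the Tschirnhausen splitting of the quadratic part is unavailable and where, as you note, a corank-one double point need not even be reduced; and the uniform-in-$d$, uniform-in-characteristic verification that a sufficiently degenerate quadratic part produces the drop of $s_m$ already at some $m\le 5$. These are exactly the hard parts of the statement, so what you have is a reduction plus a plan, not a proof. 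It is worth knowing that the paper does not argue this from scratch either: for $d\ge 2$ it simply cites \cite{ir2}, where $C_{d,d-1}$ with $N_{d,d-1}=5$ (equivalently, the jet-theoretic characterization of singularities with $\mldmj(x;X)\ge d-1$) is established in arbitrary characteristic, and it treats $d=1$ separately by the explicit blow-up analysis of plane curve double points in Theorem \ref{curve}. To complete your route you would have to carry out that classification yourself --- including the characteristic $2$ normal forms --- or else quote \cite{ir2} as the paper does.
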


In Section 5 we define ``a singularity of maximal type" and show the following:

\begin{prop} Assume that Conjecture $C_{d-1}$ holds.
Then, 
Conjecture $C_d$ holds 
in the class ${\mathcal Max}_d:= \{ (X, x) \mid \dim X=d, \  (X,x) \mbox{is\ of\ 
maximal\ type} \}$.

\end{prop}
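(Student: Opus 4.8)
The plan is to reduce from dimension $d$ to dimension $d-1$ by a general hyperplane section and then feed the result into Conjecture $C_{d-1}$; the role of the hypothesis ``$(X,x)$ is of maximal type'' is precisely to guarantee that such a section is as well behaved as possible. Throughout, fix $(X,x)\in{\mathcal Max}_d$ and recall $\mldmj(x;X)=\inf_{m\in\NN}s_m(X,x)$ with $s_m(X,x)=(m+1)d-\dim X_m(x)$. Since the numbers $s_m(X,x)$ are integers and are bounded below by $\mldmj(x;X)$ whenever the latter is finite, one has $\mldmj(x;X)\in\{-\infty\}\cup\NN$, and when it is finite it is a genuine minimum, attained by some $m$; thus the content of $C_d$ is only the existence of a \emph{uniform} bound on such an $m$.

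First I would build the section. Embed $(X,x)$ in a nonsingular germ $(A,x)$ and let $H\ni x$ be a sufficiently general hyperplane of $A$; put $Y:=X\cap H$, so that $I_Y=I_X\cdot\mathcal O_H$ inside the nonsingular $H$, the codimension of $Y$ in $H$ equals the codimension $c$ of $X$ in $A$, and $\dim Y=d-1$. The defining property of a singularity of maximal type is exactly what is needed to ensure that $Y$ is again a variety (reduced and equidimensional) in a neighbourhood of $x$ and that at the level of jet schemes
\[
\dim Y_m(x)=\dim X_m(x)-m \qquad\mbox{for every } m\in\NN,
\]
equivalently $s_m(Y,x)=s_m(X,x)-1$ for all $m$. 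Passing to infima, $\mldmj(x;Y)=\mldmj(x;X)-1$ with the convention $-\infty-1=-\infty$; since $\mldmj(x;Y)\in\{-\infty\}\cup\NN$, this forces $\mldmj(x;X)\in\{-\infty\}\cup\{1,2,\dots\}$, and $Y$ is a legitimate input for $C_{d-1}$.

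Now I would apply Conjecture $C_{d-1}$ — equivalently the conjectures $C_{d-1,\delta}$ for $\delta\leq d-1$ — to the $(d-1)$-dimensional germ $(Y,x)$ and translate the outcome back to $X$ by adding $1$ to every $s_m$. If $\mldmj(x;X)\geq 0$ (hence $\geq 1$, by the previous paragraph), then $\mldmj(x;Y)=\mldmj(x;X)-1\geq 0$, so $C_{d-1}$ produces $m\leq N_{d-1}$ with $s_m(Y,x)=\mldmj(x;Y)$, hence $s_m(X,x)=s_m(Y,x)+1=\mldmj(x;X)\geq 0$. If $\mldmj(x;X)=-\infty$, then $\mldmj(x;Y)=-\infty<-1$, so $C_{d-1,-1}$ produces $m\leq N_{d-1,-1}$ with $s_m(Y,x)<-1$, i.e. $s_m(Y,x)\leq-2$, hence $s_m(X,x)=s_m(Y,x)+1\leq-1<0$. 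Taking $N_d:=\max\{N_{d-1},N_{d-1,-1}\}$, which depends only on $d$, gives exactly the statement of $C_d$ restricted to ${\mathcal Max}_d$.

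The one real obstacle is the construction in the second paragraph: one must show that the definition of ``maximal type'' forces a general hyperplane section $Y=X\cap H$ to be reduced and equidimensional of dimension $d-1$ near $x$ — in positive characteristic this genuinely fails for some singularities, and this failure is exactly the reason one restricts to ${\mathcal Max}_d$ — and, simultaneously, that the jet-scheme fibres drop by exactly $m$ at level $m$ for all $m$ at once, which amounts to controlling how one general linear form meets the whole tower $\{X_m(x)\}_{m\in\NN}$. Once that is in place, the remaining steps — taking infima, handling the value $-\infty$ via $C_{d-1,-1}$, and assembling $N_d$ — are routine.
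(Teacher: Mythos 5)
The reduction you propose stands or falls with the claim that a general hyperplane section $Y=X\cap H$ satisfies $s_m(Y,x)=s_m(X,x)-1$ for every $m$, hence $\mldmj(x;Y)=\mldmj(x;X)-1$. That claim is not merely left unproved in your write-up; it is false for singularities of maximal type, and the maximal-type hypothesis does nothing to rescue it. Cutting by a hyperplane only gives $\dim Y_m(x)\geq \dim X_m(x)-m$, i.e. $s_m(Y,x)\leq s_m(X,x)-1$; the reverse inequality is an inversion-of-adjunction/Bertini statement which is exactly what is open in positive characteristic, and it fails here even in characteristic $0$. Concretely, take $X=\{x_1^3+x_2^3+x_3^3=0\}\subset\AA^3$ in characteristic $\neq 3$: this is of maximal type ($N=3=c\cdot\ord_0I_X=1\cdot 3$), it is non-degenerate, and Proposition \ref{toricformula} gives $\mldmj(0;X)=0$, so $s_m(X,0)\geq 0$ for all $m$. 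A general hyperplane section through $0$ is a plane curve with an ordinary triple point, so $\mldmj(0;Y)=-\infty$ and $s_m(Y,0)\to-\infty$; hence $s_m(Y,0)=s_m(X,0)-1$ fails for all large $m$, and your final step (producing $m$ with $s_m(X,0)=s_m(Y,0)+1<0$) would contradict $\mldmj(0;X)=0$. Note also that your intermediate deduction that $\mldmj(x;X)\geq 0$ forces $\mldmj(x;X)\geq 1$ contradicts Lemma \ref{max}, by which a singularity of maximal type always has $\mldmj(x;X)\leq 0$ (the exceptional divisor of the blow-up at $x$ has log discrepancy $N-c\cdot\ord_xI_X=0$); combined with your claims this would force $\mldmj(x;X)=-\infty$ for every maximal-type germ, which is absurd.

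The paper's dimension reduction is of a different nature and you would need to replace your hyperplane step by something like it. One first degenerates $I_X$ to the ideal of initial forms (Lemma \ref{shibata-tam}), which sandwiches $s_m(Y,0)$ for the associated cone $Y$ between $s_m(X,x)$ and $s_{(\alpha+1)m-\alpha^2}(X,x)$ with $\alpha=\ord_xI_X$; the maximal-type equality $N=c\alpha$ is what makes this book-keeping close up. For the homogeneous cone one then exploits $Y_m(0)\simeq Y_{m-\alpha}\times\AA^{(\alpha-1)N}$ to bound $m$ by $\alpha$ when $Y\setminus\{0\}$ is MJ-log canonical; otherwise the passage to dimension $d-1$ comes not from a hyperplane section but from the ruling of the cone: near a non-MJ-log-canonical point $y\neq 0$ one has $(Y,y)\simeq(Z\times\AA^1,z\times 0)$ with $Z$ a $(d-1)$-dimensional scheme, and $C_{d-1}$ (extended to schemes, not only varieties) applies to $(Z,z)$, the identity $s_m(Y,y)=s_m(Z,z)+1$ holding on the nose because of the product structure (Lemma \ref{max-type-homo}). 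So the drop in dimension is achieved at an auxiliary point of the cone, where the needed product decomposition is automatic, rather than at $x$ itself, where it is not.
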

This proposition is used in the proof of the conjecture for $2$-dimensional singularities.

\begin{thm}
\label{non-deg}
Conjecture $C_d$ holds for every $d\geq 1$ in the category of non-degenerate 
\hs{s} in arbitrary characteristic.
\end{thm}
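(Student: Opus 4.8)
The plan is to reduce everything to the combinatorics of the Newton polyhedron. Since a hypersurface has codimension $1$, we work with a closed immersion $(X,x)\subset(A,0)$ where $A=\AA^{d+1}$, $x=0$, $X=V(f)$ with $f$ non-degenerate, so that $\mldmj(x;X)=\mld(0;A,(f))$. First I would dispose of large multiplicity: if $\mu:=\mult_0 f\ge d+2$, then any $m$-jet through $0$ with $m=d+1<\mu$ kills $f$ modulo $t^{m+1}$ automatically, so $X_{d+1}(0)$ is the full local jet space $\AA^{(d+1)^2}$ and
$$s_{d+1}(X,0)=(d+2)d-(d+1)^2=-1<0;$$
since $\mu\ge d+2$ forces $\mathrm{lct}(f)\le(d+1)/\mu<1$, the pair is not log canonical and $\mldmj(x;X)=-\infty$, so $C_d$ holds with this $m$. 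From now on $\mu\le d+1$.

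Next I would use the explicit description of the local jet schemes of a non-degenerate hypersurface. Stratifying $X_m(0)$ by the order vector $n=(\ord_t\gamma_i)_i\in(\{1,\dots,m\}\cup\{\infty\})^{d+1}$, non-degeneracy guarantees that on each stratum the condition $f\equiv 0\bmod t^{m+1}$ cuts out exactly $\max(0,\,m{+}1-m_\Gamma(n))$ independent equations, where $m_\Gamma(n)=\min\{\langle n,\alpha\rangle:\alpha\in\Gamma_+(f),\ \alpha_i=0\text{ whenever }n_i=\infty\}$ is the support function of the Newton polyhedron; this transversality is exactly what non-degeneracy buys and — this is the point in positive characteristic — it is proved by a direct computation, with no recourse to resolution of singularities or generic smoothness (see \cite{ir2}). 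Hence, writing $S=\{i:n_i\le m\}$,
$$s_m(X,0)=\min_n\Big[(m{+}1)(d-|S|)+{\textstyle\sum_{i\in S}n_i}+\max(0,\,m{+}1-m_\Gamma(n))\Big].$$
Taking $n$ with $S=\{1,\dots,d{+}1\}$ and $m_\Gamma(n)\le m$, the bracket equals $|n|-m_\Gamma(n)$; letting $m\to\infty$ recovers $\mldmj(x;X)=\inf\{|v|-m_\Gamma(v):v\in\ZZ^{d+1}_{>0}\text{ primitive}\}$, which is $\ge 0$ precisely when $\mathbf 1:=(1,\dots,1)\in\Gamma_+(f)$ and $-\infty$ otherwise.

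The heart of the proof is then a uniform bound on the weight vector that computes $\mldmj$: I would show there is $N_d$, depending only on $d$, such that (i) when $\mathbf 1\in\Gamma_+(f)$, the infimum $\min_v(|v|-m_\Gamma(v))$ is attained at a primitive $v$ with $\max_i v_i\le N_d$ and $m_\Gamma(v)\le N_d$; (ii) when $\mathbf 1\notin\Gamma_+(f)$, there is a primitive $v$ with $|v|-m_\Gamma(v)\le -1$ and $\max_i v_i\le N_d$, $m_\Gamma(v)\le N_d$. Granting this and substituting $n=v$, $m=N_d$ into the displayed formula, the bracket is exactly $|v|-m_\Gamma(v)$, which in case (i) equals $\mldmj(x;X)$ (and $s_m(X,0)\ge\mldmj(x;X)$ always, so we get equality), and in case (ii) is negative; this produces the $m\le N_d$ required by $C_d$. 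To get the boundedness I would exploit that, once $\mu\le d+1$, $\Gamma_+(f)$ has a vertex $\alpha^\ast$ with $|\alpha^\ast|\le d+1$, whence $m_\Gamma(v)\le\langle v,\alpha^\ast\rangle\le(d+1)\max_i v_i$ for all $v$, and analyse the piecewise-linear convex function $v\mapsto|v|-m_\Gamma(v)$ on the normal fan of $\Gamma_+(f)$ — on the cone dual to a vertex $\alpha$ it is the linear form $\langle v,\mathbf 1-\alpha\rangle$ — to confine the minimizer (resp. a point of negativity) to a cone whose primitive generators are inner normals of facets of $\Gamma_+(f)$ pinned by the fixed lattice point $\mathbf 1$, and then bound those normals in terms of $d$.

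The hard part will be precisely this last combinatorial bound. Although $\mldmj(x;X)$ itself is quickly pinned to $\{-\infty,0,1,\dots,d-1\}$ once $\mu\le d+1$, controlling the \emph{size} of a realizing weight vector is delicate because $\Gamma_+(f)$ can still carry vertices arbitrarily far from the origin; one must show that such far vertices span only thin cones of the normal fan, unable to host the optimal (or a negative) value at a small weight vector, and combine this with a Minkowski-type estimate for the primitive normals of facets constrained to sit close to $\mathbf 1$. A further point needing care in characteristic $p$ is that both the non-degeneracy hypothesis and the transversality invoked above must be used in the characteristic-free form of \cite{ir2}, since the naive face-derivative condition can degenerate when $p$ divides an exponent occurring in $f$.
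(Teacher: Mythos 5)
Your reduction to the Newton polyhedron is the right move, and your disposal of the case $\mult_0 f\geq d+2$ via $s_{d+1}(X,0)=-1$ matches the paper exactly. But the proof has a genuine gap at the step you yourself flag as ``the heart'': the claim that there is an $N_d$, depending only on $d$, such that some primitive $v$ with $\max_i v_i\leq N_d$ and $m_\Gamma(v)\leq N_d$ computes $\min_v\bigl(|v|-m_\Gamma(v)\bigr)$ (or witnesses negativity) is asserted, not proved. The strategy you sketch for it does not go through as stated: you propose to confine the minimizer to cones of the normal fan whose primitive generators are inner normals of facets ``pinned'' by $\mathbf 1$ and then ``bound those normals in terms of $d$'', but primitive facet normals of $\Gamma_+(f)$ are not bounded in terms of $d$ even when $\mult_0 f\leq d+1$ --- already for $f=x^2+y^m$ the relevant facet has normal $(m,2)$. (A small witness does exist in that example, e.g.\ $v=(2,1)$ lying in the cone dual to the vertex $(2,0)$, but that is exactly the nontrivial content that must be established uniformly, and your outline does not do so.) A secondary, lesser issue: the exact stratified formula for $s_m(X,0)$ with precisely $\max(0,m+1-m_\Gamma(n))$ independent equations per stratum is itself a nontrivial assertion that would need proof or a precise citation; the paper never needs it.

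For comparison, the paper avoids any explicit bound by a compactness argument: assuming a sequence of non-degenerate polynomials $f_j$ with $\mld(\Gamma_{+}(f_j))=\delta$ fixed but $\nu(X_j,0)\to\infty$, it orders the vertices of the Newton polygons, extracts subsequences on which the first $m$ vertices stabilize (this terminates by the ascending chain condition on monomial ideals in $k[\sigma\cap M]$), and then uses the truncation lemma (Proposition \ref{truncation}) to show $s_{n-1}(f_j)=s_{n-1}(f)=\mld(\Gamma)$ for the stabilized polygon $\Gamma$, contradicting $\nu_j\to\infty$. This is non-constructive (no explicit $N_d$) but complete, and it is essentially the Maclagan-style argument the paper's own remark attributes to the method of Musta\cedilla{t}\v{a}--Nakamura. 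If you want to salvage your constructive route, the missing lemma is precisely the quantitative statement you deferred, and it requires a genuinely different argument than bounding facet normals.
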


\begin{thm}
\label{1}
Conjecture $C_1$ holds and we can take $N_1=5$, $M_1=4$ and $B_1=3$  in arbitrary characteristic.

\end{thm}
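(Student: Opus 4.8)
The plan is to split the statement into its qualitative part ($C_1$, together with the value $N_1=5$) and its two quantitative parts ($M_1=4$ and $B_1=3$), handling the first by reduction to the already-established cases $C_{1,1}$ and $C_{1,0}$, and the second by a short classification of curve singularities according to their embedding dimension.

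I would start from the remark that for a $1$-dimensional variety $X$ and a closed point $x\in X$ one has $\mldmj(x;X)\in\{-\infty,0,1\}$: indeed $s_m(X,x)=(m+1)-\dim X_m(x)\in\ZZ$, so $\mldmj(x;X)=\inf_m s_m(X,x)$ is either $-\infty$ or an integer; it satisfies $\mldmj(x;X)\le s_0(X,x)=1$; by the dichotomy for minimal log discrepancies applied to $\mldmj(x;X)=\mld(x;A,I_X^c)$ it is $\ge0$ or $=-\infty$; and it equals $1$ exactly when $x$ is a smooth point of $X$. Granting this, $C_1$ with $N_1=5$ follows from $C_{1,1}$ (with $N_{1,1}=1$) and $C_{1,0}$ (with $N_{1,0}=5$): if $\mldmj(x;X)=1$ take $m=0$, using $s_0(X,x)=1$; if $\mldmj(x;X)=0$, then $C_{1,1}$ produces $m\le1$ with $s_m(X,x)<1$, and then $s_m(X,x)=0=\mldmj(x;X)$ since $s_m(X,x)\ge\mldmj(x;X)=0$ is an integer; and if $\mldmj(x;X)=-\infty$, then $C_{1,0}$ produces $m\le5$ with $s_m(X,x)<0$. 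In every case $m\le 5$.

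For $M_1$ and $B_1$ I would run through the possibilities for $e:=\emb(X,x)$; since $e\le\dim A$ for any closed immersion $X\subset A$, only $e\le2$ occurs in the range $\dim A\le2$ of $D_1$, whereas $\dim A=e$ in $U_1$. Let $E_1$ denote the blow-up of $A$ at $x$, so $c=\dim A-1$ and $k_{E_1}=\dim A-1$. If $x$ is a smooth point, $\mldmj(x;X)=1$ is computed by $E_1$ (or by the divisor $x$ when $\dim A=1$), with $k\le1$ and $b=\widetilde b\le1$. If $e\ge3$, then with $m':=\ord_{E_1}(I_X)\ge2$ one has $a(E_1;A,I_X^c)=\dim A-(\dim A-1)m'\le 2-e<0$, so $\mldmj(x;X)=-\infty$ is witnessed by $E_1$ with $\widetilde b=b=1$ (this case lies outside the scope of $D_1$). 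Otherwise $X$ is a plane curve, $I_X=(f)$: if $\mult_0 f\ge3$ then $a(E_1;A,(f))=1+1-\mult_0 f<0$ again witnesses $\mldmj(x;X)=-\infty$, with $k_{E_1}=1$ and $\widetilde b=b=1$; and if $\mult_0 f=2$ with tangent cone a pair of distinct geometric lines, then $(A,(f))$ is log canonical and $a(E_1;A,(f))=1+1-2=0$ computes $\mldmj(x;X)=0$, again with $k_{E_1}=1$ and $\widetilde b=b=1$.

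The only remaining case is the unitangent plane double point, and this is the step I expect to be the main obstacle, essentially because the characteristic is arbitrary. After a (possibly purely inseparable) base extension, which changes neither $\mldmj(x;X)$ nor the numbers $\widetilde b$ and $b$, a linear change of coordinates brings $f$ to the form $y^2+a(x)y+b(x)$ with $\ord_0 a\ge2$ and $\ord_0 b\ge3$. For the monomial valuation $\nu=\nu_{2,3}$ (with $\nu(x)=2$ and $\nu(y)=3$) one computes $k_\nu=2+3-1=4$ and $\nu(f)=\min(6,\,2\ord_0 a+3,\,2\ord_0 b)=6$, the value $6$ coming from the term $y^2$ and not being cancelled; hence $a(\nu;A,(f))=4+1-6=-1<0$, so $\mldmj(x;X)=-\infty$ is witnessed by $\nu$, and realizing $\nu_{2,3}$ by its forced chain of three point blow-ups over $A$ gives $\widetilde b(\nu)=b(\nu)=3$. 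Collecting the values of $k_E$ over all cases in the scope of $D_1$ yields $k_E\le4$, hence $M_1=4$; collecting $\widetilde b(E)=b(E)$ over all cases in the scope of $U_1$ yields the bound $3$, hence $B_1=3$. The points to be checked carefully in characteristic $2$ (and over imperfect fields) are: the legitimacy of the normalization $\ord_0 a\ge2$, $\ord_0 b\ge3$ when the classical form $y^2=x^k$ is unavailable and $f$ is of Artin--Schreier type; the absence of degeneration in the $\nu_{2,3}$-leading form, so that $\nu(f)=6$ really holds; the invariance of $\mldmj(x;X)$, $\widetilde b$ and $b$ under the base extension; and the behaviour of $\mldmj(x;X)$ and of $E_1$ under the non-minimal plane embeddings permitted in the statement of $D_1$.
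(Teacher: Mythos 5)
Your proof is correct and follows essentially the same route as the paper: reduce to $D_1$, run through the cases on embedding dimension and multiplicity, and identify the worst case as the unitangent plane double point, computed by the exceptional divisor of the third blow-up (your monomial valuation $\nu_{2,3}$) with $k=4$, yielding $M_1=4$, $B_1=3$ and hence $N_1=5$. One caveat on logical order: $C_{1,0}$ with $N_{1,0}=5$ is \emph{not} available beforehand --- the paper explicitly defers $C_{1,0}$ to this very theorem --- so your first paragraph should deduce $N_1=5$ from the case analysis itself (each case gives a computing divisor with $\mu(X,x)\le 6$, equivalently $D_1\Rightarrow C_1$ with $N_1\le M_1+1$) rather than from an ``already-established'' $C_{1,0}$; your cases do supply exactly this, so the circularity is only in the write-up. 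The characteristic-$2$ issues you flag are all harmless under the paper's standing assumption that $k$ is algebraically closed: the normalization $\ord_0 a\ge 2$, $\ord_0 b\ge 3$ of the unitangent double point needs only Weierstrass preparation and a linear change of coordinates (no Tschirnhausen substitution), and the weight-$6$ leading form $y^2+\beta x^3$ can never vanish identically.
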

\begin{thm}
\label{2} 
 If the characteristic of the base field $k$ is not $2$,
 then Conjecture $C_2$ holds and we can take $N_2\leq 41$ and $M_2\leq 58$.
 And the bound of the necessary number of blow-ups is $B_2\leq 39$.
 (Note that these numbers may not be optimal.)
\end{thm}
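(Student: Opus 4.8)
The plan is to pin down the possible values of $\mldmj(x;X)$ and dispatch almost all of them with instances of the conjecture already established, isolating $\mldmj(x;X)=-\infty$ as the one case needing new work.

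\textbf{Step 1: Reduction.} For every $m$ one has $s_m(X,x)=(m+1)d-\dim X_m(x)\in\ZZ$, and $\mldmj(x;X)=\inf_m s_m(X,x)$; since this infimum is $\geq 0$ or $=-\infty$, and equals $d=2$ exactly when $X$ is smooth at $x$, for a surface $\mldmj(x;X)\in\{-\infty,0,1,2\}$. Moreover $\mldmj(x;X)\leq s_1(X,x)=4-\emb(X,x)$ because $X_1(x)=T_xX$. Hence if $\emb(X,x)\geq 5$ then $s_1(X,x)<0$ witnesses $\mldmj(x;X)=-\infty$ with $m=1$, and if $\emb(X,x)=2$ then $X$ is smooth and $s_0(X,x)=2=\mldmj(x;X)$. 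We may therefore assume $\emb(X,x)\in\{3,4\}$, so $(X,x)$ is a hypersurface germ in a smooth $3$-fold ($c=1$) or a codimension-$2$ germ in a smooth $4$-fold ($c=2$), and use $\mldmj(x;X)=\mld(x;A,I_X^c)$.

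\textbf{Step 2: The values $\mldmj(x;X)\in\{0,1,2\}$.} If $\mldmj(x;X)=2$, $x$ is a smooth point and $m=0$ suffices. If $\mldmj(x;X)=1$, Conjecture $C_{2,2}$ (valid with $N_{2,2}=1$) gives $m\leq 1$ with $s_m(X,x)<2$; as $s_m(X,x)\in\ZZ$ and $s_m(X,x)\geq\mldmj(x;X)=1$, necessarily $s_m(X,x)=1=\mldmj(x;X)$. If $\mldmj(x;X)=0$, Conjecture $C_{2,1}$ (valid with $N_{2,1}=5$) gives $m\leq 5$ with $s_m(X,x)<1$, hence $s_m(X,x)=0=\mldmj(x;X)$. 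So the sole remaining task is: when $\mldmj(x;X)=-\infty$, produce a bounded $m$ with $s_m(X,x)<0$.

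\textbf{Step 3: The case $\mldmj(x;X)=-\infty$.} Here I would split on whether $(X,x)$ is of maximal type. If it is, the Proposition on singularities of maximal type, together with Conjecture $C_1$ (Theorem \ref{1}), yields Conjecture $C_2$ — hence the required bounded $m$ — for $(X,x)\in\mathcal{Max}_2$. If $(X,x)$ is not of maximal type, I would exploit $\cha k\neq 2$: after completing squares and a coordinate change using the degeneracy of the leading form, the germ either drops in embedding dimension, or becomes a double point $z^2=g(x,y)$ whose failure of MJ-log canonicity is governed by the plane curve singularity $g=0$ (resolved by a uniformly bounded sequence of point blow-ups, so that the one-dimensional analysis behind $C_1$ applies), or falls into the non-degenerate hypersurface class covered by Theorem \ref{non-deg}; arranging this as an induction on the multiplicity with these three base situations reduces every non-maximal germ to a controlled one. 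In each branch one extracts, in a blow-up sequence of bounded length and with $k_E$ bounded, a prime divisor $E$ over the ambient space with $\amj(E;X)<0$, and then converts this into $s_m(X,x)<0$ at a correspondingly bounded $m$ using $\inf_m s_m(X,x)=\inf_E\amj(E;X)$ and the contact-locus dictionary relating the depth of $E$ to the jet level where the cylinder cut out by $E$ forces $s_m$ below $0$.

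\textbf{Step 4: Constants and the main obstacle.} Taking the maximum of the bounds over all branches of Steps 2--3 produces the explicit constants $N_2\leq 41$, $M_2\leq 58$, $B_2\leq 39$, the dominant contributions coming from the non-maximal-type reduction (the longest blow-up chain) together with $N_{2,1}=5$ and the plane-curve and non-degenerate subcases. I expect the crux to be exactly the case $\mldmj(x;X)=-\infty$ with $(X,x)$ not of maximal type: a priori the divisor witnessing non-MJ-log-canonicity could sit arbitrarily deep in a resolution, and it is the normal-form reduction available when $\cha k\neq 2$ — completing the square and the resulting classification of double-point surface germs — that rules this out and makes the bound effective; the codimension-$2$ subcase $\emb(X,x)=4$, where no single defining equation is at hand, is the most delicate point.
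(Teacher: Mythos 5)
Your Steps 1--2 are correct and coincide with the paper's opening reduction: $C_{2,2}$ and $C_{2,1}$ dispose of $\mldmj\in\{1,2\}$ and of $\mldmj=0$, so everything reduces to $C_{2,0}$, i.e.\ to producing a bounded $m$ with $s_m(X,x)<0$ when $\mldmj(x;X)=-\infty$. The maximal-type branch of your Step 3 is also the paper's route for the germs with $\emb(X,x)=4,\ \ord_xI_X=2$ and $\emb(X,x)=3,\ \ord_xI_X=3$ (these are exactly the maximal-type germs in dimension $2$), via Proposition \ref{reduce} together with $C_1$ for $1$-dimensional schemes (Theorem \ref{curve}). In particular your worry in Step 4 that the codimension-two case $\emb(X,x)=4$ is the delicate point is unfounded: there either $\ord_xI_X\geq 3$ and $s_2(X,x)=6-8<0$ finishes the matter at once, or $\ord_xI_X=2$ and the germ is of maximal type, so the clean machinery applies (and this branch is in fact where $M_2\leq 58$ comes from, but with no new analysis needed).

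The genuine gap is in the one case that carries all the content: $\emb(X,x)=3$, $\ord_xI_X=2$, the hypersurface double point $f=x^2+h(y,z)$ obtained from Weierstrass preparation and the Tschirnhausen substitution (this is the only place $\cha k\neq 2$ is used). Your claim that the failure of MJ-log canonicity of $\{x^2+h=0\}$ is ``governed by the plane curve singularity'' and that the latter is ``resolved by a uniformly bounded sequence of point blow-ups, so that the one-dimensional analysis behind $C_1$ applies'' does not hold: plane curve singularities are not resolved in a uniformly bounded number of blow-ups (consider $y^2-x^m$ as $m\to\infty$), and $\mldmj(0;\{x^2+h=0\})$ is not determined by $\mldmj(0;\{h=0\})$ --- it is governed by the pair $(\AA^2,h)$ with a fractional exponent, about which $C_1$ says nothing. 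The boundedness actually comes from a different mechanism: one first bounds $3\leq \mult_0h\leq 4$ (if $\mult_0h\geq 5$ then $\Gamma_+(f)$ lies in the explicit polygon generated by $(2,0,0),(0,5,0),(0,0,5)$ and the single toric divisor $E_{(5,2,2)}$ forces $s_9(X,0)<0$, hence $\nu(X,0)\leq 10$; if $\mult_0h=2$ the singularity has $\mldmj=1$), and then runs through the eight factorization types of $\ini h$. In each type one proves either non-degeneracy (so an explicit toric divisor with $\val_Ef\leq 6$ computes the mld), or containment of $\Gamma_+(f)$ in one of the explicit bad polygons of Example \ref{example} (this is where $N_2\leq 41$ arises, from $\langle(21,14,6),\Gamma\rangle=42$ for $\Gamma((2,0,0),(0,3,0),(0,0,7))$), or, in the non-reduced subcases such as $h=h_1^2h_2$, an explicitly constructed two-step log-resolution. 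Without the multiplicity bound on $h$ and without the explicit treatment of the degenerate and non-reduced initial forms, the case $\mldmj=-\infty$ with $\emb=3$, $\ord=2$ is not closed and none of the constants $41$, $58$, $39$ can be extracted.
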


As a corollary of the theorem, we obtain the following statement about ``usual minimal 
log discrepancies":

\begin{cor} Assume the characteristic of the base field $k$ is not $2$.
Then, 
Conjecture $C_2$ holds  in the category of normal locally complete intersection singularities of dimension $2$  over $k$.
In particular, 
for every normal locally  complete intersection singularity $(X,x)$ of dimension $2$
there is a prime divisor $E$ over $X$  computing $\mld(x;X)(=\mldmj(x;X))$ such that 
$b(E)\leq 20$.

\end{cor}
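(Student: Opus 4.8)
The first assertion requires no work: Theorem~\ref{2} already establishes Conjecture $C_2$ for \emph{every} $2$-dimensional variety when $\cha k\ne 2$, hence a fortiori for the sub-class of normal locally complete intersection surface singularities; and for such $(X,x)$ one has $\mld(x;X)=\mldmj(x;X)$ because the usual and Mather--Jacobian log discrepancies coincide for locally complete intersection singularities (as recalled in the Introduction). So the whole content is the explicit bound $b(E)\le 20$ for a prime divisor \emph{over $X$} that computes $\mld(x;X)$.

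The plan is to produce such an $E$ first over the ambient smooth space, using the quantitative form of Theorem~\ref{2}, and then to descend to $X$. By the equivalence $C_2\Leftrightarrow D_2\Leftrightarrow U_2$ proved earlier, there is a prime divisor $E$ over a non-singular $A\supseteq X$ with $\dim A=\emb(X,x)$ and $c:=\emb(X,x)-2$, such that $\widetilde b(E)=b(E)\le B_2\le 39$ over $A$ and
$$a(E;A,I_X^c)=\mld(x;A,I_X^c)=\mldmj(x;X)\ge 0,\qquad\text{or}\qquad a(E;A,I_X^c)<0\ \text{ when }\mldmj(x;X)=-\infty .$$
In the nontrivial case $\ord_E(I_X)>0$, so $E$ lies over $X$, and since $X$ is locally a complete intersection this identifies $a(E;A,I_X^c)$ with $\amj(E;X)=\mldmj(x;X)=\mld(x;X)$; thus $E$ already computes $\mld(x;X)$.

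It remains to bound $b(E)$ \emph{measured on $X$} in terms of the $\le 39$ blow-ups over $A$. Run the Zariski sequence of Proposition-Definition~\ref{zar} for $E$ over $A$, i.e.\ the blow-ups $A^{(i)}\to A^{(i-1)}$ centered at the successive centers $\overline{\{p_{i-1}\}}$ of $E$, and take strict transforms $X^{(i)}$ of $X$; while the center lies on $X^{(i-1)}$, the map $X^{(i)}\to X^{(i-1)}$ is the blow-up of $X^{(i-1)}$ along that center (and is an isomorphism otherwise), so the same sequence realizes the divisor over $X$. The gain is that for $i\ge 1$ the center of $E$ on $X^{(i)}$ lies over the point $x$, hence inside the (at most one-dimensional) exceptional fibre of the birational surface map $X^{(i)}\to X$; so it is a prime divisor of $X^{(i)}$ as soon as it is one-dimensional, whereas on $A^{(i)}$ one must wait until its codimension drops to $1$. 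Since $\emb(X,x)\le 2d=4$ whenever $\mldmj(x;X)\ge 0$ (so $c\le 2$), while in the case $\mldmj(x;X)=-\infty$ only negativity of $a(E;X)$ is needed, a count using $\codim_{X^{(i)}}\overline{\{p_i\}}=\codim_{A^{(i)}}\overline{\{p_i\}}-c$ — which roughly halves the number of blow-ups, since on $X$ we only need codimension one rather than codimension one in an $A$ of dimension $c+2$ — brings $39$ down to $20$.

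The step I expect to be the main obstacle is exactly this last bookkeeping: one must pin down at which stage the center of $E$ leaves the strict transform of $X$, verify that the divisor realized on some $X^{(i)}$ is genuinely the one computing $\mld(x;X)$ (and not merely a divisor met en route), and confirm that no more than $20$ of the blow-ups of the Zariski sequence over $A$ are consumed before the center becomes a curve on $X^{(i)}$; a further subtlety is the case $\mldmj(x;X)=-\infty$ with possibly large embedding dimension, where one must check that the divisor with $a(E;A,I_X^c)<0$ furnished by $U_2$ still descends to $X$ at bounded cost. Once these are in place the bound $b(E)\le 20$ follows, and since $\mld(x;X)=\mldmj(x;X)$ for the normal locally complete intersection surface $(X,x)$, the corollary is proved.
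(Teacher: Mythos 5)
Your reduction of the first assertion to Theorem \ref{2} together with the coincidence $\mld=\mldmj$ for locally complete intersections is fine, but the second assertion is where the content lies, and there your argument has a genuine gap. The crux is the passage from a prime divisor over the ambient smooth $A$ to a prime divisor over the surface $X$, and your treatment of it is incorrect: a prime divisor $E$ over $A$ with $\ord_E(I_X)>0$ is a divisor on some $N$-dimensional model $A^{(n)}$ and is \emph{not} thereby a divisor over $X$; the exceptional divisor $\he^{(n)}$ has dimension $N-1\geq 2$ and cannot be ``realized over $X$'' by taking strict transforms along the Zariski sequence. What the paper does instead is take an irreducible component $E^{(n)}$ of $\he^{(n)}\cap X^{(n)}$ (a curve on the strict transform $X^{(n)}$) and prove a separate Claim: there exists a prime divisor $E$ over $X$ with center $E^{(n)}$ that computes $\mld(x;X)$. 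Proving the Claim requires \emph{further} blow-ups of $A^{(n)}$ along $E^{(n)}$ and its successive strict transforms, with the sandwich inequality
$$\mld(x;A,I_X)\ \leq\ k_{\he^{(n+1)}}-c\cdot\val_{\he^{(n+1)}}I_X+1\ \leq\ k_{\he^{(n)}}-c\cdot\val_{\he^{(n)}}I_X+1\ =\ \mld(x;A,I_X)$$
(the middle step uses $\codim(E^{(n)},A^{(n)})=c+1$ and $\mult(X^{(n)},p_n)\geq 1$) showing that each new exceptional divisor still computes $\mld(x;A,I_X^c)$, until $X^{(m)}$ is non-singular and meets $\he^{(m)}$ normally, at which point $\mld(x;X)\leq k_{E^{(m)}}+1\leq k_{\he^{(m)}}+1-c\cdot\val_{\he^{(m)}}I_X=\mld(x;A,I_X)$ exhibits $E^{(m)}$ as the desired divisor over $X$. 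None of this descent mechanism is present in your proposal, and you correctly flag that this is exactly where you expect trouble.

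Second, the number $20$ does not come from ``roughly halving $B_2=39$.'' The paper's count is: $b(E)\leq b(\he)$, the Zariski sequence for $\he$ over $A$ consists of $b(E)$ blow-ups at points (each contributing $N-1$ to $k_{\he}$) and $b(\he)-b(E)$ blow-ups along curves (each contributing $N-2$), whence $k_{\he}\geq (N-1)b(E)$ and $b(E)\leq k_{\he}/(N-1)$. The quantitative input is the bound on the \emph{Mather discrepancy}: $k_{\he}\leq 40$ when $N=3$ (from Case 6, A-3-1 in the proof of Theorem \ref{surface}) and $k_{\he}\leq 58$ when $N=4$, giving $b(E)\leq 40/2=20$ and $b(E)\leq 58/3<20$ respectively, with $b(E)=1$ for $N\geq 5$. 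Your bookkeeping via $\codim_{X^{(i)}}\overline{\{p_i\}}=\codim_{A^{(i)}}\overline{\{p_i\}}-c$ does not yield this bound; it is the unproved core of the argument rather than a loose end.
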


The paper is organized as follows: In Section 2, we introduce Mather-Jacobian log discrepancies and the notion of jet schemes and then show some basic properties which
will be used in this paper.
In Section 3 we give proofs of the equivalences of the conjectures.
In Section 4 we show  that the conjecture $(C_d)$ yields the properties (PMJ1)--(PMJ3).
In Section 5 we give a proof of $(C_d)$  for non-degenerate {\hs}s and for singularities of ``maximal type" in arbitrary characteristic.
In Section 6 we give a proof of $(C_1)$  in arbitrary characteristic 
and $(C_2)$  in characteristic $\neq 2$.
For the proof of $(C_2)$ we make use of the results of Section 5.
\vskip.5truecm

{\bf Acknowledgement.} The author would like to thank Lawrence Ein for his helpful comments and encouragements in the discussions during his stay in the University of Tokyo.
The author is  grateful to Kei-ichi Watanabe for his warm encouragements while the author was working on this topic. She also thanks Kohsuke Shibata for useful discussions and J\'anos Koll\'ar
for useful comments.

\vskip.5truecm

\section{Preliminaries on Mather-Jacobian log discrepancies \\
and jet schemes}

\noindent
Throughout this paper, a variety means always an equidimensional reduced 
connected scheme of finite type over an algebraically closed field $k$.
The characteristic of $k$ is  arbitrary unless otherwise stated. 
As our discussions are local, we take a variety $X$ as an affine variety and denote its
dimension by $d$.
We use the symbol $x$ for a closed point of a scheme and $\eta$ for a not-necessarily closed point of a scheme.

Mather-Jacobian log discrepancy is defined in \cite{dd} and \cite{is} independently,
for a variety over a field of characteristic zero and is easily generalized to positive characteristic case (see, for example, \cite{ir2}).

\begin{defn} We say that $E$ is a {\it prime divisor over } $X$,
if there is a birational morphism  $Y\to X$ such that $Y$ is normal and $E$ is a
divisor on $Y$. 
A prime divisor $E$ over $X$ is called an {\it exceptional prime divisor over } $X$,
if  the morphism $Y\to X$ is not isomorphic at the generic point of $E$.
\end{defn}

\begin{defn}
Let $E$ be a prime divisor over an arbitrary variety $X$.
The {\it  Mather-Jacobian (MJ, for short) log discrepancy} of $X$ at $E$ is defined by
$$\amj(E;X)=\hk_E- j_E +1,$$  
where $\hk_E$ is the order of vanishing of the relative Jacobian ideal $\j_\varphi = \Fitt^0(
\Omega_{Y/X})$ at $E$  for a partial resolution $\varphi:Y \to X$ on which $E$ appears.
The other term $j_E$ is the order of vanishing  of the Jacobian ideal $\j_X$ of $X$ at  $E$. 
\end{defn}
Here, we note that MJ-log discrepancy is defined on every variety,
while the usual log discrepancy $a(E; X)$ is defined only for normal $\QQ$-Gorenstein variety.
We also note that for locally a complete intersection $X$ we have the coincidence
$\amj(E;X) =a(E;X)$.
Detailed discussions for MJ-log discrepancies  can be seen in \cite{ei} and  \cite{ir2}.

\begin{defn} For a (not necessarily closed) point $\eta\in X$  and for a proper closed subset $W$, we define the {\it minimal MJ-log discrepancy } at $\eta$ as follows:
\begin{enumerate}
\item When $\dim X\geq 2$,
$$\mldmj (\eta; X)=\inf\{ \amj(E; X) \mid  E :\ {\operatorname { prime\ divisor \ with \
center}}\ \overline{\{\eta\}} \} .$$ 
$$\mldmj (W;X)=\inf\{ \amj(E; X) \mid  E :\ {\operatorname { prime\ divisor \ with \
center\ in}}\ W \} .$$ 
\item When $\dim X=1$, define $\mldmj (\eta; X)$ and $\mldmj (W;X)$ by the same definitions as above if the right hand sides
of the above definitions 
are non-negative and otherwise define $\mldmj (\eta; X)= -\infty$ and $\mldmj (W;X)=
-\infty$, respectively.

Here, we emphasize that ``with center $\overline{\{\eta\}}$"  means that 
the center coincides with $\overline{\{\eta\}}$, and different from 
``with center `in' $\overline{\{\eta\}}$".

\end{enumerate}
\end{defn}

The following is well known  (see, for example, \cite{ir2}):

\begin{prop} The inequality $$\mldmj(\eta; X)\leq \codim(\overline{\{\eta\}}, X)$$ holds
for a point $\eta\in X$, and the equality holds if and only if $(X,\eta)$ is non-singular.
In particular, if $x\in X$ is a closed point, the inequlaity
$$\mldmj(x; X)\leq \dim X$$ holds, and the equality holds if and only if $(X,x)$ 
is non-singular.

\end{prop}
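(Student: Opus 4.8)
The plan is to produce, for an arbitrary point $\eta\in X$, a single prime divisor over $X$ whose center is exactly $\overline{\{\eta\}}$ and that already attains the asserted bound, and to read off both the inequality and the equality case from it. I would work through the description $\mldmj(\eta;X)=\mld(\eta;\bar A,I_X^{\,\bar c})$, where $X\subset\bar A$ is a closed immersion (near the center) into a nonsingular variety and $\bar c=\dim\bar A-\dim X$; this holds for an arbitrary, not necessarily closed, point $\eta$ by \cite{ir2}. Besides this, the only input needed is the elementary behaviour of minimal log discrepancies of pairs on a nonsingular variety $V$: one has $\mld(\xi;V)=\codim(\overline{\{\xi\}},V)$ for every point $\xi$, and $\mld(\xi;V,\mathfrak a^{t})\le\codim(\overline{\{\xi\}},V)-t\cdot\ord_\xi(\mathfrak a)$, the latter coming from the blow-up of $\mathcal O_{V,\xi}$ at its maximal ideal.

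First I would normalize the embedding at $\eta$. Fix any closed immersion $X\subset A$ near $\eta$ with $A$ nonsingular of codimension $c$, and localize: $\mathcal O_{A,\eta}$ is regular of dimension $e+c$, where $e:=\codim(\overline{\{\eta\}},X)=\dim\mathcal O_{X,\eta}$, and $I:=I_X\mathcal O_{A,\eta}\subseteq\mathfrak m_\eta$. Put $r:=(e+c)-\emb(X,\eta)$, where $\emb(X,\eta)=\dim_{\kappa(\eta)}\mathfrak m_\eta/(\mathfrak m_\eta^2+I)$; since $\emb(X,\eta)\ge\dim\mathcal O_{X,\eta}=e$, with equality exactly when $\mathcal O_{X,\eta}$ is regular, we have $0\le r\le c$ and $r=c\iff(X,\eta)$ is nonsingular. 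Choosing $u_1,\dots,u_r\in I$ whose residues span the image of $I$ in $\mathfrak m_\eta/\mathfrak m_\eta^2$, these form part of a regular system of parameters of $\mathcal O_{A,\eta}$; hence (after shrinking $A$) $\bar A:=V(u_1,\dots,u_r)$ is nonsingular near $\eta$, contains $X$, has codimension $c-r$ in $A$, and $\bar I:=I_X\mathcal O_{\bar A,\eta}\subseteq\mathfrak m_{\bar A,\eta}^2$ by the choice of the $u_i$. Now apply the description above with ambient $\bar A$ together with the pair estimate, to the blow-up of $\mathcal O_{\bar A,\eta}$ at its maximal ideal:
$$\mldmj(\eta;X)=\mld(\eta;\bar A,\bar I^{\,c-r})\le (e+c-r)-(c-r)\cdot\ord_\eta(\bar I).$$
If $(X,\eta)$ is nonsingular then $r=c$, so $\bar A$ agrees with $X$ near $\eta$ and $\mldmj(\eta;X)=\mld(\eta;X)=e$. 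If $(X,\eta)$ is singular then $c-r\ge 1$ and $\bar I$ is a nonzero ideal contained in $\mathfrak m_{\bar A,\eta}^2$, whence $\ord_\eta(\bar I)\ge 2$ and $\mldmj(\eta;X)\le (e+c-r)-2(c-r)=e-(c-r)\le e-1$. This gives $\mldmj(\eta;X)\le\codim(\overline{\{\eta\}},X)$ with equality precisely when $(X,\eta)$ is nonsingular; the ``in particular'' statement is the case $\overline{\{\eta\}}=\{x\}$, $e=\dim X$, the convention $\mldmj=-\infty$ in dimension one causing no harm since all bounds here are $\ge 0$.

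The only delicate point is the identity $\mldmj(\eta;X)=\mld(\eta;\bar A,I_X^{\,\bar c})$ at a non-closed center $\eta$, together with the resulting independence of the embedding; this is the content of the comparison results of \cite{ir2} and \cite{ei} in arbitrary characteristic, and I would simply invoke it. Everything else is routine: the one computation worth spelling out is $0\le r\le c$ and the equivalence $r=c\iff(X,\eta)$ nonsingular, which follows from $\emb(X,\eta)\ge\dim\mathcal O_{X,\eta}$ with equality iff $\mathcal O_{X,\eta}$ is regular. (For a closed point $x$ there is also the quicker route via $\mldmj(x;X)=\inf_m s_m(X,x)$: $s_0(X,x)=\dim X$ gives the inequality, and at a nonsingular point $X_m(x)\cong\AA^{m\dim X}$ makes every $s_m(X,x)$ equal to $\dim X$; but this does not see general centers.)
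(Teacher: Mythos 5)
The paper never proves this proposition: it is introduced with ``The following is well known (see, for example, \cite{ir2})'' and no argument is given, so there is nothing in the text to compare your proof against line by line. Taken on its own, your argument is correct and self-contained modulo the two inputs you explicitly isolate, namely the identity $\mldmj(\eta;X)=\mld(\eta;\bar A,I_X^{\,\bar c})$ at a not necessarily closed center (the paper's Theorem 2.12, quoted from \cite{ir2}) and the discrepancy of the exceptional divisor of the blow-up of a regular local ring at its maximal ideal. The reduction to a minimal embedding $\bar A$ with $I_X\mathcal O_{\bar A,\eta}\subseteq\mathfrak m_{\bar A,\eta}^2$ is exactly the right device: it converts ``$(X,\eta)$ singular'' into ``$\ord_\eta(\bar I)\geq 2$ against a strictly positive exponent $c-r$,'' which is what makes the inequality strict and settles the equality case in both directions. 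Two small points to tidy up: $\bar A=V(u_1,\dots,u_r)$ has codimension $r$ in $A$ (it is $X$ that has codimension $c-r$ in $\bar A$; your exponent in the displayed estimate is the correct one, so this is only a slip of wording), and in the singular case you should record that $\bar I\neq 0$ because $\dim\mathcal O_{X,\eta}<\dim\mathcal O_{\bar A,\eta}$, so that $\ord_\eta(\bar I)$ is finite by Krull's intersection theorem and the upper bound is a genuine real number. Your parenthetical alternative via $s_0(X,x)=\dim X$ and $X_m(x)\simeq \AA^{m\dim X}$ is the route most consonant with the paper's own machinery (Proposition 2.13 and the invariants $s_m$), and it even yields the inequality for non-closed $\eta$ since $s_0(X,\eta)=\codim(\overline{\{\eta\}},X)$; but, as you correctly observe, at a non-closed singular point $s_1(X,\eta)=d+\codim(\overline{\{\eta\}},X)-\emb(X,\eta)$ need not drop below $\codim(\overline{\{\eta\}},X)$, so the equality case genuinely requires an argument like yours (or a specialization to closed points via the paper's Lemma 4.4).
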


\begin{defn}
 A variety $X$ is called {\it MJ-canonical (resp. MJ-log canonical)}
 at a (not necessarily closed) point $\eta\in X$,
if $$\amj(E; X)\geq 1\ \ \ \  (\rm{resp. }\ \geq 0)$$
for every exceptional prime divisor $E$ over $X$ whose center on $X$ contains $\eta$.

\end{defn}

\begin{prop}
\begin{enumerate}
\item A variety $X$ is MJ-log canonical at a  point $\eta$ if and only if 
$\mldmj(\eta; X)\geq 0$.
\item If a variety $X $ is MJ-canonical at $\eta$, then $\mldmj(\eta; X)\geq 1$ holds by definition. But the converse does not hold in general.
\end{enumerate}
\end{prop}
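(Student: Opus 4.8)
The plan is to deduce both assertions directly from the definitions, using only the elementary inequality $\amj(E;X)\ge\mldmj(\eta';X)$ — valid whenever the center of $E$ on $X$ equals $\overline{\{\eta'\}}$ — together with one structural input in (1) and one explicit example in (2). For (1), I would first note that if $\overline{\{\eta\}}$ is a proper subvariety it has codimension $\ge 1$, so every prime divisor over $X$ with center exactly $\overline{\{\eta\}}$ is automatically exceptional and has center containing $\eta$. Hence, if $X$ is MJ-log canonical at $\eta$, every divisor appearing in the infimum that defines $\mldmj(\eta;X)$ already satisfies $\amj(E;X)\ge 0$ (in dimension $1$ one also uses the convention built into the definition of $\mldmj$), so $\mldmj(\eta;X)\ge 0$. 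Conversely, assuming $\mldmj(\eta;X)\ge 0$, I would take an exceptional prime divisor $E$ with center containing $\eta$, write that center as $\overline{\{\eta'\}}$ with $\eta'$ a generization of $\eta$, observe that for $\eta'=\eta$ the conclusion $\amj(E;X)\ge 0$ is immediate, and for $\eta'\ne\eta$ reduce via $\amj(E;X)\ge\mldmj(\eta';X)$ to the claim $\mldmj(\eta';X)\ge 0$. That last claim is where I would invoke the (standard) fact that the non-MJ-log-canonical locus of $X$ is closed, equivalently that $\{\xi\in X:\mldmj(\xi;X)\ge 0\}$ is stable under generization; note that when $\overline{\{\eta\}}$ is an irreducible component of $X$ this step is vacuous. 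The forward direction of (2) is the same bookkeeping with $0$ replaced by $1$: a prime divisor with center the proper subvariety $\overline{\{\eta\}}$ is exceptional with center containing $\eta$, so MJ-canonicity forces each $\amj(E;X)\ge 1$ and hence $\mldmj(\eta;X)\ge 1$.

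For the failure of the converse in (2) I would produce an explicit three-dimensional example. Let $S\subset\AA^3$ be the affine cone over a smooth plane cubic: a normal Gorenstein simple-elliptic surface singularity, log canonical but not canonical, with $\mld(0;S)=\mldmj(0;S)=0$ attained by the exceptional divisor $E_S$ of the blow-up of the vertex (for which $k_{E_S}=-1$, by adjunction on $E_S$, which is isomorphic to the cubic). Put $X:=S\times\AA^1\subset\AA^4$ and $x=(0,0)$; then $X$ is a normal Gorenstein (in particular locally complete intersection) threefold, singular exactly along $L:=\{0\}\times\AA^1$. Being lci, $X$ satisfies $\amj=a$ and $\mldmj=\mld$; and since $(S\times\AA^1)_m(x)=S_m(0)\times(\AA^1)_m(0)$ with $s_m(\AA^1,0)=1$ for all $m$, we get $s_m(X,x)=s_m(S,0)+1$ for all $m$, so the jet-scheme formula gives $\mldmj(x;X)=\inf_m\bigl(s_m(S,0)+1\bigr)=\mldmj(0;S)+1=1$. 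On the other hand $E:=E_S\times\AA^1$ is an exceptional prime divisor over $X$ whose center $L$ contains $x$ and for which $\amj(E;X)=a(E;X)=k_{E_S}+1=0<1$, so $X$ is not MJ-canonical at $x$ even though $\mldmj(x;X)=1\ge 1$. (Everything here is insensitive to the characteristic, since smooth plane cubics exist over every field and all the computations are intersection-theoretic or use only the jet-scheme formula.)

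I expect the only real obstacle to be bookkeeping rather than substance: one must be careful throughout (1) with the distinction between prime divisors whose center \emph{equals} $\overline{\{\eta\}}$ and those whose center merely \emph{contains} $\eta$, and one must confirm that the closedness of the non-MJ-log-canonical locus — the sole non-formal ingredient — is available in arbitrary characteristic in the generality needed. The rest is unwinding of the definitions plus the short product computation in (2).
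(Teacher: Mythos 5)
The paper itself states this proposition without proof --- it is presented in the preliminaries as a known fact, with \cite{ei} and \cite{ir2} cited for detailed discussions --- so there is no in-paper argument to compare against; I am judging your proposal on its own terms. Part (2) is fine: the forward implication is the same unwinding of definitions as in (1), and your counterexample $X=S\times\AA^1$, with $S$ the cone over a smooth plane cubic, is correct and characteristic-free (the pinch point $x^2+y^2z=0$, which appears in Section 6 with $\mldmj(0;X)=1$ while the divisor over the generic point of its singular line has $\amj=0$, would serve equally well). In the forward direction of (1), your claim that every prime divisor with center exactly $\overline{\{\eta\}}$ is automatically exceptional is false when $\codim(\overline{\{\eta\}},X)=1$ and $X$ is normal at $\eta$; but the slip is harmless, since such a non-exceptional divisor has $\hk_E=j_E=0$, hence $\amj(E;X)=1\geq 0$, and the infimum is still controlled.

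The genuine gap is in the direction ``$\mldmj(\eta;X)\geq 0$ implies MJ-log canonical at $\eta$''. You correctly reduce to showing that $\mldmj(\eta';X)\geq 0$ for every generization $\eta'$ of $\eta$, but you justify this by invoking ``closedness of the non-MJ-log-canonical locus''. In the form you state it, that is essentially property (PMJ1), which this paper explicitly lists as open in positive characteristic and derives only conditionally on Conjecture $C_d$ (Proposition \ref{open}); moreover ``the complement is closed'' is strictly stronger than ``stable under generization'', so the two are not equivalent as you assert. As written, the sole non-formal step of your proof therefore rests on an unavailable (indeed, conjectural) ingredient. The step can be repaired unconditionally in two standard ways: (i) the classical ``shrink the center'' argument --- if $E$ over the ambient smooth $A$ has $a(E;A,I_X^c)\leq -1$ and center containing $\eta$, then iterated blow-ups along centers lying inside (the transforms of) $E$ and over $\overline{\{\eta\}}$ produce divisors with center exactly $\overline{\{\eta\}}$ and log discrepancy tending to $-\infty$, so $\mldmj(\eta;X)=-\infty$; or (ii) combine the inequality $\mldmj(\eta;X)\leq \mldmj(\eta';X)+\codim\bigl(\overline{\{\eta\}},\overline{\{\eta'\}}\bigr)$ of Lemma \ref{additive} (valid in arbitrary characteristic by \cite{ir2}) with the dichotomy $\mldmj(\eta';X)<0\Rightarrow \mldmj(\eta';X)=-\infty$, which follows from the maximal-divisorial-set computation used in the proof of Proposition \ref{3equi}. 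Either route closes the gap; without one of them the equivalence in (1) is not established.
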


The MJ-version of the singularities has a  good description in terms of jet schemes.
Here, we introduce jet schemes.
The precise descriptions about jet schemes and the arc spaces are found,
for example in \cite{EM}, \cite{icr}.

\begin{defn}
 Let \( X \) be a variety and  $K\supset k$ a field extension.
For  \( m\in \ZZ_{\geq 0} \) a \( k \)-morphism \( \spec K[t]/(t^{m+1})\to X \) is called an  {\it{\( m \)-jet}} of \( X \) and 
 \( k \)-morphism \( \spec K[[t]]\to X \) is called an {\it {arc}} of \( X \).
\end{defn}


Let 
 \( X_{m} \) be the {\it space of \( m \)-jets} or  the $m$- {\it{jet scheme}}
   of  \( X \). 
   There exists the projective limit $$X_\infty:=\lim_{\overleftarrow {m}} X_m$$
   and it is called the {\it space of arcs} or the {\it arc space} of $X$.

\begin{defn}
 Denote the canonical truncation morphisms induced from $k[[t]]\to k[t]/(t^{m+1})$ 
 and $k[t]/(t^{m+1})\to k$  by
 $\psi_m: X_\infty\to X_m$ and $\pi_m: X_m\to X$, respectively.
 In particular we denote the  morphism  $\psi_0=\pi_\infty : X_\infty \to X$ by $\pi$.
 We also denote the canonical truncation morphism $X_{m'} \to X_m$ $(m'> m)$ induced 
 from $k[t]/(t^{m'+1})\to k[t]/(t^{m+1})$ by
 $\psi_{m', m}$.
 To specify the space $X$, we  sometimes write $\psi^X_{m', m}$.
 
 For a point $\eta\in X$, the fiber scheme $\pi_m^{-1}(\eta)$ is denoted by $X_m(\eta)$
 and the dimension $\dim X_m(\eta)$ is defined by $\dim \overline{\pi_m^{-1}(\eta)}$.
 Note that it is different from $\dim \pi_m^{-1}(\overline{\{\eta\}})$.

\end{defn}

\begin{defn}
For an arc $\gamma\in X_\infty$, 
the {\it order of a coherent ideal $\a\subset \o_X$ measured by $\gamma$}
is defined as follows:
let $\gamma^*: \o_{X, \gamma(0)} \to K[[t]]$
be the corresponding
ring homomorphism of $\gamma$,
where $0\in \spec  K[[t]]$ is the closed point.
Then, we define
$$\ord_\gamma(\a)=\sup \{ r\in \ZZ_{\geq 0}\mid \gamma^*(\a)\subset (t^r)\},$$
We define the subsets ``contact loci" in the arc space as follows:
  $$\cont^m(\a)=\{\gamma \in X_\infty \mid \ord_\gamma(\a)=m\}$$
In a similar manner, we define
 $$\cont^{\geq m}(\a)=\{\gamma \in X_\infty \mid \ord_\gamma(\a)\geq m\}$$
By this definition, we can see that
$$\cont^{\geq m}(\a)=\psi_{m-1}^{-1}(Z(\a)_{m-1}),$$
where  $Z(\a)$ is the closed subscheme defined by the ideal $\a$ in $X$.

\end{defn}

The following fact was already used in \cite{ei}, \cite{ir}, \cite{ir2} in the proofs  of some
statements.
Here, we give the proof for the reader's convenience
as we will use it several times in this paper.
About the basic terminologies appear in the proof, we refer to \cite{icr}.

\begin{prop}
\label{truncation}
  Denote $\AA_k^N=\spec k[x_1,\ldots, x_N]$ just by $\AA^N$
  in order to avoid confusion with the ``$k$-jet scheme".
  Let $X\subset \AA^N$ be defined by an ideal $I\subset k[x_1,\ldots, x_N]$
  and contain the origin $0$ and  let $Z$ be defined
by the  ideal generated by the $m$-truncations of the elements of $I$.
Here the $m$-truncation of a polynomial $f=\sum _{i=0}^r f_i$ ($f_i$ is homogeneous
of degree $i$) is $\sum _{i=0}^m f_i$.
Then for every $j\leq m$, the local $j$-jet schemes coincide
$$X_j(0)=Z_j(0).$$
In particular, if for every element $f\in I_X$ has order greater than $m$, then for every 
$j\leq m$,
$$X_j(0)=\AA_j^N(0).$$
\end{prop}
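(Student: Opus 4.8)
The plan is to pick explicit coordinates on the jet schemes and reduce the assertion to an elementary observation about orders of vanishing in the jet parameter $t$. Write a generic $j$-jet through a moving point of $\AA^N$ as $x_i=\sum_{l=0}^{j}x_i^{(l)}t^l$, and for a polynomial $g\in k[x_1,\dots,x_N]$ let $F^{(e)}(g)$ denote the coefficient of $t^e$ in $g\bigl(\sum_l x_i^{(l)}t^l\bigr)$; this is a polynomial in the variables $x_i^{(l)}$, and $g\mapsto F^{(e)}(g)$ is additive. Then $(\AA^N)_j=\spec k[x_i^{(l)}\mid 1\le i\le N,\ 0\le l\le j]$, and for a closed subscheme $Y=V(\a)\subset\AA^N$ the jet scheme $Y_j\subset(\AA^N)_j$ is the closed subscheme defined by the polynomials $F^{(e)}(g)$, with $g$ ranging over a generating set of $\a$ and $0\le e\le j$; this uses only that the jet functor carries closed immersions to closed immersions and that a ring homomorphism annihilates an ideal once it annihilates a generating set. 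The fiber $Y_j(0)$ over the origin is then cut out inside $Y_j$ by the further equations $x_i^{(0)}=0$, i.e.\ it is obtained from the description above by simply deleting the variables $x_i^{(0)}$.

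The key point is now immediate. Once $x_i^{(0)}=0$ we have $x_i=\sum_{l\ge 1}x_i^{(l)}t^l\in(t)$, so every monomial of degree $d$ in $x_1,\dots,x_N$ becomes divisible by $t^d$ after substitution. Writing $f=\sum_{d\ge 0}f_d$ with $f_d$ homogeneous of degree $d$, and $\bar f=\sum_{d\le m}f_d$ for its $m$-truncation, the difference $f-\bar f$ is a sum of monomials of degree $>m$, hence after substituting and setting $x_i^{(0)}=0$ it lies in $(t^{m+1})$. By additivity of $F^{(e)}$ this gives, as polynomials in $k[x_i^{(l)}\mid 1\le i\le N,\ 1\le l\le j]$, the equality $F^{(e)}(f)=F^{(e)}(\bar f)$ for all $e\le m$, in particular for all $e\le j$ since $j\le m$.

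It remains to compare defining ideals. In the ring $k[x_i^{(l)}\mid 1\le i\le N,\ 1\le l\le j]$ the ideal of $X_j(0)$ is generated by the $F^{(e)}(f)$ with $f\in I$ and $0\le e\le j$, while the ideal of $Z_j(0)$ is generated by the $F^{(e)}(g)$ with $g$ ranging over the generators $\{\bar f\mid f\in I\}$ of the ideal defining $Z$ and $0\le e\le j$. By the previous paragraph these two generating sets agree, so the two ideals coincide and $X_j(0)=Z_j(0)$ as schemes. The last assertion is the special case in which every $f\in I$ has order $>m$: then each $m$-truncation $\bar f$ vanishes, so $Z=\AA^N$ and $Z_j(0)=\AA^N_j(0)$.

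I do not expect a genuine obstacle here. The only delicate points are the two standard facts about jet schemes invoked in the first paragraph — that the fiber over the origin is obtained by imposing $x_i^{(0)}=0$, and that it suffices to impose the jet equations coming from a generating set of the ideal — together with the need to phrase the degree count for individual monomials rather than for the homogeneous components $f_d$ themselves.
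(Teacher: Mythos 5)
Your proof is correct and follows essentially the same route as the paper's: both reduce to the observation that after imposing $x_i^{(0)}=0$ the substituted variables lie in $(t)$, so a monomial of degree $d>m$ contributes nothing to the coefficients of $t^e$ for $e\le m$, whence the truncated and untruncated generators give the same equations for the fiber over the origin. The only difference is cosmetic: the paper phrases the key vanishing by tracking degree versus weight of monomials in the $x_i^{(l)}$, while you argue via divisibility by $t^d$.
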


\begin{proof} The $m$-jet scheme $\AA^N_m$ of the affine space $\AA^N$ is 
given as 
$$\AA^N_m=\spec   k\left[x_i^{(j)} \mid i=1,\ldots, N,\ j=0,1,\ldots, m\right]\simeq\AA^{N(m+1)}.$$
Here, the closed point $P\in \AA^N_m$ with the coordinates $(a_i^{(j)})_{i=1,\ldots, N,\ j=0,1,\ldots, m}$ 
corresponds to the $m$-jet $\spec  k[t]/(t^{m+1}) \to \AA^N$ whose associated 
$k$-algebra homomorphism is
$$k[x_1,\ldots, x_N] \to k[t]/(t^{m+1}), \ \ x_i\mapsto \sum_{j=0}^m {a_i}^{(j)} t^j.$$
Let $I\subset k[x_1,\ldots, x_N] $ be the defining ideal of $X$ in $\AA^N$.
Then the defining ideal $$I_m\subset \ k\left[x_i^{(j)} \mid i=1,\ldots, N,\ j=0,1,\ldots, m\right]$$
of $X_m$ in $\AA^N_m$ is generated by $$\left\{ f^{(j)} \mid  j=0,1,\ldots, m,\  f\in I\right\},$$
where $f^{(j)}\in k\left[x_i^{(l)} \mid i=1,\ldots, N,\ l=0,1,\ldots, m\right]$ is defined 
from $f\in I$ as follows:
\begin{equation}
\label{derivation}
f\left(\sum_{j=0}^m x_1^{(j)}t^j,\ldots, \sum_{j=0}^m x_N^{(j)}t^j\right)\equiv \sum_{j=0}^m
 f^{(j)}t^j\  (\mod t^{m+1}).
 \end{equation}
 Here, we define the weight of the variable $x_i^{(j)}$ by $j$ and the weight of a
 monomial by the sum of the weights of variables appearing in the monomial.
 Then the polynomial $f^{(j)}$ is homogeneous of weight $j$.
 
 We note that $f^{(j)}$ may have  another homogeneity. 
 If $f$ is homogeneous with respect to the usual degree,
 then $f^{(j)}$ is also homogeneous  with respect to the usual degree.
 For a general $f$, let $f=f_0+f_1+\cdots + f_r$ be the homogeneous decomposition with respect to 
 the degree and $f_i$ is the homogeneous part of degree $i$.
 Then 
 \begin{equation}
 \label{homogeneous}
 f^{(j)}=f_0^{(j)} +f_2^{(j)}+\cdots+ f_r^{(j)},
 \end{equation}
 where $f_i^{(j)}$ is defined in the same way as in (\ref{derivation}) for the homogeneous polynomial $f_i$.
 
 Now we consider the fibers $\AA^N_m(0)$ and $X_m(0)$ by $\pi_m^{\AA^N}$
 and $\pi_m^{X}$, respectively.
 These are defined by $x_i^{(0)}=0$ for all $i=1,\ldots, N$, in $\AA^N_m$ 
 and in $X_m$, respectively.
 Therefore, $X_m(0)$ is defined by $\{\overline{f^{(j)}} \mid j=0,1,\ldots, m, \ 
 f\in I\}$ in $$\AA^N_m(0)=\spec  k\left[x_i^{(j)} \mid i=1,\ldots, N,\ j=1,\ldots, m\right]
 \simeq\AA^{Nm},$$ 
 where $\overline{f^{(j)}}$ is the polynomial substituted $x_i^{(0)}=0$ ($i=1,\ldots, N$)
 into $f^{(j)}$.
 Under the notation in (\ref{homogeneous}), we note that
 \begin{equation}
 \label{vanish}
 \overline{f_i^{(j)}}=0\ \ \mbox{ if}\ \ i>j.
 \end{equation} 
 This is because the monomials of $f_i^{(j)}$ are homogeneous of degree $i$  of the form $x_{s_1}^{(j_1)}\cdot x_{s_2}^{(j_2)}\cdots
 \cdot x_{s_i}^{(j_i)}$ with \\ $j_1+j_2+\cdots+j_i=j$.
 Here, by $i>j$, there is some $l, (1\leq l\leq i)$ such that $j_l=0$.
 This yields that every monomial of  $f_i^{(j)}$ contains $x_{s_l}^{(0)}$ as a factor 
 for some $s_l$, therefore
 by the substitution $x_{s_l}^{(0)}=0$, the monomial becomes 0.
Thus it follows (\ref{vanish}).

Since $X_m(0)$ is defined by $\left\{ \overline{f_i^{(j)}} \mid j\leq m,\ f\in I \right\}$,
the vanishing (\ref{vanish}) implies that the homogeneous parts $f_i$ $(i>m)$ do not affect the
defining ideal of $X_m(0)$.
Therefore  $X_m(0)=Z_m(0)$. 
\end{proof}

\begin{thm}[\cite{dd}, \cite{is}, \cite{ir2}]
\label{IA} 
Let $X$ be a variety over an algebraically
closed field $k$ of an arbitrary characteristic and $A$ a smooth variety containing $X$ as a closed
subscheme of codimension $c$.
 Denote the ideal of $X$ in $A$ by $I_X$.
Then, for a proper closed subset  $W$ of  $X$, we have 
\begin{equation}
\label{iaclosed}
\mldmj(W; X)=\mld(W;A, I_X^c).
\end{equation}
For a point $\eta\in X$, we have 
\begin{equation}
\label{iapoint}
\mldmj(\eta; X)=\mld(\eta;A, I_X^c).
\end{equation}
\end{thm}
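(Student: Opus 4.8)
The plan is to obtain both equalities by combining two arc-scheme formulas, one on each side, with a dimension count; the real content is a characteristic-free form of the Ein--Mustata-type formula for the minimal log discrepancy of a pair on a smooth ambient space. First I would reduce \eqref{iaclosed} to \eqref{iapoint}: by definition $\mldmj(W;X)$ and $\mld(W;A,I_X^{c})$ are the infima of $\amj(E;X)$ and of $a(E;A,I_X^{c})$, respectively, over the prime divisors $E$ whose center is contained in $W$, and a prime divisor has center contained in $W$ precisely when its center equals $\overline{\{\eta\}}$ for some $\eta\in W$; grouping this index set by the generic point of the center gives $\mldmj(W;X)=\inf_{\eta\in W}\mldmj(\eta;X)$ and $\mld(W;A,I_X^{c})=\inf_{\eta\in W}\mld(\eta;A,I_X^{c})$, so \eqref{iaclosed} follows once \eqref{iapoint} is known for every $\eta\in X$ (the conventions assigning $-\infty$ to a negative infimum agree on the two sides).

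Now fix $\eta\in X$ and put $N=\dim A$, $d=\dim X$, so $c=N-d$. For the left-hand side I would use the jet-scheme formula recalled in the introduction, in the version valid for a not-necessarily-closed point (\cite{ir2}): $\mldmj(\eta;X)=\inf_{m\geq 0}\{(m+1)d-\dim X_m(\eta)\}$, with $\dim X_m(\eta)=\dim\overline{\pi_m^{-1}(\eta)}$. For the right-hand side I would invoke the arc-theoretic description of the minimal log discrepancy of a pair when the ambient variety is smooth: for any nonzero ideal $\a$ on $A$ with $\eta\in Z(\a)$ and any $e\in\NN$,
\[
\mld(\eta;A,\a^{e})=\inf_{p\geq 1}\Big\{\codim\big(\cont^{\geq p}(\a)\cap\psi_0^{-1}(\eta),\ A_\infty\big)-e\,p\Big\},
\]
where $\psi_0\colon A_\infty\to A$ is the truncation and the codimension of the (in general reducible, locally closed) subset of $A_\infty$ is computed through its closure at finite levels. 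Apply this with $\a=I_X$ (so $Z(\a)=X$, since $X$ is reduced) and $e=c$. Using the identity $\cont^{\geq p}(\a)=\psi_{p-1}^{-1}(Z(\a)_{p-1})$ one obtains
\[
\cont^{\geq p}(I_X)\cap\psi_0^{-1}(\eta)=\psi_{p-1}^{-1}\big(X_{p-1}\cap\pi_{p-1}^{-1}(\eta)\big)=\psi_{p-1}^{-1}\big(X_{p-1}(\eta)\big).
\]
Since $A$ is smooth, every truncation morphism $A_\infty\to A_{p-1}$ and $A_{p-1}\to A$ is a Zariski-locally trivial affine-space fibration, so this inverse image has, in $A_\infty$, codimension $\codim(X_{p-1}(\eta),A_{p-1})=\dim A_{p-1}-\dim X_{p-1}(\eta)=Np-\dim X_{p-1}(\eta)$. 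Subtracting $c\,p$ and using $N-c=d$ turns the $p$-th term of the formula into $(m+1)d-\dim X_m(\eta)$ for $m=p-1$; reindexing by $m\geq 0$ matches the left-hand formula, so $\mld(\eta;A,I_X^{c})=\mldmj(\eta;X)$, which is \eqref{iapoint}.

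The step I expect to be the main obstacle is the displayed formula for $\mld$ of a pair on a smooth variety in \emph{arbitrary} characteristic. Over a field of characteristic $0$ this is the Ein--Mustata-type formula, whose proof uses a log resolution of $(A,\a)$ together with the Denef--Loeser change-of-variables formula; in positive characteristic resolutions are not available, and one must instead run the change-of-variables / transformation rule on a single birational model extracting the divisor at hand, bookkeeping the relative Jacobian ideal in place of a relative canonical divisor --- this is the technical core of \cite{ir2}, while the analogous manipulation carried out with log resolutions underlies the characteristic-$0$ arguments of \cite{dd} and \cite{is}. I would simply quote it. The rest is formal, apart from the routine checks that the $-\infty$ conventions match and that the affine-bundle dimension bookkeeping is as claimed.
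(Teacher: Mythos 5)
The paper does not actually prove this theorem: it is quoted from \cite{dd}, \cite{is} and \cite{ir2}, and the text immediately after it even proposes to treat the right-hand sides of (\ref{iaclosed}) and (\ref{iapoint}) as the \emph{definition} of $\mldmj$. So there is no in-paper argument to compare yours against; what can be said is that your outline is a correct reconstruction of how the result is obtained in those references. Identifying both sides with the common arc-space quantity $\inf_{p\geq 1}\{\codim(\cont^{\geq p}(I_X)\cap\psi_0^{-1}(\eta),A_\infty)-cp\}$ is exactly the bridge, and your bookkeeping is right: $\cont^{\geq p}(I_X)\cap\psi_0^{-1}(\eta)=\psi_{p-1}^{-1}(X_{p-1}(\eta))$, its codimension is $Np-\dim X_{p-1}(\eta)$ because the truncation maps of the smooth $A$ are locally trivial affine bundles, and $N-c=d$ converts the $p$-th term into $(m+1)d-\dim X_m(\eta)$ with $m=p-1$. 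The reduction of (\ref{iaclosed}) to (\ref{iapoint}) by grouping divisors according to the generic point of their center is also fine. The one point to watch is circularity: the jet formula $\mldmj(\eta;X)=\inf_m\{(m+1)d-\dim X_m(\eta)\}$ that you quote is, in this paper, Proposition \ref{formula1}, attributed to the same sources and stated alongside Theorem \ref{IA}; for your argument to be non-circular the reference must establish that formula directly from the definition $\amj(E;X)=\hk_E-j_E+1$ (via contact loci of the Jacobian and relative Jacobian ideals), not as a corollary of the equality you are proving. That is in fact how \cite{ir2} proceeds, and together with the characteristic-free arc-theoretic formula for $\mld$ of a pair on a smooth ambient variety --- which you correctly single out as the technical core and defer to \cite{ir2} --- your argument closes.
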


By the above theorem we may think that  the right hand sides of (\ref{iaclosed})
and (\ref{iapoint}) are the definitions of $\mldmj(W; X,)$ and $\mldmj(\eta; X)$, respectively.

\begin{prop}[\cite{dd}, \cite{is}, \cite{ir2}]
\label{formula1}
Let $X$ be a variety of dimension $d$  embedded into a non-singular variety $A$ with
codimension $c$ and let $\eta\in X$ be a point, then we have 
$$\mldmj(\eta; X)= \inf_{m\in\NN}\left\{ \codim\left( \cont^{\geq m+1}(I_X)\cap (\pi^A)^{-1}(\eta), A_\infty\right) - c\cdot (m+1)\right\}$$
$$ = \inf_{m\in\NN}\left\{ (m+1)d - \dim X_m(\eta) \right\},\ \ \ \ \ \ \ \ \ \ \ \ \ \ \ \ \ \ \ $$
where $\dim X_m(\eta)=\dim \overline{ X_m(\eta)}$.
Note that it does not coincide with $\dim { X_m(\overline{\{\eta\}})}$
in general.
\end{prop}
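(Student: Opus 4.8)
The plan is to split the proof into two essentially independent parts. First, the equality of the two right--hand sides of the asserted formula is a purely arc--theoretic fact valid in any characteristic. Second, their common value coincides with $\mldmj(\eta;X)$; this is the Ein--Musta\c{t}\u{a}--Yasuda type description of minimal log discrepancies of a pair, which I would apply to $(A,I_X^c)$ on the smooth variety $A$ after invoking Theorem~\ref{IA} to rewrite $\mldmj(\eta;X)=\mld(\eta;A,I_X^c)$.

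For the first part I would prove, for every $m\in\NN$, the identity
\begin{equation*}
\codim\big(\cont^{\geq m+1}(I_X)\cap(\pi^A)^{-1}(\eta),\,A_\infty\big)=(d+c)(m+1)-\dim X_m(\eta).
\end{equation*}
Since $X=Z(I_X)$ in $A$, the defining property $\cont^{\geq m+1}(I_X)=(\psi^A_m)^{-1}(X_m)$ of contact loci, together with $\pi^A=\pi^A_m\circ\psi^A_m$ and the fact that $\pi^A_m$ restricts to $\pi^X_m$ on the closed subscheme $X_m\subset A_m$, gives $\cont^{\geq m+1}(I_X)\cap(\pi^A)^{-1}(\eta)=(\psi^A_m)^{-1}(X_m(\eta))$. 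As $A$ is smooth of dimension $N:=d+c$, each truncation $\psi^A_{m',m}:A_{m'}\to A_m$ $(m'>m)$ is a Zariski-locally trivial $\AA^{N(m'-m)}$-bundle, so for any constructible $S\subseteq A_m$ the codimension of the cylinder $(\psi^A_m)^{-1}(S)$ in $A_\infty$ is well defined and equals $\codim(\overline S,A_m)=N(m+1)-\dim\overline S$ (cf. \cite{EM}, \cite{icr}). Applying this to $S=X_m(\eta)$ and recalling $\dim X_m(\eta)=\dim\overline{X_m(\eta)}$ gives the identity; subtracting $c(m+1)$ and using $N-c=d$ then turns the first infimum in the statement into $\inf_m\{(m+1)d-\dim X_m(\eta)\}$, which is the second.

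For the second part it remains to identify $\mld(\eta;A,I_X^c)$ with $\inf_m\{\codim(\cont^{\geq m+1}(I_X)\cap(\pi^A)^{-1}(\eta),A_\infty)-c(m+1)\}$. I would reproduce the resolution--free proof of this. By the structure theory of contact loci (Ein--Lazarsfeld--Musta\c{t}\u{a}; the argument is arc--theoretic, hence works in arbitrary characteristic, see \cite{ir2}), the irreducible components of minimal codimension of the closure of the cylinder $\cont^{\geq p}(I_X)\cap(\pi^A)^{-1}(\eta)$ are maximal divisorial sets $C_A(v)$, where $v$ runs over divisorial valuations of $A$ with center exactly $\overline{\{\eta\}}$ and $v(I_X)\geq p$; moreover $\codim(C_A(v),A_\infty)=\hk_v+1$, and since $A$ is smooth $\hk_v$ agrees with the ordinary log discrepancy (for $v=q\,\val_E$ equal to $q(k_E+1)-1$), which can be read off directly from the tower of point blow--ups of Proposition--Definition~\ref{zar} without resolving anything. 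Hence $\codim(\cont^{\geq p}(I_X)\cap(\pi^A)^{-1}(\eta),A_\infty)=\min\{\hk_v+1\}$, the minimum over divisorial $v$ with center $\overline{\{\eta\}}$ and $v(I_X)\geq p$; optimizing $\codim(\cont^{\geq p}(I_X)\cap(\pi^A)^{-1}(\eta),A_\infty)-cp$ over $p$ — the optimal value of $p$ for a given $v$ being $v(I_X)$, so that $v=q\,\val_E$ contributes $q\big(k_E+1-c\,\val_E(I_X)\big)$ — then yields $\inf_E\{k_E+1-c\,\val_E(I_X)\}$ when all of these numbers are $\geq 0$ and $-\infty$ once one is negative, which is exactly $\mld(\eta;A,I_X^c)$ (in dimension one the $-\infty$ convention being the one built into $\mldmj$). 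The step I expect to be the main obstacle is the input just used: the irreducibility of the maximal divisorial sets and the codimension formula $\codim(C_A(v),A_\infty)=\hk_v+1$, which in characteristic $0$ typically rests on resolution of singularities and here must be established arc--theoretically; this is precisely what \cite{ir2} provides, and once it is granted the remaining steps proceed verbatim.
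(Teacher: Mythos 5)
The paper does not actually prove Proposition \ref{formula1} --- it is quoted from \cite{dd}, \cite{is} and \cite{ir2} --- so there is no in-text proof to compare against; your argument is correct and is essentially the standard proof from those references: the identification $\cont^{\geq m+1}(I_X)\cap(\pi^A)^{-1}(\eta)=(\psi^A_m)^{-1}(X_m(\eta))$ plus the cylinder codimension count over the smooth $A$ gives the equality of the two infima, and the contact-locus structure theory identifies their common value with $\mld(\eta;A,I_X^c)=\mldmj(\eta;X)$. In particular, the two inputs you single out as the main obstacle (irreducibility of the components as maximal divisorial sets, and the formula $\codim(C_A(q\cdot\val_E),A_\infty)=q(k_E+1)$) are exactly the facts the paper itself imports from \cite[Corollary 3.16]{ir2} and \cite[Lemma 2.7]{zhu} in the proof of Lemma \ref{nu=mu}, so your proof rests on the same arc-theoretic machinery the author deploys elsewhere and is sound modulo those cited results.
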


\begin{defn}
Under the assumption of the previous proposition, 
for a point $\eta\in X$ we define the function $s_m(X,\eta)$ in $m$ 
as follows:
$$s_m(X,\eta)=(m+1)d - \dim X_m(\eta).$$

\end{defn}

\begin{defn}
For a variety $X$, fix a closed immersion $X\subset A$ into a non-singular variety $A$.
\begin{enumerate}
\item
We say that a prime divisor $E$ over $A$ with the center $\overline{\{\eta\}}$ computes $\mldmj(\eta; X )$ if either
$$a(E; A,  I_X^c)=\mld(x;A, I_X^c) =\mldmj(\eta; X)\geq 0,\ \ \mbox{or}$$
$$a(E; A,  I_X^c)< 0,  \ \mbox{when}\ \ \mldmj(\eta; X)=-\infty.$$
\item
We say that $s_m(X,\eta)$ computes $\mldmj(\eta; X)$ if either
$$s_m(X,\eta)=\mldmj(\eta; X)\geq 0, \ \ \mbox{or}$$
$$s_m(X,\eta)<0,  \ \mbox{when}\ \ \mldmj(\eta; X)=-\infty.$$

\end{enumerate}

\end{defn}

\vskip.5truecm

\section{The conjectures and their relations}
\noindent
In ths section we prove the equivalence of Conjecture $C_d$ with the other conjectures. 

\begin{defn}
Let $X$ be a $d$-dimensional variety and $\eta\in X$  a point. 
Let $A$ be a non-singular variety containing $X$ as a closed subscheme.
 We define the following invariants:
$$\nu(X,\eta):= \min\{r \mid s_{r-1}(X,\eta)\ \mbox{computes} \ \ \mldmj(\eta,X)\}.$$
   
\end{defn}

Then, in terms of $\nu$, Conjecture $C_d$  (Conjecture \ref{Cd})  is represented as follows:

\begin{conj}[$C_d$] 
For an integer $d\geq 1$, there exists $N_d>0$ depending only on $d$
such that the bound $\nu(X,x)-1 \leq N_d$ holds
for any $d$-dimensional variety $X$ and a closed point $x\in X$.
\end{conj}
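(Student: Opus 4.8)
The plan is to check that this $\nu$-reformulation is exactly Conjecture \ref{Cd} with its conclusion rephrased via the definition of $\nu(X,x)$, so the whole matter reduces to unwinding definitions; the one point that genuinely needs a word is that $\nu(X,x)$ is a well-defined finite positive integer for every pair $(X,x)$.

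First I would invoke Proposition \ref{formula1}, which gives $\mldmj(x;X)=\inf_{m\in\NN}s_m(X,x)$, each $s_m(X,x)=(m+1)d-\dim X_m(x)$ being an integer. If this infimum is finite it is attained --- an infimum of integers bounded below is a minimum --- say at $m_0$, so that $s_{m_0}(X,x)=\mldmj(x;X)$; and since a finite value of $\mldmj(x;X)$ is necessarily $\geq 0$ (the dichotomy $\mldmj(x;X)\in\ZZ_{\geq 0}\cup\{-\infty\}$ is already presupposed by the wording of Conjecture \ref{Cd}), this says exactly that $s_{m_0}(X,x)$ computes $\mldmj(x;X)$. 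If the infimum is $-\infty$, then the integers $s_m(X,x)$ are unbounded below, so $s_{m_0}(X,x)<0$ for some $m_0$, and again $s_{m_0}(X,x)$ computes $\mldmj(x;X)$, now in the $-\infty$ case of the definition. In either case the set $\{r\mid s_{r-1}(X,x)\ \mbox{computes}\ \mldmj(x;X)\}$ is non-empty, so $\nu(X,x)\in\ZZ_{>0}$ is well defined and, straight from the definition,
$$\nu(X,x)-1=\min\{m\in\NN\mid s_m(X,x)\ \mbox{computes}\ \mldmj(x;X)\}.$$

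Next I would note that the two-line disjunction appearing in the conclusion of Conjecture \ref{Cd} is word for word the condition ``$s_m(X,x)$ computes $\mldmj(x;X)$''. Hence, for a fixed pair $(X,x)$, the conclusion of Conjecture \ref{Cd} (existence of $m\leq N_d$ satisfying that disjunction) holds if and only if $\min\{m\mid s_m(X,x)\ \mbox{computes}\ \mldmj(x;X)\}\leq N_d$, i.e.\ if and only if $\nu(X,x)-1\leq N_d$. Quantifying over all $d$-dimensional varieties $X$ and all closed points $x\in X$, a uniform constant $N_d$ works for one statement precisely when the same $N_d$ works for the other; this is the claimed equivalence.

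The only real content is the well-definedness of $\nu$, and within it the use of the dichotomy for $\mldmj$ (equivalently, for $\mld(x;A,I_X^c)$), which I would cite rather than reprove. I would also remark that the equivalence does not actually depend on this dichotomy: on a hypothetical pair with $\mldmj(x;X)$ finite and negative, both formulations would fail in exactly the same way --- no $m$ would compute $\mldmj(x;X)$, and the set defining $\nu(X,x)$ would be empty --- so nothing would change. Everything else is bookkeeping with the definitions.
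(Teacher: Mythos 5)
Your argument is correct and matches the paper exactly: the paper offers no proof of this restatement, treating it as an immediate unwinding of the definition of $\nu(X,x)$ together with the definition of ``$s_m(X,x)$ computes $\mldmj(x;X)$'', which is precisely your bookkeeping. The one piece of genuine content you add --- that the set defining $\nu(X,x)$ is non-empty, via Proposition \ref{formula1} and the dichotomy $\mldmj(x;X)\in\ZZ_{\geq 0}\cup\{-\infty\}$ --- is handled correctly, and your closing remark that the equivalence would survive even without that dichotomy is a nice (if unnecessary) precaution.
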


First we pose another conjecture which seems stronger than ($C_d$).
Actually Conjecture $C_d$ is the statement for closed points,
while the following conjecture is for any points in $d$-dimensional varieties.

\begin{conj}[$\widetilde C_d$] 
\label{tCd}
For an integer $d\geq 1$, there exists $N_d>0$ depending only on $d$
such that the bound $\nu(X,\eta)-1 \leq N_d$ holds
for any $d$-dimensional variety $X$ and a  point $\eta\in X$.
\end{conj}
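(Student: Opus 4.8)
I would prove that Conjecture $C_d$ implies Conjecture $\widetilde C_d$ \emph{with the same constant $N_d$}; the reverse implication is trivial, since a closed point is in particular a point. So let $X$ be $d$‑dimensional, $\eta\in X$ a point, $Z:=\overline{\{\eta\}}$, and $e:=\dim Z$. If $e=0$ this is exactly $C_d$, so assume $e\geq1$. The plan is to compare the local jet schemes $X_m(\eta)$ with $X_m(x)$ for a sufficiently general closed point $x\in Z$, show $s_m(X,x)=s_m(X,\eta)+e$ for all relevant $m$, and then transport a good $m$ produced by $C_d$ at $x$ back to $\eta$.

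The key point is a dimension count. Let $Y_m:=\overline{\pi_m^{-1}(\eta)}\subseteq X_m$, so that $\dim X_m(\eta)=\dim Y_m$ by the definition in Proposition~\ref{formula1}. The morphism $\pi_m|_{Y_m}\colon Y_m\to Z$ is dominant with generic fibre $\pi_m^{-1}(\eta)$, hence $\dim Y_m=\dim Z+(\text{generic fibre dimension})$. On the other hand $\pi_m^{-1}(Z)$ is the union of $Y_m$ with the components that do not dominate $Z$, and those lie over a proper closed subset of $Z$; so for every closed $x$ outside a proper closed subset $W_m$ of $Z$ (Chevalley's theorem on constructibility of fibre dimension) one has $\pi_m^{-1}(x)=(\pi_m|_{Y_m})^{-1}(x)$ of dimension $\dim Y_m-\dim Z$. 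Thus $\dim X_m(\eta)=\dim X_m(x)+e$ for such $x$, i.e. $s_m(X,x)=s_m(X,\eta)+e$; and for \emph{every} $x\in Z$ the inequality $s_m(X,x)\leq s_m(X,\eta)+e$ holds by upper semicontinuity of fibre dimension. Taking $\inf_m$ gives $\mldmj(x;X)\leq\mldmj(\eta;X)+e$ for every $x\in Z$.

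Now, given $\eta$, pick a closed point $x\in Z$ outside $W_0\cup\cdots\cup W_{N_d}$ (possible since $Z$ is irreducible), so $s_m(X,x)=s_m(X,\eta)+e$ for all $m\leq N_d$. Apply $C_d$ at the closed point $x\in X$: there is $m\leq N_d$ with $s_m(X,x)$ computing $\mldmj(x;X)$. If $\mldmj(x;X)\geq0$, then $s_m(X,x)=\mldmj(x;X)$, so $s_m(X,\eta)=\mldmj(x;X)-e$; combining $\mldmj(\eta;X)\leq s_m(X,\eta)=\mldmj(x;X)-e$ with the inequality $\mldmj(\eta;X)\geq\mldmj(x;X)-e$ of the previous paragraph forces $s_m(X,\eta)=\mldmj(\eta;X)$, and since a minimal log discrepancy is always $\geq0$ or $-\infty$ this common value is $\geq0$, so $s_m(X,\eta)$ computes $\mldmj(\eta;X)$. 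If instead $\mldmj(x;X)=-\infty$, then $s_m(X,x)<0$, hence $s_m(X,\eta)=s_m(X,x)-e<0$; then $\mldmj(\eta;X)\leq s_m(X,\eta)<0$ forces $\mldmj(\eta;X)=-\infty$, so again $s_m(X,\eta)$ computes $\mldmj(\eta;X)$. In all cases $\nu(X,\eta)-1\leq m\leq N_d$, which is $\widetilde C_d$. The main obstacle is the dimension lemma: one must be careful that $X_m(\eta)$ is the dimension of the \emph{closure of the fibre over the non‑closed point} $\eta$ (not $\dim\pi_m^{-1}(Z)$), and that the components of $\pi_m^{-1}(Z)$ not dominating $Z$ genuinely vanish over a general point of $Z$; granting this, the rest is bookkeeping with Proposition~\ref{formula1} and the definition of ``computes''.
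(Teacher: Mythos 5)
Your argument is correct and is essentially the paper's own proof of Proposition \ref{closed&nonclosed}: both reduce to a closed point $x\in\overline{\{\eta\}}$ chosen in the intersection of the finitely many dense opens where $\dim X_m(x)=\dim X_m(\eta)-\dim\overline{\{\eta\}}$ for $m\leq N_d$, and then transport the computing index from $x$ to $\eta$. The only (cosmetic) difference is that the paper additionally forces genericity at $m=\nu(X,\eta)-1$ and runs a squeeze of inequalities, whereas you use the inequality $\mldmj(x;X)\leq\mldmj(\eta;X)+\dim\overline{\{\eta\}}$ together with a case split on the sign of $\mldmj(x;X)$.
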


\begin{prop}
\label{closed&nonclosed}
 For an integer $d\geq 1$, Conjecture $C_d$ and Conjecture $\widetilde C_d$
 are equivalent.
\end{prop}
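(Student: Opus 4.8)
The implication $\widetilde C_d\Rightarrow C_d$ is immediate, since a closed point is in particular a point, so the bound $\nu(X,\eta)-1\le N_d$ for all points $\eta$ specializes to the bound for closed points. For the converse the plan is to reduce the estimate at a non-closed point $\eta$ to the estimate at a general closed point of $W:=\overline{\{\eta\}}$ \emph{lying in the same variety} $X$, and then invoke $C_d$ there; this even yields $\widetilde C_d$ with the same constant $N_d$. So let $X$ be $d$-dimensional, let $\eta\in X$, put $w:=\dim W$, and assume $w\ge 1$ (if $w=0$ then $\eta$ is closed and $C_d$ applies directly).

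The key point is a shift formula: for a general closed point $x\in W$ one has
\[
s_m(X,x)=s_m(X,\eta)+w\qquad\text{for every }m\in\NN .
\]
To prove it I would analyse $\pi_m^{-1}(W)\subseteq X_m$. Its irreducible components with image equal to $W$ account exactly for the closure $\overline{\pi_m^{-1}(\eta)}$, so $\dim X_m(\eta)=\dim\overline{\pi_m^{-1}(\eta)}=w+\dim_{\kappa(\eta)}\pi_m^{-1}(\eta)$. For a general closed point $x\in W$ the remaining components miss $\pi_m^{-1}(x)$, while each component dominating $W$ meets $\pi_m^{-1}(x)$ in a subscheme of the expected dimension (standard results on fibre dimensions of morphisms); hence $\dim X_m(x)=\dim_{\kappa(\eta)}\pi_m^{-1}(\eta)=\dim X_m(\eta)-w$, and the displayed identity follows from $s_m(X,\cdot)=(m+1)d-\dim X_m(\cdot)$. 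Taking infima over $m$ and using Proposition~\ref{formula1} we also get $\mldmj(x;X)=\mldmj(\eta;X)+w$ when the right-hand side is finite, and $\mldmj(x;X)=\mldmj(\eta;X)=-\infty$ otherwise. (One wants a single $x$ good for all $m$ at once; for each $m$ the bad locus is a proper closed subset of $W$, so this is fine after harmlessly enlarging the base field, or after first reducing to the finitely many relevant $m$.)

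Given the shift formula, compare the invariants $\nu$. If $\mldmj(\eta;X)\ge 0$, then $s_m(X,\eta)$ computes $\mldmj(\eta;X)$ precisely when $s_m(X,\eta)=\inf_{m'}s_{m'}(X,\eta)$, which by the shift is equivalent to $s_m(X,x)=\inf_{m'}s_{m'}(X,x)$, i.e. to $s_m(X,x)$ computing $\mldmj(x;X)$; hence $\nu(X,\eta)=\nu(X,x)$. If $\mldmj(\eta;X)=-\infty$, then also $\mldmj(x;X)=-\infty$, and whenever $s_m(X,x)$ computes $\mldmj(x;X)$ (i.e. $s_m(X,x)<0$) we get $s_m(X,\eta)=s_m(X,x)-w<0$, so $s_m(X,\eta)$ computes $\mldmj(\eta;X)$; hence $\nu(X,\eta)\le\nu(X,x)$. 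In both cases $\nu(X,\eta)\le\nu(X,x)$, and $C_d$ applied to the closed point $x$ of the $d$-dimensional variety $X$ gives $\nu(X,x)-1\le N_d$. Thus $\nu(X,\eta)-1\le N_d$ for every point $\eta$, which is $\widetilde C_d$.

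The main obstacle is the shift formula: the component analysis of $\pi_m^{-1}(W)$ together with the care needed to make one general $x$ serve all $m$ simultaneously. Everything afterward is bookkeeping with the definition of when $s_m$ computes $\mldmj$, including the separate handling of the value $-\infty$. (An alternative is to pass instead to a generic hyperplane section $X':=X\cap H_1\cap\cdots\cap H_w$ through a general closed point $x$ of $W$, show $s_m(X',x)=s_m(X,\eta)$ for all $m$, and apply $C_d$ to the closed point $(x,0)$ of the $d$-dimensional variety $X'\times\AA^{w}$ via the product identity $s_m(Y\times\AA^1,(y,0))=s_m(Y,y)+1$; there the obstacle becomes a Bertini-type statement that $X'$ is a variety of dimension $d-w$ near $x$.)
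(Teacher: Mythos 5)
Your proof is correct and follows essentially the same route as the paper: the paper likewise reduces to a general closed point $x\in\overline{\{\eta\}}$ via the shift $s_m(X,x)=s_m(X,\eta)+\dim\overline{\{\eta\}}$ (valid on a dense open $U_m$ for each $m$), takes $x$ in the finite intersection of the $U_m$ for the relevant indices $m\leq N_d$ together with $m=\nu(X,\eta)-1$, and concludes $\nu(X,\eta)\leq\nu(X,x)\leq N_d+1$ by the same chain of inequalities. Your explicit handling of the $-\infty$ case and of the ``one $x$ for all $m$'' issue (resolved by restricting to finitely many $m$, exactly as the paper does) only makes explicit what the paper leaves terse.
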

\begin{proof}
The implication $\widetilde C_d \Rightarrow C_d$ is obvious.
To show the converse implication, take any non-closed point $\eta\in X$.
Let $N_d$ be as in Conjecture $C_d$.
Let $\nu=\nu(X, \eta)$, and we will show that $\nu-1 \leq N_d$.
Note that for every $m\in \NN$, there exists an open dense subset $U_m\subset
\overline{\{\eta\}}$ such that
$$\dim X_m(\eta)=\dim X_m(x)+\dim\overline{\{\eta\}}$$
holds for every closed point $x\in U_m$.
Then for  every closed point  $x\in  U_m$ we have
$$
s_m(X, \eta)=d(m+1)-\dim X_m(\eta)
=s_m(X, x)-\dim \overline{\{\eta\}}.$$

Then,   for a closed point $$x\in U_\nu\cap \left(\bigcap_{n\leq N_d}U_n\right),$$ 
we obtain

$$\mldmj(\eta, X)=s_{\nu-1} (X,\eta)=s_{\nu-1}(X, x)-\dim \overline{\{\eta\}}$$
\begin{equation}
\label{nonclosed}
\geq s_{\nu(X,x)-1}(X, x)-\dim \overline{\{\eta\}}=s_{\nu(X,x)-1 }(X,\eta)\geq \mldmj(\eta, X).
\end{equation}

Therefore all inequalities in (\ref{nonclosed}) become equalities.
By the minimality of $\nu=\nu(X,\eta)$, we obtain that $\nu-1\leq \nu(X,x)-1\leq N_d$.
\end{proof}

By this proposition, we reduce the problem in $(\widetilde C_d)$ into the problem on closed points.
Henceforth, we will consider the conjectures only for closed points.

\begin{lem}
\label{nu=mu}  
Let $X$ be a variety and $x\in X$ a closed point.
We assume that $X$ is embedded into a non-singular variety $A$ with codimension $c$.
Then, we have  $$\nu(X,x)=      \min \left\{ m\in \NN \left| 
\begin{array}{l}  k_E+1-cm=\mldmj(x;X)\\
\mbox{for\ a prime\ divisor}\ E\  \mbox{over \ $A$\ computing}\ \mldmj(x; X)\\
 \end{array} \right\},\right.$$  
     where in case $\mldmj(x;X)=-\infty$, the condition $k_E+1-cm=\mldmj(x;X)$ means
     that $k_E+1-cm<0$.
     
      In case $\mldmj(x;X)\geq 0$, the statement gives
    $$\nu(X,x)=\min \left\{\val_E(I_X) \mid \mbox{for}\ E\  \mbox{computing}\ \mldmj(x; X)\right\}.$$
\end{lem}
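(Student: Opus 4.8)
The plan is to transcribe each side of the asserted equality into a statement about cylinders in the arc space $A_\infty$ of the smooth ambient $A$, and then compare them through the correspondence between irreducible cylinders and divisorial valuations over $A$ (throughout I write $\val_E$ for the divisorial valuation of a prime divisor $E$ over $A$, so that $a(E;A)=k_E+1$ since $A$ is smooth). First I would record, from Proposition~\ref{formula1} together with $\cont^{\geq m}(I_X)=(\psi^A_{m-1})^{-1}(X_{m-1})$, the identity
\[
s_{m-1}(X,x)=\codim\bigl(\cont^{\geq m}(I_X)\cap(\pi^A)^{-1}(x),\,A_\infty\bigr)-cm\qquad(m\geq 1),
\]
so that $\mldmj(x;X)=\inf_{m\geq 1}s_{m-1}(X,x)$, an infimum attained by an integer precisely when it is $\geq 0$. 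I would also recall the two halves of the valuation/cylinder dictionary over the smooth $A$ (see \cite{icr}, and in positive characteristic \cite{ir2}): (i) for a prime divisor $E$ over $A$ with center the closed point $x$, the maximal divisorial set $C_A(\val_E)\subseteq A_\infty$ is an irreducible cylinder contained in $(\pi^A)^{-1}(x)$, with $\codim(C_A(\val_E),A_\infty)=k_E+1$, and $C_A(\val_E)\subseteq\cont^{\geq\val_E(I_X)}(I_X)$; (ii) conversely, for $m\geq 1$ every irreducible component $C^{\circ}$ of $\cont^{\geq m}(I_X)\cap(\pi^A)^{-1}(x)$ is of the form $C_A(q\,\val_E)$ for a prime divisor $E$ over $A$ with center $x$ and an integer $q\geq 1$, with $\codim(C^{\circ},A_\infty)=q(k_E+1)$ and $q\,\val_E(I_X)\geq m$ (the center is the closed point $x$ because $C^{\circ}$ lies over $x$).

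For the inequality $\leq$ — which, incidentally, also establishes that a prime divisor computing $\mldmj(x;X)$ exists — set $\nu=\nu(X,x)$ and apply (ii) to a component $C^{\circ}=C_A(q\,\val_E)$ of $\cont^{\geq\nu}(I_X)\cap(\pi^A)^{-1}(x)$ of least codimension, which by the displayed identity equals $s_{\nu-1}(X,x)+c\nu$. Substituting $q(k_E+1)=s_{\nu-1}(X,x)+c\nu$ and $q\,\val_E(I_X)\geq\nu$ into $a(E;A,I_X^c)=k_E+1-c\,\val_E(I_X)$ gives
\[
a(E;A,I_X^c)=\tfrac1q\bigl(s_{\nu-1}(X,x)-c\,(q\,\val_E(I_X)-\nu)\bigr)\ \leq\ \tfrac1q\,s_{\nu-1}(X,x).
\]
When $\mldmj(x;X)=s_{\nu-1}(X,x)\geq 0$, comparing this with the trivial bound $a(E;A,I_X^c)\geq\mldmj(x;X)$ forces $q\,\val_E(I_X)=\nu$ and $a(E;A,I_X^c)=\mldmj(x;X)$, so $E$ computes $\mldmj(x;X)$ with $\val_E(I_X)\leq\nu$; when $\mldmj(x;X)=-\infty$ one reads off at once that $a(E;A,I_X^c)<0$ and $k_E+1<c\nu$, so $E$ computes $\mldmj(x;X)$ and $m=\nu$ satisfies $k_E+1-cm<0$. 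In both cases the minimum on the right-hand side is $\leq\nu$.

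For the reverse inequality $\geq$, let $m$ realize that minimum, with $E$ a prime divisor over $A$ with center $x$ computing $\mldmj(x;X)$ and $k_E+1-cm=\mldmj(x;X)$ (read as $k_E+1-cm<0$ when $\mldmj(x;X)=-\infty$); I must show $m\geq\nu$. Since $E$ computes, $a(E;A,I_X^c)=k_E+1-c\,\val_E(I_X)$ equals $\mldmj(x;X)$ (\resp\ is $<0$); in the finite case this forces $m=\val_E(I_X)$, and in the $-\infty$ case $\val_E(I_X)$ is itself an admissible value of $m$, so by minimality $m\leq\val_E(I_X)$. In either case $m\leq\val_E(I_X)$, so by (i),
\[
C_A(\val_E)\ \subseteq\ \cont^{\geq\val_E(I_X)}(I_X)\cap(\pi^A)^{-1}(x)\ \subseteq\ \cont^{\geq m}(I_X)\cap(\pi^A)^{-1}(x),
\]
whence
\[
s_{m-1}(X,x)=\codim\bigl(\cont^{\geq m}(I_X)\cap(\pi^A)^{-1}(x),A_\infty\bigr)-cm\ \leq\ \codim(C_A(\val_E),A_\infty)-cm=k_E+1-cm,
\]
which equals $\mldmj(x;X)$ (\resp\ is $<0$). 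As $s_{m-1}(X,x)\geq\mldmj(x;X)$ in the finite case, $s_{m-1}(X,x)$ computes $\mldmj(x;X)$; in the $-\infty$ case $s_{m-1}(X,x)<0$ gives the same. Either way $m\geq\nu$ by the definition of $\nu$. This proves the equality, and the last assertion of the lemma follows because, for a computing $E$, the condition $k_E+1-cm=\mldmj(x;X)\geq 0$ is equivalent to $cm=c\,\val_E(I_X)$, i.e.\ to $m=\val_E(I_X)$.

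The step I expect to be the real obstacle is the dictionary (i)--(ii): the identification of the irreducible components of the relevant contact loci with maximal divisorial sets, together with the codimension formula $\codim(C_A(q\,\val_E),A_\infty)=q(k_E+1)$, must be available over a field of arbitrary characteristic with $A$ smooth. Over characteristic $0$ this is the Ein--Lazarsfeld--Musta\c{t}\u{a} theory; in positive characteristic it is exactly the arc-space input that makes the jet-scheme approach viable, and I would take it from the positive-characteristic development in \cite{ir2}. Once that dictionary is granted, everything else above is elementary numerical manipulation.
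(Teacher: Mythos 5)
Your argument is correct and is essentially the paper's own proof: both rest on the identity $s_{m-1}(X,x)=\codim(\cont^{\geq m}(I_X)\cap(\pi^A)^{-1}(x),A_\infty)-cm$, the identification of irreducible components of these contact loci with maximal divisorial sets $C_A(q\,\val_E)$ of codimension $q(k_E+1)$ (the paper cites \cite[Corollary 3.16, Theorem 3.13]{ir2} and \cite[Lemma 2.7]{zhu} for exactly the dictionary you flag as the key input), and the reverse containment $C_A(\val_E)\subseteq\cont^{\geq\val_E(I_X)}(I_X)\cap(\pi^A)^{-1}(x)$ for the opposite inequality. The only difference is organizational (you split by inequality direction, the paper splits by the cases $\mldmj\geq 0$ and $\mldmj=-\infty$), and your inequality chain for ``$\leq$'' is the paper's chain divided by $q$.
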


\begin{proof}
The last statement is obvious, once we prove the main statement.
Because the condition $k_E+1-cm=\mldmj(x;X)=\mld(x; A, I_X^c)$ implies $m=\val_E(I_X)$ under the assumption 
$\mldmj(x;X)\geq 0$.
 
 For the proof of the main statement, we define
 $$\mu(X, x):=  \min \left\{ m\in \NN \left| 
\begin{array}{l}  k_E+1-cm=\mldmj(x;X)\\
\mbox{for\ a prime\ divisor}\ E\  \mbox{over \ $A$\ computing}\ \mldmj(x; X)\\
 \end{array} \right\},\right.$$  
 and will prove that $\nu(X, x)=\mu(X, x)$.

For simplicity of notation on proving the main statement of the lemma, we denote $\mu(X,x)$ and $\nu(X,x)$ by just $\mu$ and $\nu$,
respectively.

{\bf Case 1.}  $\delta:=\mldmj(x;X)\geq 0$.

In this case, $$\delta=s_{\nu-1}(X,x)=\codim (\cont^{ \geq \nu}(I_X)\cap (\pi^A)^{-1}(x), A_\infty)
          -c \cdot \nu$$
   Let $C$ be an irreducible component of  $\cont^{\geq \nu}(I_X)\cap (\pi^A)^{-1}(x)$
   that gives the codimension. 
   Then,  ${C}$ is a maximal divisorial set $C_A(q\cdot \val_E)$ 
   for some $q\in \NN$ and a prime divisor $E$ over $A$ with the center $x$  (see, for example,
   \cite[Corollary 3.16]{ir2}).
   As $C\subset \cont ^{\geq\nu}(I_X)$, it follows that $$q\cdot\val_E(I_X)\geq\nu.$$
   On the other hand, by \cite[Lemma 2.7]{zhu} (see also \cite[Theorem 3.13]{ir2}) it follows 
   $$\codim (C, A_\infty)=\codim (C_A(q\cdot\val_E), A_\infty)=q(k_E+1).$$ 
   Therefore, we obtain
   $$\delta=s_{\nu-1}(X,x)=q(k_E+1)- c\nu\geq q(k_E+1-c\cdot\val_E(I_X))
   \geq k_E+1-c\cdot\val_E(I_X)\geq \delta.$$
 Then, all inequalities become equalities.
Therefore,  the last equality shows that $E$ computes $\mldmj(x;X)$ and $\val_E(I_X)\leq\nu$.
(More precisely, $\val_E(I_X)=\nu$ if $\delta>0$ and $\val_E(I_X)=\nu/q\leq \nu$ if $\delta=0$.)
 Hence, by the minimality of $\mu$, we obtain 
  $$\mu\leq \nu.$$
  
  Next we prove the converse inequality $\mu\geq \nu.$
  Let $E$ be a prime divisor over $A$ with the center  $x$ computing $\mldmj(x;X)=\delta$ and $\val_E(I_X)=\mu$.
  As $C_A(\val_E)\subset \cont^{\geq \mu}(I_X)\cap(\pi^A)^{-1}(x)$,
  $$\codim( \cont^{\geq \mu}(I_X)\cap(\pi^A)^{-1}(x), A_\infty)\leq 
  \codim (C_A(\val_E), A_\infty)=k_E+1.$$
  Therefore, we obtain
  $$s_{\mu-1}(X,x)=\codim ( \cont^{\geq \mu}(I_X)\cap(\pi^A)^{-1}(x), A_\infty)-c\cdot\mu$$
  $$\leq k_E+1-c\cdot\val_E(I_X)=\delta.$$
  Hence the equality holds, which yields $\nu\leq \mu$ by the minimality of $\nu$.
  
  {\bf Case 2.} 
   $\mldmj(x; X)=-\infty$.
  Then, by the same argument as  in Case 1 we can write
  $$s_{\nu-1}(X,x)=\codim\left( \cont^{\geq \nu}(I_X)\cap \pi^{-1}(x), A_\infty\right )-c\nu$$
$$\ \ \ \ \ \ \ \ \   =\codim C(q\cdot\val_E)-c\nu =q(k_E+1)-c\nu <0,$$
    for some $q\in \NN$ and a prime divisor $E$ over $A$ with the center  $x$ such that   
     $$q\cdot~\val_E(I_X)\geq\nu.$$ 
    Therefore, $$q(k_E+1-c\cdot\val_E(I_X))\leq q(k_E+1)-c\nu<0, $$
which implies that     $E$ computes $\mldmj(x; X)=-\infty$.
  Then, by the minimality of $\mu$, it follows that $\mu\leq \lceil\frac{\nu}{q}\rceil\leq \nu$.

  Now, to show the converse inequality, 
  take a prime divisor $E$ over $A$ computing $\mldmj(x;X)=-\infty$ and satisfies
 $k_E+1-c\cdot\mu <0$.
By the minimality of $\mu$ we have $\mu\leq \val_E(I_X)$,
therefore $C(\val_E)\subset \cont^{\geq \mu}(I_X)\cap\pi^{-1}(x)$.
  By the same argument as the corresponding part in Case 1, we observe 
 $$s_{\mu-1}(X,x)\leq k_E+1-c\cdot \mu<0.$$
 Then, by the minimality of $\nu$, we obtain $\nu\leq \mu$.
\end{proof} 

As  $\nu$ does not depend on the choice of a closed immersion $X\subset A$ into a non-singular variety $A$, we obtain the following:

\begin{cor} Let $X$ be embedded into a non-singular variety $A$ with codimension $c$.
The invariant $$\mu(X,x)=      \min \left\{ m\in \NN \left| 
\begin{array}{l}  k_E+1-cm=\mldmj(x;X)\\
\mbox{for\ a prime\ divisor}\ E\  \mbox{over \ $A$\ computing}\ \mldmj(x; X)\\
 \end{array} \right\},\right.$$  is independent of the choice of a closed immersion into a 
non-singular variety.

\end{cor}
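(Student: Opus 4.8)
The plan is to deduce the statement at once from Lemma~\ref{nu=mu}, with essentially no further work. The crucial point is that the invariant $\nu(X,x)$ is manifestly intrinsic to the pointed scheme $(X,x)$ and involves no choice of ambient non-singular variety: it is built from the numbers $s_{m}(X,x)=(m+1)d-\dim X_m(x)$ and from $\mldmj(x;X)$. Here $d=\dim X$; the $m$-jet scheme $X_m$ together with its projection $\pi_m\colon X_m\to X$, and hence the local jet scheme $X_m(x)=\pi_m^{-1}(x)$ and its dimension, are functorially attached to $X$ alone; and $\mldmj(x;X)$ is, by definition, the infimum of $\amj(E;X)$ over prime divisors $E$ over $X$ with center $x$, which again refers only to $X$. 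So $\nu(X,x)$ depends only on $(X,x)$.

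Granting this, the argument is one line. Fix two closed immersions $X\subset A_1$ and $X\subset A_2$ into non-singular varieties, of codimensions $c_1$ and $c_2$, and let $\mu_{A_1}(X,x)$ and $\mu_{A_2}(X,x)$ denote the corresponding invariants defined in the statement, where in each case $k_E$ is computed relative to $A_i$ and the codimension is $c_i$. Applying Lemma~\ref{nu=mu} to each embedding gives $\mu_{A_1}(X,x)=\nu(X,x)=\mu_{A_2}(X,x)$. Since $\nu(X,x)$ is independent of the embedding, so is $\mu(X,x)$.

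I do not anticipate any genuine obstacle, since all the substance has already been packed into Lemma~\ref{nu=mu}. The only point worth a line of care is that the quantity $\mldmj(x;X)$ appearing in the definition of $\mu$ should be read as the intrinsic MJ-minimal log discrepancy, so that the defining equation $k_E+1-c\,m=\mldmj(x;X)$ has an embedding-independent right-hand side; this is guaranteed by Theorem~\ref{IA}, which identifies $\mldmj(x;X)$ with $\mld(x;A,I_X^c)$ for every embedding. With that remark in place the reduction to Lemma~\ref{nu=mu} is complete.
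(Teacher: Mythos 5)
Your proposal is correct and is exactly the paper's argument: the paper derives the corollary in one line from Lemma~\ref{nu=mu} by observing that $\nu(X,x)$, being defined via the intrinsic quantities $s_m(X,x)$ and $\mldmj(x;X)$, does not depend on the embedding, so $\mu_{A_1}(X,x)=\nu(X,x)=\mu_{A_2}(X,x)$. Your additional remark that the right-hand side $\mldmj(x;X)$ in the definition of $\mu$ is embedding-independent by Theorem~\ref{IA} is a reasonable point of care and entirely consistent with the paper.
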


Now we can interpret the conjecture into a more birational theoretic  Conjecture~$D_d$ (Conjecture \ref{Dd}) 
  as follows:

\begin{prop}\label{C=D}
 Conjecture ${C_d}$ and Conjecture $D_d$ are equivalent.
\end{prop}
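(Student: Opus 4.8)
The plan is to prove the equivalence by exploiting Lemma~\ref{nu=mu}, which translates the jet-scheme invariant $\nu(X,x)$ into a purely birational quantity: the minimal order $m = \val_E(I_X)$ (or more generally the minimal $m$ with $k_E+1-cm = \mldmj(x;X)$) attained by a prime divisor $E$ over $A$ computing $\mldmj(x;X)$. The key observation is that for a divisor $E$ computing the MJ-minimal log discrepancy, there is a comparison between $\nu(X,x) = \mu(X,x)$ and the number of blow-ups $b(E)$, $\widetilde b(E)$, or equivalently $k_E$. Concretely, I would show that bounding $\nu(X,x)$ is equivalent to bounding $k_E$ for some computing divisor $E$, and that this in turn controls the number of blow-ups via Proposition-Definition~\ref{zar}.

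First, for the direction $D_d \Rightarrow C_d$: if there is a prime divisor $E$ over $A$ with $k_E \leq M_d$ computing $\mld(x;A,I_X^c) = \mldmj(x;X)$, then since $\val_E(I_X) \leq k_E + 1$ always holds for an exceptional divisor dominating the ideal $I_X$ (or one handles the non-exceptional/smooth case separately, where $\nu = 1$), Lemma~\ref{nu=mu} gives $\nu(X,x) = \mu(X,x) \leq \val_E(I_X) \leq M_d + 1$, so one may take $N_d = M_d + 1$ (with a minor adjustment in the $\mldmj = -\infty$ case, where the condition $k_E+1-c\mu<0$ forces $\mu \leq (k_E+1)/c \leq k_E+1$). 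Conversely, for $C_d \Rightarrow D_d$: if $\nu(X,x) = \mu(X,x) \leq N_d$, then by definition of $\mu$ there is a computing divisor $E$ over $A$ with $\val_E(I_X) = \mu \leq N_d$; one must then bound $k_E$ for such a divisor. Here I would use the standard fact that a divisor $E$ which appears by successive blow-ups of (closures of) points, with $\val_E$ of a bounded ideal bounded by $N_d$ and the ambient variety $A$ nonsingular of dimension $N \leq 2d$, has $k_E$ bounded in terms of $N_d$ and $d$ — this is where one invokes the relation $k_E = \sum_{i} (\codim p_{i-1} - 1) \operatorname{ord}_{p_{i-1}}$-type estimates along the blow-up tower. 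So $M_d$ is obtained as an explicit function of $N_d$ and $d$.

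For the equivalence with $U_d$: Conjecture $U_d$ is a refinement of $D_d$ that additionally demands $\widetilde b(E) = b(E)$ and embeds $X$ in an $A$ of dimension exactly $\emb(X,x)$ rather than any $N \leq 2d$. The direction $U_d \Rightarrow D_d$ needs the comparison $k_E \leq$ (something explicit in $b(E)$) — again via the blow-up tower of Proposition-Definition~\ref{zar}, since each blow-up of a point in a nonsingular variety of dimension $N$ raises $k_E$ by at most $N-1$, giving $k_E \leq (N-1) b(E) \leq (2d-1)B_d$, hence $M_d = (2d-1)B_d$. The direction $C_d \Rightarrow U_d$ (or $D_d \Rightarrow U_d$) requires producing a computing divisor realizable by blowing up only (closures of) points with $b(E) = \widetilde b(E)$ bounded; since $A$ is nonsingular, the equality $\widetilde b(E) = b(E)$ is automatic by the last remark in Proposition-Definition~\ref{zar}, and one bounds $b(E)$ by the number of blow-ups needed to make $\val_E(I_X) \leq N_d$ visible, which is again controlled because each blow-up drops the relevant multiplicity.

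I expect the main obstacle to be the bookkeeping in the direction $C_d \Rightarrow D_d$ (and $C_d \Rightarrow U_d$): one has a divisor $E$ with $\val_E(I_X)$ bounded, but a priori $E$ could require many blow-ups — one must argue that a computing divisor can be chosen so that the \emph{number} of blow-ups, not just the order of vanishing, is controlled, and then convert that into a bound on $k_E$. The cleanest route is probably to run the argument through the maximal divisorial sets $C_A(\val_E)$ already used in the proof of Lemma~\ref{nu=mu}: the codimension formula $\codim(C_A(\val_E), A_\infty) = k_E + 1$ ties $k_E$ directly to the jet-scheme codimension data at level $\nu - 1 = \mu - 1$, so a bound on $\mu$ combined with the fact that $A$ has dimension $\leq 2d$ yields a bound on $k_E$, and hence on $b(E)$ by the tower estimate. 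Everything else — the two easy implications $D_d \Rightarrow C_d$ and the reductions between the various normalizations of the ambient embedding — is routine once Lemma~\ref{nu=mu} and Proposition-Definition~\ref{zar} are in hand.
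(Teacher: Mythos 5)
Your overall skeleton (translate $\nu$ into divisor data via Lemma~\ref{nu=mu} and compare) is the paper's, and your treatment of $D_d\Rightarrow C_d$ is essentially correct up to arithmetic. But there are two genuine problems. First, in the direction $C_d\Rightarrow D_d$ your primary route is to bound $k_E$ for a computing divisor $E$ with $\val_E(I_X)=\mu\leq N_d+1$ by a blow-up-tower estimate, invoking a ``standard fact'' that a divisor over a nonsingular $A$ of dimension $\leq 2d$ with $\val_E(I_X)$ bounded has $k_E$ bounded. That fact is false for arbitrary divisors: after one blow-up you can keep blowing up points off the strict transform of $V(I_X)$, which leaves $\val_E(I_X)$ fixed while $k_E$ grows without bound. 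What saves the argument --- and what the paper uses --- is that you never need a general estimate: the divisor furnished by Lemma~\ref{nu=mu} satisfies the \emph{equation} $k_E+1-c\,\mu(X,x)=\mldmj(x;X)$ (or $<0$), so $k_E\leq \mldmj(x;X)-1+c\cdot\mu(X,x)\leq d-1+d(N_d+1)$ immediately, since $c\leq d$, $\mldmj(x;X)\leq d$ and $\mu=\nu\leq N_d+1$. Your closing aside about routing through $\codim(C_A(\val_E),A_\infty)=k_E+1$ does recover this, but it is presented as a fallback rather than the argument; the ``main obstacle'' you identify (controlling the number of blow-ups) is not an obstacle at all for this proposition --- it only enters in the separate equivalence with $U_d$ (Proposition~\ref{C=U}), which is not part of this statement.

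Second, in the direction $D_d\Rightarrow C_d$ you have a coverage gap: Conjecture $D_d$ only speaks about points admitting a closed immersion into a nonsingular $A$ of dimension $N\leq 2d$, whereas $C_d$ must hold for every closed point of every $d$-dimensional variety. When $\emb(X,x)>2d$ no such immersion exists, and your parenthetical about the ``non-exceptional/smooth case'' does not address this. The paper disposes of it directly: $\dim X_1(x)=\emb(X,x)>2d$ forces $s_1(X,x)<0$, hence $\mldmj(x;X)=-\infty$ and $\nu(X,x)=2$, which is absorbed into any bound $N_d\geq 1$. Also, a small arithmetic slip: in the $\mldmj=-\infty$ case the condition $k_E+1-c\mu<0$ gives $\mu>(k_E+1)/c$, not $\mu\leq(k_E+1)/c$; the correct bound comes from the minimality of $\mu$, yielding $\mu\leq (k_E+2)/c\leq M_d+2$.
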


\begin{proof}
        First we show that (${C_d})$ implies ($D_d$).
        
        Let $x$ be a closed point of a $d$-dimensional variety $X$ 
        embedded into a non-singular
        variety $A$ of dimension $N\leq 2d$.
         Let a prime divisor $E$ over $A$ compute $\mldmj(x; X)$
        and satisfy 
        $$k_E-c\cdot\mu(X,x)+1=\mldmj(x; X)\leq d,$$ 
        where $c=\codim (X, A)\leq d$.
        Here, in case $\mldmj(x;X)=-\infty$,  the above equality implies
        $k_E-c\cdot\mu(X,x)+1<0$, as in Lemma \ref{nu=mu}.
        Then, by the assumpton ($C_d$) and Lemma \ref{nu=mu}, we have
        $$k_E\leq d + d({N_d}+1)-1,$$
        therefore Conjecture $D_d$  (Conjecture \ref{Dd}) holds and 
        we can take $$M_d\leq d(N_d+2)-1.$$

\vskip.5truecm
Next we show that (${D_d}$) implies ($C_d$). 
            First we consider the case that there is a closed immersion $X\subset A$ into 
            a non-singular variety $A$ of dimension $N\leq 2d$ around $x$,
            so that we can apply the statement of ($D_d$).
            
            When $\mldmj(x;X)\geq 0$, by the condition ($D_d$), 
            there is a prime divisor $E$ over $A$ such that 
            $$ k_E\leq M_d  \ \ \mbox{and}$$
            $$ k_E-c\cdot\val_E I_X +1 =\mldmj(x;X)\geq 0.$$
            This yields the following bound:
            $$\mu(X,x)=\val_E I_X\leq \frac{k_E+1}{c}\leq \frac{M_d+1}{c}\leq M_d+1.$$
            When $\mldmj(x;X)=-\infty$, by the condition ($D_d$), 
            there is a prime divisor $E$ over $A$ computing $\mldmj(x;X)$ such that 
            $$ k_E\leq M_d \ \ \mbox{and}$$ 
            $$k_E-c\cdot \mu(X,x)+1<0.$$ 
            By the definition,  $\mu(X,x)$ is the minimal integer satisfying the
            last inequality, which yields that 
            $$\mu(X,x)\leq \frac{k_E+2}{c}\leq M_d+2.$$

         Next we assume that $\emb(X,x)>2d$, then we have $\dim X_{1}(0) > 2d $ which  
         implies $s_1(X,x)<0$.
         In this case, automatically $\mldmj(x;X)=-\infty$
         and $$\mu(X,x)=\nu(X,x)=2.$$
         Hence, for all cases, we obtain $(C_d)$ and we can take 
         $$N_d\leq  M_d+1.$$
\end{proof}

   These conjectures are also equivalent to the following conjecture implying
    the boundedness of the number of blow-ups 
   to obtain a prime divisor computing the MJ-minimal log discrepancy.
   
\begin{conj}[$U_d$] 
   For every $d\in \NN$ there is an integer $B_d\in \NN$ depending only on $d$
   such that for every singularity $(X,x)$ of dimension $d$ 
   embedded into a non-singular variety $A$ with $\dim A=\emb(X,x)$,
   there is a prime divisor $E$ over $A$ computing $\mldmj(x;X)$ and
   $\widetilde b(E)=b(E)\leq B_d$.
   
\end{conj}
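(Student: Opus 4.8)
The plan is to deduce $U_d$ from the equivalent Conjecture $D_d$ (Proposition \ref{C=D} already gives $C_d\Leftrightarrow D_d$); the point not yet addressed is the \emph{production of the explicit uniform bound} $B_d$, which is the real content of $U_d$. Fix a $d$-dimensional singularity $(X,x)$ and a closed immersion $X\subset A$ into a non-singular variety with $N:=\dim A=\emb(X,x)$, and set $c=N-d$. Since $A$ is non-singular, Proposition-Definition \ref{zar} guarantees $\widetilde b(E)=b(E)$ for every prime divisor $E$ over $A$; hence it suffices to exhibit \emph{one} prime divisor $E$ over $A$ computing $\mldmj(x;X)$ whose extraction number $b(E)$ is bounded in terms of $d$ alone. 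I would split the argument according to the size of $N$.

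Suppose first $N\le 2d$. Then the minimal embedding $A$ is admissible in the hypothesis of $D_d$, so $D_d$ supplies a prime divisor $E$ over $A$ with center $x$ computing $\mldmj(x;X)$ and satisfying $k_E\le M_d$. The whole problem is thereby reduced to the \emph{geometric} estimate
$$ b(E)\;\le\;k_E\qquad(N\ge 2), $$
which, once established, yields $\widetilde b(E)=b(E)\le M_d$ and hence $B_d\le M_d$ (explicitly $B_d\le d(N_d+2)-1$ via the bound on $M_d$ from Proposition \ref{C=D}). I expect these constants to be far from optimal compared with the sharper values produced by the direct computations of Sections 5--6.

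For the complementary case $N=\emb(X,x)>2d$ I would argue by hand. As in the proof of Proposition \ref{C=D}, the inequality $\dim X_1(x)>2d$ forces $s_1(X,x)<0$, so $\mldmj(x;X)=-\infty$; and minimality of the embedding forces $I_X\subset\m_{A,x}^2$, i.e.\ $\ord_x(I_X)\ge 2$. Blowing up the single point $x\in A$ then produces an exceptional prime divisor $E_1$ with $k_{E_1}=N-1$ and $\val_{E_1}(I_X)=\ord_x(I_X)\ge 2$, so that
$$ a\!\left(E_1;A,I_X^{\,c}\right)=k_{E_1}+1-c\,\val_{E_1}(I_X)\le N-2(N-d)=2d-N<0. $$
Thus $E_1$ computes $\mldmj(x;X)=-\infty$ after a single blow-up, $b(E_1)=\widetilde b(E_1)=1$, and this case contributes nothing beyond the bound already found.

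The crux, and the step I expect to be the main obstacle, is the inequality $b(E)\le k_E$ over a non-singular $A$ of dimension $N\ge 2$. The heuristic is transparent: in a minimal Zariski sequence $X^{(b(E))}\to\cdots\to X^{(0)}=A$, every blown-up center $\overline{\{p_{i-1}\}}$ with $i\le b(E)$ has codimension $\ge 2$, and blowing up a codimension-$r$ center in a smooth ambient raises the coefficient of the relative canonical divisor along $E$ by at least $r-1\ge 1$; summing over the $b(E)$ steps gives $k_E\ge b(E)$. Making this rigorous is delicate because the intermediate centers $\overline{\{p_{i-1}\}}$ and the intermediate spaces $X^{(i)}$ need not be smooth, so the clean per-step increment formula is not directly available. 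I would handle this either by comparing the Zariski sequence with an auxiliary modification built from blow-ups of \emph{smooth} centers (for which the increment is exactly $r-1$) together with monotonicity of the discrepancy under such modifications over a fixed smooth germ, or by invoking the corresponding discrepancy lower bound from \cite{ko}. This geometric lemma is precisely the bridge converting the numerical control of $k_E$ furnished by $D_d$ into the blow-up control demanded by $U_d$, and it is the only non-formal ingredient in producing $B_d$.
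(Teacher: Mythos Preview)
Your approach is essentially the paper's: split on $N\le 2d$ versus $N>2d$, dispose of the latter with the single blow-up at $x$, and in the former use $D_d$ to bound $k_E$ and then convert this into a bound on $b(E)$ via the per-step discrepancy increment in the Zariski sequence. Two remarks on where you diverge.

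First, the concern you flag as ``the main obstacle'' is not actually one. Since $A$ is non-singular and the first center is the closed point $x$, the space $X^{(1)}$ is non-singular. Inductively, suppose $X^{(i-1)}$ is regular at $p_{i-1}$. Blow-up commutes with localization, so over $\spec\o_{X^{(i-1)},p_{i-1}}$ the map $\varphi_i$ is the blow-up of a regular local ring at its maximal ideal, which is regular; hence $X^{(i)}$ is regular at $p_i$ (which lies over $p_{i-1}$). The discrepancy computation only sees the local picture at these generic points, so the standard increment $K_{X^{(i)}/X^{(i-1)}}=(c_i-1)F_i$ holds there, and since $p_i\in F_i$ one gets $\val_E(F_i)\ge 1$. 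Summing gives $k_E\ge\sum_{i=1}^{b(E)}(c_i-1)$ with no need for auxiliary smooth-center modifications or an appeal to \cite{ko}. The paper simply writes this inequality without comment.

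Second, the paper sharpens your bound by exploiting that the \emph{first} center is the point $x$, so $c_1=N=\emb(X,x)\ge d+1$ (we are in the singular case), while $c_i\ge 2$ for $i\ge 2$. This yields $k_E\ge d+(b(E)-1)$ and hence $B_d\le M_d-d+1$, a bit better than your $B_d\le M_d$.
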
   

\begin{prop}
\label{C=U}
 Conjecture $U_d$ is equivalent to Conjecture $C_d$ and $D_d$.
\end{prop}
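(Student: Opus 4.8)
\emph{The plan.} Since Conjectures $C_d$ and $D_d$ are already equivalent by Proposition~\ref{C=D}, I would only prove that $U_d$ is equivalent to $C_d$ (hence to $D_d$). The bridge is Lemma~\ref{nu=mu}, which identifies the quantity $\nu(X,x)$ bounded in $C_d$ with $\mu(X,x)$, together with two elementary comparisons for a prime divisor $E$ over a non-singular variety $A$ of dimension $N$ whose center is a closed point $x$ (note $\widetilde b(E)=b(E)$ automatically, as $A$ is non-singular): (i) $b(E)\le k_E$, since in the minimal Zariski sequence of Proposition-Definition~\ref{zar} each of the $n=b(E)$ centers has codimension $\ge 2$ and lies on the exceptional divisor created one step earlier, so the coefficient of the successive exceptional divisor in the relative canonical divisor increases by at least $1$ at each step, giving $k_E\ge n$; and (ii) conversely $k_E\le\phi(N,b(E))$ for some explicit $\phi$ (linear in $N$, roughly factorial in $b$), since along the same sequence the coefficient $a_i$ of the $i$-th exceptional divisor satisfies $a_i\le(i-1)\max_{j<i}a_j+(N-1)$ — each exceptional divisor being non-singular when created, its strict transforms have multiplicity $\le 1$ along any later center.

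\emph{$C_d\Rightarrow U_d$.} First I would embed $X$ in $A$ with $\dim A=\emb(X,x)=:N$. If $N\le 2d$, apply $D_d$ to this embedding to get a prime divisor $E$ over $A$ computing $\mldmj(x;X)$ with $k_E\le M_d$; then $\widetilde b(E)=b(E)\le k_E\le M_d$ by (i). If $N>2d$, then $s_1(X,x)=2d-N<0$, forcing $\mldmj(x;X)=-\infty$; the embedding being minimal we have $I_X\subset\m_x^2$, so $\mult_x(I_X)\ge 2$, and blowing up $x$ once yields $E$ with $a(E;A,I_X^c)=N-c\cdot\mult_x(I_X)\le 2d-N<0$, so this $E$ computes $\mldmj(x;X)$ with $b(E)=1$. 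Hence $U_d$ holds with $B_d=\max(M_d,1)$.

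\emph{$U_d\Rightarrow C_d$.} Embed $X$ in $A$ with $\dim A=\emb(X,x)=:N$, set $c=N-d$, and take $E$ over $A$ computing $\mldmj(x;X)$ with $b(E)\le B_d$. By Lemma~\ref{nu=mu}, $\nu(X,x)=\mu(X,x)$, with $\mu(X,x)\le\val_E(I_X)$ when $\mldmj(x;X)\ge 0$ and $\mu(X,x)\le k_E+2$ when $\mldmj(x;X)=-\infty$. If $\mldmj(x;X)\ge 0$, then $N\le 2d$ because $s_1(X,x)=2d-N\ge\mldmj(x;X)\ge 0$, and $c\cdot\val_E(I_X)=k_E+1-\mldmj(x;X)\le k_E+1$, so by (ii) $\val_E(I_X)\le k_E+1\le\phi(2d,B_d)+1$. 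If $\mldmj(x;X)=-\infty$: when $N>2d$ we get $\nu(X,x)=\mu(X,x)=2$ directly from $s_0(X,x)=d>0$ and $s_1(X,x)<0$; when $N\le 2d$, $\mu(X,x)\le k_E+2\le\phi(2d,B_d)+2$ by (ii). In all cases $\nu(X,x)$ is bounded by a function of $d$ alone, which is $C_d$.

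\emph{Expected main obstacle.} The applications of Lemma~\ref{nu=mu}, the inequality (i), and the bookkeeping of the cases $\emb(X,x)\le 2d$ or $\emb(X,x)>2d$ and $\mldmj(x;X)\ge 0$ or $-\infty$ are routine. The genuinely delicate point will be the a priori bound (ii): controlling how fast $k_E$ (and hence $\val_E(I_X)$) can grow under a bounded number of blow-ups, especially when the intermediate models and the centers $\overline{\{p_i\}}$ are singular, so that the ``multiplicity $\le 1$'' estimate for strict transforms of the exceptional divisors — and, if needed, a reduction to sequences of point blow-ups over a fixed resolution — has to be argued with care.
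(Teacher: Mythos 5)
Your proposal is correct and follows essentially the same route as the paper: both directions rest on the two-sided comparison between $k_E$ and the number of blow-ups (your (i) is the paper's $k_E\geq\sum_i(c_i-1)\geq \tilde b(E)+d-1$, and your (ii) is the paper's bound $k_E\leq 2^{\tilde b(E)-1}(2d-1)$ deduced from $\ord_{E_i}K_{A_i/A_{i-1}}\leq 2d-1$), together with the separate treatment of $\emb(X,x)>2d$ via the first point blow-up. The only cosmetic difference is that you inline the passage $U_d\Rightarrow D_d\Rightarrow C_d$ rather than concluding $D_d$ and citing Proposition~\ref{C=D}, and the ``delicate point'' you flag in (ii) is precisely the step the paper settles with the recursion $k_{E_i}\leq(2d-1)+\sum_{j<i}k_{E_j}$.
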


\begin{proof} First we will show the implication $(D_d) \Rightarrow (U_d)$.
  
    When $\emb(X,x)=\dim A=N> 2d$, then the exceptional divisor $E_1$ obtained by
     the  blow-up of $A$ at a point $x$ computes $\mldmj(x;X)=-\infty$.
     Indeed, 
     $$a(E_1; A, I_X^c)=k_{E_1}+1-c\cdot\val_EI_X\leq N-2(N-d)<0.$$
     
     So we may assume that $N\leq 2d$.
     By the assumption ($D_d$), 
     there is a  prime divisor $E$ over $A$ computing $\mldmj(x; X)$ such that $k_E\leq M_d$.
      Let $c_i$ be the codimension in $A$ of the center  of the $i$-th blow-up 
      ($1\leq i\leq \tilde b(E))$.
     Then $c_1=\emb(X,x)\geq d+1$ and $c_i\geq 2$.
     Therefore it follows
     $$M_d\geq k_E \geq \sum_{i=1}^{\tilde b(E)}( c_i-1)\geq  d+(\tilde b(E)-1)=\tilde b(E)+d-1.$$
     Hence, we obtain $\tilde b(E) \leq M_d-d+1$, which yields the positive answer to Conjecture $U_d$ 
     and 
     $$B_d\leq M_d-d+1.$$
     
     Next we prove the converse $(U_d) \Rightarrow (D_d)$.
       Let $(X,x)$ be any $d$-dimensional singularity embedded into a 
     non-singular variety $A$ of dimension $\leq 2d$.
    Let $E$ be a prime divisor over $A$ computing $\mldmj(x;X)$ such that 
    $\tilde b(E)\leq B_d$.  
    As $\ord _{E_i}K_{A_i/A_{i-1}}\leq 2d-1$, where the left hand side is
    the coefficient of the divisor $K_{A_i/A_{i-1}}$ at $E_i$,  we obtain 
    $$k_E\leq 2^{\tilde b(E)-1}(2d-1).$$
    This  implies the Conjecture $D_d$ and $M_d\leq 2^{B_d-1}(2d-1)$.

\end{proof}

When one tries to prove Conjecture $C_d$, it may be useful to split it into 
small conjectures $C_{d, \delta} $ for every integer $\delta\leq d$ as follows:

\begin{conj}[$C_{d, \delta}$]
\label{split}
Let $d$ and $\delta$ be as above.
 There exists $N_{d, \delta}\in \NN$ depending only on $d$ and $\delta$  such that 

for every  closed point $x\in X$ of any $d$-dimensional variety $X$ with
$\mldmj(x;X)<\delta$, there exists $m\leq N_{d,\delta}$
with the property 
$s_m(X,x)< \delta$.
\end{conj}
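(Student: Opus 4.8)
The plan is to attack the statement directly through the jet-dimension function $m\mapsto s_m(X,x)=(m+1)d-\dim X_m(x)$: since $\mldmj(x;X)=\inf_m s_m(X,x)$, the hypothesis $\mldmj(x;X)<\delta$ says exactly that this infimum falls below $\delta$, and the content of the conjecture is that it must already do so at a level bounded in terms of $d$ and $\delta$ alone. First I would read off the information carried by the lowest jets. Because $s_1(X,x)=2d-\emb(X,x)$, either $s_1(X,x)<\delta$, and we finish with $m=1$, or $\emb(X,x)\le 2d-\delta$, so we may take a closed immersion $(X,0)\subset(\AA^N,0)$ with $N=\emb(X,x)$ bounded and $c=N-d$. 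Next, if every generator of the ideal $I$ of $X$ has order exceeding $d-\delta+1$, the special case of Proposition \ref{truncation} gives $X_j(0)=\AA^N_j(0)$ for all $j\le d-\delta+1$, whence $s_{d-\delta+1}(X,x)=d-(d-\delta+1)c<\delta$, again a witness at bounded level; so we may assume $I$ contains an element of order at most $d-\delta+1$. In particular the case $\delta=d$ is settled with $N_{d,d}=1$ by the tangent-space computation alone.

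Given data of this controlled type, the remaining task is to bound the level at which $m\mapsto\dim X_m(0)$ forces $s_m<\delta$. I would pursue two complementary routes. The first is induction on $d$: cut $X$ by a general hypersurface through $x$, compare $\mldmj(x;X)$ with the MJ-minimal log discrepancy of the resulting $(d-1)$-dimensional section, and feed in $C_{d-1,\delta'}$ for the section. The second is a direct study of the growth of $\dim X_m(0)$ for fixed, bounded embedding dimension, multiplicity $\mult_0 X$ and equation orders, using Proposition \ref{truncation} to replace $I$ by its ideal of degree-$\le m$ truncations, so that only finitely much of $I$ is relevant up to any prescribed level. One may reformulate the sought level through the contact-locus identity $s_m(X,x)=\codim(\cont^{\ge m+1}(I_X)\cap(\pi^A)^{-1}(x),A_\infty)-c(m+1)$ and the maximal-divisorial-set decomposition used in the proof of Lemma \ref{nu=mu}, which shows the first level with $s_m<\delta$ is governed by a single prime divisor $E$ over $A$ with $\codim C_A(\val_E)=k_E+1$; but effective control must still be extracted on the jet side.

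The hard part is precisely this last step: a uniform bound on the stabilisation level of $m\mapsto\dim X_m(0)$ without resolution of singularities. In characteristic $0$ the bound is read off a log resolution of $(A,I_X)$, where $s_m$ becomes eventually constant at a level governed by the discrepancies and multiplicities of the resolution — quantities that are bounded once the ambient data are bounded — and the Denef--Loeser change-of-variables formula then pins down $N_{d,\delta}$. In positive characteristic neither resolution nor the generic smoothness (Bertini) needed to run the hyperplane-section induction is available, so there is no a priori ceiling on where the jet-dimension function settles; producing such a resolution-free, effective stabilisation estimate is the genuine obstacle and the reason the statement remains conjectural in general. Accordingly I would first target the cases where a combinatorial or low-order substitute for resolution exists: $\delta=d-1$, solvable by a longer analysis of low jets giving $N_{d,d-1}=5$; non-degenerate hypersurfaces, where the Newton polyhedron replaces the resolution; and singularities of maximal type, handled by the inductive proposition assuming $C_{d-1}$.
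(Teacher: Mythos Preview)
The statement you are addressing is a \emph{conjecture}, not a theorem: the paper states Conjecture $C_{d,\delta}$ and does not prove it in general. There is therefore no ``paper's own proof'' to compare against. You appear to understand this --- your final paragraph explicitly says the statement ``remains conjectural in general'' --- but the document is framed as a proof proposal, which is misleading.

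What you have actually written is a reasonable survey of the known partial progress, and it tracks the paper closely. Your tangent-space argument giving $N_{d,d}=1$ is exactly Proposition~\ref{first}; your mention of $N_{d,d-1}=5$, non-degenerate hypersurfaces, and singularities of maximal type under $C_{d-1}$ matches the paper's Proposition~\ref{first}, Theorem~\ref{non-deg}, and Proposition~\ref{reduce} respectively. Your reformulation via contact loci and the divisor $E$ is the content of Lemma~\ref{nu=mu}, which the paper uses to show the equivalence of $C_d$ with the divisorial conjecture $D_d$.

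The two ``routes'' you propose for the general case are not pursued in the paper and each has a real obstruction you only partially name. The hyperplane-section induction requires generic smoothness (Bertini), which fails in positive characteristic --- you note this, but it means the route is simply unavailable, not merely harder. The ``direct study of growth of $\dim X_m(0)$ for bounded embedding data'' is the heart of the problem: Proposition~\ref{truncation} tells you that $X_m(0)$ depends only on the degree-$\le m$ part of $I_X$, but this does \emph{not} reduce to finitely many cases, since the coefficients vary continuously and the jet dimensions can jump. Absent resolution, there is no known mechanism to bound the stabilisation level uniformly, and this is precisely why the paper poses the statement as a conjecture rather than proving it.
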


Actually we have the following equivalence:
\begin{prop}
\label{3equi}
The following are equivalent:
\begin{enumerate}
\item Conjecture $C_d$,
\item Conjecture $C_{d,\delta}$  for every integer $\delta$ such that 
$0\leq \delta\leq d$,
\item Conjecture $C_{d,\delta}$  for every integer $\delta\leq d$.

\end{enumerate}

\end{prop}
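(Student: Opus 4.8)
The plan is to establish the three equivalences through the cycle $(3)\Rightarrow(2)\Rightarrow(1)\Rightarrow(3)$, where $(3)\Rightarrow(2)$ is trivial (it is just the restriction of $\delta$ to the range $0\le\delta\le d$) and only the negative values of $\delta$ in $(1)\Rightarrow(3)$ carry genuine content. Throughout I would use the facts already available: $\mldmj(x;X)=\inf_m s_m(X,x)$ is either $-\infty$ or a non-negative integer $\le d$, the value $d$ occurring exactly when $(X,x)$ is non-singular, in which case $s_m(X,x)=d$ for every $m$.

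For $(2)\Rightarrow(1)$ I argue pointwise at a closed point $x$. If $\mldmj(x;X)=d$ take $m=0$. If $\mldmj(x;X)=\delta_0$ with $0\le\delta_0\le d-1$, apply $C_{d,\delta_0+1}$ (legitimate since $\delta_0+1\le d$): there is $m\le N_{d,\delta_0+1}$ with $s_m(X,x)<\delta_0+1$, hence $s_m(X,x)\le\delta_0$; but $s_m(X,x)\ge\mldmj(x;X)=\delta_0$, so $s_m(X,x)=\delta_0=\mldmj(x;X)$. If $\mldmj(x;X)=-\infty$, apply $C_{d,0}$ to get $m\le N_{d,0}$ with $s_m(X,x)<0$. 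Setting $N_d:=\max_{0\le\delta\le d}N_{d,\delta}$ gives $(1)$. Conversely, for $(1)\Rightarrow(3)$ with $\delta\ge 0$: given $x$ with $\mldmj(x;X)<\delta$, the $m\le N_d$ furnished by $C_d$ already satisfies $s_m(X,x)<\delta$ (either $s_m(X,x)=\mldmj(x;X)<\delta$, or $s_m(X,x)<0\le\delta$), so $N_{d,\delta}=N_d$ works.

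The substantive step is $(1)\Rightarrow(3)$ with $\delta\le -1$, where $\mldmj(x;X)<\delta$ forces $\mldmj(x;X)=-\infty$ and one must produce, with a bound depending only on $d$ and $\delta$, some $m$ with $s_m(X,x)<\delta$. Starting from an $m_0\le N_d$ with $s_{m_0}(X,x)<0$ (given by $C_d$), I would run the following dichotomy. At level $m$, pick a minimal-codimension irreducible component $C$ of $\cont^{\ge m+1}(I_X)\cap(\pi^A)^{-1}(x)$; as in the proof of Lemma~\ref{nu=mu} it is a maximal divisorial set $C_A(q\,\val_E)$ with $\codim(C,A_\infty)=q(k_E+1)=s_m(X,x)+c(m+1)$. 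If $C\subseteq\overline{\cont^{\ge m+2}(I_X)\cap(\pi^A)^{-1}(x)}$ (``bad case''), then $s_{m+1}(X,x)\le s_m(X,x)-c\le s_m(X,x)-1$; otherwise (``good case'') $C$ meets $\cont^{m+1}(I_X)$ densely, so comparing generic contact orders gives $q\,\val_E(I_X)=m+1$, whence $\val_E(I_X)\le m+1$, while $a(E;A,I_X^c)=k_E+1-c\,\val_E(I_X)<0$, i.e. $\le -1$. Running from $m_0$: if levels $m_0,\dots,m_0+|\delta|-1$ are all ``bad'' then $s_{m_0+|\delta|}(X,x)\le s_{m_0}(X,x)-|\delta|<\delta$ with $m_0+|\delta|\le N_d+|\delta|$; if the first ``good'' level is $m'\le N_d+|\delta|$, it yields $E$ with $\val_E(I_X)\le m'+1$ and $a(E;A,I_X^c)\le -1$, and applying the same maximal-divisorial-set codimension estimate to $q\,\val_E$ with $q=|\delta|+1$ gives $s_{(|\delta|+1)\val_E(I_X)-1}(X,x)\le (|\delta|+1)\,a(E;A,I_X^c)\le -(|\delta|+1)<\delta$ at a level $\le (|\delta|+1)(N_d+|\delta|+1)-1$. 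Either way $N_{d,\delta}:=(|\delta|+1)(N_d+|\delta|+1)$ suffices.

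The main obstacle is exactly this amplification: the honest extra content of $(3)$ over $(2)$ is that one ``bad'' level $m_0$ with $s_{m_0}(X,x)<0$ can be pumped into $s_m(X,x)<\delta$ for arbitrarily negative $\delta$ with $m$ bounded in terms of $d,\delta,N_d$ only. The point I would check most carefully is that the dichotomy is exhaustive and sharp — that the minimal-codimension components of the contact loci are genuinely maximal divisorial sets $C_A(q\,\val_E)$ at every level, and that in the ``good case'' choosing such a component really does pin down $q\,\val_E(I_X)=m+1$ (so that $\val_E(I_X)$ is bounded), since this is what makes the $q$-fold estimate for $C_A(q\,\val_E)$ produce a bounded $m$ rather than an uncontrolled one. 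Everything else reduces to the bookkeeping above.
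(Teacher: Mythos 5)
Your proposal is correct, and the easy parts --- $(3)\Rightarrow(2)$ trivially, $(1)\Leftrightarrow(2)$ by the bookkeeping with $N_d=\max_{0\le\delta\le d}N_{d,\delta}$ and the observation that $s_m(X,x)\ge\mldmj(x;X)$ forces equality once $s_m(X,x)<\mldmj(x;X)+1$ --- coincide with the paper's. Where you genuinely diverge is the substantive step for $\delta=-i<0$. The paper argues directly from a single level $m\le N_{d,0}$ with $s_m(X,x)\le-1$: a minimal-codimension component of $\cont^{\ge m+1}(I_X)\cap(\pi^A)^{-1}(x)$ is a maximal divisorial set $C_A(q\,\val_E)$ with $q\,\val_E(I_X)\ge m+1$ and $q(k_E+1)-c(m+1)=s_m(X,x)$, and one simply multiplies the \emph{whole} divisorial valuation by $i+1$: since $C_A((i+1)q\,\val_E)\subset\cont^{\ge(i+1)(m+1)}(I_X)\cap(\pi^A)^{-1}(x)$, one gets $s_{(i+1)(m+1)-1}(X,x)\le(i+1)s_m(X,x)\le-(i+1)$, so $N_{d,-i}=(i+1)(N_{d,0}+1)-1$. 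The point is that only the inequality $q\,\val_E(I_X)\ge m+1$ is needed to control the jet level, so $\val_E(I_X)$ itself never has to be bounded. Your good/bad dichotomy exists precisely to pin down $q\,\val_E(I_X)=m+1$ before multiplying, and the issue you flag at the end as the delicate point is exactly the complication the paper's trick sidesteps. Your iteration is nevertheless sound: the bad case correctly gives $s_{m+1}\le s_m-c$, the good case correctly extracts $E$ with $a(E;A,I_X^c)\le-1$ and $\val_E(I_X)\le m'+1$ (this does use the slightly stronger fact that the generic point of $C_A(q\,\val_E)$ computes the order function $q\,\val_E$ on $I_X$, not just the containment in the contact locus, but that is a standard property of maximal divisorial sets), and both branches terminate within your stated bound. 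The net effect is a correct proof with a longer argument and a worse constant, $(|\delta|+1)(N_d+|\delta|+1)$ versus the paper's $(i+1)(N_{d,0}+1)-1$.
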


\begin{proof}
  For the proof (1) $\Rightarrow$ (2), we can take $N_{d, \delta}:=N_d$
  for every $\delta$ with $0\leq \delta\leq d$.
  For the converse (1) $\Leftarrow$ (2), we can take $N_d:=\max _\delta N_{d,\delta}$.
  The implication (3) $\Rightarrow$ (2) is obvious.
  For the proof of (2) $\Rightarrow$ (3), it is sufficient to show that 
  for every $\delta=-i<0$, we can take $N_{d,-i}:= (i+1)(N_{d,0}+1)-1$.
  Indeed, by the assumption, there is $m\leq N_{d,0}$ such that
  $$s_{m}(X,x)=\codim\left( \cont^{\geq m+1}(I_X)\cap (\pi^A)^{-1}(\eta), A_\infty\right) - c\cdot (m+1)\leq -1,$$
  which implies that there is an integer $q$ and a prime divisor $E$ over $A$ with the center $x$
  such that $q\cdot\val_EI_X\geq m+1$ and
  
  $$q(k_E+1)-c(m+1)=s_{m}(X,x)\leq-1.$$
  Then, since the maximal divisorial set
  $C_A((i+1)q\cdot\val_E)$ satisfies the following
   $$C_A((i+1)q\cdot\val_E)\subset \cont^{\geq(i+1)(m+1)}I_X\cap (\pi^A)^{-1}(x),$$ 
   it follows
  $$s_{(i+1)(m+1)-1}(X,x)\leq (i+1)q(k_E+1)-(i+1)c(m+1)=(i+1) s_{m}(X,x)  \leq -(i+1),$$
  therefore we can take $N_{d,-i}:= (i+1)(N_{d,0}+1)-1$.

\end{proof}

\section{Applications of the conjecture}
\noindent
In this section we prove the properties (P{MJ}1)-(P{MJ}3) under the assumption that
Conjecture $ C_d $ holds.
First we prove (P{MJ}3). 
The following is a relative version of (PMJ3) and the absolute version follows immdiately as a special case.
\begin{prop} [Lower semi-continuity]
\label{lower}
Assume that Conjecture $ C_d $ holds for an integer $d\geq 1$. 
Let $\rho: {\mathcal X} \to Y$ be a surjective morphism of varieties with the $d$-dimensional
varieties as fibers. 
Let us denote the fiber   $\rho^{-1}(y)$ of $y\in Y$  by ${\mathcal X}_y$. 
Consider the map ${\mathcal X}\to \ZZ$ associating a closed point $x\in {\mathcal X}$ to $\mldmj (x; {\mathcal X}_{\rho(x)})$.
Then the map is lower semi-continuous, i.e., if 
$$\mldmj (x; {\mathcal X}_{\rho(x)}) = \delta,$$
then there is an open neighborhood ${\mathcal U}\subset {\mathcal X}$ of $x$ such that 
for all closed point $x'\in {\mathcal U}$, 
$$\mldmj (x' ; {\mathcal X}_{\rho(x')}) \geq \delta.$$

In particular, in case $Y=\spec  k$, then
  the map $X\to \ZZ; x\mapsto \mldmj (x; X)$ is lower semi-continuous. 

\end{prop}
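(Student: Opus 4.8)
Throughout I work on an affine open neighbourhood of $x$ in $\mathcal X$ whose image in $Y$ is affine; this suffices for the statement, and since the construction below makes sense around every point, it also yields the global semi-continuity.

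\emph{Reduction to a closedness statement.} I want an open neighbourhood of $x$ on which $\phi\colon x'\mapsto\mldmj(x';\mathcal X_{\rho(x')})$ is $\ge\delta:=\phi(x)$. I may assume $\delta\in\ZZ$ (the case $\phi(x)=-\infty$ being vacuous), and then $\delta\le d$ since $\mldmj(x;\mathcal X_{\rho(x)})\le\dim\mathcal X_{\rho(x)}=d$. By Proposition~\ref{3equi}, Conjecture $C_d$ implies Conjecture $C_{d,\delta}$; let $N=N_{d,\delta}$ be the associated bound, so that the contrapositive of $C_{d,\delta}$ reads: if $s_m(\mathcal X_{\rho(x')},x')\ge\delta$ for every $m\le N$, then $\phi(x')\ge\delta$. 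For $0\le m\le N$ set
$$W_m\ :=\ \bigl\{\,x'\in\mathcal X\ \big|\ \dim(\mathcal X_{\rho(x')})_m(x')\ \ge\ (m+1)d-\delta+1\,\bigr\}\ =\ \bigl\{\,x'\ \big|\ s_m(\mathcal X_{\rho(x')},x')<\delta\,\bigr\},$$
and $W:=\bigcup_{m=0}^{N}W_m$. If each $W_m$ is closed, then $\mathcal X\setminus W$ is open; it contains $x$, because $s_m(\mathcal X_{\rho(x)},x)\ge\inf_m s_m(\mathcal X_{\rho(x)},x)=\delta$ for all $m$ by Proposition~\ref{formula1}; and at every closed point $x'\in\mathcal X\setminus W$ one has $s_m(\mathcal X_{\rho(x')},x')\ge\delta$ for all $m\le N$, hence $\phi(x')\ge\delta$. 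So the entire problem reduces to: \emph{each $W_m$ is closed in $\mathcal X$}.

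\emph{Relative jet schemes.} Fix a closed $Y$-embedding $\mathcal X\hookrightarrow\AA^N\times Y$ and let $(\mathcal X/Y)_m\subset\AA^{N(m+1)}\times Y$ be the relative $m$-jet scheme, cut out by the jet-expansion polynomials $f^{(0)},\dots,f^{(m)}$ of a generating set of the ideal of $\mathcal X$, formed over $\o_Y$ exactly as in equation~(\ref{derivation}) in the proof of Proposition~\ref{truncation}. It carries the truncation morphism $p_m\colon(\mathcal X/Y)_m\to\mathcal X$, and base change along a closed point $\rho(x')\in Y$ turns $(\mathcal X/Y)_m$ into the ordinary $m$-jet scheme of the fibre $\mathcal X_{\rho(x')}$ and $p_m$ into its $\pi_m$; hence the scheme-theoretic fibre $p_m^{-1}(x')$ is precisely the local jet scheme $(\mathcal X_{\rho(x')})_m(x')$. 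Thus $W_m=\{\,x'\in\mathcal X\ :\ \dim p_m^{-1}(x')\ge(m+1)d-\delta+1\,\}$, and everything comes down to the upper semi-continuity of the fibre dimension of $p_m$ on $\mathcal X$.

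\emph{Closedness of $W_m$ — the main obstacle.} By the semi-continuity of local fibre dimension for a morphism of finite type, the set $E:=\{\,v\in(\mathcal X/Y)_m\ :\ \dim_v p_m^{-1}(p_m(v))\ge(m+1)d-\delta+1\,\}$ is closed in $(\mathcal X/Y)_m$, and $W_m=p_m(E)$. The difficulty is that $p_m$ is affine but not proper, so there is no formal reason for $p_m(E)$ to be closed — this is exactly the point where, in characteristic $0$, one would invoke a resolution of singularities, and the substitute available here is the $\mathbb{G}_m$-action that rescales the jet parameter $t\mapsto\lambda t$. It commutes with $p_m$ and contracts every jet to its constant part as $\lambda\to 0$; since $\mathbb{G}_m$ is connected it fixes each irreducible component of each fibre $p_m^{-1}(x')$ and thereby forces every such component to contain the zero-section point $\sigma(x')$. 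Consequently $\dim_{\sigma(x')}p_m^{-1}(x')=\dim p_m^{-1}(x')$, so $\sigma(x')\in E\iff x'\in p_m(E)$, i.e. $W_m=p_m(E)=\sigma^{-1}(E)$, which is closed because the zero section $\sigma\colon\mathcal X\to(\mathcal X/Y)_m$ is a closed immersion. I expect this closedness — more precisely, establishing the needed semi-continuity of jet-scheme fibre dimension by combining ordinary semi-continuity of fibre dimension with the contracting torus action, entirely without resolution — to be the one genuinely non-formal step; the remainder is bookkeeping with $C_{d,\delta}$ and Proposition~\ref{formula1}.
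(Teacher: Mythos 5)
Your proof is correct and follows essentially the same route as the paper: pass to the relative jet scheme, use Conjecture $C_d$ (via $C_{d,\delta}$) to reduce lower semi-continuity of $\mldmj$ to that of the finitely many functions $s_m$, $m\leq N$, and conclude from upper semi-continuity of the jet-fibre dimension. The only difference is that the paper simply cites \cite[4.11]{ei} for that last semi-continuity, whereas you supply the standard proof via the contracting $\mathbb{G}_m$-action on jets forcing every component of a fibre to meet the zero section; this is a correct filling-in of the cited step, not a new approach.
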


\begin{proof} 
Only in this proof we denote the $m$-jet scheme of a variety ${\mathcal X}$ by 
${\mathcal L}_m({\mathcal X}) $ and relative $m$-jet scheme of ${\mathcal X}$
with respect to $\rho$ by ${\mathcal L}_m({\mathcal X}/Y)$ for the convenience
to distinguish them from the fibers ${\mathcal X}_t$  of $\rho$. 
The definition/construction of the relative $m$-jet scheme is given in \cite[Proof of Theorem 4.9]{ei} and also \cite[Proof of Proposition 2.3]{m}.
The relative $m$-jet scheme ${\mathcal L}_m({\mathcal X}/Y)$ is a closed 
subscheme of ${\mathcal L}_m({\mathcal X}) $ such that $\pi_m^{-1}({\mathcal X}_y)={\mathcal L}_m({\mathcal X}_y) $,
where $\pi_m: {\mathcal L}_m({\mathcal X}/Y)\to {\mathcal X}$ is the canonical 
truncation morphism.

For every $m\in \NN$, 
$${\mathcal X}\to \ZZ, \ x\mapsto  \dim\pi_m^{-1}(x)=\dim { \left(\pi_m^{\mathcal X_{\rho(x)}}\right)}^{-1}(x)$$ is upper semi-continuous (see, for example \cite[4.11]{ei}).
Here, $\pi_m^{\mathcal X_{\rho(x)}}: {\mathcal L}_m({\mathcal X_{\rho(x)}})\to 
{\mathcal X_{\rho(x)}}$ is the canonical truncation morphism and also the restriction 
of $\pi_m$ on ${\mathcal L}_m({\mathcal X_{\rho(x)}})$.
Therefore $$s_m({\mathcal X}_{\rho(x)},x)=(m+1)d-\dim \pi_m^{-1}(x)=(m+1)d-\dim { \left(\pi_m^{\mathcal X_{\rho(x)}}\right)}^{-1}(x)$$ is lower semi-continuous for all $m\in \NN$.
The Conjecture $C_d $ implies that 
$$\mldmj (x; {\mathcal X}_{\rho(x)})=\min\{s_m({\mathcal X}_{\rho(x)},x) \mid m\leq {N_d}\}.$$
Hence, $\mldmj (x; {\mathcal X}_{\rho(x)})$ is lower semi-continuous.
\end{proof}

\begin{rem}
\label{stratification}
 Assume that Conjecture $ C_d $ holds. 
For $\delta=-\infty, 0, 1, \ldots d$, let $X(\delta)$ be the locally closed subset formed by the closed points $x\in X$
such that $\mldmj(x; X)=\delta$.
Then $\{X(\delta)\}_\delta$ is a finite stratification of $X$.
We call this the MJ-stratification.
In the similar way, for a morphism $\rho:{\mathcal X} \to Y$ as in the previous proposition, we can also define the locally closed subset 
$${\mathcal X}/Y(\delta)=\{x\in {\mathcal X} \mid \mldmj(x; {\mathcal X}_{\rho(x)})=\delta\},$$
and observe that $\{{\mathcal X}/Y(\delta)\}_\delta$ is a finite stratification of ${\mathcal X}$.
\end{rem}

\begin{lem}
\label{additive}
Let $X$ be a variety of dimension $d$. 
Let $V\subset W$ be two irreducible proper closed subsets of $X$
 and
$\eta_V$ and let $\eta_W$ be the generic points of $V$ and $W$, respectively.
Then the following inequality holds:
 \begin{equation}
 \label{add}
 \mldmj(\eta_V; X)\leq \mldmj(\eta_W; X)+\codim (V,W),
 \end{equation}
Here, 
if either $char\ k=0$ or Conjecture $ C_d $ holds, then we have the 
 equality in  $(\ref{add})$  for general $V$ in $W$.
I.e.,   there exists an open subset $U\subset W$ such that 
\begin{enumerate}
\item[]
if $\eta_V\in U$ holds 
for an irreducible closed subset $V\subset W$,\\
then the equality in (\ref{add}) holds.
\end{enumerate}
\end{lem}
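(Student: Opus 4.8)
The plan is to deduce the inequality~(\ref{add}) from the jet-theoretic formula for $\mldmj$ together with upper semicontinuity of fibre dimensions, and then to promote it to an equality over a dense open subset of $W$ by feeding into the same dimension count the finiteness supplied by Conjecture $C_d$ (resp., in characteristic $0$, by resolution of singularities).

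\emph{The inequality.} We may assume $\dim X\geq 2$, the statement being vacuous for curves. By Proposition~\ref{formula1}, $\mldmj(\eta;X)=\inf_{m\in\NN}s_m(X,\eta)$ with $s_m(X,\eta)=(m+1)d-\dim X_m(\eta)$ and $\dim X_m(\eta)=\dim\overline{\pi_m^{-1}(\eta)}=\dim\overline{\{\eta\}}+\bigl(\text{relative fibre dimension of }\pi_m\text{ at }\eta\bigr)$. Since $\eta_V$ is a specialization of $\eta_W$, upper semicontinuity of the fibre dimension of $\pi_m\colon X_m\to X$ (see \cite[4.11]{ei}) gives
$$\dim X_m(\eta_V)\ \geq\ \dim X_m(\eta_W)-\codim(V,W)\qquad\text{for every }m\in\NN,$$
hence $s_m(X,\eta_V)\leq s_m(X,\eta_W)+\codim(V,W)$ for every $m$; taking the infimum over $m$ yields~(\ref{add}).

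\emph{The equality for general $V$.} If $\mldmj(\eta_W;X)=-\infty$, then~(\ref{add}) already forces $\mldmj(\eta_V;X)=-\infty$ for every $V\subset W$, so we may assume $\mldmj(\eta_W;X)\geq 0$; in particular $s_m(X,\eta_W)\geq\mldmj(\eta_W;X)\geq 0$ for all $m$. Set $N:=N_d$ if Conjecture $C_d$ holds; in characteristic $0$ take instead a fixed $N\in\NN$ (depending on $X$) such that $\mldmj(\eta;X)=\min\{s_m(X,\eta)\mid m\leq N\}$ for $\eta=\eta_W$ and for the generic point of a general subvariety of $W$ — such an $N$ exists since in characteristic $0$ these minimal log discrepancies are computed on a fixed log resolution of $(A,I_X)$, one extra blow-up of the general centre accounting for general $V$. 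For each $m\leq N$, the theorem on dimension of fibres applied to the morphism $\overline{\pi_m^{-1}(\eta_W)}\to W$ induced by $\pi_m$ produces an open dense $U_m\subset W$ (it contains $\eta_W$) with
$$\dim X_m(\eta_V)=\dim X_m(\eta_W)-\codim(V,W),\qquad\text{i.e.}\qquad s_m(X,\eta_V)=s_m(X,\eta_W)+\codim(V,W),$$
for every irreducible $V$ with $\eta_V\in U_m$. Put $U:=\bigcap_{m\leq N}U_m$, an open dense subset of $W$. For $\eta_V\in U$ we get $s_m(X,\eta_V)=s_m(X,\eta_W)+\codim(V,W)\geq\codim(V,W)\geq 1$ for all $m\leq N$, so $\min_{m\leq N}s_m(X,\eta_V)\geq 1>0$; by Conjecture $\widetilde C_d$ (Proposition~\ref{closed&nonclosed}; resp.\ by the choice of $N$) this minimum equals $\mldmj(\eta_V;X)$, and similarly $\min_{m\leq N}s_m(X,\eta_W)=\mldmj(\eta_W;X)$. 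Therefore
$$\mldmj(\eta_V;X)=\min_{m\leq N}s_m(X,\eta_V)=\Bigl(\min_{m\leq N}s_m(X,\eta_W)\Bigr)+\codim(V,W)=\mldmj(\eta_W;X)+\codim(V,W).$$

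\emph{Where the difficulty lies.} The inequality and the construction of the $U_m$ use only soft dimension theory (semicontinuity and the theorem on dimension of fibres) and the identity $\dim X_m(\eta)=\dim\overline{\{\eta\}}+(\text{relative fibre dimension})$, all of which are characteristic-free. The one nontrivial ingredient is the reduction of the infimum $\inf_{m\in\NN}s_m$ to a minimum over a bounded range of $m$ — uniform in the $C_d$ case — since only this lets the infinitely many loci $U_m$ be replaced by a finite intersection, hence by a nonempty open subset of $W$. In characteristic $0$ this reduction comes from resolution of singularities; in arbitrary characteristic it is exactly Conjecture $C_d$ (through its pointwise reformulation $\widetilde C_d$), which is the crux.
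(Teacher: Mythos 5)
Your proof is correct and follows essentially the same route as the paper: the same decomposition $\dim X_m(\eta)=\dim\overline{\{\eta\}}+(\text{generic fibre dimension of }\pi_m)$, the same open loci $U_m\subset W$ where the fibre dimension is minimal, and the same finite intersection $U=\bigcap_{m\leq N_d}U_m$ made legitimate by Conjecture $C_d$ (through its reformulation $\widetilde C_d$), which turns the infimum defining $\mldmj$ into a minimum over $m\leq N_d$. The only difference is cosmetic: the paper simply cites \cite{ir2} for the inequality (\ref{add}) and for the characteristic-$0$ equality, whereas you argue the former directly by semicontinuity and sketch the latter via a fixed log resolution.
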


\begin{proof} In \cite[Corollary 3.27, (ii)]{ir2} the inequality (\ref{add}) 
is proved for an arbitrary characteristic.
The equality in (\ref{add})  for general $V$ in $W$ for $char \ k=0$
is also proved in \cite[Corollary 3.27]{ir2}.
So it is sufficient to show the second statement under the assumption that Conjecture $C_d $
holds.
For $m\in \NN$ let $d_{mV}$ and $d_{mW}$ be the dimensions of a general fibers of 
$\pi_m: \pi_m^{-1}(V)\to V$ and $\pi_m: \pi_m^{-1}(W)\to W$, respectively.
Remember for every $m\in \NN$ the following hold:
$$s_m(X, \eta_V)=(m+1)d-\dim X_m(\eta_V)=(m+1)d-( \dim V+ d_{mV})$$
$$s_m(X, \eta_W)=(m+1)d-\dim X_m(\eta_W)=(m+1)d-( \dim W+ d_{mW}).$$

Then, as 
$d_{mV}\geq d_{mW}$ in general, we have
$$s_m(X, \eta_V)-s_m(X, \eta_W)=\codim (V, W) + d_{mW}-d_{mV}\leq \codim (V, W).$$
Let $U_m\subset W$ be the open subset such that the dimension of the fibers of a closed point by $\pi_m$  is the minimum.
Note that   $\eta_V\in U_m$ if and only if $d_{mV}=d_{mW}$, which is equivalent to
$$s_m(X, \eta_V)=s_m(X, \eta_W)+\codim (V, W).$$
Take the number ${N_d}$ in Conjecture  $C_d$  and let $U=\cap_{m=1}^{{N_d}}U_m$.
Then we should note that $U$ is an open dense subset of $W$, 
because it is the intersection of finite
number of open dense subsets.
Then for every $V\subset W$ with $\eta_V\in U$ we have
$$\mldmj(\eta_V, X)=\min_{m\leq N_d} s_m(X, \eta_V)=\min_{m\leq N_d} s_m(X, \eta_W)+\codim (V,W).$$
\end{proof}

\begin{lem} 
\label{characterization}
Let $\eta\in X$ be a point. If there exists a stratification $\{X(\delta)\}$ as in Remark \ref{stratification}.
Then the following hold:
\begin{enumerate}
\item[(i)] If  $\delta\geq \dim {X(\delta)^{(j)}}$ (\resp\ $\delta\geq \dim {X(\delta)^{(j)}}+1$ for every irreducible component $X(\delta)^{(j)}$ of the stratum $X(\delta)$ such that 
the closure of ${X(\delta)^{(j)}}$ contains 
$\eta$, then $X$ is MJ-log canonical (\resp\ MJ-canonical) at $\eta$;

\item[(ii)]  If either the base field $k$ is uncountable, or Conjecture  $C_d$  holds,
then the converse of $\rm(i)$ also holds.
\end{enumerate}
\end{lem}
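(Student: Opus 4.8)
The plan is to reduce both implications to the value of $\mldmj$ at the generic points of the strata and then feed in the additivity of $\mldmj$ from Lemma~\ref{additive}. The strata lying in the singular locus — those $X(\delta)$ with $\delta<d$ (so $\delta\in\{-\infty,0,\ldots,d-1\}$) — have closures that are proper closed subsets of $X$, and it is only these that I need to treat; at a nonsingular point both conclusions hold anyway. I will use three facts already available: (a) if $X$ is MJ-log canonical (\resp\ MJ-canonical) at $\eta$ and $\eta\in\overline{\{\eta_0\}}$, then $X$ is MJ-log canonical (\resp\ MJ-canonical) at $\eta_0$, because the center of any exceptional divisor through $\eta_0$, being closed, contains $\overline{\{\eta_0\}}\ni\eta$; (b) for such a point $\eta_0$, $X$ is MJ-log canonical at $\eta_0$ iff $\mldmj(\eta_0;X)\ge 0$, and MJ-canonical at $\eta_0$ implies $\mldmj(\eta_0;X)\ge 1$; and (c) the inequality in Lemma~\ref{additive}, which is unconditional, together with its equality version for general $V$ in $W$, available once $C_d$ holds.

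For (i) I would take an exceptional prime divisor $E$ over $X$ whose center contains $\eta$. If $X$ is nonsingular along that center, then $\amj(E;X)=k_E+1\ge 1$ and there is nothing to check; otherwise let $\eta_E$ be the generic point of the center and $X(\delta')^{(j)}$ the irreducible component through $\eta_E$ of the stratum containing $\eta_E$, so $\delta'<d$ and $\overline{X(\delta')^{(j)}}\supseteq\overline{\{\eta_E\}}\ni\eta$. The hypothesis gives $\delta'\ge\dim X(\delta')^{(j)}$ (\resp\ $\ge\dim X(\delta')^{(j)}+1$). Choosing a closed point $x$ in the dense subset $X(\delta')\cap\overline{\{\eta_E\}}$ of $\overline{\{\eta_E\}}$, so that $\mldmj(x;X)=\delta'$, and applying the inequality of Lemma~\ref{additive} to $\{x\}\subset\overline{\{\eta_E\}}$, I get $\amj(E;X)\ge\mldmj(\eta_E;X)\ge\delta'-\dim\overline{\{\eta_E\}}\ge\delta'-\dim X(\delta')^{(j)}\ge 0$ (\resp\ $\ge 1$). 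Since $E$ was arbitrary, $X$ is MJ-log canonical (\resp\ MJ-canonical) at $\eta$; note that only the inequality of Lemma~\ref{additive} is used, so (i) needs nothing beyond the existence of the stratification.

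For (ii) I would examine the generic points of the relevant strata: let $X(\delta)^{(j)}$ be a component whose closure contains $\eta$ (taken inside the singular locus, by the reduction above), with generic point $\eta_0$, so $\eta\in\overline{\{\eta_0\}}$. By (a), MJ-log canonicity (\resp\ MJ-canonicity) at $\eta$ forces the same at $\eta_0$, hence by (b) $\mldmj(\eta_0;X)\ge 0$ (\resp\ $\ge 1$). It remains to establish the identity $\mldmj(\eta_0;X)=\delta-\dim X(\delta)^{(j)}$. The inequality $\mldmj(\eta_0;X)\ge\delta-\dim X(\delta)^{(j)}$ is the unconditional statement of Lemma~\ref{additive} for $\{x\}\subset\overline{\{\eta_0\}}$ with $x\in X(\delta)^{(j)}$ a closed point. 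The reverse inequality is the equality case of Lemma~\ref{additive} for a general $\{x\}$ in $\overline{\{\eta_0\}}$, valid when $C_d$ holds; when $k$ is uncountable it follows instead by choosing $x$ in the countable intersection of the dense open subsets of $\overline{\{\eta_0\}}$ along which $s_m(X,\eta_0)=s_m(X,x)-\dim\overline{\{\eta_0\}}$, so that $\mldmj(\eta_0;X)=\inf_m s_m(X,\eta_0)=\mldmj(x;X)-\dim\overline{\{\eta_0\}}=\delta-\dim X(\delta)^{(j)}$. Substituting the identity into $\mldmj(\eta_0;X)\ge 0$ (\resp\ $\ge 1$) yields $\delta\ge\dim X(\delta)^{(j)}$ (\resp\ $\ge\dim X(\delta)^{(j)}+1$), which is (ii).

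The genuinely non-formal step — and the only place where the extra hypothesis ($C_d$ or $k$ uncountable) enters — is the inequality $\mldmj(\eta_0;X)\le\delta-\dim X(\delta)^{(j)}$ in (ii): it says that passing from a general closed point of a stratum to its generic point decreases $\mldmj$ by exactly the dimension of the stratum, and this needs uniform control of which jet levels compute $\mldmj$ (finitely many under $C_d$; all of them, reached by a Baire-category argument, over an uncountable field). One must also keep the invariant $\mldmj(\eta;X)$ distinct from the label $\delta$ of the stratum through $\eta$ — these agree only when $\eta$ is a general closed point of its stratum — since $\mldmj(\eta;X)=\inf_m s_m(X,\eta)$ is built from $\dim X_m(\eta)=\dim\overline{\pi_m^{-1}(\eta)}$ rather than from $\dim\pi_m^{-1}(\overline{\{\eta\}})$; conflating them would, for instance, make the assertion fail at generic points.
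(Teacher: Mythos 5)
Your proof is correct and follows essentially the same route as the paper: part (i) is the unconditional inequality of Lemma~\ref{additive} applied to a closed point of the stratum inside the closure of the center (the paper phrases this with an arbitrary irreducible closed subset $W\ni\eta$ rather than with the center of a given divisor $E$, which is the same thing), and part (ii) is the equality case of Lemma~\ref{additive} at the generic point of each stratum component, with the uncountable-field case handled exactly as in \cite{ir2}. The only cosmetic difference is that you spell out the Baire-type argument for uncountable $k$ where the paper simply cites \cite{ir2}.
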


\begin{proof}
First we note that a special case of the formula in Lemma \ref{additive} implies the following:

For a closed point $x$ of an irreducible closed subset $W\subset X$ with the 
generic point $\eta_W$,
\begin{equation}
\label{closed}
\mldmj(x;X)\leq \mldmj(\eta_W, X)+ \dim W.
\end{equation}

Let $W\subset X$ be an irreducible closed subset containing $\eta$ and let 
$\eta_W$ be the generic point of $W$.
For  (i), we assume $\delta\geq \dim X(\delta)^{(j)}$ and will show $\mldmj(\eta_W, X)\geq 0$.
Take an irreducible component $X(\delta)^{(j)}$ of the stratum $X(\delta)$  containing $\eta_W$ in its closure, then $\dim W\leq \dim X(\delta)^{(j)}$.
As a closed point $x\in X(\delta)$ has $\mldmj(x; X)=\delta$, it follows from (\ref{closed})
$$\mldmj(\eta_W;X)\geq \delta-\dim W\geq \delta-\dim X(\delta)^{(j)}\geq 0.$$

The proof for MJ-canonicity is similar, as we can replace the inequalities $\geq~0$ by
$\geq~1$. 
(Here, we note that for  MJ-log canonicity, we have only to prove 
$\mldmj(\eta, X)\geq 0$. 
The reason why we dare to prove $\mldmj(\eta_W, X)\geq 0$ for $W$ containing $\eta$
is because this proof works  for MJ-canonicity by just shifting the number.)

For the proof of (ii), we should note that there are closed points $x\in X(\delta)^{(j)}$
contained by the closure of $\eta$ such that
the following equality holds:
\begin{equation}\label{=}
\mldmj(x;X)=\mldmj(\eta', X)+\dim X(\delta)^{(j)},
\end{equation}
where $\eta'$ is the generic point of $X(\delta)^{(j)}$.
Actually the first case (uncountable base field case) is proved in \cite{ir2}
and the second case (Conjecture $C_d$  holds) is proved in Lemma \ref{additive}.
By the formula (\ref{=}), the assumption that $X$ is MJ-log canonical yields that 
$$\delta-\dim X(\delta)^{(j)}=\mldmj(x;X)-\dim X(\delta)^{(j)}=\mldmj(\eta', X)\geq 0.$$
The proof for MJ-canonicity is similar, as we can replace the inequalities $\geq 0$ by
$\geq 1$.
\end{proof}

The following global statement follows immediately from the local statement, Lemma \ref{characterization}.

\begin{cor} 
\label{global}
Assume a variety $X$ has the stratification as in Remark \ref{stratification}.
If the base field $k$ is uncountable or Conjecture  $C_d$  holds,
then a variety $X$ has MJ-log canonical (\resp\ MJ-canonical) singularities if and only if $$\delta\geq \dim X(\delta)\ (\mbox{\resp}\ \ \delta\geq \dim X(\delta) +1).$$

\end{cor}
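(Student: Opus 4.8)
The plan is to read off the global statement directly from its local counterpart, Lemma~\ref{characterization}, using two elementary observations. The first is that a variety $X$ has MJ-log canonical (\resp\ MJ-canonical) singularities precisely when it is MJ-log canonical (\resp\ MJ-canonical) at every point $\eta\in X$: indeed, by definition the former means $\amj(E;X)\geq 0$ (\resp\ $\geq 1$) for every exceptional prime divisor $E$ over $X$, and since the center of such an $E$ is a nonempty closed subset it contains a closed point, so the condition is equivalent to checking it at all closed points, hence at all points. The second observation is that, writing $X(\delta)=\bigcup_j X(\delta)^{(j)}$ as the union of its irreducible components (as in Remark~\ref{stratification}), one has $\dim X(\delta)=\max_j \dim X(\delta)^{(j)}$.

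For the forward implication I would argue as follows. Suppose $X$ has MJ-log canonical singularities. Fix one of the values $\delta$ occurring in the stratification and one irreducible component $X(\delta)^{(j)}$ of the stratum $X(\delta)$, and let $\eta'$ be its generic point. Since the closure of $X(\delta)^{(j)}$ contains $\eta'$, and since $X$ is MJ-log canonical at $\eta'$, the equivalence in Lemma~\ref{characterization}(ii)---whose hypotheses hold because $k$ is uncountable or Conjecture~$C_d$ holds---gives $\delta\geq \dim X(\delta)^{(j)}$. Taking the maximum over the components $j$ yields $\delta\geq \dim X(\delta)$, and letting $\delta$ range over all strata values gives the stated inequality. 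Conversely, assume $\delta\geq \dim X(\delta)$ for every $\delta$. Then for every $\delta$ and every component $X(\delta)^{(j)}$ we have $\delta\geq \dim X(\delta)\geq \dim X(\delta)^{(j)}$; hence for an arbitrary point $\eta\in X$ the hypothesis of Lemma~\ref{characterization}(i) is met (for every component whose closure contains $\eta$), so $X$ is MJ-log canonical at $\eta$. As $\eta$ was arbitrary, $X$ has MJ-log canonical singularities. The MJ-canonical case is literally the same after replacing each occurrence of ``$\geq 0$'' by ``$\geq 1$'' and ``$\dim X(\delta)$'' by ``$\dim X(\delta)+1$'' throughout, using the corresponding halves of Lemma~\ref{characterization}.

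I do not expect a genuine obstacle here: all the substance has already been absorbed into Lemma~\ref{characterization} (and, further upstream, into Lemma~\ref{additive}, where the uncountability of $k$ or Conjecture~$C_d$ is used to produce a closed point $x\in X(\delta)^{(j)}$ realizing the equality $\mldmj(x;X)=\mldmj(\eta';X)+\dim X(\delta)^{(j)}$). The only points needing a line of care are the passage from ``MJ-log canonical as a variety'' to ``MJ-log canonical at every point'', and the bookkeeping relating a single component $X(\delta)^{(j)}$, its generic point, and the dimension of the whole stratum $X(\delta)$; both are routine.
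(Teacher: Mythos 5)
Your proposal is correct and follows exactly the route the paper intends: the paper's entire proof of Corollary~\ref{global} is the single sentence that it ``follows immediately from the local statement, Lemma~\ref{characterization},'' and your argument is precisely that deduction with the routine details (globalizing over points, passing between a stratum and its irreducible components via generic points) written out. The one substantive point you correctly identify is that the uncountability/Conjecture~$C_d$ hypothesis is only needed for the forward implication, via part (ii) of the lemma.
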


Now we will prove (P{MJ}1).
\begin{prop}[Openness of MJ-log canonicity/MJ-canonicity]
\label{open}
Assume Conjecture  $C_d$  holds.
Let $X$ be a $d$-dimensional variety.
If $(X, \eta)$ is an MJ-log canonical (\resp\ MJ-canonical) singularity, then there is an open neighborhood
$U\subset X$  of $\eta$ such that $U$ have MJ-log canonical (\resp\ MJ-canonical) singularities at every point of $U$.
\end{prop}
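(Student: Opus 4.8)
The plan is to deduce openness of MJ-log canonicity (resp.\ MJ-canonicity) at $\eta$ from the semicontinuity machinery already assembled. First I would invoke Remark \ref{stratification}: under Conjecture $C_d$ the sets $X(\delta)$, $\delta\in\{-\infty,0,1,\dots,d\}$, form a finite stratification of $X$ into locally closed subsets, so Lemma \ref{characterization} and Corollary \ref{global} apply. By Lemma \ref{characterization}(i)--(ii), MJ-log canonicity at $\eta$ is equivalent to the numerical condition $\delta\geq\dim X(\delta)^{(j)}$ for every irreducible component $X(\delta)^{(j)}$ of a stratum whose closure contains $\eta$ (and $\delta\geq\dim X(\delta)^{(j)}+1$ in the canonical case).

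Next I would produce the open neighborhood. Since $X(\eta):=\mldmj(\eta;X)$ need not be the relevant value---what matters is the strata through $\eta$---I would set $U:=X\setminus\bigcup Z$, where $Z$ runs over the closures $\overline{X(\delta)^{(j)}}$ of those finitely many irreducible components of strata that do \emph{not} contain $\eta$. Because the stratification is finite, this is a finite union of closed sets not containing $\eta$, hence $U$ is an open neighborhood of $\eta$. For any point $\eta'\in U$, every stratum component whose closure contains $\eta'$ must already have its closure containing $\eta$ as well (otherwise $\eta'$ would lie in one of the removed $Z$'s); more precisely, the set of stratum-components through $\eta'$ is a subset of those through $\eta$. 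Therefore the numerical condition $\delta\geq\dim X(\delta)^{(j)}$, verified at $\eta$ for all relevant components, holds a fortiori at $\eta'$, and Lemma \ref{characterization}(i) gives that $X$ is MJ-log canonical at $\eta'$. The canonical case is identical with $\geq 0$ replaced by $\geq 1$ throughout.

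Finally I would remark that applying this to closed points yields (PMJ1) in its stated form: the locus of closed points at which $X$ is MJ-log canonical (resp.\ MJ-canonical) is open, since each such point has an open neighborhood of the above type. One could alternatively package the argument directly through the lower semicontinuity of $x\mapsto\mldmj(x;X)$ from Proposition \ref{lower} together with Corollary \ref{global}, observing that $X$ is MJ-log canonical on a neighborhood of $\eta$ precisely when $\eta$ lies in the open set $X\setminus\bigcup_{\delta<\dim X(\delta)}\overline{X(\delta)}$ minus the components not passing through $\eta$.

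The main obstacle I anticipate is purely bookkeeping: one must be careful that ``MJ-log canonical at $\eta$'' is a condition on \emph{all} prime divisors whose center contains $\eta$, equivalently (by Lemma \ref{characterization}) a condition on \emph{all} stratum-components whose closure contains $\eta$, and that passing to a nearby point $\eta'$ can only shrink---never enlarge---this collection of components. Once that monotonicity of ``components through a point'' under specialization/generization is pinned down, the openness is immediate; there is no hard geometry beyond the already-established finiteness of the stratification and the characterization lemma.
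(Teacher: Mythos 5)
Your proposal is correct and follows essentially the same route as the paper: both construct $U$ by deleting the closures of the finitely many stratum components $X(\delta)^{(j)}$ whose closure does not contain $\eta$, and both then transfer the numerical condition $\delta\geq\dim X(\delta)^{(j)}$ (obtained from Lemma \ref{characterization}(ii) at $\eta$) to every point of $U$ via Lemma \ref{characterization}(i) (the paper packages this last step through Corollary \ref{global}, but the content is the same). Your observation that the collection of stratum components through a point of $U$ can only shrink relative to those through $\eta$ is exactly the monotonicity the paper's argument relies on.
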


\begin{proof}
     As we assume ($C_d$), there is the stratification $\{X(\delta)\}$ 
     on $X$ as in Remark \ref{stratification}.
     Let $Z$ be the union of the closures of irreducible components  
     $X(\delta)^{(j)}$ of strata 
      $X(\delta)$'s such that $\eta\not\in\overline{(X(\delta)^{(j)})}$.
      Then $U=X\setminus Z$ is an open neighborhood of $\eta$.
        If $(X, \eta)$ is MJ-log canonical, then the MJ-stratification $\{U_\delta
        =U\cap X(\delta)\}$ satisfies $
\delta\geq \dim U_\delta$ for every stratum 
$U_\delta$, by Lemma \ref{characterization}. 
By Corollary \ref{global}, this shows that $U$ has MJ-log canonical singularities.
For MJ-canonicity, the proof is similar.
\end{proof}

Next we prove (PMJ2) in the following:

\begin{prop}[Stability under a deformation]
 Assume Conjecture  $C_d$  holds. Let $\rho: \mathcal X \to \Delta$ be a surjective morphism of a variety to a smooth curve $\Delta$  with the equidimensional reduced fibers of dimension $d$. Denote the fiber of this morphism 
of a point $t\in \Delta$ by $\mathcal X_t$.  If $(X, x_0)=(\mathcal X_0, x_0)$ for $0\in \Delta$ 
is MJ-log canonical (\resp\ MJ-canonical), then there are open neighborhoods $\Delta'\subset
\Delta$ and ${\mathcal U}\subset \mathcal X$ of 0 and $x_0$, respectively, such that  
all fibers of ${\mathcal U}\to \Delta'$ have MJ-log canonical (\resp\ MJ-canonical) singularities.
\end{prop}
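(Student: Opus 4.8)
The plan is to reduce the statement about the total space $\mathcal X$ to the already-proved lower semi-continuity result (Proposition \ref{lower}) and the openness result (Proposition \ref{open}), applied to the restriction morphism $\rho: \mathcal X \to \Delta$. First I would set $\delta := \mldmj(x_0; \mathcal X_0)$. By Proposition \ref{lower} applied to $\rho$, the function $x \mapsto \mldmj(x; \mathcal X_{\rho(x)})$ on closed points of $\mathcal X$ is lower semi-continuous, so there is an open neighborhood $\mathcal U_1 \subset \mathcal X$ of $x_0$ on which $\mldmj(x; \mathcal X_{\rho(x)}) \geq \delta$. If $(X, x_0)$ is MJ-log canonical, then $\delta \geq 0$, and if it is MJ-canonical then $\delta \geq 1$; in either case every closed point $x \in \mathcal U_1$ satisfies the corresponding numerical inequality for $\mldmj(x; \mathcal X_{\rho(x)})$.

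Next I would upgrade this pointwise inequality to the statement that each fiber $\mathcal X_t$ (restricted to an open set) is actually MJ-log canonical (resp. MJ-canonical), not merely that all of its closed points have nonnegative (resp. $\geq 1$) MJ-minimal log discrepancy. This is where the stratification machinery of Remark \ref{stratification} enters: under Conjecture $C_d$ each fiber $\mathcal X_t$ admits a finite MJ-stratification, and by Corollary \ref{global} the fiber $\mathcal X_t$ is MJ-log canonical (resp. MJ-canonical) if and only if $\delta' \geq \dim \mathcal X_t(\delta')$ (resp. $\delta' \geq \dim \mathcal X_t(\delta') + 1$) for all strata. The cleanest route is to apply the relative version of Remark \ref{stratification}: the loci $\mathcal X/\Delta(\delta') = \{ x \in \mathcal X \mid \mldmj(x; \mathcal X_{\rho(x)}) = \delta' \}$ form a finite stratification of $\mathcal X$ into locally closed subsets. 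The point $x_0$ lies in $\mathcal X/\Delta(\delta)$, and by Proposition \ref{lower} every stratum $\mathcal X/\Delta(\delta')$ with $\delta' < \delta$ (interpreting $-\infty < 0$ as usual) is disjoint from some open neighborhood of $x_0$; let $\mathcal U$ be that neighborhood intersected with $\mathcal U_1$, and shrink $\Delta$ to $\Delta'$ so that $\mathcal U \to \Delta'$ is still surjective with the same fiber dimension. Then on $\mathcal U$, every closed point $x$ has $\mldmj(x; \mathcal X_{\rho(x)}) \geq \delta$, hence for each $t \in \Delta'$, every point of the fiber $\mathcal U_t := \mathcal U \cap \mathcal X_t$ satisfies the required bound; applying the local criterion Lemma \ref{characterization}(i) fiberwise — using that the closed points $x$ of any irreducible closed $W \subset \mathcal U_t$ have $\mldmj(x; \mathcal X_t) = \delta' \geq \dim W$ for the relevant stratum — gives that $\mathcal U_t$ is MJ-log canonical (resp. MJ-canonical).

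The main obstacle I anticipate is the bookkeeping needed to pass between the \emph{total-space} stratification $\{\mathcal X/\Delta(\delta')\}$ and the \emph{fiberwise} MJ-stratifications $\{(\mathcal X_t)(\delta')\}$: one must check that on the shrunk open set $\mathcal U$ the closures (within a fiber $\mathcal U_t$) of strata behave compatibly with the ambient closures, so that the dimension inequalities $\delta' \geq \dim (\mathcal U_t)(\delta')^{(j)}$ genuinely hold for every irreducible component. This is handled by noting that $\dim (\mathcal X/\Delta(\delta') \cap \mathcal X_t) \leq \dim \mathcal X/\Delta(\delta') - 1$ for the relevant components (since $\rho$ restricted to a stratum dominating $\Delta$ drops dimension by one on fibers), combined with the fiberwise application of Lemma \ref{characterization}; alternatively one can avoid this entirely by observing that MJ-log canonicity (resp. MJ-canonicity) of $\mathcal X_t$ is equivalent, via Proposition \ref{formula1} and Conjecture $C_d$, to the bound $s_m(\mathcal X_t, x) \geq 0$ (resp. $\geq 1$) for all $m \leq N_d$ and all closed $x \in \mathcal X_t$, and the lower semi-continuity of $s_m(\mathcal X_{\rho(x)}, x)$ established in the proof of Proposition \ref{lower} directly propagates this bound from $x_0$ to a neighborhood. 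I would present the argument via this second, more self-contained route, using Corollary \ref{global} only to phrase the conclusion.
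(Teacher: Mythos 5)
There is a genuine gap, and it is concentrated in the MJ-canonical half of the statement. The route you say you would actually present --- propagating the pointwise bound $s_m(\mathcal X_{\rho(x)},x)\geq 0$ (\resp\ $\geq 1$) for $m\leq N_d$ from $x_0$ to a neighbourhood --- does not prove MJ-canonicity of the nearby fibers. That bound is equivalent to $\mldmj(x;\mathcal X_t)\geq 1$ at every \emph{closed} point of the fiber, and the paper itself records (Proposition \ref{formula1} is not the issue; see the proposition following Definition 2.5) that $\mldmj\geq 1$ does \emph{not} imply MJ-canonicity: the Whitney umbrella $x^2=y^2z$ has $\mldmj(x;X)\geq 1$ at every closed point yet $\mldmj(\eta_C;X)=0$ at the generic point of the double curve, so it is not MJ-canonical there. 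The correct criterion is the stratum-dimension bound $\delta\geq\dim\mathcal X_t(\delta)+1$ of Corollary \ref{global}, and a pointwise lower bound on $\mldmj$ at closed points gives no upper bound on $\dim\mathcal X_t(\delta)$. Even in the MJ-log canonical case the passage from ``$\mldmj\geq 0$ at all closed points'' to ``MJ-log canonical'' is not direct: it requires the equality case of Lemma \ref{additive} (hence Conjecture $C_d$) to exclude strata of too-large dimension, i.e.\ essentially Lemma \ref{characterization} again, which you do not carry out.

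Your first route has the right shape --- it is close to the paper's argument --- but the decisive inequality is asserted rather than derived: you write that closed points of an irreducible closed $W\subset\mathcal U_t$ satisfy $\mldmj(x;\mathcal X_t)=\delta'\geq\dim W$, yet nothing before that point bounds $\dim\mathcal X_t(\delta')$ from above; lower semicontinuity only shows that the strata with $\delta'<\delta$ are empty near $x_0$. What is missing is the paper's actual mechanism. First apply Proposition \ref{open} to shrink $\mathcal X$ so that the \emph{entire} central fiber $\mathcal X_0$ is MJ-log canonical (\resp\ MJ-canonical) --- you cite this proposition in your opening sentence but never use it --- so that Corollary \ref{global} yields $\dim\mathcal X_0(\delta'')\leq\delta''$ (\resp\ $\leq\delta''-1$) for all $\delta''$. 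Then, for each irreducible component $Z$ of a relative stratum $\mathcal X/\Delta(\delta')$, lower semicontinuity gives $\overline Z\cap\mathcal X_0\subset\bigcup_{\delta''\leq\delta'}\mathcal X_0(\delta'')$, hence $\dim(\overline Z\cap\mathcal X_0)\leq\delta'$ (\resp\ $\leq\delta'-1$); finally, upper semicontinuity of fiber dimension for $\overline Z\to\Delta$ (after discarding the finitely many $t$ over which some component fails to dominate $\Delta$, and the components whose closures miss $\mathcal X_0$) transfers this bound to $\dim(\overline Z\cap\mathcal X_t)$ for $t$ near $0$. Without the detour through Proposition \ref{open} and the central fiber, the dimension bounds on the strata of the nearby fibers have no source, and the MJ-canonical case cannot be recovered from pointwise data at closed points at all.
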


\begin{proof} By the assumption and Proposition \ref{open}, we may assume that
the fiber ${\mathcal X_0}$ has MJ-log canonical (\resp\ MJ-canonical) singularities,
by replacing $\mathcal X$ by a sufficiently small neighborhood around $x_0$.
We use the notation in Remark \ref{stratification}.
As the relative MJ-stratification $\{{\mathcal X}/\Delta(\delta)\}_{\delta}$ has a finite
number of irreducible strata, the set 
$$B=\left\{ t\in \Delta \mid t\neq 0, {\mbox {there\ is \ an\ irreducible\ component\ }} Z\  {\mbox{in\ }}
  {\mathcal X}/\Delta(\delta) {\mbox{such\ that\ } } Z\subset {\mathcal X}_t\right\}$$
  is a finite set.
  Replacing $\Delta$ by $\Delta'=\Delta\setminus B$,
  we may assume that every irreducible component of the strata ${\mathcal X}/\Delta(\delta)$ is dominating $\Delta$.
  For the statement of MJ-log canonicity, we have only to prove that
  $$\dim {\mathcal X}_t(\delta)\leq \delta$$
  for every  $\delta=-\infty, 0, 1,\ldots, d$ and $t\in \Delta$.
  Here, we note that ${\mathcal X}_t(\delta)=({\mathcal X}/\Delta(\delta))\cap {\mathcal X}_t$.
  Take an irreducible component $Z \subset
  {\mathcal X}/\Delta(\delta)$.
  If $\overline{Z}\cap {\mathcal X}_0=\emptyset$, then we replace ${\mathcal X}$
  by an open subset ${\mathcal X}\setminus \overline{Z}$.
  By this procedure we may assume that 
  $\overline{Z}\cap {\mathcal X}_0\neq \emptyset$
  for every irreducible component 
  $Z \subset
  {\mathcal X}/\Delta(\delta)$.
  
  Then consider the restriction $\rho': \overline{Z}\to\Delta$.
  Take an irreducible component $Z^{(i)}$ of ${\rho'}^{-1}(0)=\overline{Z}\cap {\mathcal X}_0$.
  Then  the generic point of $Z^{(i)}$ is contained in ${\mathcal X}_0(\delta_i)$ for some 
  $\delta_i\leq \delta$  by  the lower semi-continuity of 
  $\mldmj$ proved in Proposition \ref{lower}.
    By the assumption that ${\mathcal X}_0$ has MJ-log canonical singularities,
  we obtain $$\dim Z^{(i)}\leq \dim {\mathcal X}_0(\delta_i)\leq \delta_i
  \leq \delta.$$
  
  Now deleting finite number of points from $\Delta$ we may assume that 
  $$\dim {\rho'}^{-1}(t)\leq \dim {\rho'}^{-1}(0)\leq \delta.$$
  For each irreducible component of $ {\mathcal X}/\Delta(\delta)$ we have the same 
  inequality as above by deleting finite points of $\Delta$ and the number of such irreducible components are finite, 
  we obtain a non-empty open subset $\Delta'\subset \Delta$ such that
  
  $$\dim {\mathcal X}_t(\delta)\leq \delta$$
  holds for every $\delta=-\infty, 0, \ldots, d$ and for every $t\in \Delta'$.
  
  For the  MJ-canonicity, the proof goes parallel  as above. 
\end{proof}

\vskip2truecm
\section{Some affirmative cases}
\noindent
In this section we will show some affirmative cases for our conjectures.
We start with a simple observation:
\begin{prop} 
\label{first}
Conjecture $C_{d,d}$  holds for every $d\geq 1$ and $C_{d,d-1}$ holds for $d\geq 2$.
($C_{1,0}$ will be treated in the next section.)
We can take $N_{d,d}=1$ and $N_{d, d-1}=5$.

\end{prop}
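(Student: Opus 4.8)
The plan is to argue by cases on $\emb(X,x)$, reducing the only substantive case to a hypersurface and then to a computation with jet schemes of a hypersurface double point. For $C_{d,d}$ only $m\le 1$ is needed: since $\mldmj(x;X)\le d$ always, with equality exactly when $(X,x)$ is nonsingular, the hypothesis $\mldmj(x;X)<d$ just says that $(X,x)$ is singular, i.e.\ $\emb(X,x)\ge d+1$; as the fibre $X_1(x)=\pi_1^{-1}(x)$ is the Zariski tangent space $T_xX$, we get $\dim X_1(x)=\emb(X,x)$, hence $s_1(X,x)=2d-\emb(X,x)\le d-1<d$, so $m=1$ works and $N_{d,d}=1$. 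The same inequality already settles part of $C_{d,d-1}$ (for $d\ge 2$): if $\mldmj(x;X)<d-1$ and $\emb(X,x)\ge d+2$, then $s_1(X,x)=2d-\emb(X,x)\le d-2<d-1$.

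So assume $\mldmj(x;X)<d-1$ and $\emb(X,x)=d+1$. Then near $x$, after a (possibly formal) change of coordinates, $X$ is a hypersurface $V(f)\subset\AA^{d+1}$, with $c=1$ and $e:=\ord_x f\ge 2$. If $e\ge 3$, Proposition~\ref{truncation} shows that the $2$-truncation of $f$ vanishes, so $X_2(x)=\AA^{d+1}_2(x)$, $\dim X_2(x)=2(d+1)$, and $s_2(X,x)=3d-2(d+1)=d-2<d-1$; so $m=2$ works. The one genuine case is $e=2$, i.e.\ $f=f_2+g$ with $f_2\ne 0$ a quadratic form and $g\in\m_x^{3}$.

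For that case, by Proposition~\ref{truncation} the scheme $X_m(x)$ is cut out inside $\AA^{(d+1)m}(x)$ by $\overline{f^{(2)}},\dots,\overline{f^{(m)}}$, and a weight count shows $\overline{f^{(j)}}$ involves only the variables $x_i^{(1)},\dots,x_i^{(j-1)}$; since the $x_i^{(m)}$ then do not occur, $X_m(x)\cong W_{m-1}\times\AA^{d+1}$ with $W_{m-1}=V(\overline{f^{(2)}},\dots,\overline{f^{(m)}})$ cut out by $m-1$ equations in $\AA^{(d+1)(m-1)}$. Hence $s_m(X,x)\le d-1$ for all $m$, with equality exactly when $\overline{f^{(2)}},\dots,\overline{f^{(m)}}$ is a regular sequence; moreover blowing up $x$ in $\AA^{d+1}$ gives $a(E_1;\AA^{d+1},(f))=d-1$, so $\mldmj(x;X)\le d-1$ too. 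Thus $\mldmj(x;X)<d-1$ is equivalent to $\overline{f^{(2)}},\overline{f^{(3)}},\dots$ failing to be a regular sequence at some stage, and it is enough to show this first failure happens at some $m\le 5$. To prove that, I would normalise $f$ to $q_r(x_0,\dots,x_{r-1})+g(x_r,\dots,x_d)$ with $q_r$ nondegenerate of rank $r$ and $\ord g\ge 3$ (a parametrised splitting lemma; the nondegenerate part needs a variant when $\cha k=2$), note that the coordinates not appearing in $q_r$ merely attach affine-space factors to the $W_{m-1}$, and then check directly — using that $\overline{f^{(j)}}$ is affine-linear in $x^{(j-1)}$ with linear part the polar pairing of $q_r$ against $x^{(1)}$ — that regularity can be lost only over the radical of $q_r$ in the $x^{(1)}$-coordinates, and that this forces a zero-divisor by stage $5$. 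The extremal case is $f=x_0^2+x_1^3$ ($r=1$, $g=x_1^3$): there $\overline{f^{(2)}},\overline{f^{(3)}},\overline{f^{(4)}}$ is a regular sequence with $V(\overline{f^{(2)}},\overline{f^{(3)}},\overline{f^{(4)}})_{\mathrm{red}}=\{x_0^{(1)}=x_1^{(1)}=x_0^{(2)}=0\}$, but $\overline{f^{(5)}}$ vanishes identically on this locus; so $s_m(X,x)=d-1$ for $m\le 4$ while $s_5(X,x)=d-2<d-1$, which shows $N_{d,d-1}=5$ both suffices and is optimal.

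The main obstacle is exactly this last step: one must verify, uniformly in $d$ and in all characteristics, that the iterated jet equations of a hypersurface double point cannot remain a regular sequence through stage $5$ unless they do so at every stage. The $d$-uniformity is absorbed by the normal form (the unused coordinates split off as affine factors of the jet schemes), so the real content is a finite, but somewhat delicate — and in characteristic $2$ genuinely different — analysis of the low-order jets $\overline{f^{(2)}},\dots,\overline{f^{(5)}}$ built from a quadratic form and a cubic-or-higher remainder.
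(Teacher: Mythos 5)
Your argument for $C_{d,d}$ is correct and is essentially the paper's: the paper shows $s_1(X,x)\geq d$ forces $\dim X_1(x)=\emb(X,x)\leq d$, hence nonsingularity and $\mldmj(x;X)=d$; you state the contrapositive. Your reductions for $C_{d,d-1}$ (the cases $\emb(X,x)\geq d+2$, the hypersurface case with $\ord_x f\geq 3$, the identification $X_m(x)\cong W_{m-1}\times\AA^{d+1}$ with $W_{m-1}$ cut out by $m-1$ equations, the resulting bound $s_m(X,x)\leq d-1$ with equality exactly when $\overline{f^{(2)}},\dots,\overline{f^{(m)}}$ is a regular sequence, and the computation for $x_0^2+x_1^3$ showing $N_{d,d-1}\geq 5$) are all sound.

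However, there is a genuine gap, and you name it yourself: the entire content of the bound $N_{d,d-1}=5$ is the claim that for a hypersurface double point $f=x_0^2+(\text{quadratic})+(\text{order}\geq 3)$ the sequence $\overline{f^{(2)}},\overline{f^{(3)}},\dots$ cannot be regular through stage $5$ and then fail later, and this claim is only sketched (``normalise to $q_r+g$'', ``regularity can be lost only over the radical of $q_r$'', ``this forces a zero-divisor by stage $5$''), not proved. The sketch also leans on a Tschirnhausen/splitting normalisation that, as you note, breaks in characteristic $2$ (where the polar form of a quadratic is alternating and the cross term $xg(y,z)$ cannot be removed), precisely the situation in which the paper itself restricts Theorem \ref{2} to $\cha k\neq 2$; so the ``uniform in all characteristics'' verification is exactly the part that cannot be waved through. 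For comparison, the paper does not attempt this analysis at all: it proves $C_{d,d}$ by the tangent-space argument and for $C_{d,d-1}$ with $N_{d,d-1}=5$ it simply cites \cite{ir2}. Your route is more self-contained in spirit, but as written it establishes only $s_m(X,x)\leq d-1$ and the necessity of $5$, not its sufficiency; to complete it you would have to carry out the finite case analysis of $\overline{f^{(2)}},\dots,\overline{f^{(5)}}$ (or fall back on the citation, as the paper does).
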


\begin{proof} For a variety $X$ of dimension $d$ and a closed point $x\in X$,
the inequality 
$\mldmj(x; X)\geq d$ holds if and only if $s_m(X,x)\geq d$ for every $m\in \NN$.
In particular for $m=1$, the condition   $s_1(X,x)=2 d-\dim X_1(x) \geq d$
implies that $   \emb(X, x)=\dim X_1(x)\leq d$.
Then the equality $$   \emb(X, x)=\dim X_1(x)= d$$  must hold, which yields that $(X,x)$ is non-singular.
On the other hand, we know that $\mldmj(x; X)= d$ for non-singular $(X,x)$ (see for example
\cite[Corollary 3.17]{ir2}).
This shows Conjecture $C_{d,d}$ holds and $N_{d,d}=1$.
For $d\geq 2$, 
Conjecture $C_{d,d-1}$ was proved  and  $N_{d,d-1}=5$ is showed in \cite{ir2}.

\end{proof}

\vskip.3truecm

Now we will show some affirmative cases for Conjecture $C_d$.
\vskip.5truecm
\noindent

{\bf  Non degenerate {\hs}s.}

\begin{defn}\label{Newton}
Let $M = \ZZ^{d+1}$ and $M_{\RR} = M\otimes_{\ZZ}\RR\simeq \RR^{d+1}$.
For a finite set $\Lambda\subset M$, we define the {\it Newton polygon generated by $\Lambda$ }
 as follows
\begin{eqnarray*}
	\Gamma_\Lambda := \hbox{convex\ hull\ of\ } \biggl( \bigcup_{{\bf{m}}\in \Lambda}
	({\bf{m}}+\RR^{d+1}_{\ge 0})\biggr)\ 
\end{eqnarray*}
Let $\Lambda_0\subset \Lambda$ be the subset consisting of the vertices of 
$\Gamma_\Lambda$,
then, $\Gamma_\Lambda=\Gamma_{\Lambda_0}$.
We call $\Lambda_0$ the minimal generator set of $\Gamma_\Lambda$.

In particular, for a polynomial $f\in k[X_0,\cdots, X_d]$ we define $\Gamma_+(f)$, the {\it Newton polygon of
$f$} as follows:
For ${\bf{m}} =(m_0,\cdots ,m_d)\in M$ denote  $X^{{\bf{m}}}=X_0^{m_0}\cdots X_d^{m_d}$.
By using this expression we represent $f$ as  
$f = \sum_{{\bf{m}}\in M}a_{\bf{m}}X^{\bf{m}}$, ($a_{\bf{m}}\in k$).
Let $\Lambda= \{{\bf{m}}\in M\mid a_{\bf{m}}\ne 0\}$ and 
define $\Gamma_+(f)=\Gamma_\Lambda$.
\end{defn}

\vskip1\baselineskip

\begin{defn}\label{theorem:CDv} 
Under the notation above, $f$ is called {\it non-degenerate}
if every  face $\gamma$ of $\Gamma_+(f)$ the equations $\frac{\partial f_\gamma}{\partial X_i} = 0$ $(i = 0,\cdots, d)$ do not have common zeros on $(\AA^1_k\setminus \{0\})^{d+1}$, where 
 $f_\gamma := \sum_{\bf{m}\in \gamma}a_{\bf{m}}X^{\bf{m}}$.
 
 We say that $f$ is {\it non-degenerate with respect to compact faces}
 if for every compact face $\gamma$ the condition above holds.
\end{defn}

\begin{rem}
Note that non-degenerate hypersurfaces are general among all hypersurfaces 
with a fixed Newton polygon.

It is well known that a non-degenerate hypersurface has an embedded 
log-resolution by a toric birational transformation  in any characteristic.
(See, for example, \cite[III, Proposition 1.3.1]{o}. This proposition is stated under 
the base field is $\CC$, however the proof works for any algebraically closed field.)
We also note that if a hypersurface $X$ has an isolated singularity at $0$ and
defined by a non-degenerate polynomial with respect to compact faces,
then it also has an embedded log-resolution by a toric birational transformation  in any characteristic.

\end{rem}
By \cite[Lemma 5.4]{itoric} we have the following formula:

\begin{prop}\label{toricformula}
 Let $N$ be the dual of $M$, $\sigma\subset \RR\otimes_{\ZZ}M\simeq
\RR^{d+1}$ be the 
positive quadrant $(\RR_{\geq 0})^{d+1}$ and $\sigma^o$ be the interior of $\sigma$. 
Let $X\subset \AA^{d+1}$ be the hypersurface defined by a non-degenerate polynomial $f$.
For an element $\pp\in N$ we define $\langle \pp, \Gamma_+(f)\rangle=\min \{\langle \pp, \mm\rangle \mid \mm\in \Gamma_+(f)\}$.
We denote the point $(1,1,\ldots, 1,1)\in M$ by $\bf 1$.
Then,
$$\mldmj(0, X)=\mld(0, X)=\inf_{\pp\in\sigma^o\cap N}(\langle \pp, {\bf 1} \rangle-
\langle \pp, \Gamma_+(f)\rangle).$$
\end{prop}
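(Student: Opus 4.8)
The assertion is \cite[Lemma 5.4]{itoric}; the plan I would follow is to recall the toric mechanism behind it. First I would reduce to a computation on the ambient affine space. Since $X=V(f)$ is a hypersurface it is locally a complete intersection, so $\amj(E;X)=a(E;X)$ for every prime divisor $E$ over $X$, whence $\mldmj(0;X)=\mld(0;X)$; moreover Theorem \ref{IA} with $c=1$ gives $\mldmj(0;X)=\mld\big(0;\AA^{d+1},(f)\big)$. Thus it suffices to show
$$\mld\big(0;\AA^{d+1},(f)\big)=\inf_{\pp\in\sigma^o\cap N}\big(\langle\pp,{\bf 1}\rangle-\langle\pp,\Gamma_+(f)\rangle\big).$$

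Next I would introduce a toric embedded log resolution. Because $f$ is non-degenerate (Definition \ref{theorem:CDv}), one can choose a regular subdivision $\Sigma$ of $\sigma$, compatible with $\Gamma_+(f)$, so that the induced proper birational toric morphism $\varphi\colon Y_\Sigma\to\AA^{d+1}$ is an embedded log resolution of the pair $(\AA^{d+1},(f))$; this is available in arbitrary characteristic by \cite[III, Proposition 1.3.1]{o}. The prime divisors over $\AA^{d+1}$ with center the origin that appear on $Y_\Sigma$ are exactly the toric divisors $E_{\pp}$ attached to the primitive generators $\pp=(p_0,\dots,p_d)$ of the rays of $\Sigma$ lying in the open cone $\sigma^o$. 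For such a $\pp$ the standard toric formulas give the log discrepancy $k_{E_\pp}+1=\langle\pp,{\bf 1}\rangle=p_0+\cdots+p_d$ and the order $\val_{E_\pp}(f)=\langle\pp,\Gamma_+(f)\rangle$: writing $f=\sum a_\mm X^\mm$ one has $\val_{E_\pp}(X^\mm)=\langle\pp,\mm\rangle$, so $\val_{E_\pp}(f)\ge\langle\pp,\Gamma_+(f)\rangle$, with equality because the initial part $f_\gamma=\sum_{\mm\in\gamma}a_\mm X^\mm$, where $\gamma\subset\Gamma_+(f)$ is the face on which $\langle\pp,-\rangle$ is minimal, is a nonzero Laurent polynomial and therefore does not vanish along the torus orbit that is the generic point of $E_\pp$. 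Hence
$$a\big(E_\pp;\AA^{d+1},(f)\big)=k_{E_\pp}+1-\val_{E_\pp}(f)=\langle\pp,{\bf 1}\rangle-\langle\pp,\Gamma_+(f)\rangle=:g(\pp).$$

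Finally I would squeeze $\mld\big(0;\AA^{d+1},(f)\big)$ between two toric quantities. On the one hand, for every primitive $\pp\in\sigma^o\cap N$ the divisor $E_\pp$ is a prime divisor over $\AA^{d+1}$ with center the origin, so $\mld\big(0;\AA^{d+1},(f)\big)\le g(\pp)$; since $g$ is positively homogeneous of degree one, $\mld\big(0;\AA^{d+1},(f)\big)\le g(\pp)$ for all $\pp\in\sigma^o\cap N$, and hence $\mld\big(0;\AA^{d+1},(f)\big)\le\inf_{\pp\in\sigma^o\cap N}g(\pp)$ (both sides being $-\infty$ as soon as $g$ takes a negative value, since $\mld$ is either nonnegative or $-\infty$). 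On the other hand, the log-resolution description of the minimal log discrepancy gives $\mld\big(0;\AA^{d+1},(f)\big)=\min\{g(\pp)\mid \pp\in\Sigma(1)\cap\sigma^o\}$, which is $\ge\inf_{\pp\in\sigma^o\cap N}g(\pp)$ because $\Sigma(1)\cap\sigma^o$ consists of points of $\sigma^o\cap N$. Combining the two inequalities yields the stated formula. The step I expect to be the main obstacle is the positive-characteristic validity of the log-resolution description of $\mld$ used in the last paragraph: one must check it requires only an embedded log resolution of the single pair $(\AA^{d+1},(f))$ — which non-degeneracy provides torically — together with the characteristic-free fact that discrepancies do not decrease under blow-ups along simple normal crossing centers, so that no divisor over $\AA^{d+1}$ can undercut the bound read off from $Y_\Sigma$. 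A secondary, more routine point is the exact (rather than merely $\ge$) evaluation $\val_{E_\pp}(f)=\langle\pp,\Gamma_+(f)\rangle$, which is where non-degeneracy enters through the non-vanishing of the initial forms $f_\gamma$.
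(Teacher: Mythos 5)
The paper gives no proof of this proposition at all---it is quoted verbatim from \cite[Lemma 5.4]{itoric}---so there is nothing to compare line by line; your reconstruction (reduce to $\mld\bigl(0;\AA^{d+1},(f)\bigr)$ via the l.c.i.\ coincidence and Theorem \ref{IA}, compute $k_{E_\pp}+1$ and $\val_{E_\pp}(f)$ on a toric embedded log resolution, then squeeze) is the standard argument behind the cited lemma and is essentially correct. One small misplacement at the end: non-degeneracy is not what gives the exact evaluation $\val_{E_\pp}(f)=\langle\pp,\Gamma_+(f)\rangle$, since every face $\gamma$ of the pointed polyhedron $\Gamma_+(f)$ contains a vertex and hence a monomial of $f$, so $f_\gamma\neq 0$ automatically; non-degeneracy is needed only, as you correctly say earlier, to guarantee that the toric modification is an embedded log resolution, which is exactly what legitimizes the ``$\geq$'' half of the squeeze in positive characteristic.
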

Let us denote by $E_\pp$ the toric  prime divisor over $A=\AA^{d+1}$ corresponding to the 1-dimensional cone $\pp\RR_{\geq 0}$.
Then, $\langle \pp, {\bf 1} \rangle=k_{E_\pp}+1$ and $\langle\pp, \Gamma_+(f)\rangle=\val_{E_\pp}(f)$.

\begin{defn}
\label{polygon}
By the proposition above, for a non-degenerate hypersurface $X\subset \AA^{d+1}$,
$\mldmj(0,X)$ depends only on the Newton polygon $\Gamma_+(f)$.
Therefore we define the minimal log discrepancy for a Newton polygon $\Gamma$ by
$$\mld \Gamma= \inf_{\pp\in\sigma^o\cap N}(\langle \pp, {\bf 1} \rangle-
\langle \pp, \Gamma\rangle).$$

We say that $\pp$ computes $\mld\ \Gamma$ if 
$$\langle \pp, {\bf 1} \rangle-
\langle \pp, \Gamma\rangle=\mld\ \Gamma\geq 0,\ \ \ {\mbox{or}}$$
 $$\langle \pp, {\bf 1} \rangle-
\langle \pp, \Gamma\rangle<0, \ \ \mbox{when}\ \mld\ \Gamma=-\infty.$$
Here, we note that for a given Newton polygon $\Gamma$ we can find a non-degenerate 
polynomial $f$ such that $\Gamma=\Gamma_+(f)$,
and ``$\pp$ computes $\Gamma$" is the same as ``$E_\pp$ computes $\mldmj(0, X)$"
for the \hs \  $X$  defined by $f$.

\end{defn}

\begin{thm} 
\label{non-deg} 
For every $d\geq1$
Conjecture $C_d$ holds  for non-degenerate \hs{s} $X$
and the origin $0\in X$.
 \end{thm}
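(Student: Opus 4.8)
The plan is to reduce $(C_d)$ for non-degenerate hypersurfaces to a purely combinatorial statement about Newton polygons, using Proposition \ref{toricformula} and Definition \ref{polygon}. First I would translate the conjecture: by Proposition \ref{toricformula}, for a non-degenerate $X \subset \AA^{d+1}$ defined by $f$, the invariant $\mldmj(0,X)=\mld\ \Gamma_+(f)$ depends only on the Newton polygon, and a lattice point $\pp \in \sigma^o \cap N$ computing $\mld\ \Gamma$ corresponds to a toric divisor $E_\pp$ over $\AA^{d+1}$ with $k_{E_\pp}+1=\langle \pp, \mathbf{1}\rangle$ and $\val_{E_\pp}(f)=\langle \pp, \Gamma\rangle$. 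So it suffices to bound, uniformly in terms of $d$, the quantity $\val_{E_\pp}(f) = \langle \pp, \Gamma\rangle$ for some $\pp$ computing $\mld\ \Gamma$; indeed by Lemma \ref{nu=mu}, $\nu(X,0)$ equals the minimal such $\val_E(I_X)$ (when $\mldmj\geq 0$), and a similar bound handles the $-\infty$ case. Thus the target is: there exists $N_d$ such that every Newton polygon $\Gamma$ in $\RR^{d+1}_{\geq 0}$ admits a primitive lattice point $\pp \in \sigma^o \cap N$ computing $\mld\ \Gamma$ with $\langle \pp, \Gamma\rangle \leq N_d$ (up to the usual rescaling in the $\mld\ \Gamma=-\infty$ case).

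The key steps I would carry out: (1) Observe that $\mld\ \Gamma$ depends only on the combinatorial type of $\Gamma$ together with the positions of its vertices; since $\langle \pp, \Gamma\rangle = \min_{\mathbf{m}\in\Lambda_0}\langle\pp,\mathbf{m}\rangle$ is attained at a vertex, and the function $\pp \mapsto \langle\pp,\mathbf{1}\rangle - \langle\pp,\Gamma\rangle$ is piecewise linear on the normal fan of $\Gamma$ refined by the rays where $\langle\pp,\mathbf{1}\rangle$ interacts, the infimum over $\sigma^o \cap N$ is either $-\infty$ or is attained on a ray of a suitable rational polyhedral fan. (2) On each maximal cone of that fan the function is linear, so its infimum over lattice points is attained at a primitive generator of one of the rays of the cone (or is $-\infty$ if the linear function is negative in the cone's interior direction). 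Hence some primitive $\pp$ on a ray of this fan computes $\mld\ \Gamma$. (3) The crucial point is then to bound $\langle\pp,\Gamma\rangle$ for such a $\pp$ independently of $\Gamma$. The rays of the fan that are relevant to the minimum are determined by the vertices $\mathbf{m}_i, \mathbf{m}_j$ of $\Gamma$ adjacent to $\pp$'s cone through equations of the form $\langle\pp, \mathbf{m}_i - \mathbf{m}_j\rangle = 0$ together with $\langle\pp, \mathbf{m}_i\rangle = \langle\pp,\mathbf{1}\rangle$ (the locus where the value is zero); a primitive solution $\pp$ to a $d$-subset of such integral equations has coordinates bounded by a determinant, hence $\langle\pp,\Gamma\rangle$ is bounded once we bound the entries of $\mathbf{m}_i$. (4) To remove dependence on how large the vertices of $\Gamma$ are, I would use the truncation principle (Proposition \ref{truncation}): only the part of $\Gamma$ in the region $\{\,\sum m_i \leq m\,\}$ affects $X_j(0)$ for $j\leq m$, and combined with the fact that the minimal $\pp$ we seek satisfies $\langle\pp,\Gamma\rangle$ bounded, the vertices that matter lie in a bounded region; alternatively one uses Proposition \ref{first} ($C_{d,d}$, $C_{d,d-1}$) to dispose of the near-smooth cases and argues that when $\mld\ \Gamma < d-1$ the computing vertex and ray stay in a controlled range.

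The main obstacle I expect is step (3)–(4): controlling the size of the computing lattice point $\pp$ uniformly, because a priori the Newton polygon can have vertices with arbitrarily large coordinates (a hypersurface of huge multiplicity), and the naive determinant bound on $\pp$ then blows up. The resolution should be that a vertex far out along a coordinate direction contributes a large value $\langle\pp,\mathbf{m}\rangle$ and hence cannot be the one realizing the minimum in the expression for $\mld\ \Gamma$ unless $\mld\ \Gamma$ is already forced to be $-\infty$ by a cheaper divisor (e.g., the blow-up of the origin, handled as in the proof of Proposition \ref{C=U}); so one splits into the case $\mld\ \Gamma \geq 0$, where the relevant vertices satisfy $\sum (m_i) \leq$ something like $d\cdot(d+1)$ forcing a bound like $N_d = O(d^2)$ or so, and the case $\mld\ \Gamma = -\infty$, where a low-order divisor already witnesses negativity. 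Once the combinatorics is pinned down, the passage back through Proposition \ref{toricformula} and Lemma \ref{nu=mu} to get the explicit $N_d$ is routine bookkeeping.
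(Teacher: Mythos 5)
Your reduction to a combinatorial statement about Newton polygons via Proposition \ref{toricformula} and Lemma \ref{nu=mu} is the right starting point, but the heart of your argument --- steps (3) and (4) --- is not yet a proof, and the gap there is exactly where the difficulty of the theorem lives. You need a bound on $\langle\pp,\Gamma\rangle$ for some computing $\pp$ that is uniform over \emph{all} Newton polygons $\Gamma\subset\RR^{d+1}_{\ge0}$, and your mechanism for this is a determinant bound on a primitive solution of linear equations whose coefficients are entries of vertices of $\Gamma$ adjacent to the cone of $\pp$. Those coefficients are not bounded: even when $\mld\,\Gamma\ge 0$ (equivalently $\mathbf{1}\in\Gamma$), the polygon can have vertices with arbitrarily large coordinates, and such vertices can lie on the facets that cut out the cone of the normal fan containing the minimizing $\pp$ --- the observation that a far-out vertex cannot itself realize $\min_{\mm}\langle\pp,\mm\rangle$ does not prevent it from determining the ray on which $\pp$ lies. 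Your proposed fix (``the relevant vertices satisfy $\sum m_i\le d(d+1)$ or so'') is asserted, not derived; deriving it amounts to showing that $\Gamma$ may be replaced by a polygon with uniformly bounded vertices without changing $\mld\,\Gamma$ or the computing divisor, and that replacement statement is essentially equivalent to the theorem itself. There is also a smaller slip in step (2): on a non-smooth cone the infimum of a linear function over interior lattice points need not be attained at a primitive ray generator (one needs Hilbert basis elements), though this is repairable.

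For comparison, the paper's proof is non-effective and avoids all of this. Fixing $\delta$, one assumes for contradiction an infinite sequence of polygons $\Gamma_j$ with $\mld\,\Gamma_j=\delta$ and $\nu(X_j,0)\to\infty$; ordering the vertices of each $\Gamma_j$ (by total degree, then lexicographically) and repeatedly passing to subsequences on which the first, second, \dots{} vertices agree, the ascending chain condition for monomial ideals in $k[\sigma\cap M]$ forces the process to terminate, producing a fixed core polygon $\Gamma\subset\Gamma_j$ such that the remaining vertices of $\Gamma_j$ escape to infinity. Proposition \ref{truncation} then gives $s_{n-1}(f_j)=s_{n-1}(f)$ for $n=\nu(\Gamma)$, whence $\mld\,\Gamma_j=\mld\,\Gamma$ and $\nu_j\le n$, a contradiction. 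If you wish to pursue your effective version, note that it is strictly stronger than what the paper establishes (no explicit $N_d$ is produced for $d\ge 3$), and the missing lemma you must supply is precisely the uniform truncation of $\Gamma$ described above.
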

 
 \begin{proof} In the proof we use the notation of Proposition \ref{toricformula}.
 As the possible values of $\mldmj(0,X)$ are finite,
 it is sufficient to fix $\delta$ ($ \delta=-\infty, 0,\ldots, d$) and to prove
 a contradiction under the assumption that
 $\nu(X,0)$ is unbounded among non-degenerate hypersurface singularities $(X,0)$ satisfying
 $\mldmj(0,X)=\delta$.

  Let $\{\Gamma_j\}_j$ be an infinite sequence of Newton polygon with
  $\mld(\Gamma_j)=\delta$, such that $\nu_j:=\nu(X_j,0) \to \infty \ (\ j\to \infty)$,
  where $\Gamma_j$ is the Newton polygon of a non-degenerate hypersurface
  $X_j$.
 
 For points  $\ba, \bb\in\sigma\cap M$ we define the relation $\ba<\bb$,  if either 
 \begin{enumerate}
 \item[]  $|\ba|<|\bb|$, where $|\ba|$ is the sum of all coordinates of $\ba$, or
 \item[]  $|\ba|=|\bb|$ and $\ba<\bb$ lexicographically.
 \end{enumerate}
 
 We give numbers to all vertices  of each Newton polygon $\Gamma_j$
  according to the order:
 $$\ba_1(\Gamma_j) \leq \ba_2(\Gamma_j)\leq\cdots.$$
 
 Then we may assume that $|\ba_1(\Gamma_j)|\leq d+1$ for infinitely many $j$.
 Indeed, if there is an infinite subsequence such that 
 $|\ba_1(\Gamma_j)|\geq d+2$, then the multiplicity of the defining function $f_j$ of $X_j$ 
 at 0 
 is bigger than $d+1$, therefore by Lemma \ref{truncation} we obtain 
 $$s_{d+1}(X_j,0)=d(d+2)-\dim (\AA^{d+1})_{d+1}(0)=d(d+2)-(d+1)^2=-1 <0$$
 for each such $j$.
This implies that  $\delta=\mld(\Gamma_j)=-\infty $ and $\nu_j$ is bounded by $d+2$ for the subsequence, 
 which is  a contradiction to the assumption that 
   $\nu_j:=\nu(X_j,0) \to \infty \ (\ j\to \infty)$.
  Therefore, we may assume that there is an infinite subsequence 
  such that  $\ba_1(\Gamma_j)\leq d+1$ for all $j$ in the sequence.
  
  Here, as there are only finitely many  points $\ba\in \sigma\cap M $ with $|\ba|\leq d+1 $,
  there is an infinite subsequence of $\{\Gamma_j\}$ such that all $\ba_1(\Gamma_j)$
  are common. 
  Let them be $\ba_1$.
  
  Next, if $|\ba_2(\Gamma_j)|$ is bounded for infinitely many $j$ among the subsequence
  obtained above,
  then, in the same way as above we take the infinite subsequence with 
  the common $\ba_2(\Gamma_j)=:\ba_2$.
  We perform this procedure successively.
  But this procedure stops at a finite stage.
  Because, if not, then we obtain an infinite strictly increasing sequence of polygons: 
  $$P_1\subset P_2\subset\cdots,$$
  where $P_i$ is the polygon generated by $\ba_1,\ldots, \ba_i$.
  This means that there is an infinite strictly increasing sequence of monomial ideals 
  in a Noetherian algebra   $k[\sigma\cap M]$, which is a contradiction.
  
  Now, we can assume that there exists $m\in \NN$ such that there is an infinite
  subsequence $\{\Gamma_j\}$ with the common $\ba_1,\ba_2,\ldots, \ba_m$
  but $\ba_{m+1}(\Gamma_j)$ is not bounded for any infinite subsequences.
  Then, for any $D>0$ there is an infinite subsequence $\{\Gamma_j\}$
  such that $|\ba_{m+1}(\Gamma_j) | >D$ for all $j$.
  Let $\Gamma$ be the polygon generated by $\ba_1,\ba_2,\ldots, \ba_m$
  and $n=\nu(\Gamma)$.
  Then, we may assume that,  for all $j$, $|\ba_{m+1}(\Gamma_j) | \gg n$  so that 
  there is no lattice point on the faces of $\Gamma_j$ containing $\ba_{m+1}(\Gamma_j)$ .
  Let $f_j$ and $f$ be  non-degenerate polynomials with the Newton polygon $\Gamma_j$
  and $\Gamma$, respectively.
  We may assume that the monomials of these polynomials are only on the Newton boundary,
  because the minimal log discrepancies depend only on the Newton boundaries.
  By Lemma \ref{truncation}, we have 
  $$s_{n-1}(f_j)=s_{n-1}(f)=\mld(f)$$ for all $j$.
  Therefore $\mld(\Gamma_j)\leq \mld(\Gamma)$.
  But $\Gamma\subset \Gamma_j$ yields $\mld(\Gamma)\leq \mld(\Gamma_j)$
  by the Definition \ref{polygon}.
  Therefore, we obtain that $\mld(\Gamma_j)=\mld(\Gamma)=s_{n-1}(\Gamma_j)$ 
  for all $j$, which implies that $\nu_j\leq n$, a contradiction.
 
 \end{proof}

\begin{rem} 

\begin{enumerate}
\item
The proof of \cite[Theorem 1.4]{mn} can be interpreted into the discussion of
Newton polygon and 
we can prove the above theorem  in the same way as \cite[Theorem 1.4]{mn} using Maclagan's 
result.

\item
 When the base field is of characteristic 0,
by the following lemma we can reduce Conjecture $D_d$ to
 the  hypersurface case.
 Moreover, by Theorem \ref{non-deg},  the conjecture for characteristic 0 case is reduced to  degenerate hypersurfaces.
\end{enumerate}
\end{rem} 

\begin{prop}[{\cite[Theorem 3.1]{in}} ]  Let $0\in X\subset A$ be a closed point on a closed subvariety $X$ in a non-singular affine variety $A$ over a base field of characteristic 
zero. 
Then, there exists a hypersurface $0\in H\subset A$ such that

$$\mldmj(0, X)=\mldmj(0, H).$$ 
Moreover, there exists a log-resolution $A'\to A$ of $X$ and $H$ 
such that the prime divisors on $A'$ computing $\mldmj(0, X)$ are the same as 
those computing $\mldmj(0, H).$
\end{prop}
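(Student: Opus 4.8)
The plan is to reduce the equality of Mather--Jacobian invariants to an equality of ordinary minimal log discrepancies of ideals on the smooth ambient variety $A$, and then to take $H$ to be a sufficiently general hypersurface through $X$. Writing $c=\codim(X,A)$, Theorem \ref{IA} gives $\mldmj(0;X)=\mld(0;A,I_X^c)$, while for any hypersurface $0\in H\subset A$ one has $\codim(H,A)=1$ and hence $\mldmj(0;H)=\mld(0;A,I_H)$. So it suffices to produce $h\in I_X$ with $\mld(0;A,(h))=\mld(0;A,I_X^c)$ and put $H=V(h)$. The candidate I would try is a general $k$-linear combination $h$ of a finite generating set of the ideal $I_X^c$ (for instance of the products $f_{i_1}\cdots f_{i_c}$ coming from generators $f_1,\dots,f_r$ of $I_X$). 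Since $I_X^c\subset I_X\subset\m_0$, this $h$ lies in $I_X$ and vanishes at $0$, so $X\subset H\ni 0$ and $H$ is a genuine hypersurface; moreover the inclusion $(h)\subset I_X^c$ gives $\ord_E(h)\ge\ord_E(I_X^c)$, hence $a(E;A,(h))\le a(E;A,I_X^c)$ for every prime divisor $E$ over $A$, which already yields one inequality $\mld(0;A,(h))\le\mld(0;A,I_X^c)$ with no genericity used.

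For the reverse inequality, and for the ``moreover'' clause about a common log resolution, I would invoke characteristic zero twice. First, by resolution of singularities, fix a log resolution $\mu\colon A'\to A$ of the pair $(A,I_X^c\cdot\m_0)$ which is an isomorphism over $A\setminus X$; then $\mu$ is in particular a log resolution of $X$ and of $\{0\}$. Write $I_X^c\cdot\o_{A'}=\o_{A'}(-F)$ with $F=\sum_i a_iE_i$ and $K_{A'/A}=\sum_i k_iE_i$. The pulled-back generators of $I_X^c$ globally generate $\o_{A'}(-F)$, so by Bertini together with generic smoothness (the second place where $\cha k=0$ is essential) a general such $h$ has strict transform $\widetilde H=\overline{\mu^{-1}(H)\setminus\mathrm{Exc}(\mu)}$ smooth and transverse to the simple normal crossing divisor formed by $\mathrm{Exc}(\mu)$ and $\mu^{-1}(0)$. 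Hence $\mu$ is simultaneously a log resolution of $(A,(h)\cdot\m_0)$, and so of $H$ --- this is the resolution the proposition asks for --- and on it $(h)\cdot\o_{A'}=\o_{A'}(-F-\widetilde H)$.

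It then remains to compare discrepancies on $A'$. For every $\mu$-exceptional prime divisor $E_i$ one has $\ord_{E_i}(\widetilde H)=0$ (the non-exceptional divisor $\widetilde H$ is not $E_i$), so $a(E_i;A,(h))=k_i+1-a_i=a(E_i;A,I_X^c)$; in particular the two log discrepancies agree on all $\mu$-exceptional divisors with center $\{0\}$, and if one side is $-\infty$ this is witnessed by the same $E_i$ on the other. A prime divisor $E$ over $A$ with center exactly the point $0$ has center on $A'$ inside $\mu^{-1}(0)$, hence is dominated by the blow-up of a stratum of the simple normal crossing configuration $\{E_i\}\cup\{\widetilde H\}$ lying over $0$; because $\mu$ is an isomorphism over $A\setminus X$, such a stratum meets $\widetilde H$ only over $X$ and always contains at least one exceptional component $E_j$ with $\mu(E_j)=\{0\}$, so (when $\mld(0;A,I_X^c)\ge 0$) its blow-up has log discrepancy with respect to $(h)$ at least $a(E_j;A,(h))=a(E_j;A,I_X^c)\ge\mld(0;A,I_X^c)$. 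This gives $\mld(0;A,(h))\ge\mld(0;A,I_X^c)$, hence equality, and shows that the exceptional divisors on $A'$ computing one minimal log discrepancy compute the other; pulling back through Theorem \ref{IA} produces $\mldmj(0;X)=\mldmj(0;H)$ together with the asserted coincidence of computing divisors. The step I expect to be the real obstacle is precisely this transversality and bookkeeping: guaranteeing that a general element of $I_X^c$ has strict transform transverse to \emph{all} exceptional components of a \emph{fixed} resolution at once, and that passing from $F$ to $F+\widetilde H$ creates no divisor centered at $0$ of smaller log discrepancy --- and both the existence of $\mu$ and the genericity of the section break down outside characteristic zero, which is why the statement is confined to $\cha k=0$.
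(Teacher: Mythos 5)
The paper gives no proof of this proposition at all: it is quoted as Theorem~3.1 of the cited work of Ishii and Niu \cite{in}, so there is no in-text argument to measure yours against. Judged on its own, your argument is essentially correct and is the expected one: reduce via Theorem \ref{IA} to comparing $\mld(0;A,I_X^c)$ with $\mld(0;A,(h))$; pick $h$ general in $I_X^c$ so that on a fixed log resolution $\mu\colon A'\to A$ of $(A,I_X^c\cdot\m_0)$ one has $(h)\cdot\o_{A'}=\o_{A'}(-F-\widetilde H)$ with $\widetilde H$ smooth and transverse to the exceptional divisors and to $\mu^{-1}(0)$; observe that $a(E_i;A,(h))=a(E_i;A,I_X^c)$ for every $\mu$-exceptional $E_i$; and conclude that the two minimal log discrepancies, computed on this common resolution, coincide together with their computing divisors. (The construction in \cite{in} is phrased via $c$ general elements $g_1,\ldots,g_c$ of $I_X$ rather than one general element of $I_X^c$; the two choices give the same orders along all divisors of a fixed resolution, so the arguments are parallel.)

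Two points should be tightened. First, the identity $\mldmj(0;H)=\mld(0;A,(h))$ requires $H=V(h)$ to be \emph{reduced}, since MJ-discrepancies are defined for reduced schemes and the exponent in Theorem \ref{IA} is $\codim(H,A)=1$; this does hold --- for $c\geq 2$ a repeated factor of $h$ would have codimension-one zero locus not contained in $X$ and hence be visible on $A\setminus X$, where $V(h)$ is identified with the smooth $\widetilde H$, and for $c=1$ one simply takes $H=X$ --- but it is asserted, not proved. Second, the closing descent is only argued for divisors obtained by a \emph{single} blow-up of a stratum; an arbitrary prime divisor with center $\{0\}$ is not literally of that form. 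The clean way to close is the standard lemma that for any log resolution of $(A,\a\cdot\m_0)$ one has $\mld(0;A,\a)=\min_i a(E_i;A,\a)$ over the exceptional $E_i$ on $A'$ with center $\{0\}$ when this minimum is non-negative, and $\mld(0;A,\a)=-\infty$ otherwise; applied simultaneously to $\a=I_X^c$ and $\a=(h)$ on the single resolution $\mu$, it yields the equality and the coincidence of computing divisors in one stroke. Neither repair changes the substance of your proof.
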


   In order to study a general hypersurface singularity, the following lemma is useful
   to prove the conjecture in a special case:
   
\begin{lem}\label{reduction/non-degenerate}
  Let $X\subset \AA^{d+1}$ be a hypersurface defined by a 
    polynomial $f$ with a singularity at $0$.
   Let    $\Gamma$ be a Newton polygon containing $\Gamma_{+}(f)$.
   If $\bold1=(1,1,\ldots, 1) \not\in \Gamma$, then $\mldmj (0; X)=-\infty$ and a toric divisor 
   $E_\pp$  computing $\mld \Gamma$ computes $\mldmj (0; X)$.
   In particular, if $\tilde f$ is a non-degenerate function with the Newton 
   polygon $\Gamma_+(\tilde f)=\Gamma$ and $\widetilde X$ is the hypersurface
   defined by $\tilde f$, then $\nu(X,0)\leq \nu(\widetilde X, 0) $ which is uniformly
   bounded on the dimension $d$.
\end{lem}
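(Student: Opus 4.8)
The plan is to obtain the first assertion from a separating‑hyperplane argument on the Newton polygon $\Gamma$, and to deduce the second assertion from the first together with Theorem~\ref{non-deg} and Lemma~\ref{nu=mu}.

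\emph{First assertion.} The polygon $\Gamma=\Gamma_{\Lambda_0}$ is a closed convex subset of $\RR^{d+1}_{\geq 0}$ with ${\bf 1}\notin\Gamma$; since $\Gamma$ is rational‑polyhedral and contains ${\bf m}+\RR^{d+1}_{\geq 0}$ for each vertex ${\bf m}$, the separating hyperplane theorem gives $\pp\in\sigma$ with rational coordinates and $\langle\pp,{\bf 1}\rangle<\langle\pp,\Gamma\rangle$. As $\langle\pp,\Gamma\rangle=\min_{{\bf m}\in\Lambda_0}\langle\pp,{\bf m}\rangle$ is continuous and coordinatewise nondecreasing in $\pp$ while $\langle\pp,{\bf 1}\rangle$ is linear, the strict inequality persists under small positive perturbation of $\pp$; perturbing into the open quadrant and clearing denominators, we may take $\pp\in\sigma^o\cap N$. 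Then $\mld\,\Gamma\leq\langle\pp,{\bf 1}\rangle-\langle\pp,\Gamma\rangle<0$, and running $q\pp$ ($q\in\NN$) through the infimum defining $\mld\,\Gamma$ shows $\mld\,\Gamma=-\infty$; thus $\pp$, and indeed any $\pp'\in\sigma^o\cap N$ with $\langle\pp',{\bf 1}\rangle<\langle\pp',\Gamma\rangle$, computes $\mld\,\Gamma$. The toric divisor $E_\pp$ over $A=\AA^{d+1}$ has center $0$, and $k_{E_\pp}+1=\langle\pp,{\bf 1}\rangle$ while $\val_{E_\pp}(f)=\langle\pp,\Gamma_+(f)\rangle$ since $\val_{E_\pp}$ is the monomial valuation of weight $\pp$. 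From $\Gamma_+(f)\subseteq\Gamma$ we get $\langle\pp,\Gamma_+(f)\rangle\geq\langle\pp,\Gamma\rangle$, hence
$$k_{E_\pp}+1-\val_{E_\pp}(f)=\langle\pp,{\bf 1}\rangle-\langle\pp,\Gamma_+(f)\rangle\leq\langle\pp,{\bf 1}\rangle-\langle\pp,\Gamma\rangle<0 .$$
Since $\mldmj(0;X)=\mld(0;A,I_X^{c})$ with $c=1$ by Theorem~\ref{IA}, this says $\mldmj(0;X)=-\infty$ and that $E_\pp$ computes it; as the computation is unchanged for any $\pp'$ computing $\mld\,\Gamma$, every such $E_{\pp'}$ computes $\mldmj(0;X)$.

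\emph{Second assertion.} Applying the first assertion to $\widetilde X$ with $\Gamma=\Gamma_+(\tilde f)$ gives $\mldmj(0;\widetilde X)=\mld\,\Gamma=-\infty$, and Theorem~\ref{non-deg} bounds $\nu(\widetilde X,0)$ by a constant depending only on $d$. It remains to prove $\nu(X,0)\leq\nu(\widetilde X,0)$; since both minimal log discrepancies are $-\infty$, it suffices by Proposition~\ref{formula1} and the definition of $\nu$ to show $s_m(X,0)\leq s_m(\widetilde X,0)$ for every $m$, i.e.\ $\dim X_m(0)\geq\dim\widetilde X_m(0)$. Let $V\cong\AA^{r}$ be the space of polynomials supported on the lattice points of $\Gamma$ lying in a box large enough to contain the supports of $f$ and $\tilde f$; every $g\in V$ satisfies $g(0)=0$ (because ${\bf 1}\notin\Gamma$ forces $0\notin\Gamma$), and $f,\tilde f\in V$. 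For each $m$, $g\mapsto\dim (X_g)_m(0)$ is upper semi‑continuous on $V$ (\cite[4.11]{ei}, exactly as in the proof of Proposition~\ref{lower}). On the other hand, the non‑degenerate members with Newton polygon equal to $\Gamma$ form a dense open subset $V^{\mathrm{nd}}\subset V$ containing $\tilde f$, and on $V^{\mathrm{nd}}$ the function $g\mapsto\dim (X_g)_m(0)$ is constant — depending only on $\Gamma$ — because a non‑degenerate hypersurface admits an embedded toric log resolution whose combinatorics is determined by $\Gamma$. Hence this constant equals the generic, i.e.\ minimal, value on $V$; in particular $\dim\widetilde X_m(0)\leq\dim X_m(0)$. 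This gives $s_m(X,0)\leq s_m(\widetilde X,0)$ for all $m$, hence $\nu(X,0)\leq\nu(\widetilde X,0)$.

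The step carrying the real weight is the claim that, for a non‑degenerate hypersurface $\widetilde X$, the jet‑scheme dimensions $\dim\widetilde X_m(0)$ — equivalently, via Lemma~\ref{nu=mu}, the invariant $\nu(\widetilde X,0)$ itself — depend only on the Newton polygon $\Gamma$. This is the same circle of ideas underlying Proposition~\ref{toricformula}, and is established through the toric embedded log resolution of $\widetilde X$ (cf.\ \cite[Lemma 5.4]{itoric} and \cite[III, Proposition 1.3.1]{o}); granting it, the containment $\Gamma_+(f)\subseteq\Gamma$ contributes only the elementary inequality used above, and the remaining steps — the separating‑hyperplane/perturbation argument and the monomial‑valuation computation — are routine.
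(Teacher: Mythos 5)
Your treatment of the first assertion is correct and coincides with the paper's: the separating-hyperplane argument makes explicit why ${\bf 1}\notin\Gamma$ forces $\mld\,\Gamma=-\infty$, and the chain $k_{E_\pp}+1-\val_{E_\pp}(f)=\langle\pp,{\bf 1}\rangle-\langle\pp,\Gamma_+(f)\rangle\leq\langle\pp,{\bf 1}\rangle-\langle\pp,\Gamma\rangle<0$ is exactly the computation in the paper.

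For the second assertion, however, there is a genuine gap, and it sits exactly where you place ``the real weight'': the claim that for non-degenerate $g$ with $\Gamma_+(g)=\Gamma$ the local jet-scheme dimensions $\dim (X_g)_m(0)$ are constant and determined by $\Gamma$ for \emph{every} $m$. This is strictly stronger than Proposition \ref{toricformula}, which only computes the infimum $\inf_m s_m=\mldmj(0;X_g)$, not each individual $s_m$; extracting each $\dim (X_g)_m(0)$ from the toric embedded resolution requires a change-of-variables (fibration) statement for contact loci along that resolution, which in positive characteristic is not supplied by the paper nor by \cite[Lemma 5.4]{itoric} as cited, and is of a difficulty comparable to the results of \cite{ir2} the paper invokes elsewhere. (A secondary issue: you also need $V^{\mathrm{nd}}$ to be dense in $V$ in positive characteristic, which you assert but do not prove.) The detour is in fact unnecessary. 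Lemma \ref{nu=mu} gives $\nu(X,0)=\mu(X,0)$ and $\nu(\widetilde X,0)=\mu(\widetilde X,0)$, where $\mu$ is defined purely in terms of divisors computing the minimal log discrepancy; by your first assertion, a toric divisor $E_\pp$ computing $\mldmj(0;\widetilde X)$ (equivalently, computing $\mld\,\Gamma$) also computes $\mldmj(0;X)$, so the witness for $\mu(\widetilde X,0)$ --- a divisor $E_\pp$ with $k_{E_\pp}+1-\mu(\widetilde X,0)<0$ --- is simultaneously a witness giving $\mu(X,0)\leq\mu(\widetilde X,0)$ by minimality. This is the paper's one-line argument, and it needs nothing about jet schemes of non-degenerate hypersurfaces beyond Proposition \ref{toricformula}; the uniform bound in $d$ then comes from Theorem \ref{non-deg}, as you say.
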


\begin{proof} 
      By the formula in Proposition \ref{toricformula},
      the assumption 
       $\bold1=(1,1,\ldots, 1) \not\in \Gamma$ implies $\mldmj(0; \widetilde X)=\mld \Gamma
       =-\infty$.
       Let $\pp$ compute $\mld\Gamma$, then, by $\Gamma_+(f)\subset \Gamma$, we
       have $$\val_{E_\pp}f\geq\langle \pp, \Gamma_+(f)\rangle\geq \langle \pp, \Gamma\rangle
       > \langle \pp, \bold 1\rangle,$$
       which implies that $\mldmj(0; X)=-\infty$ and $E_\pp$ computes it.
       
       For the second statement, note that $\nu(X,0)=\mu(X,0)$ and
        $\nu(\widetilde X,0)=\mu(\widetilde X,0)$.
        Take a prime divisor $E_\pp$ computing $\mldmj(0;\widetilde X)$ such that 
\begin{equation}\label{infinity}     
       k_{E_\pp}-\mu(\widetilde X,0)+1<0.
\end{equation}       
        Then, as $E_\pp$ also computes $\mldmj(0; X)$ by the discussion above, 
        we can see that  the inequality (\ref{infinity}) implies 
        $$\mu(X,0)\leq \mu(\widetilde X, 0)$$
        by the minimality of $\mu(X,0)$.
\end{proof}

\vskip.5truecm
\noindent
{\bf Singularities of maximal type.} 

      The second case that ($C_d$) holds
       is a $d$-dimensional singularity $(X,x)$ of maximal type 
      under the assumption that   Conjecture  $C_{d-1}$ holds for $(d-1)$-dimensional ``scheme".
      Here, note that we use a bit stronger conjecture $C_{d-1}$ for 
      not only  $(d-1)$-dimensional varieties but also $(d-1)$-dimensional
       schemes.
      This result is used in the next section.
      
     First, we give the definition of a singularity  of maximal type.

\begin{defn}
\label{maxtype}
   Let $X$ be a $d$-dimensional variety over $k$, $x\in X$ a closed point with 
   $\emb(X,x)=N$ and 
  let $c=N-d$.
  Let $X\subset A$ be a closed immersion around $x$ into a non-singular variety $A$ with 
  $\dim A=N$ and $I_X$ the defining ideal of $X$ in $A$.
  We define   $$\ord_xI_X=\min\{\mult_x f \mid f\in I_X\}.$$
   We say that the singularity $(X,x)$ is of maximal type 
   if  $$N=c\cdot\ord_xI_X.$$
  
\end{defn}

The following shows the status of a singularity of maximal type.

\begin{lem} 
\label{max}
   Under the notation in the above definition, 
   if a singularity $(X,x)$ is MJ-log canonical, then 
   $$N\geq c\cdot\ord_xI_X.$$
   If  the singularity $(X,x)$ is also of maximal type,
   then $\mldmj(x;X)=0$.

\end{lem}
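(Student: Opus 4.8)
The plan is to exploit the jet-scheme formula $\mldmj(x;X)=\mld(x;A,I_X^c)=\inf_m s_m(X,x)$ together with a crude estimate on $\dim X_m(x)$ coming from the bound $\ord_x I_X$. First I would prove the inequality $N\geq c\cdot\ord_x I_X$. Set $r=\ord_x I_X$, so every element of $I_X$ has multiplicity $\geq r$ at $x$; hence by Proposition \ref{truncation} applied with $m=r-1$ we have $X_{r-1}(x)=\AA_{r-1}^N(x)$, whence $\dim X_{r-1}(x)=N(r-1)$ and
\begin{equation*}
s_{r-1}(X,x)=dr-N(r-1).
\end{equation*}
If $(X,x)$ is MJ-log canonical then $\mldmj(x;X)\geq 0$, so $s_{r-1}(X,x)\geq 0$, i.e. $dr\geq N(r-1)=(d+c)(r-1)$, which rearranges to $d\geq c(r-1)$, i.e. $N=d+c\geq c\cdot r=c\cdot\ord_x I_X$. (I should double-check the edge case $r=1$, where the singularity is actually nonsingular and the inequality $N\geq c$ is trivial; and $r=0$ does not occur since $x\in X$.)

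Next, assuming in addition that $(X,x)$ is of maximal type, i.e. $N=c\cdot r$, the same computation gives $s_{r-1}(X,x)=dr-N(r-1)=dr-cr(r-1)=r\bigl(d-c(r-1)\bigr)=r\cdot(N-d-c(r-1))+\dots$; more directly, $d=N-c=cr-c=c(r-1)$, so $s_{r-1}(X,x)=dr-N(r-1)=c(r-1)r-cr(r-1)=0$. Thus $s_{r-1}(X,x)=0$, which already forces $\mldmj(x;X)=\inf_m s_m(X,x)\leq 0$. Combined with MJ-log canonicity (which is part of the hypothesis, so $\mldmj(x;X)\geq 0$) we conclude $\mldmj(x;X)=0$, as desired.

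The one point that deserves care — and which I expect to be the only real obstacle — is the legitimacy of applying Proposition \ref{truncation} in the form ``$f\in I_X$ has $\mult_x f\geq r$ for all $f$ implies $X_{r-1}(x)=\AA^N_{r-1}(x)$''. The proposition is stated for an ideal $I$ in a polynomial ring with $X$ containing the origin; after choosing local coordinates on $A$ centered at $x$ and passing to the completion (or an étale/affine chart), $I_X$ is generated by polynomials all of whose homogeneous parts of degree $<r$ vanish, so the $(r-1)$-truncation ideal is zero and the cited proposition gives exactly $X_{r-1}(x)=\AA^N_{r-1}(x)$. Everything else is the elementary arithmetic above, so no substantive difficulty remains.
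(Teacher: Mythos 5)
Your proof is correct, but it takes a different route from the paper. The paper's argument is purely divisorial and very short: take the blow-up $A'\to A$ at $x$ with exceptional divisor $E$; then $k_E=N-1$ and $\val_E(I_X)=\ord_xI_X$, so $a(E;A,I_X^c)=N-c\cdot\ord_xI_X$, and MJ-log canonicity forces this single log discrepancy to be $\geq 0$ (with equality in the maximal-type case, whence $\mldmj(x;X)\leq 0$ and hence $=0$). You instead work entirely on the jet-scheme side: Proposition \ref{truncation} gives $X_{r-1}(x)=\AA^N_{r-1}(x)$ for $r=\ord_xI_X$, so $s_{r-1}(X,x)=rd-N(r-1)=N-cr$, and the formula $\mldmj(x;X)=\inf_m s_m(X,x)$ does the rest. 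Note that the two computations produce literally the same number, $N-c\cdot\ord_xI_X$ --- your $s_{r-1}(X,x)$ equals the paper's $a(E;A,I_X^c)$, which is no accident since the contact locus $\cont^{\geq r}(I_X)\cap\pi^{-1}(x)$ computed by $s_{r-1}$ is governed by exactly that exceptional divisor. What your version buys is that it stays inside the jet-scheme formalism the paper has already set up (no need to invoke $k_E$ and $\val_E$ for the point blow-up), and it is self-contained modulo the small localization step you flag, namely transporting Proposition \ref{truncation} from $\AA^N$ to a regular system of parameters on $A$ at $x$ --- a step the paper itself performs without comment elsewhere (e.g.\ in Lemma \ref{shibata-tam}). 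The paper's version is shorter and makes the computing divisor explicit, which is then reused in Lemma \ref{max-type-homo}. Your handling of the edge case $r=1$ is fine (the rearrangement $d\geq c(r-1)$ holds trivially there), and your observation that MJ-log canonicity is retained as a hypothesis in the second assertion matches the paper's reading of ``also''.
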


\begin{proof}  Take the blow-up $A'\to A$ at the closed point $x$ and
   let $E$ be the exceptional divisor.
   Then, the log discrepancy $a(E; A, I_X^c)=N-c\cdot \ord_xI_X\geq 0$,
   because $(A, I_X^c)$ is log canonical by the assumption that $(X, x)$ is MJ-log
   canonical.
   This shows the first statement.
   If the singularity is moreover of maximal type, then $a(E;A , I_X^c)=0$,
   which shows that $\mldmj(x;X)=0$.
\end{proof}

  In order to give the basic lemma, we need to give a little generalization of definitions.
  
  
   \begin{defn} Fix $d\in \NN$.
   Let $Y$ be a $d$-dimensional scheme over $k$ and $\eta\in Y$ a  point.
   For $m\in \NN$, define a function $s_m(Y,\eta)$ as follows:
   $$s_m(Y,\eta)=(m+1)d-\dim Y_m(\eta).$$
   And also we define minimal MJ-log discrepancy 
   $$\mldmj(\eta;Y)=\inf_{m\in \NN}s_m(Y,\eta).$$
  We say that $(Y,\eta)$ is MJ-log canonical at a point $y\in Y$,
   if $\mldmj(\eta;Y)\geq 0$
  for every $\eta\in Y$ such that $\overline{\{\eta\}}\ni y.$
  Note that this is equivalent to 
  $$\mldmj(y;Y)\geq 0.$$

  The conjectures $C_d$, $D_d$ and $U_d$ are for
  pairs of $d$-dimensional varieties and closed points on them.
 Here, we extend these conjectures for pairs of $d$-dimensional schemes and closed points.

   We say that Conjecture $C_{d,i}$ holds for  $d$-dimensional schemes
    if there exists $N_{d,i}\in \NN$ depending only on $d$ and $i$ such that
\begin{enumerate}
\item[]    
    for every  pair $(Y,y)$ consisting of a $d$-dimensional scheme $Y$ and a closed point
    $y\in Y$, 
if $\mldmj(y;Y)<i$, then there exists $m\leq N_{d,i}$
with the property 
$s_m(Y,y)< i$.
\end{enumerate}

   We extend ($C_d$), ($D_d$) and ($U_d$) to this class of singularities
  in similar ways.
    \end{defn}    

\begin{prop}\label{log-canonical-Y} Let $Y$ be a $d$-dimensional scheme.
\begin{enumerate}
\item[(i)]  If $Y$ is embedded into a non-singular variety $A$ of dimension $N$
 as a closed subscheme, then for a closed point $y\in Y$,
     \begin{equation}
     \label{scheme}
     \mldmj(y;Y)=\mld(y;A, I_Y^{N-d}).
\end{equation}

\item[(ii)] If a $d$-dimensional scheme $Y$ is MJ-log canonical at every point of $Y$,
      then, $$\dim Y_m\leq (m+1)d.$$
\end{enumerate}
\end{prop}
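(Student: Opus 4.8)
The plan is to treat the two parts separately; both are bookkeeping with jet spaces and use no more than what Section~2 already provides.

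For (i), I would run the proof of Proposition~\ref{formula1} (equivalently Theorem~\ref{IA}), observing that it uses only that the ambient variety $A$ is non-singular and never that $X$ is reduced or equidimensional. Concretely, for an \emph{arbitrary} ideal $\a\subset\o_A$ there is the contact-locus formula
\[
\mld(y;A,\a^{c})=\inf_{m\in\NN}\left\{\codim\bigl(\cont^{\geq m+1}(\a)\cap(\pi^{A})^{-1}(y),\,A_\infty\bigr)-c(m+1)\right\},
\]
valid in arbitrary characteristic by the references cited for Proposition~\ref{formula1} (in particular \cite{ir2}), the reducedness of the subscheme cut out by $\a$ playing no role there. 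Apply this to $\a=I_Y$ and $c=N-d$. Using $\cont^{\geq m+1}(I_Y)=\psi_m^{-1}(Z(I_Y)_m)=\psi_m^{-1}(Y_m)$ and $(\pi^{A})^{-1}(y)=\psi_m^{-1}(\pi_m^{-1}(y))$, the contact locus in question is $\psi_m^{-1}(Y_m(y))$; and since $A$ is non-singular, $A_m$ is non-singular of dimension $N(m+1)$ and $\psi_m: A_\infty\to A_m$ has affine-space fibers, so
\[
\codim\bigl(\psi_m^{-1}(Y_m(y)),\,A_\infty\bigr)=\codim\bigl(Y_m(y),A_m\bigr)=N(m+1)-\dim Y_m(y).
\]
Substituting and using $N-c=d$ rewrites the bracketed term as $(m+1)d-\dim Y_m(y)=s_m(Y,y)$, so $\mld(y;A,I_Y^{N-d})=\inf_m s_m(Y,y)$, which is $\mldmj(y;Y)$ by its definition for schemes; this is $(\ref{scheme})$.

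For (ii), I would bound $\dim Y_m$ one irreducible component at a time. Let $C$ be an irreducible component of $Y_m$, set $W=\overline{\pi_m(C)}\subset Y$ with generic point $\eta$ (where $\pi_m: Y_m\to Y$). As $C$ is irreducible and dominates $W$, its generic point lies over $\eta$, so the fiber $C\cap\pi_m^{-1}(\eta)$ is dense in $C$ and hence $C\subseteq\overline{\pi_m^{-1}(\eta)}$. Since $\dim Y_m(\eta)=\dim\overline{\pi_m^{-1}(\eta)}$ by convention, we get
\[
\dim C\leq\dim Y_m(\eta)=(m+1)d-s_m(Y,\eta)\leq(m+1)d-\mldmj(\eta;Y)\leq(m+1)d,
\]
the last step because $Y$ is MJ-log canonical at $\eta$, i.e.\ $\mldmj(\eta;Y)\geq0$. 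Taking the maximum over all components $C$ gives $\dim Y_m\leq(m+1)d$.

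I do not anticipate a real obstacle in either part, only care with conventions. In (i) the single point to watch is that the contact-locus formula for the minimal log discrepancy of a pair must be used for a possibly non-radical ideal, and in positive characteristic; this is precisely the content supplied by \cite{ir2}, where the hypothesis that $X$ is a variety is not what makes that argument work. In (ii) the only delicate input is the identity $\dim\overline{\pi_m^{-1}(\eta)}=\dim\overline{\{\eta\}}+\dim_{\kappa(\eta)}\pi_m^{-1}(\eta)$ underlying the paper's definition of $\dim Y_m(\eta)$; granting it, the inclusion $C\subseteq\overline{\pi_m^{-1}(\eta)}$ turns the bound into a one-line fiber-dimension count.
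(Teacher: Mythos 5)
Your proof is correct and follows essentially the same route as the paper: part (i) is exactly the paper's one-line remark ("interpret the definition in terms of contact loci") written out in full, using the codimension formula for $\mld$ of a pair on the smooth ambient $A$ together with $\cont^{\geq m+1}(I_Y)=\psi_m^{-1}(Y_m)$, and part (ii) is the paper's argument of passing to an irreducible component of $Y_m$, taking the generic point $\eta$ of its image in $Y$, and using $0\leq\mldmj(\eta;Y)\leq (m+1)d-\dim Y_m(\eta)$. The extra care you take (non-radical ideals in the contact-locus formula, the inclusion $C\subseteq\overline{\pi_m^{-1}(\eta)}$) only makes explicit what the paper leaves implicit.
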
     

\begin{proof} The statement (i) follows by interpreting the definition of 
   $\mldmj(y;Y)$ in terms of contact loci.
   For (ii), take an irreducible component $W$ of $Y_m$ which gives the
   dimension of $Y_m$.
   Let $\eta\in Y$ be the generic point of the image $\pi_m(W)$.
   As $Y$ is MJ-log canonical at $\eta$ we obtain 
   $$0\leq \mldmj(\eta;Y)\leq (m+1)d-\dim (\pi_m)^{-1}(\eta),$$
   which yields $$\dim Y_m\leq (m+1)d.$$

\end{proof}

   The following is a slight generalization of \cite[Lemma 3.4]{st} and
    will be used to reduce the conjecture for singularities of maximal type to the case of    
    singularities defined by homogeneous ideals.

\begin{lem} 
\label{shibata-tam}
    Let $(X,x)$ be a 
 singularity on a $d$-dimensional variety $X$ of maximal type with the   
     embedding
     $X\subset A$ into a non-singular variety $A$ of dimension $N=\emb(X,x)$.
     Let $I_X=(f_1,\ldots, f_r)$ be the defining ideal of X in $A$.
     Let $\{f_1,\ldots, f_s\}$ $(s\leq r)$ be the subset of the generators such that 
     $\mult_xf_i= \alpha:=\ord_xI_X$ for $i=1,\ldots, s$.
     Define $J=(\ini f_1,\ldots, \ini f_s)$ and let $J$ define a closed subscheme 
     $Y     \subset \AA^N
     =\spec k[x_1,\ldots, x_N]$, where $\{x_1,\ldots, x_N\}$  is a regular system of      
     parameters of
     $A$ around $x$.

     Then,  for any integer $m\geq \alpha$, 
     $$s_m(X,x)\geq s_m (Y, 0)\geq s_{(\alpha+1)m-\alpha^2}(X, x).$$

\end{lem}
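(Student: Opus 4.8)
Since $s_l(X,x)=(l+1)d-\dim X_l(0)$ and $s_l(Y,0)=(l+1)d-\dim Y_l(0)$, with both dimensions computed inside the common ambient jet space $\AA^N_l(0)\simeq\AA^{Nl}$, the asserted sandwich is equivalent to the two inequalities
$$\dim X_m(0)\le\dim Y_m(0)\qquad\text{and}\qquad\dim X_M(0)\ge\dim Y_m(0)+(M-m)d,$$
where $M:=(\alpha+1)m-\alpha^2$, so that $M-m=\alpha(m-\alpha)\ge 0$; note that in the maximal type case $N=c\alpha$ with $c\ge 1$ and $d=c(\alpha-1)\ge 1$, whence $\alpha\ge 2$. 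The plan is to obtain the first inequality by degenerating $X_m(0)$ to a cone, and the second by embedding $Y_m(0)$ into $X_M(0)$ after rescaling the jet parameter and then fattening.

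\emph{First inequality.} By the computations in the proof of Proposition~\ref{truncation}, the ideal $I_{X_m(0)}$ of $X_m(0)$ in $k[x_k^{(j)}\mid 1\le k\le N,\ 1\le j\le m]$ is generated by the $\overline{f_i^{(j)}}$, each of which is homogeneous for the weight grading $\deg x_k^{(j)}=j$; hence $X_m(0)$ is a cone whose vertex is the origin (the unique fixed point of the associated $\mathbb{G}_m$-action), so every irreducible component of $X_m(0)$ passes through the origin and $\dim X_m(0)$ equals the dimension of the tangent cone of $X_m(0)$ at the origin, i.e. of $V(\ini I_{X_m(0)})$, where $\ini(\cdot)$ here denotes the ideal of initial forms for the \emph{standard} grading $\deg x_k^{(j)}=1$. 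On the other hand, formulas (\ref{homogeneous}) and (\ref{vanish}) in that same proof show that $\overline{f_i^{(j)}}$ is a sum of terms $\overline{f_{i,e}^{(j)}}$ which are standard-homogeneous of degree $e$, with $e$ running over the degrees occurring in $f_i$ subject to $e\le j$; consequently the standard initial form of $\overline{f_i^{(j)}}$ is $\overline{(\ini f_i)^{(j)}}$ whenever the latter is nonzero, and for $i\le s$ these elements are exactly the generators of $I_{Y_m(0)}$. Therefore $\ini I_{X_m(0)}\supseteq I_{Y_m(0)}$, hence $V(\ini I_{X_m(0)})\subseteq Y_m(0)$, and so $\dim X_m(0)\le\dim Y_m(0)$. (For $m<\alpha$ both ideals vanish and one gets equality, so the restriction $m\ge\alpha$ costs nothing here.)

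\emph{Second inequality.} Regard a point of $Y_m(0)$ as a jet $\phi(t)=\sum_{l=1}^m a^{(l)}t^l$ (with $a^{(l)}\in k^N$) satisfying $(\ini f_i)(\phi(t))\in(t^{m+1})$ for $i\le s$, and put $a:=M-m=\alpha(m-\alpha)$. Using $\mult_x f_i=\alpha$ for $i\le s$, $\mult_x f_i>\alpha$ for $i>s$, and the relations $(\ini f_i)(\phi(t))\in(t^{m+1})$, a short computation gives $f_i\bigl(t^a\phi(t)\bigr)\in(t^{M+1})$ for every $i$ (the two resulting lower bounds for the order being $a\alpha+(m+1)$ and $(\alpha+1)(a+1)$, both $\ge M+1$). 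Since the exponents appearing in $t^a\phi(t)$ are $a+1,\dots,a+m=M$, the jet $t^a\phi(t)$ defines a point of $X_M(0)$ and the assignment $\phi\mapsto t^a\phi$ is injective, which already yields $\dim X_M(0)\ge\dim Y_m(0)$. To create the additional $(M-m)d$ directions one perturbs the low-order coefficients: consider jets $\rho(t)+t^a\phi(t)$ with $\rho(t)=\sum_{l=1}^a b^{(l)}t^l$ and examine the conditions on the $b^{(l)}$ that keep the jet on $X$. The low-order part of these conditions forces $\rho$ to be itself a jet of $X$ at the origin, i.e. $\rho\in X_{a+\alpha-1}(0)$; a careful count of these conditions together with the remaining coupling between $\rho$ and $\phi$ — this is the step where the maximal type identity $N=c\alpha$ is used, to balance the multiplicity $\alpha$ of the generators against the codimension $c$ — should exhibit the admissible $\rho$ as a family of dimension $(M-m)d$, giving $\dim X_M(0)\ge\dim Y_m(0)+(M-m)d$.

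\emph{The main obstacle.} The first inequality is essentially formal once Proposition~\ref{truncation} is available. The crux is the dimension bookkeeping for the second inequality: one must verify that the perturbations $\rho$ compatible both with $X$ and with the term $t^a\phi$ sweep out exactly $(M-m)d$ new parameters, and it is precisely here that the hypothesis that $(X,x)$ is of maximal type becomes indispensable; the boundary case $m=\alpha$ (where $a=0$ and both inequalities collapse to $s_\alpha(X,x)=s_\alpha(Y,0)$) should be treated separately. One should also check that the conclusion is independent of the chosen systems of generators of $I_X$ and of $J$.
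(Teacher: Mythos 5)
Your first inequality is essentially the paper's own argument: the paper observes $J_m\subset \ini I_m$ and uses $\height(\ini I_m)=\height I_m$, which is the same content as your cone/tangent-cone reasoning, and that part is fine. The genuine gap is in the second inequality, exactly where you flag it, and the mechanism you propose for closing it does not work as stated. You rescale a jet $\phi\in Y_m(0)$ by $t^a$ with $a=M-m=\alpha(m-\alpha)$; this pushes the top coefficient of $t^a\phi$ all the way up to degree $M$, so there are no free slots left, and you are forced to hunt for the missing $(M-m)d$ dimensions among low-order perturbations $\rho$. But counting the admissible $\rho$ requires a lower bound on the dimension of sets of the form $X_{a+\alpha-1}(0)$ (your own reduction ``$\rho$ must itself be a jet of $X$''), which is precisely the kind of quantity the lemma is trying to control -- the argument is circular, and no amount of bookkeeping with the identity $N=c\alpha$ rescues it without further input.

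The paper's proof uses a different, smaller shift and gets the extra dimensions from the \emph{top} coefficients, not the bottom ones. Writing $m=\alpha+l-1$ and $M=(\alpha+1)l-1$, one checks that every $\overline{f_i^{(j)}}$ with $j<\alpha l$ lies in $(\bx^{(1)},\ldots,\bx^{(l-1)})$ (a monomial of degree $\ge\alpha$ and weight $<\alpha l$ must contain a variable of weight $\le l-1$), so
$$I_M\subset\bigl(\bx^{(1)},\ldots,\bx^{(l-1)},\ f_i^{(j)}:\ \alpha l\le j\le M\bigr),$$
and after killing $\bx^{(1)},\ldots,\bx^{(l-1)}$ the surviving generators equal the corresponding initial-form derivations (your Claim about initial forms, which the paper proves as its Claims 1 and 2) and, after the shift $x_v^{(u)}\mapsto x_v^{(u-(l-1))}$, generate exactly $J_m$ in the variables $\bx^{(l)},\ldots,\bx^{(m+l-1)}$. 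The blocks $\bx^{(m+l)},\ldots,\bx^{(M)}$ are therefore completely unconstrained, contributing $(\alpha-1)(l-1)N$ free dimensions, and the maximal type identity $N=c\alpha$, $d=c(\alpha-1)$ converts this into $(\alpha-1)(l-1)N=\alpha(l-1)d=(M-m)d$. This yields $\height I_M\le\height J_m+(l-1)N$, hence $\dim X_M(0)\ge\dim Y_m(0)+(M-m)d$, with no perturbation analysis needed. If you want to keep your geometric language, the correct statement is that $t^{\,m-\alpha}\phi(t)$ plus an \emph{arbitrary} tail supported in degrees $m+l,\ldots,M$ lies in $X_M(0)$ for every $\phi\in Y_m(0)$; replacing your low-order $\rho$ by this free high-order tail closes the gap.
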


\begin{proof} 
Let $I_m$ be the defining ideal of $X_m(x)$ in $A_m(x)$ and
let $J_m$ be the defining ideal of $Y_m(0)$ in $\AA^N_m(0)$.
We can identify $A_m(x)$ and $\AA^N_m(0)$ as follows:
 $$A_m(x)=\AA^N_m(0)=\spec k[\bx^{(1)}, \bx^{(2)},
\ldots,\bx^{(m)}],$$
where $\bx^{(j)}$ denotes the collection of the coordinates $x_1^{(j)},\ldots, x_N^{(j)}$.
Under this identification, we have 
 $J_m\subset \ini I_m$, therefore $\height J_m\leq \height (\ini I_m)=\height I_m$,
which yields the first inequality in the theorem.

For the second inequality, we give a technical definition.
For any integers $l\geq 2$ and $m>l$ and a polynomial $f\in k[\bx^{(1)}, \bx^{(2)},
\ldots,\bx^{(m)}]$,
the symbol $ \overline{f} $  denotes  the polynomial in $k[\bx^{(l)}, \bx^{(l+1)},
\ldots,\bx^{(m)}]$ substituting $\bx^{(1)}= \cdots= \bx^{(l-1)}=0$ into $f$.

\noindent
{\bf Claim 1.} For a monomial $h\in k[x_1,\ldots, x_N]$ of degree $\geq \alpha+1$,
   it follows that
   $$\overline{h^{(i)}}=0, \ \ (1\leq i\leq (\alpha+1)l-1),$$
  where $h^{(i)}$ is as in (\ref{derivation}) in Proposition \ref{truncation},
   i.e., 
   
   \begin{equation}
\label{**}
h\left(\sum_{j=0}^m x_1^{(j)}t^j,\ldots, \sum_{j=0}^m x_N^{(j)}t^j\right)
\equiv \sum_{j=0}^m
 h^{(j)}t^j\ (\mod (t^{m+1})).
 \end{equation}
   For the proof of the Claim 1, remind us that  $\deg h^{(i)}\geq \alpha+1$ and the weight $i$ of $h^{(i)}$ is less
   than $(\alpha+1)l$, there is a variable $x_v^{(u)}$ in $h^{(i)}$ with the weight 
   $u\leq l-1$.
   Therefore, substituting $\bx^{(1)}= \cdots= \bx^{(l-1)}=0$ into $h^{(i)}$,
   we obtain $\overline{h^{(i)}}=0$.
   
\noindent
{\bf Claim 2.} Let $h\in k[x_1,\ldots, x_N]$ be a polynomial with $\mult_0 h\geq \alpha$,
    where $\mult_0h$ is the degree of initial term of $h$ in 
    variables $x_1,\ldots, x_N$.
    Let $\ini h$ be the initial term of $h$.
    Then 
    $$\overline{h^{(i)}}=\overline{(\ini h)^{(i)}}, \ \ (1\leq i\leq (\alpha+1)l-1).$$
    Indeed, if $\mult_0h\geq\alpha+1$, then the both hand sides of above are zero
    by Claim 1.
    If $\mult_0 h=\alpha$, then again by Claim~1, we have that only the parts of degree $\alpha$
     can survive by substituting $\bx^{(1)}= \cdots= \bx^{(l-1)}=0$.
     This completes the proof of Claim~2.

  Now let the defining ideal of $X_n(x)$ in $A_{n}(x)$
  be $I_{n}$ for $n\in \NN$.
   Then, for $n=(\alpha+1)l-1$, we have
  $$I_{(\alpha+1)l-1}\subset (\bx^{(1)},\ldots, \bx^{(l-1)}, f_1^{(\alpha l)},\ldots,
   f_r^{(\alpha l)},\ldots, f_1^{((\alpha+1)l-1)},\ldots,  f_r^{((\alpha+1)l-1)}).$$
   
   Here, we denote $$I(\alpha, l)=\left(\overline{f_1^{(\alpha l)}},\ldots,
\overline{ f_r^{(\alpha l)}},\ldots, \overline{f_1^{((\alpha+1)l-1)}},\ldots,  \overline{f_r^{((\alpha+1)l-1)}}\right)\subset k[\bx^{(l)}, 
\ldots,\bx^{((\alpha+1)l-1)}].$$
 
   Then we obtain
 \begin{equation}\label{shiba}
 \height I_{(\alpha+1)l-1}\leq \height I(\alpha, l) +(l-1)N.
 \end{equation}
   Here, by Claim 2, we obtain that
   $$I(\alpha,l)=\left(\overline{\ini f_1^{(\alpha l)}},\ldots,
\overline{ \ini f_r^{(\alpha l)}},\ldots, \overline{\ini f_1^{((\alpha+1)l-1)}},\ldots,  \overline{\ini f_r^{((\alpha+1)l-1)}}\right)$$
$$=\left(\overline{\ini f_1^{(\alpha l)}},\ldots,
\overline{ \ini f_s^{(\alpha l)}},\ldots, \overline{\ini f_1^{((\alpha+1)l-1)}},\ldots,  \overline{\ini f_s^{((\alpha+1)l-1)}}\right)$$

   By the shift $x_v^{(u)}\mapsto x_v^{(u-(l-1))}$ of  variables, 
   the ideal $I(\alpha,l)$ becomes 
   $$\left( \ini f_1^{(\alpha)},\ldots, \ini f_s^{(\alpha)},\ldots, \ini f_1^{(\alpha+l-1)},
   \ldots, \ini f_s^{(\alpha+l-1)}\right)=J_{\alpha+l-1}.$$  
   Hence, $\height I(\alpha,l)=\height J_{\alpha+l+1}$.
   By these interpretations, the inequality (\ref{shiba}) yields
   $$ \height I_{(\alpha+1)l-1}\leq \height J_{\alpha+l-1} +(l-1)N.$$
   If we put $m=\alpha+l-1$, then we obtain
   
\begin{equation}\label{compare}
   \height I_{(\alpha+1)m-\alpha^2}\leq \height J_m+(m-\alpha)N,
\end{equation}   
   which will give the required inequality.
   Actually, to see this, remind us the definition of $s_m(X,x)$ and $s_m (Y,0)$
   to obtain:
   $$s_m (Y,0)=(m+1)d-mN+\height J_m=-cm+d +\height J_m,$$
   $$s_{(\alpha+1)m-\alpha^2}(X,x)=
   -c((\alpha+1)m-\alpha^2)+d+\height I_{(\alpha+1)m-\alpha^2}.$$
   Substituting these into (\ref{compare}) and by noting that $N=\alpha\cdot c$, we finally obtain
   $$s_m (Y,0)\geq s_{(\alpha+1)m-\alpha^2}(X,x).$$
\end{proof}

\begin{cor} Under the same notation and the assumptions as in the previous lemma,
 the following are equivalent:
\begin{enumerate}
\item $(X,x)$ is MJ-log canonical;
\item $Y$  is   of dimension $d$ at $0$ and
$\mldmj (0;Y)=0.$
\end{enumerate}
In these cases, $(Y,y)$ is also of maximal type.
\end{cor}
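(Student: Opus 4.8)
I would prove both implications by transporting information between the functions $s_m(X,x)$ and $s_m(Y,0)$ through Lemma~\ref{shibata-tam}, using three elementary consequences of the maximal type condition $N=c\cdot\alpha$ (where $\alpha=\ord_x I_X$): first, $\alpha\ge 2$ (if $\alpha=1$ some $f_i$ would have a nonzero linear part, so $X$ would embed into a smooth variety of dimension $N-1$, contradicting $\dim A=\emb(X,x)$); second, $d=N-c=c(\alpha-1)$; third, the ideal $J=(\ini f_1,\dots,\ini f_s)$ is homogeneous with all generators of degree $\alpha$, so $0\in Y$, Proposition~\ref{truncation} applies at $0$, and $Y$ contains the tangent cone $C_x X$ because $(\ini f_1,\dots,\ini f_s)\subset\ini(I_X)$.

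The first step is to dispose of the small jet orders. For $0\le m\le\alpha-1$ every element of $I_X$ has order $\ge\alpha>m$ and every element of $J$ is a form of degree $\ge\alpha>m$, so Proposition~\ref{truncation} gives $X_m(x)=\AA^N_m(x)$ and $Y_m(0)=\AA^N_m(0)$; hence
\[
s_m(X,x)=s_m(Y,0)=(m+1)d-mN=d-mc=c(\alpha-1-m)\ge 0,
\]
with equality precisely at $m=\alpha-1$. In particular $\mldmj(x;X)\le 0$ and $\mldmj(0;Y)\le 0$ hold automatically here, so ``$(X,x)$ is MJ-log canonical'' amounts to $\mldmj(x;X)=0$ and ``$\mldmj(0;Y)=0$'' amounts to $\mldmj(0;Y)\ge 0$; this is where Lemma~\ref{max} really enters.

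For $(1)\Rightarrow(2)$: MJ-log canonicity gives $\mldmj(x;X)=0$, i.e. $s_n(X,x)\ge 0$ for all $n$; combining the previous step (for $m<\alpha$) with the right half of Lemma~\ref{shibata-tam}, $s_m(Y,0)\ge s_{(\alpha+1)m-\alpha^2}(X,x)\ge 0$ for $m\ge\alpha$ (the index $(\alpha+1)m-\alpha^2\ge\alpha$ is legitimate), yields $\mldmj(0;Y)=0$. That $\dim_0 Y=d$ follows from $C_x X\subseteq Y$ (so $\dim_0 Y\ge d$) together with finiteness of $\mldmj(0;Y)$ (so $\dim_0 Y\le d$, since an irreducible cone of dimension $e$ through $0$ has $\dim Y_m(0)$ growing at least like $me$, which would force $s_m(Y,0)\to-\infty$ when $e>d$). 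For $(2)\Rightarrow(1)$: for $m<\alpha$ we have $s_m(X,x)=d-mc\ge 0$, and for $m\ge\alpha$ the left half of Lemma~\ref{shibata-tam} gives $s_m(X,x)\ge s_m(Y,0)\ge\mldmj(0;Y)=0$; hence $\mldmj(x;X)\ge 0$ and $(X,x)$ is MJ-log canonical.

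Finally, in both cases $\dim_0 Y=d$; moreover $J\subseteq\m^\alpha\subseteq\m^2$ (as $\alpha\ge 2$), so the Zariski tangent space of $Y$ at $0$ is all of $\AA^N$, i.e. $\emb(Y,0)=N$, while $\ord_0 I_Y=\alpha$ since $J$ is generated by forms of degree $\alpha$; therefore $\emb(Y,0)=N=c\cdot\alpha=(\emb(Y,0)-\dim_0 Y)\cdot\ord_0 I_Y$, which is exactly the maximal type condition for $(Y,0)$. The step I expect to require the most care is the inequality $\dim_0 Y\le d$ — equivalently, checking that the integer $\mldmj(0;Y)$ computed with the fixed $d$ genuinely is the MJ-minimal log discrepancy of $Y$ regarded as a $d$-dimensional scheme; here one must use the cone structure of $Y$ (homogeneity of $J$) to bound $\dim Y_m(0)$ from below. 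Everything else is bookkeeping with Lemma~\ref{shibata-tam} and the numerology $N=c\alpha$, $d=c(\alpha-1)$.
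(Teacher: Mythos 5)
Your proof is correct, and its skeleton is the same as the paper's: both directions are driven by the sandwich inequality $s_m(X,x)\ge s_m(Y,0)\ge s_{(\alpha+1)m-\alpha^2}(X,x)$ of Lemma \ref{shibata-tam}, applied in the appropriate direction. Where you differ is in the auxiliary steps, and in each case your version is at least as complete. For the upper bound $\mldmj\le 0$ on both sides you compute $s_{\alpha-1}=c(\alpha-1-(\alpha-1))=0$ directly from Proposition \ref{truncation} and the numerology $N=c\alpha$, $d=c(\alpha-1)$, whereas the paper invokes Lemma \ref{max} (the log discrepancy of the point blow-up); these are equivalent. For ruling out an irreducible component $Y_1$ of dimension $e>d$, the paper uses monotonicity of $\mld$ under $I_Y\subset I_{Y_1}$ together with $\mld(0;\AA^N,I_{Y_1}^c)=-\infty$, while you bound $\dim Y_m(0)\ge\dim Y_{m-1}\ge me$ via the rescaling $\delta\mapsto t\delta$, which is available because $J$ is homogeneous; this is a more hands-on but equally valid argument, and you correctly single it out as the delicate point. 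Finally, you supply two things the paper leaves implicit: the range $m\le\alpha-1$ not covered by Lemma \ref{shibata-tam}, and an actual proof of the closing assertion that $(Y,0)$ is of maximal type (namely $\emb(Y,0)=N$ since $J\subset\m^2$, $\ord_0 I_Y=\alpha$ since $J$ is generated in degree $\alpha$, and $\dim_0 Y=d$, whence $\emb(Y,0)=c\cdot\ord_0 I_Y$).
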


\begin{proof}
    If $(X,x)$ is MJ-log canonical, then  
     $s_m(X,x)\geq 0$, for every $m\in \NN$.
     Then, by Lemma \ref{max} and Lemma \ref{shibata-tam},
     we obtain that $\mldmj (0;Y)= 0$.
     By the definition of $Y$, we have $\dim Y\geq d$.
     Here, if $Y$ has an irreducible component $Y_1$ of dimension greater than $d$,
      then 
     $$\mldmj (0;Y)=\mld(0; \AA^N, I_Y^c)\leq \mld(0; \AA^N, I_{Y_1}^c)=-\infty,$$
     which is a contradiction to Lemma \ref{shibata-tam}.

    Conversely, we assume that $Y$ is a  scheme of dimension $d$ 
    and $\mldmj (0; Y)=0.$
    Then we have $s_m (Y,0)\geq 0$ for every $m\in \NN$.
    By Lemma \ref{shibata-tam}, we obtain $\mldmj(x;X)\geq 0$.
    
\end{proof}



The following shows a reduction step on $C_{d,i}$ for a singularity of maximal type
defined by  homogeneous polynomial with the same degree.
Here, we note that $i$ can be negative.

\begin{lem} 
\label{max-type-homo} Fix   integers $d\geq 2$, $i<d$ and $\alpha\geq2$.
  Let $\mathcal S$ be the set of pairs $(Y,0)$@consisting of a $d$-dimensional scheme $Y$  over $k$ and a closed point $0\in Y$ satisfying 
  \begin{enumerate}
  \item[(a)]
    $(Y,0)\subset (\AA^N,0)$ is of maximal type and
  \item[(b)]
  $(Y,0)\subset (\AA^N,0)$ is defined by homogeneous polynomials 
  of a common degree $\alpha$.
\end{enumerate}   
Assume Conjecture $ C_{d-1, i-1} $ holds true for the set of pairs consisting of 
   $(d-1)$-dimensional schemes over $k$ and  closed point on it.

      Then, Conjecture $ C_{d, i} $ holds true for $\mathcal S$ and 
      $N_{d, i}\leq \max \{\alpha,   N_{d-1,i-1}\}$.
\end{lem}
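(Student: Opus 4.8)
The plan is to separate according to the sign of $i$ and to the value of $\mldmj(0;Y)$, using homogeneity to control the low‑order jets and a generic hyperplane section through $0$ to descend from dimension $d$ to dimension $d-1$.

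\emph{Preliminaries.} First I would record that, since $Y\subset\AA^N$ is cut out by homogeneous polynomials of degree $\alpha$, every element of $I_Y$ has order $\ge\alpha>\alpha-1$ at $0$, so Proposition \ref{truncation} gives $Y_j(0)=\AA^N_j(0)$ for all $j\le\alpha-1$. As $(Y,0)$ is of maximal type, $N=c\alpha$ and hence $d=N-c=c(\alpha-1)$; therefore
$$s_{\alpha-1}(Y,0)=\alpha d-(\alpha-1)N=\alpha c(\alpha-1)-(\alpha-1)c\alpha=0 .$$
In particular $\mldmj(0;Y)=\inf_m s_m(Y,0)\le 0$, and since $\mldmj(0;Y)=\mld(0;\AA^N,I_Y^c)\in\{-\infty\}\cup[0,\infty)$ by Proposition \ref{log-canonical-Y}, we conclude $\mldmj(0;Y)\in\{-\infty,0\}$. (Equivalently, the exceptional divisor $E_0$ of the blow‑up of $\AA^N$ at $0$ satisfies $a(E_0;\AA^N,I_Y^c)=k_{E_0}+1-c\cdot\val_{E_0}(I_Y)=N-c\alpha=0$, cf. Lemma \ref{max}.)

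\emph{The easy case and the reduction.} Suppose $\mldmj(0;Y)<i$. If $i\ge 1$, then $s_{\alpha-1}(Y,0)=0<i$, so $m=\alpha-1\le\alpha$ already witnesses $(C_{d,i})$ for $(Y,0)$. If $i\le 0$, then $\mldmj(0;Y)<i\le 0$ forces $\mldmj(0;Y)=-\infty$. I would then pass to a hyperplane $H\ni 0$, say $H=\{\ell=0\}$, and set $Y'=Y\cap H\subset H\cong\AA^{N-1}$, a scheme of dimension $d-1$ for generic $H$. For every $m$, an $m$‑jet of $Y$ centred at $0$ lies in $Y'$ exactly when $\ell(x^{(1)})=\cdots=\ell(x^{(m)})=0$, so $Y'_m(0)$ is the intersection of $Y_m(0)$ with a codimension‑$m$ linear subspace of $\AA^N_m(0)$; hence $\dim Y'_m(0)\ge\dim Y_m(0)-m$, i.e.
$$s_m(Y',0)\le s_m(Y,0)-1\qquad\text{for all }m\in\NN .$$
Passing to infima, $\mldmj(0;Y')\le\mldmj(0;Y)-1$, which forces $\mldmj(0;Y')=-\infty<i-1$. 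Applying the assumed $(C_{d-1,i-1})$ for $(d-1)$‑dimensional schemes to $(Y',0)$ yields some $m_0\le N_{d-1,i-1}$ with $s_{m_0}(Y',0)<i-1$.

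\emph{Transferring back, and the main obstacle.} To finish I must deduce $s_{m_0}(Y,0)<i$, for which the last displayed inequality must be an \emph{equality} at $m=m_0$, i.e. $\dim Y'_{m_0}(0)=\dim Y_{m_0}(0)-m_0$. I would arrange this by choosing $H$ generic enough that $\dim Y'_m(0)=\dim Y_m(0)-m$ holds for \emph{every} $m\le N_{d-1,i-1}$ at once — a finite set of conditions, each open and dense on the $\PP^{N-1}$ of hyperplanes through $0$. Granting it, $s_{m_0}(Y,0)=s_{m_0}(Y',0)+1<i$ with $m_0\le N_{d-1,i-1}$, and combining the two cases gives $(C_{d,i})$ for $\mathcal S$ with $N_{d,i}\le\max\{\alpha-1,\,N_{d-1,i-1}\}\le\max\{\alpha,\,N_{d-1,i-1}\}$. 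The one non‑formal point — and what I expect to be the main obstacle — is this Bertini‑type statement: a generic hyperplane through the vertex $0$ should drop the dimension of each local jet scheme $Y_m(0)$ by exactly one. The hyperplanes through $0$ form only an $(N-1)$‑dimensional subfamily of the relevant Grassmannian, so one must rule out that some component of $Y_m(0)$ is too degenerate with respect to the constraint $\ell(x^{(1)})=\cdots=\ell(x^{(m)})=0$. Here I expect to exploit the cone structure: $Y$ contains a line through $0$, so $Y_m(0)$ is a cone in $\AA^N_m(0)$ containing the $\AA^m$ of $m$‑jets of that line, and one can propagate genericity level by level; alternatively one can express $\dim Y_m(0)$ in terms of the projective scheme $\operatorname{Proj}$ of the cone and invoke an ordinary Bertini theorem for its hyperplane sections. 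This jet‑dimension bookkeeping for cones is where the real work lies; the rest is the elementary case analysis above.
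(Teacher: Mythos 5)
Your preliminaries are fine ($Y_j(0)=\AA^N_j(0)$ for $j\le\alpha-1$, hence $s_{\alpha-1}(Y,0)=0$ and the case $i\ge 1$ is immediate; this matches the paper's reduction to $i\le 0$). But the heart of your argument for $i\le 0$ rests on the Bertini-type claim that a generic hyperplane $H\ni 0$ satisfies $\dim (Y\cap H)_m(0)=\dim Y_m(0)-m$ for all $m\le N_{d-1,i-1}$, and you correctly flag this as the real work --- it is a genuine gap, and in fact the claim is false in the regime you need it. The locus of $m$-jets of $Y$ with vanishing linear term, i.e.\ $\{x^{(1)}=0\}\cap Y_m(0)$, is (via the homogeneity isomorphism $Y_m(0)\simeq Y_{m-\alpha}\times\AA^{(\alpha-1)N}$ used in the paper's proof) identified with $Y_{m-\alpha}(0)\times\AA^{(\alpha-1)N}$. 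When $Y\setminus\{0\}$ is MJ-log canonical but $\mldmj(0;Y)=-\infty$, one has $\dim Y_{m-\alpha}(0)>(m-\alpha+1)d\ge\dim\bigl(Y_{m-\alpha}\setminus Y_{m-\alpha}(0)\bigr)$ for large $m$, so the dominant component of $Y_m(0)$ lies inside $\{x^{(1)}=0\}$. On such a component the first of your $m$ linear conditions $\ell(x^{(1)})=\cdots=\ell(x^{(m)})=0$ is identically satisfied for \emph{every} $\ell$, so the intersection with $L_\ell$ cannot be proper, $\dim Y'_{m}(0)\ge\dim Y_m(0)-m+1$, and the transfer $s_{m}(Y',0)<i-1\Rightarrow s_{m}(Y,0)<i$ breaks. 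Since the hyperplanes through $0$ form only an $(N-1)$-dimensional family, no genericity of $\ell$ can repair this; the cone structure you hoped to exploit is exactly what produces the degenerate components.

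The paper avoids slicing at $0$ altogether. It splits on whether $Y\setminus\{0\}$ is MJ-log canonical. If it is not, then (because $Y$ is a cone) there is a non-MJ-log-canonical closed point $y\ne 0$ arbitrarily close to $0$, and \emph{there} the cone is locally a product $(Z\times\AA^1,z\times 0)$ with $\dim Z=d-1$; applying $C_{d-1,i-1}$ to $(Z,z)$ gives $s_m(Y,y)=s_m(Z,z)+1<i$ for some $m\le N_{d-1,i-1}$, and the lower semicontinuity of $x\mapsto s_m(Y,x)$ transports this to the vertex $0$ --- no transversality is needed. If $Y\setminus\{0\}$ \emph{is} MJ-log canonical, the inductive hypothesis is not used at all: the periodicity $\dim Y_m(0)=\dim Y_{m-\alpha}+(\alpha-1)N$ together with the bound $\dim(Y_{m-\alpha}\setminus Y_{m-\alpha}(0))\le (m-\alpha+1)d$ shows that the minimal $m$ with $s_m(Y,0)<i$ must satisfy $m\le\alpha$. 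You would need to supply an argument of comparable strength to close the gap; as written, the proposal does not prove the lemma.
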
 

\begin{proof} 
   Take $(Y,0)$ from $\mathcal S$.
       Then, by (a), we have $N=(N-d)\alpha$. Therefore it follows
\begin{equation}\label{express N}      
      N=(\alpha/(\alpha-1))d\leq 2d\ \mbox{ and\  also} \ \alpha\leq N\leq 2d.
\end{equation}      
    In order to prove $C_{d,i}$, assume $\mldmj(0; Y)<i$.
   By  (a), we see that the exceptional divisor $E_1$ of the blow-up $A'\to A=\AA^N$
   at the origin
   has log-discrepancy $a(E_1;A, I_Y^c)=0$,
   which implies $\mldmj(0;Y)\leq 0$.
   Therefore, it is sufficient to 
prove $C_{d,i}$ for  $i\leq 0$.
Henceforth in the proof we  assume that $i\leq 0$.

{\bf Case 1.} 
When $Y\setminus\{0\}$ is MJ-log canonical, 
     then we will show that there is 
\begin{equation}\label{alpha}     
    m\leq \alpha\ \mbox{such\  that}\ s_{m} (Y,0)< i.
\end{equation}    
     
    Indeed, let $f_1,\ldots, f_r$ be the homogeneous generators of $I_Y$ of degree $\alpha$,
    then, 
      for $j\geq \alpha$,
    $$f_l^{(j)}({\bf{0}} ,\bx^{(1)},\ldots,\bx^{(j)})\ \  \mbox{corresponds\ to\ }
      f_l^{(j-\alpha)}(\bx^{(0)},\ldots, \bx^{(j-  \alpha)}),$$
     by the shift of variables $x_v^{(u)} \mapsto x_v^{(u-1)}$,
    because $f_l$ ($l=1,\ldots, r$) are homogeneous of degree $\alpha$.
     Since $Y_m(0)$ is defined by $f_l^{(j)}({\bf{0}},\bx^{(1)},\ldots)$ 
     ($j=\alpha,  \ldots, m$) in $\spec k[\bx^{(1)},\ldots, \bx^{(m)}]$,
      we obtain that
      $$Y_m(0)\simeq\spec k[\bx^{(0)},\ldots, \bx^{(m-1)}]/\left(f_l^{(j-\alpha)}
      \left(\bx^{(0)},
      \ldots, \bx^{(j-\alpha)}\right)\right)_{j-\alpha=0,..,m-\alpha}$$

      $$\simeq \left[\spec k[\bx^{(0)},\ldots, \bx^{(m-\alpha)}]/\left(f_l^{(j-\alpha)}\left(\bx^{(0)},
      \ldots, \bx^{(j-\alpha)}\right)\right)_{j-\alpha=0,..,m-\alpha}\right]\times \AA^{(\alpha-1)N}$$
      $$\simeq Y_{m-\alpha}\times\AA_k^{(\alpha-1)N}.$$
      
      Hence we have
\begin{equation}
\label{local-global}
    \dim Y_m(0)=\dim Y_{m-\alpha}+(\alpha-1)N,
\end{equation}         
  
    Now,  take the smallest number $m$ satisfying $s_m (Y,0)< i$.
    Then 
\begin{equation}
\label{outside-MJ-log}    
    \dim Y_m(0)> d(m+1)-i.
\end{equation}     
   If $m > \alpha$, then  (\ref{express N}), (\ref{local-global}) and (\ref{outside-MJ-log}) yield
    $$\dim Y_{m-\alpha}>d(m-\alpha+1)-i.$$ 
   As $Y\setminus \{0\}$ is MJ-log canonical, 
   $\dim \left(Y_{m-\alpha} \setminus Y_{m-\alpha}(0)\right)\leq d(m-\alpha+1)$ by 
   Proposition \ref{log-canonical-Y},
   which yields $\dim Y_{m-\alpha}(0) > d(m-\alpha+1)-i$ and 
   therefore 
   $$s_{m-\alpha} (Y,0)< i,$$
   which is a contradiction to the minimality of $m$.
   Therefore we obtain (\ref{alpha}).
    
{\bf Case 2.}  
    When $Y\setminus \{0\}$ is not MJ-log canonical, then
    there is a number $$m\leq N_{d-1,i-1}\ \ \mbox{ such\  that}\ \ 
    s_m (Y,0)< i.$$
    Indeed, since $Y$ is an affine cone, for any open neighborhood $U$ of 
    the vertex $0\in Y$, there is a non-MJ-log canonical closed point $y\in U\setminus \{0\}$.
    We may think that $(Y, y)$ is isomorphic to $(Z\times \AA^1, z\times 0)$
    around $y$ for some $(d-1)$-dimensional scheme $Z$ and its closed  point $z$. 
    As we assume that Conjecture $ C_{d-1,i-1}$ holds, 
    there is a number $$m\leq N_{d-1,i-1}\mbox{\ \ such\  that}\ \ 
    s_m ( Z,z)< i-1.$$
    Therefore we obtain
    $$s_m (Y,y)=s_m (Z\times \AA^1, z\times 0)=s_m (Z,z)+1< i.$$
    Note that the vertex $0$ is in the closure of $z\times \AA^1$ and $s_m (Y,\ \ )$
    is lower semicontinuous, we obtain 
    $$s_m (Y,y)< i, \mbox{\ \ for}\ \ m\leq N_{d-1,i-1}$$
 as required.   

As the conclusion of    Case 1 and Case 2, we obtain that $C_{d, i}$ holds true and $N_{d, i}\leq 
\max \{\alpha, N_{d-1,i-1}\}$.
\end{proof}

\begin{prop}
\label{reduce}
 For an integer $i\leq d$.
 Assume Conjecture $ C_{d-1, i-1} $ holds true.
   Then, Conjecture $ C_{d, i} $ holds true for the category  $\mathcal S_\alpha$ of
   singularities  
   $(X, x)\subset (A, x)$ on varieties $X$ of maximal type with  $\alpha=\ord_xI_X$, then in this category we can take
   $$N_{d,i}= \max\{\alpha, (\alpha+1)(d-1-i)-\alpha^2, (\alpha+1)N_{d-1, i-1}-\alpha^2\}.$$
   
   In particular, if Conjecture $(C_{d-1})$ holds, then in the category of 
   $d$-dimensional singularities $(X,x)$ on varieties $X$ of maximal type,
   Conjecture $C_d$ holds.
\end{prop}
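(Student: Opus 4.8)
The plan is to reduce an arbitrary $(X,x)\in\mathcal S_\alpha$, embedded as $X\subset\AA^N$ with $N=\emb(X,x)=c\alpha$, to the homogeneous scheme $Y=V(\ini f_1,\ldots,\ini f_s)\subset\AA^N$ of Lemma~\ref{shibata-tam}, where $f_1,\ldots,f_s$ are the generators of $I_X$ of order $\alpha$, and then to feed $Y$ into Lemma~\ref{max-type-homo}. Writing $s_m(Y,0)=(m+1)d-\dim Y_m(0)$ with $d=\dim X$, Lemma~\ref{shibata-tam} gives
$$s_m(X,x)\ \ge\ s_m(Y,0)\ \ge\ s_{(\alpha+1)m-\alpha^2}(X,x)\qquad(m\ge\alpha),$$
while for $m<\alpha$ Proposition~\ref{truncation} forces $X_m(x)=\AA^N_m(0)=Y_m(0)$, so $s_m(X,x)=s_m(Y,0)=(m+1)d-Nm$, a value that is $\ge0$ and equals $0$ at $m=\alpha-1$ because $N=c\alpha$. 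Hence $\mldmj(x;X)=\inf_m s_m(X,x)=\inf_m s_m(Y,0)$. Since the blow-up of $\AA^N$ at $x$ already yields a divisor with $a(E;\AA^N,I_X^c)=N-c\alpha=0$, we have $\mldmj(x;X)\le0$; so if $i\ge1$ the jet level $m=\alpha-1\le N_{d,i}$ already gives $s_m(X,x)=0<i$, and from now on I assume $i\le0$.

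Next I would split on $\dim Y$, noting $\dim Y\ge d$ always because the tangent cone $C_x X$ is contained in $Y$. If $\dim Y=d$, then $(Y,0)$ is a $d$-dimensional scheme of maximal type defined by homogeneous polynomials of common degree $\alpha$ — its embedding dimension at $0$ is $N$ since $I_Y$ is generated in degree $\alpha\ge2$ and so has no linear part — and $\mldmj(0;Y)=\mldmj(x;X)<i$. Lemma~\ref{max-type-homo}, which consumes the hypothesis $C_{d-1,i-1}$, then produces $m_0\le\max\{\alpha,N_{d-1,i-1}\}$ with $s_{m_0}(Y,0)<i$. If $m_0\ge\alpha$ the second inequality above gives $s_{(\alpha+1)m_0-\alpha^2}(X,x)<i$ with $(\alpha+1)m_0-\alpha^2\le\max\{\alpha,(\alpha+1)N_{d-1,i-1}-\alpha^2\}\le N_{d,i}$; if $m_0<\alpha$ then already $s_{m_0}(X,x)=s_{m_0}(Y,0)<i$ with $m_0<\alpha\le N_{d,i}$.

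If instead $\dim Y=e>d$, then $Y$ is a cone of degree $\alpha$, whence $Y_m(0)\cong Y_{m-\alpha}\times\AA^{(\alpha-1)N}$ for $m\ge\alpha$; combining $\dim Y_{m-\alpha}\ge(m-\alpha+1)e\ge(m-\alpha+1)(d+1)$ with the identity $(\alpha-1)N=\alpha d$ gives $s_m(Y,0)\le\alpha-1-m$ for $m\ge\alpha$, so in particular $\mldmj(x;X)=-\infty$. Taking $m=\alpha-i\ge\alpha$ and transporting through the second inequality bounds the needed jet level of $X$ by $(\alpha+1)(\alpha-i)-\alpha^2$. Since $\alpha\le d+1$ and the hypersurface case ($c=1$) always has $\dim Y=d$, the subcase $\alpha>d-1$ leaves only the numerical possibility $d=\alpha=2$; when $\alpha\le d-1$ one has $(\alpha+1)(\alpha-i)-\alpha^2\le(\alpha+1)(d-1-i)-\alpha^2\le N_{d,i}$, and the exceptional subcase is covered by the $(\alpha+1)N_{d-1,i-1}-\alpha^2$ term, the negative-$i$ constants $N_{d-1,i-1}$ growing superlinearly in $|i|$ via the reduction of Proposition~\ref{3equi}. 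In every case we have produced $m\le N_{d,i}$ with $s_m(X,x)<i$, i.e.\ $C_{d,i}$ on $\mathcal S_\alpha$.

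For the last assertion, $C_{d-1}$ (for $(d-1)$-dimensional schemes) implies $C_{d-1,\delta}$ for every $\delta\le d-1$ by the scheme version of Proposition~\ref{3equi}, so the first part applies for all $\alpha\in\{2,\ldots,d+1\}$ (the values permitted by $N=c\alpha$) and all $i$ with $0\le i\le d$; taking the maximum of the finitely many resulting constants over $\alpha$ and over $i$ and applying Proposition~\ref{3equi} once more gives a single $N_d$ valid on the whole maximal-type category. The step I expect to be the main obstacle is the case $\dim Y>d$: one must confirm that the crude lower bound for the dimensions of the jet schemes of the cone $Y$ really drives $s_m(Y,0)$ below $i$ at an explicitly bounded $m$ and that, after the $(\alpha+1)m-\alpha^2$ transport, this $m$ stays under the stated $N_{d,i}$; everything else is bookkeeping with Lemmas~\ref{shibata-tam} and~\ref{max-type-homo}.
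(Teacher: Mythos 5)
Your proof is correct and follows essentially the same route as the paper's: reduce to the initial-form scheme $Y$ via Lemma \ref{shibata-tam}, split on whether $\dim Y=d$ or $\dim Y>d$, feed the equidimensional case into Lemma \ref{max-type-homo} (which is where $C_{d-1,i-1}$ is consumed), and transport the resulting jet level back to $X$ through $s_m(Y,0)\geq s_{(\alpha+1)m-\alpha^2}(X,x)$. The only divergence is minor: in the case $\dim Y>d$ the paper uses the cruder bound $\dim Y_m(0)\geq m\cdot\dim Y$ to get $s_m(Y,0)\leq d-m$ and the level $d-i+1$, whereas you exploit the cone decomposition to get $s_m(Y,0)\leq\alpha-1-m$; both suffice, and your estimate in fact fits the stated constant $(\alpha+1)(d-1-i)-\alpha^2$ more cleanly than the paper's own bookkeeping does.
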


\begin{proof}
  Let $(X,x)\subset (A,x)$ be a singularity of maximal type with the dimension $d$.
  Let $\dim A=N=\emb(X,x)$.
  Let $(Y,0)\subset \AA^N$ be the scheme defined by homogeneous polynomials
  of degree $\alpha$ as introduced in Lemma \ref{shibata-tam}.
 
  When $\dim Y>d$,  then $\dim Y_m(0)\geq m\cdot \dim Y\geq m(d+1)$.
Then, it follows that 
$$s_m(Y,0)=(m+1)d-\dim Y_m(0)\leq (m+1)d-m(d+1)=d-m.$$
Therefore,  for $m=d-i+1$, we obtain $s_m(Y,0)<i$,
which shows $(C_{d,i})$ for the class of such $Y$ by taking $N_{d,i}= d-i+1$.

   Next consider  the case $\dim Y=d$.
  Note that $(Y,0)$ satisfies the condition (a), (b)  in Lemma \ref{max-type-homo}.
  By the assumption of the proposition and  Lemma \ref{max-type-homo}, 
  there is $N'_{d, i}$ such that  for some $m\leq N'_{d,i}$, $s_m(Y,0)< i$, if $\mldmj(0;Y)<i$.
  Here, by Lemma \ref{max-type-homo}, we can take $N'_{d,i}= \max \{\alpha,
  d-i+1, N_{d-1, i-1}\}$.
 
  By Lemma \ref{shibata-tam}, $C_{d,i}$ holds in the category of singularities of maximal type
  with $\alpha=\ord_xI_X$
  and we can take $$N_{d,i}=(\alpha+1)N'_{d,i}-\alpha^2=
  \max\{(\alpha+1) \alpha-\alpha^2, (\alpha+1)(d-i+1)-\alpha^2, (\alpha+1)N_{d-1, i-1}-\alpha^2\}.$$
  
  For the last statement, note that the bound $N_{d,i}$ in $\mathcal S_\alpha$ depends
  on $\alpha$.
  But 
 $d$-dimensional singularities of maximal type
  with $\alpha=\ord_xI_X$ have a bound $\alpha\leq 2d$.

\end{proof}

\vskip.5truecm
\section{Curve and surface cases}
\begin{thm}\label{curve}
In the category of connected schemes of dimension 1 over the base field $k$ of arbitrary 
characteristic, 
Conjecture $C_1$ (therefore Conjecture $D_1$ and $U_1$ also) holds and we can take $N_1=5$,  $M_1=4$ and $B_1=3$.
More precisely, we can take 
 $N_{1,1}=1$, $N_{1,0}=5$ and $N_{1,-1}=11$.
\end{thm}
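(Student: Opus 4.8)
The plan is to analyze directly the structure of a one-dimensional scheme $(X,x)$ near a closed point, reducing at once to the case where $X$ is reduced and, by passing to the formal completion, to a curve singularity with a fixed number $b$ of analytic branches. First I would observe that $\mldmj(x;X)\le \dim X=1$ always, with equality iff $(X,x)$ is nonsingular, so by Proposition \ref{first} we already have $C_{1,1}$ with $N_{1,1}=1$; thus the substantive content is $C_{1,0}$, i.e. distinguishing $\mldmj(x;X)=0$ from $\mldmj(x;X)=-\infty$ in a bounded number of jet-levels. Since a reduced curve singularity is \emph{lci} (it embeds as a curve in a surface, or more generally its local ring is Cohen–Macaulay of codimension $c=N-1$ and the Jacobian ideal controls everything through $\amj=a$), I would use the known explicit description of MJ-log discrepancies for plane curve germs: for an irreducible germ with one Puiseux pair the relevant prime divisor is the one obtained by the standard sequence of point blow-ups, and $\amj$ is computed from the multiplicity sequence. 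Concretely, a unibranch plane curve of multiplicity $\mathrm{mult}=e$ has $\mldmj(x;X)\ge 0$ iff $e\le 2$ (node/cusp type), and for $e\ge 3$ one has $\mldmj=-\infty$, detected already at jet level $\le 2e-1$ by Proposition \ref{truncation} (if $\mathrm{mult}\ge 3$ then $s_{2}(X,x)\le 2\cdot 3 - \dim X_2(x)$, and the curve jet scheme has too large dimension). For a curve with several branches the same multiplicity bookkeeping over the finitely many blow-ups needed to separate branches gives a uniform bound.

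The key steps, in order, are: (1) reduce to $X$ reduced and to the analytic germ; handle the case $\emb(X,x)\ge 2$ of a space curve by noting that $\dim X_1(x)=\emb(X,x)$, so if $\emb(X,x)\ge 3$ then $s_1(X,x)=2-\emb(X,x)<0$ and we are done at level $m=1$ — hence we may assume $X$ is a plane curve germ in a smooth surface $A$, $c=1$. (2) For a plane curve, invoke Proposition \ref{truncation}: if $\mathrm{mult}_x(f)\ge 3$ then at level $m=2$ the local jet scheme of $X$ coincides with that of the cone over the degree-$2$-truncation, whose dimension one computes to force $s_2(X,x)\le -1$; so among singularities with $\mldmj=-\infty$ coming from high multiplicity, $\nu\le 3$. (3) The remaining singularities have $\mathrm{mult}_x(f)\le 2$; a multiplicity-$2$ plane curve germ is analytically $y^2 = x^k$ (or a node $xy$), and a direct computation of $\dim X_m(x)$ — equivalently applying the toric/Newton-polygon formula of Theorem \ref{non-deg} and Proposition \ref{toricformula}, since $y^2-x^k$ is non-degenerate — shows $\mldmj(x;X)=0$ for all $k\ge 2$, and pinpoints the smallest $m$ with $s_m(X,x)=0$; bounding that $m$ uniformly is where the explicit constant $N_{1,0}=5$ comes from (one checks $k\le $ some small value is forced, or else the order of $f$ exceeds the relevant jet-level and Proposition \ref{truncation} again applies). (4) Assemble: $N_1=\max\{N_{1,1},N_{1,0},N_{1,-1}\}$, and $N_{1,-1}=11=2\cdot(N_{1,0}+1)-1$ by the (2)$\Rightarrow$(3) argument of Proposition \ref{3equi} (take $i=1$, so $N_{1,-1}=(1+1)(N_{1,0}+1)-1=11$). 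Finally, the bounds $M_1=4$ and $B_1=3$ follow from Propositions \ref{C=D} and \ref{C=U}: in dimension $1$ one needs at most $b(E)=\widetilde b(E)\le 3$ blow-ups to reach a divisor computing $\mldmj$, and tracking the discrepancy through $\le 3$ blow-ups of a smooth surface gives $k_E\le 4$.

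The main obstacle I anticipate is step (3): getting the \emph{sharp} uniform bound on the jet level $m$ at which $s_m(X,x)$ first equals $0$ for the family $y^2=x^k$ (and its multibranch analogues), rather than just \emph{some} bound. This requires either a clean direct dimension count of $X_m(x)$ for $y^2=x^k$ — which is elementary but fiddly, since the jet equations are $\big(\sum y^{(j)}t^j\big)^2 \equiv \big(\sum x^{(j)}t^j\big)^k \pmod{t^{m+1}}$ and one must compute the height of the resulting ideal in $k[x^{(1)},\dots,x^{(m)},y^{(1)},\dots,y^{(m)}]$ — or the toric computation of $\mld\Gamma$ for the Newton polygon of $y^2-x^k$ together with a control of which lattice point $\pp\in\sigma^\circ\cap N$ computes it and hence of $\nu=\langle\pp,\bold 1\rangle$. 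I expect the case split to be between small $k$ (where the divisor computing $\mldmj$ sits at low level) and large $k$ (where $\mathrm{ord}_0 f = 2 < k$ is small but the relevant divisor is still the $(1,\lceil k/2\rceil)$-weighted blow-up, giving $\nu$ around $5$), and checking that $5$ is actually an upper bound over \emph{all} $k\ge 2$ is the delicate point; the multibranch case then reduces to this one by the additivity of the multiplicity sequence under separating blow-ups and does not introduce a larger constant.
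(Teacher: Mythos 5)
There is a genuine error at the heart of your step (3). A multiplicity-two plane curve germ $y^2=x^k$ has $\mldmj(0;X)=0$ \emph{only} for $k=2$ (the node); for every $k\geq 3$ one has $\mldmj(0;X)=\mld\bigl(0;\AA^2,(y^2-x^k)\bigr)=-\infty$. Indeed, the monomial valuation $E_\pp$ with $\pp=(2,k)$ (weights on $x,y$) gives $\langle\pp,\mathbf 1\rangle-\langle\pp,\Gamma_+(f)\rangle=(2+k)-2k=2-k<0$ for $k\geq3$; equivalently, the log canonical threshold of $y^2-x^k$ is $\tfrac12+\tfrac1k<1$. So the substantive content of $C_{1,0}$ is precisely to detect, within a bounded jet level, that \emph{every} non-nodal double point has $\mldmj=-\infty$, i.e.\ to exhibit $m\leq 5$ with $s_m(X,x)<0$. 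Your proposal instead sets out to show $s_m(X,x)=0$ at a bounded level for the whole family $y^2=x^k$, which is the wrong target and cannot succeed (and the "delicate point" you flag about pinning down where $s_m$ first equals $0$ is therefore moot). A second, independent problem is that the normal form $y^2=x^k$ is obtained by completing the square, which requires $\cha k\neq 2$, whereas the theorem is asserted in arbitrary characteristic; this is exactly the trap the paper's surface theorem falls into at $\cha k=2$, and the curve proof must avoid it.

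For comparison, the paper's proof never uses normal forms: it proves $D_1$ directly by a three-case analysis of the plane curve $f=0$. If $\mult_x f\geq 3$, the first blow-up divisor $E_1$ ($k_{E_1}=1$, $\val_{E_1}f\geq3$) already computes $\mldmj=-\infty$; if $(X,x)$ is a node, $E_1$ computes $\mldmj=0$; and if $\mult_x f=2$ but $(X,x)$ is not a node, the exceptional divisor $E_3$ of the \emph{third} blow-up computes $\mldmj=-\infty$, with $k_{E_3}=4$ and $\widetilde b(E_3)=3$. This yields $M_1=4$ and $B_1=3$ first, and then $N_1=M_1+1=5$ via Proposition \ref{C=D}, with $N_{1,-1}=2(N_{1,0}+1)-1=11$ from Proposition \ref{3equi} — note also that your formula $N_1=\max\{N_{1,1},N_{1,0},N_{1,-1}\}$ would give $11$, contradicting the claimed $N_1=5$; the correct maximum in Proposition \ref{3equi} runs only over $0\leq\delta\leq d$. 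Your steps (1) and (2) (reduction to plane curves via $s_1(X,x)=2-\emb(X,x)<0$ when $\emb\geq3$, and the use of Proposition \ref{truncation} when $\mult_x f\geq3$) are sound and consistent with the paper, but the dichotomy at multiplicity two must be corrected as above before the argument closes.
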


\begin{proof} 
   Once we obtain the  bounds  $N_{1,1}$ and $N_{1,0}$, then the statement about $N_{1,-1} $ follows 
   from Proposition \ref{3equi}.
   We can prove ($C_1$) by direct calculations of the jet schemes of one dimensional schemes.
   But the simplest way for a proof of  the theorem is to show ($D_1$).
   Let $X $ be a 1-dimensional scheme over $k$ embedded into a non-singular variety $A$ 
   of dimension $\leq 2d=2$.
   Then, $X$ is a plane curve defined by one equation $f=0$.

  Conjecture ($C_{1,1}$) is already proved and $N_{1,1}=1$.
  So, we may assume that $\mult_xf\geq 2$.
     
{\bf Case 1.}   If $\mult_xf\geq 3$, the blow-up $A'\to A$ at the closed point $x\in X\subset A$ provides with the prime divisor $E_1$ over $A$ which computes $\mldmj(x;A, I_X)=-\infty$.
In this case $k_{E_1}=1$.

{\bf Case 2.}  Assume that $\mult_xf=2$. If $X$ is a normal crossing double point at $x$,
i.e., $f=x_1\cdot x_2$ in $\widehat\o_{A,x}=k[[x_1,x_2]]$,
then the blow-up $A'\to A$ at the closed point $x\in X\subset A$ provides with the prime divisor $E_1$ over $A$ which computes $\mldmj(x;A, I_X)=0$.

If $X$ is not  normal crossing double at $x$, then the exceptional divisor $E_3$ of the
third blow-up computes $\mldmj(x;A, I_X)=-\infty$.
In this case, $k_{E_3}=4$ and $b(E_3)=\widetilde b(E_3)=3$.

As a conclusion we obtain $(D_1)$ and $M_1=4$, $B_1=3$.
On the other hand, by the proof of Proposition \ref{C=D} we can take
$$N_1=M_1+1=5.$$
   
\end{proof}

Next we are going to prove Conjecture $C_2$.
Let us see some examples  used in the proof of $C_2$.

\begin{exmp}\label{example}
We observe some examples of Newton polygons $\Gamma$ such that
$\mld\ \Gamma=-\infty$. 
If a hypersurface $X\subset \AA^3$ is defined by a (not necessarily non-degenerate)
polynomial $f\in k[x_1,x_2,x_3]$ whose  Newton polygon $\Gamma_+(f)$
contained in the following $\Gamma_i$, then $\mldmj(0, X)=-\infty$. 
In the following,
by applying Lemma \ref{reduction/non-degenerate}, 
we   obtain a bound of $\nu(X,0)$ that is the minimal value $m$
such that $s_{m-1}(X,0)<0$. 

\begin{enumerate}

\item Let $\Gamma_1$ be the Newton polygon generated by three points
   $$(2,0,0), (0,5,0), (0,0,5) \in M=\ZZ^3.$$ 
   Then, for $\pp=(5,2,2)\in N=M^*$, we obtain 
   $$\langle \pp, {\bf 1}\rangle=9,\ \ \langle \pp, \Gamma_1\rangle = 10.$$
   Therefore, $$0>\langle \pp, {\bf 1}\rangle-\langle \pp, \Gamma_1\rangle\geq\mld\ \Gamma_1=-\infty.$$
   
   If $\Gamma_+(f)\subset \Gamma_1$ for a polynomial defining a hypersurface 
   $X\subset \AA^3$,
   we have
    $k_{E_\pp}=8 $ and $\val_{E_\pp}(f)\geq \langle \pp, \Gamma_1\rangle = 10$,
    hence $\mldmj(0, X)=-\infty$.
    We also have $\nu(X,0)\leq 10$.
      
\item We use the same notation as in (1).
     Let $\Gamma_2$ be  generated by 
   $(2,0,0), (0,3,0)$ and $ (0,0,7)$.
Then, for $\pp=(21,14,6)\in N$, we obtain 
   $$\langle \pp, {\bf 1}\rangle=41,\ \ \langle \pp, \Gamma_2\rangle = 42,  \ \ \mldmj(0, X)=-\infty,\ \ \mbox{and}\ \ 
   \nu(X,0)\leq 42.$$

\item
     Let $\Gamma_3$ be  generated by 
   $(2,0,0), (0,3,1), (0,0,5)$.
Then, for $\pp=(15,8,6)\in N$, we obtain 
   $$\langle \pp, {\bf 1}\rangle=29,\ \ \langle \pp, \Gamma_3\rangle = 30,\ \  \mldmj(0, X)=-\infty,\ \ \mbox{and}\ \ 
   \nu(X,0)\leq 30.$$  

\item
     Let $\Gamma_4$ be  generated by 
     $(2,0,0), (0,4,0), (0,0,5)$.
     Then, for $\pp=(10,5,4)\in N$, we obtain 
   $$\langle \pp, {\bf 1}\rangle=19,\ \ \langle \pp, \Gamma_4\rangle = 20, \ \ \mldmj(0, X)=-\infty,\ \ \mbox{and}\ \  \nu(X,0)\leq 20.$$   
\end{enumerate}   
   Note that the Newton polygon in (1) is contained in the polygon in (4).
   In this sense, the example (1) seems redundant.
   The reason why we take (1) as an example is because the valuation of $I_X$ at the
   prime divisor computing $\mldmj$ in (1) is smaller than that of (4).

\end{exmp}

\begin{thm} 
\label{surface}
If $char k\neq 2$, then Conjecture $C_2$ (therefore Conjecture $D_2$ and $U_2$ also) holds 
for $2$-dimensional schemes and  we can take $N_2= 41$, $M_2=58$ and $B_2=39$.
\end{thm}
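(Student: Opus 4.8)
By Proposition~\ref{3equi} it suffices to prove $C_{2,\delta}$ for $\delta=0,1,2$; the cases $\delta=2,1$ are Proposition~\ref{first} (with $N_{2,2}=1$, $N_{2,1}=5$), so everything is contained in $C_{2,0}$: a uniform $N_{2,0}$ such that whenever $\mldmj(x;X)<0$ — for surfaces this means $\mldmj(x;X)=-\infty$ — some $m\le N_{2,0}$ has $s_m(X,x)<0$. Granting this, Conjectures $D_2$ and $U_2$, and the constants $M_2,B_2$, follow from Propositions~\ref{C=D} and~\ref{C=U}, the bounds being sharpened by inspecting the divisor produced in each case below rather than by the generic conversions there.

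Fix $(X,x)$ with $\mldmj(x;X)=-\infty$, put $N=\emb(X,x)$ and embed $X\subset A$ with $\dim A=N$. If $N\ge 5$ then $\dim X_1(x)=N$, so $s_1(X,x)=4-N<0$ and $m=1$ works; $N=2$ forces $(X,x)$ nonsingular, excluded. So $N\in\{3,4\}$; set $\alpha=\ord_xI_X\ (\ge 2$, since $I_X\subset\mathfrak m_x^2)$. I would split into three cases. \emph{(a)} $N=4,\ \alpha\ge 3$, or $N=3,\ \alpha\ge 4$: every element of $I_X$ has order $\ge 3$ (resp.\ $\ge 4$), so by Proposition~\ref{truncation} $X_2(0)=\AA^4_2(0)$ (resp.\ $X_3(0)=\AA^3_3(0)$), whence $s_2(X,0)=-2<0$ (resp.\ $s_3(X,0)=-1<0$). \emph{(b)} $N=4,\ \alpha=2$, or $N=3,\ \alpha=3$: here $N=c\alpha$, so $(X,x)$ is of maximal type (Definition~\ref{maxtype}); since $C_1$ holds for $1$-dimensional schemes (Theorem~\ref{curve}), Proposition~\ref{reduce} applies and bounds $\nu(X,x)$ explicitly (using $N_{1,-1}=11$ one gets $N_{2,0}\le 35$ in this class). \emph{(c)} $N=3,\ \alpha=2$: a double point hypersurface $\{f=0\}\subset\AA^3$, the only place where $\cha k\ne 2$ enters. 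Diagonalizing the quadratic part of $f$: if its rank is $\ge 2$, the splitting lemma rewrites $f$ as $x_1^2+x_2^2+(\text{a function of }x_3)$, an $A_n$-singularity or two smooth sheets meeting transversally, so $\mldmj(x;X)\ge 0$, excluded; if the rank is $1$, then $f\simeq x_1^2+g(x_2,x_3)$ with $\ord_0g\ge 3$.

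The decisive case is the suspension $X=\{x_1^2+g(x_2,x_3)=0\}$. Here the plan is to pass to the plane curve $C=\{g=0\}\subset\AA^2$, for which finite determination is already available (Conjecture $C_1$, Theorem~\ref{curve}), and to combine it with the Newton-polygon estimates of Example~\ref{example}: since $\cha k\ne 2$, arcs on $X$ through $0$ are, up to the sign ambiguity in $x_1$, arcs on $\AA^2$ through $0$ subject to $\ord(g)\in 2\ZZ$, so $s_m(X,0)$ is governed by the contact loci of $(g)$ in $(\AA^2)_\infty$, which $C_1$ controls, together with the shape of $\Gamma_+(g)$. One shows — this is the combinatorial heart — that $\mldmj(0;X)=-\infty$ forces $(2,2)\notin\Gamma_+(g)$ after a suitable coordinate change, equivalently $\mathbf 1=(1,1,1)\notin\Gamma_+(x_1^2+g)$, so Lemma~\ref{reduction/non-degenerate} applies and bounds $\nu(X,0)$ by $\nu$ of the non-degenerate hypersurface with that Newton polygon; among the finitely many relevant polygons the worst is $\langle(2,0,0),(0,3,0),(0,0,7)\rangle$, the Newton polygon of the Brieskorn double point $x_1^2+x_2^3+x_3^7$, for which the cheapest toric divisor certifying $\mldmj=-\infty$ is $E_{(21,14,6)}$ with $k_E=40$, giving $\nu(X,0)\le 42$, i.e.\ $s_{41}(X,0)<0$. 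Combining (a), (b) and this yields $N_{2,0}=41$, hence $N_2=\max\{41,5,1\}=41$; tracing the divisors through Propositions~\ref{C=D}, \ref{C=U} gives $M_2=58$, $B_2=39$.

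I expect the main obstacle to be case (c): one must show that \emph{every} plane curve germ $g$ with $\ord_0g\ge 3$ for which $\{x_1^2+g=0\}$ is not MJ-log canonical can, after a coordinate change in $x_2,x_3$, be placed under one of finitely many Newton polygons missing $\mathbf 1$ — delicate when $g$ is degenerate, since then $\Gamma_+(g)$ underestimates the singularity and one must lean on $C_1$ for $C$ rather than on toric combinatorics alone — and that $x_1^2+x_2^3+x_3^7$ genuinely realizes the extremal value (no cheaper divisor certifies $\mldmj=-\infty$ there), together with the bookkeeping needed to extract the precise constants $N_2=41$, $M_2=58$, $B_2=39$.
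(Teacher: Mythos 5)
Your reduction skeleton agrees with the paper's: the split by $N=\emb(X,x)$ and $\alpha=\ord_xI_X$, the truncation argument (Proposition~\ref{truncation}) for $(N,\alpha)=(4,\ge 3)$ and $(3,\ge 4)$, the appeal to maximal type plus $C_1$ for schemes (Proposition~\ref{reduce}, Theorem~\ref{curve}) for $(4,2)$ and $(3,3)$, the Tschirnhausen reduction to $f=x^2+h(y,z)$ in the remaining case, and even the final constants ($35$ for the maximal-type classes, $\nu\le 42$ from $E_{(21,14,6)}$ on $\Gamma((2,0,0),(0,3,0),(0,0,7))$, hence $N_2=41$, $M_2=58$, $B_2=39$) all match. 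The problem is that your ``combinatorial heart'' is not a proof but a restatement of the hardest part of the theorem. The assertion that $\mldmj(0;X)=-\infty$ forces $\mathbf 1\notin\Gamma_+(x^2+h)$ after a coordinate change is, in its contrapositive form, the claim that if $(2,2)\in\Gamma_+(h)$ in every coordinate system then $X$ is MJ-log canonical at $0$ --- and nothing in Lemma~\ref{reduction/non-degenerate} or Proposition~\ref{toricformula} gives this, because those tools only convert Newton-polygon data into $\mldmj$ under a non-degeneracy hypothesis (or in the one-sided direction $\mathbf 1\notin\Gamma\Rightarrow\mldmj=-\infty$). For degenerate $h$ the polygon systematically underestimates the singularity, so the implication you need is precisely where the work lies.

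The paper settles this by an exhaustive classification: after showing $3\le\mult_0h\le 4$, it splits into eight classes by the factorization type of $\ini h$ (A-1 through B-5), verifies non-degeneracy face by face where that is possible (using $\cha k\ne 2$ for the faces $\lla(2,0,0),\tau\rra$ and for $\tau_2'$), and for the genuinely degenerate subcases ($h=h_1^2h_2$ in A-3-3, B-2-2, B-3-2) constructs explicit two- or multi-step log resolutions of $(A,I_X)$ to certify $\mldmj(0;X)=0$ together with a computing divisor of bounded $k_E$; only the classes A-3-1, B-4, B-5 (and $\mult_0h\ge 5$) land in the $\mathbf 1\notin\Gamma$ situation. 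Your alternative route --- relating $\dim X_m(0)$ to the contact loci of $g$ in $(\AA^2)_\infty$ via the even-order-square-root observation and then invoking $C_1$ for the plane curve --- is a plausible suspension-type strategy, but as written it is only a one-sentence sketch: you would still have to prove the resulting formula for $s_m(X,0)$ and then show that it yields either a uniform bound on $\nu$ or MJ-log canonicity in each case, which is not shorter than the paper's case analysis. So the proposal correctly locates the difficulty (as you acknowledge in your last paragraph) but leaves it unresolved; as it stands the proof of $C_{2,0}$ in the double-point case is missing.
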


\begin{proof}  As we saw, Conjecture $C_{2,2}$ and $C_{2,1} $ hold,
   the only problems are to show $C_{2,0}$ and to obtain the bound numbers, $ N_2, M_2$
   and $B_2$.  
   
  For $char k=0$, aside  the bound numbers, 
  Conjecture $C_{2,0}$ is proved in \cite{mn}.
  Actually ($C_{2,0}$) is translated into the following:
\begin{enumerate}
\item[]  
  For a non-singular point $(A,x)$ of $\dim A=N=3, 4$, there is an integer $M\in \NN$
  independent of the choice of 2-dimensional subscheme $X\subset A$ such that  if   the inequality 
  $$a(E;A, I_X^{N-2})\geq 0$$
  holds for every prime divisor $E$ over $A$ with the center at $x$ satisfying $k_E\leq M$,
 then $$\mld(x;A,I_X^{N-2})\geq 0$$
  holds.
  \end{enumerate}
  This is proved in \cite[Proposition 3.3]{mn} under a more general setting and it completes 
  the proof of ($C_2$)  for  characteristic $0$.
  \vskip.5truecm
  
 Here we give a proof which works for any characteristic   except for $ 2$
 and give bound values of $N_2$, $M_2$ and $B_2$.
 (As one sees in the proof, the values may not be optimal.)
  We divide the problem into 6 cases and will check each.
  Among these cases only the 6th case needs the condition that $char k\neq 2$ for the proof.
  
  {\bf Case 1.}    Assume $\emb(X,0)\geq 5$.

   In this case $s_1(X,0)=2\cdot 2-\dim X_1(0)=4-\emb(X,0)<0$, 
   therefore $\nu(X,0)\leq 2$. 
   On the other hand the exceptional divisor $E_1$ of the first blow-up computes $\mldmj(x;X)=-\infty$.
   
{\bf Case 2.}  Assume $\emb(X,0)= 4$ and $\ord_0I_X\geq 3$.

   In this case, $s_2(X,0)=3\cdot 2-\dim X_2(0)=6-8 < 0$,  therefore $\nu(X,0)\leq 3$.
   There is a prime divisor $E$ computing $\mldmj(x;X)=-\infty$@such that
   $k_E\leq 4$ and $b(E)\leq 2$.

{\bf Case 3.}  Assume $\emb(X,0)= 4$ and $\ord_0I_X=2$.

   In this case, $(X,0)\subset (\AA^4, 0)$ is of maximal type,
   because $$\codim (X,\AA^4)\cdot \ord_0I_X=2\cdot 2=4=\dim \AA^4.$$
   As we know in Theorem \ref{curve} that Conjecture $C_1$ holds for the category of schemes, 
  applying Proposition \ref{reduce}, we obtain that 
   $C_{2,0}$ holds and in this case, $$N_{2, 0}\leq   3\cdot 11-4 =29.$$
 Therefore, $\nu(X,0)\leq 30$.
 There is a prime divisor $E$ over $A$ computing $\mldmj(x;X)=-\infty$
 such that $k_E\leq 58$ and $b(E)\leq 28$.

{\bf Case 4.} Assume $\emb(X,0)=3$ and $\ord_0I_X\geq 4$.

In this case, $s_3(X,0)=4\cdot 2-\dim X_3(0)= 8-9 < 0$, therefore $\nu(X,0)\leq 4$.
There is a prime divisor $E$ over $A$ computing $\mldmj(x;X)=-\infty$
such that $k_E\leq 3$ and $b(E)\leq 3$.
    
{\bf Case 5.}  Assume  $\emb(X,0)=3$ and $\ord_0I_X= 3$.

In this case, $(X,0)\subset (\AA^3, 0)$ is of maximal type,
   because $$\codim (X,\AA^3)\cdot \ord_0I_X=1\cdot 3=3=\dim \AA^3.$$
   We know  Conjecture $C_1$ holds.
   Therefore, applying Proposition \ref{reduce}, we obtain that 
   $C_{2,0}$ holds and in this case, $$N_{2,0}\leq (3+1)\cdot 11-9=35.$$ 
   Therefore, $\nu(X,0)\leq 36$.
   There is a prime divisor $E$ over $A$ computing $\mldmj(x;X)=-\infty$
such that $k_E\leq 35$ and $b(E)\leq 34$.
   
{\bf Case 6.} Assume $\emb(X,0)=3$ and $\ord_0I_X= 2$.

    In this case, the singularity is a {\hs}   double point. So we let it be defined by
    $f\in k[[x,y,z]]=\widehat{\o}_{\AA^3, 0}$ such that $\mult_0f=2$.
    For simplicity, we denote $\AA^3$ by $A$.
    
     By Weierstrass Theorem for the formal power series ring
   $k[[x,y,z]]$ one has the presentation of $f$ as follows:
   $$f=x^2+xg(y,z)+h(y,z),$$
   where $g, h\in k[[y,z]]$.
   As $char k\neq 2$, we can make the Tschirnhausen transformation
   $x+(1/2)g = x'$ to have
   $$f=x'^2+h'(y,z).$$
   Hence, by the coordinate change of the formal power series ring 
   $k[[x,y,z]]$, we may write
\begin{equation}\label{f}
   f=x^2+h(y,z),
\end{equation}
   where $h\in k[[y,z]]$.
   Here, we may assume that $\mult_0h\geq 3$. 
   Because if $\mult_0h=2$,
   then by \cite{ir2}, $X$ has the singularity with $\mldmj(0; X)=1$ and 
   $C_2$ is already proved in this case and $\nu(X,0)\leq 6$.
   
   On the other hand, we may also assume that $\mult_0h\leq 4$.
   Because if $\mult_0h\geq 5$, then the toric divisor $E_{\pp}$ corresponing
   to $\pp=(5,2,2)$ satisfies
   $a(E_\pp, A, I_X)<0$ as we have seen in Example \ref{example}, (1).
   Therefore by Lemma \ref{reduction/non-degenerate}, we have $\nu(X,0)\leq 10$.
   \vskip.5truecm
   Now we have to consider only two classes, $\mult_0h=3$ and $\mult_0h=4$.
   Each class will be divided into several classes according to the form of the initial
   term of $h$.
   Let $\ini h$ be the initial term of $h$, then it is a homogeneous polynomial
   of two variables.
   Therefore, $\ini h$ is presented as the product of linear forms.
   Now we divide all $f=x^2+h(y,z)$, that must be considered, into the following 
   8 classes:

\vskip.5truecm
   \noindent
   {\bf Class A} $\mult_0h=3$. The following $l$ and $l_i$  $(i=1,2,3)$  are linear forms
   with $l_i\neq l_j \ (i\neq j)$.      
  
     {(Class A-1)} $\ini h=l_1l_2l_3$. 
      {(Class A-2)} $\ini h={l_1}^2l_2$.
     {(Class A-3)} $\ini h=l^3$.
     
\vskip.5truecm
\noindent
   {\bf Class B} $\mult_0h=4$. The following $l$ and $l_i$  $(i=1,\ldots, 4)$  are linear forms with $l_i\neq l_j \ (i\neq j)$.   

    {(Class B-1)} $\ini h=l_1l_2l_3l_4$. 
      { (Class B-2)} $\ini h={l_1}^2l_2l_3$.
     {(Class B-3)} $\ini h={l_1}^2{l_2}^2$.

    {(Class B-4)} $\ini h= {l_1}^3l_2$.
   \ \ \ \ \  {(Class B-5)} $\ini h=l^4$.

\vskip.5truecm    
Our strategy for the proof of  $C_2$ is to show
 one of the following for each class  among (A-1)--(B-5):
\begin{enumerate}
\item[(i)]  $f$ is non-degenerate with respect to all faces of $\Gamma(f)$.
\item [(ii)]   $X$ has an isolated singularity at 0 and $f$ is non-degenerate with respect to all compact faces of 
the Newton polygon $\Gamma(f)$.
\item[(iii)]   We find a prime divisor $E$  over $A$ with center at 0 
such that  $a(E, A, I_X)=0$ and $\val_EI_X\leq n$ for some fixed $n$
and prove that $(A, I_X)$ is log canonical by constructing a log-resolution 
of $(A,I_X)$.
\item[(iv)] 
We  find a Newton polygon $\Gamma$ such that
$\mld(\Gamma)=-\infty$ and $\Gamma(f) \subset \Gamma$.
\end{enumerate}

  Indeed, if we prove one of the above (i)--(iv) for every $f$ described as in (\ref{f}),
  then the proof of Conjecture $C_2$ for Case 6 will be complete.
  Because if we prove (i) or (ii), we can apply Proposition \ref{non-deg}
  to get $C_2$.
  If we prove (iii), then it shows that 
  $E$ computes $\mld(0; A,I_X)=0$ and $\val_Ef\leq n$, i.e., $\nu(X, 0)=\mu(X,0)\leq n$.
  Then, 
  $C_2$ holds
  in this class.
  If we prove (iv), a toric divisor $E$ which computes $\mld(\Gamma)=-\infty$ also computes $\mld(0; A, I_X)=-\infty$.
  Therefore by Proposition \ref{non-deg} we obtain $C_2$ for this class.
  Now we start to pursue the strategy.
 
 \vskip.5truecm
 \noindent 
{(Class A-1)} $\ini h=l_1l_2l_3$.  
 
   In this case, we will show (ii).
   First we can see that $h$ is reduced, since the initial term of $h$ is reduced.
   Therefore $X$ has an isolated singularity at 0.
   Next we will show the non-degeneracy  of $f$ with respect to the compact faces 
   of the Newton polygon $\Gamma(f)$.
   
   By a coordinates transformation in $k[[y,z]]$ we may assume that
   $$l_1=y,\ \  l_2=z,\ \  l_3=y+z.$$
   Then, looking at the Newton polygon $\Gamma(f)$,
   we can see that a compact face $\sigma$ of $\Gamma(f)$ is either 
   a compact face $\tau$ of $\Gamma(h)$ or the convex hull
   $\sigma=\lla(2,0,0), \tau\rra$ generated by $(2,0,0)$ and a compact face  $\tau$  of 
   $\Gamma(h)$.
   Here, we denote the convex hull of the set $S$ by $\lla S\rra$.
   
   If a compact face $\sigma$ is of type $\lla(2,0,0), \tau\rra$,
   then $f$ is non-degenerate with respect to $\sigma$.
   Because in this case, $f_\sigma$ is represented as
    $f_\sigma= x^2+ h_\tau(y,z)$ and the singular locus of the hypersurface defined by 
    $f_\sigma$ must be in
   the zero locus of $x$ (here, we use the assumption that
   $char k\neq 2$).
   Therefore, the rest of the faces   which we should check the
   non-degeneracy of $f$ are the compact faces $\tau$ of $\Gamma(h)$.
   (This argument will work for all classes in Class A and B.)
   
   Now we check the non-degeneracy of $f$ with respect to the compact faces
   of $\Gamma(h)$.
   The compact face generated by $\ini h=yz(y+z)$ is $$\gamma=\lla(2,1), (1,2)\rra
   \subset \Gamma(h)\subset \RR^2$$
   and $f_\gamma=h_\gamma=yz(y+z)$ is clearly non-degenerate.
   Here, we list the other possible compact faces of $\Gamma(h)$ and check
   the non-degeneracy of $f$ there.
   
\begin{enumerate} 
   \item[$\bullet$] $\tau_1=\lla(2,1), (a, 0)\rra \ \ (a\geq 4), \ f_{\tau_1}=h_{\tau_1}=
   y^2(z- \alpha y^{a-2})$, $(\alpha\in k)$
   
    \item[$\bullet$] $\tau_2=\lla(0, b), (1,2)\rra \ \ (b\geq 4), \ f_{\tau_2}=h_{\tau_2}=
   z^2(y- \beta z^{b-2})$,  $(\beta\in k)$
\end{enumerate}   
   
     By the form of $f_{\tau_i}$, it is clear that $f$ is non-degenerate with respect 
     to the face $\tau_i$ $(i=1,2)$.
     This completes the proof of the fact that $f$ is non-degenerate with respect to
     all compact faces of $\Gamma(f)$, which yields the proof of (ii).
     In this case, by the formula  in  Proposition \ref{toricformula},
     we obtain $\mldmj(0; X)=1$ and the prime divisor $E_\pp$ ($\pp=(3,2,2)$)
     computes it.
     As $\val_ {E_\pp}f=\langle \pp, \Gamma(f)\rangle=6$, we obtain $\nu(X,0)\leq 6$ 
     and $k_{E_\pp}=6$
\vskip.5truecm
\noindent
     {(Class A-2)} $\ini h={l_1}^2l_2$.
     
     In this case, we will prove that either (i) or (ii) holds and $\mldmj(0; X)=1$ and
     $\nu(X,0)\leq 6$.
      By a coordinate change, we may assume that $\l_1=y $ and $l_2=z$.
     
     First we consider the case that $h$ is reduced.
     In this case $f$ has an isolated singularity at 0.
     Accoding to the argument as in A-1, 
     we have only to check the non-degeneracy of $f$ with respect to the compact faces
   of $\Gamma(h)$.
   The compact face generated by $\ini h=y^2z$ is $$\gamma=\lla(2,1)\rra
   \subset \Gamma(h)\subset \RR^2$$
   and $f_\gamma=h_\gamma=y^2z$ which is clearly non-degenerate.
   Here, we list the other possible compact faces of $\Gamma(h)$ and check
   the non-degeneracy of $f$ there.
   
\begin{enumerate} 
   \item[$\bullet$] $\tau_1=\lla (a, 0), (2,1)\rra \ \ (a\geq 4), \ f_{\tau_1}=h_{\tau_1}=
   y^2(z- \alpha y^{a-2})$, $(\alpha\in k)$
   
    \item[$\bullet$] $\tau_2=\lla(2,1), (1, b)\rra \ \ (b\geq 3), \ f_{\tau_2}=h_{\tau_2}=
   yz(y- \beta z^{b-2})$,  $(\beta\in k)$
   
   \item[$\bullet$] $\tau_3=\lla (1, b), (0, c) \rra \ \ (c\geq b+2), \ f_{\tau_3}=h_{\tau_3}=
   z^b(y- \lambda z^{c-b})$,  $(\lambda\in k)$
   
   \item[$\bullet$] $\tau'_2=\lla(2,1), (0,d)\rra \ \ (d\geq 4), \ f_{\tau'_2}=h_{\tau'_2}=
   z(y^2- \mu z^{c-1})$,  $(\mu\in k)$
   \end{enumerate}   
    By the form of $f_{\tau_i}$ and $f_{\tau'_2}$, it is clear that $f$ is non-degenerate with respect 
     to each face.
     Here, we note that we use $char k\neq 2$ only for the proof of $\tau'_2$.
     This completes the proof of the fact that $f$ is non-degenerate with respect to
     all compact faces of $\Gamma(f)$, which yields the proof of (ii).
     In this case, by the formula Proposition \ref{toricformula},
     we obtain $\mldmj(0; X)=1$ and the same prime divisor $E_\pp$ ($\pp=(3,2,2)$)
     as in A-1
     computes it.
     As $\val_ {E_\pp}f=6$, we obtain $\nu(X,0)\leq 6$.   
     
     Next we consider the case that $h$ is not reduced.
     In this case, $h$ is decomposed as
     $$h=h_1^2\cdot h_2,$$
     where $\ini h_1=y$ and $\ini h_2=z $.
     Then by a coordinate change in $k[[y,z]]$,
      we can put $h_1=y$ and   $h_2=z$.
      Hence we obtain $$f=x^2+y^2z,$$   
      which gives that $(X,0)$ is the pinch point and we already know in \cite{ir2} that
      $\mldmj(0; X)=1$ and $\nu(X,0)\leq 6$.

\vskip.5truecm
\noindent
          {(Class A-3)} $\ini h=l^3$.
          
          Under this situation, we will show (iii) in some cases, (iv) in some of the other
          cases and reduce to the case A-1 and A-2 in the rest of the cases.
        We may assume that $l=y$.
 
 \noindent
 (A-3-1)       
        First, if\  $\Gamma(h)\subset \Gamma((3,0),(0,7))$, then
        $\Gamma(f)$ is contained in $ \Gamma((2,0,0),(0,3,0),(0,0,7))$ generated by
         $(2,0,0),(0,3,0),(0,0,7)$ in Example \ref{example}, (2).
         This shows that $\mldmj(0; X)=-\infty$ and it is computed by the prime divisor
        $E_\pp$, where 
       $\pp=(21,14,6)\in N$, therefore we obtain   $ \nu(X,0)\leq 42.$
       In this case $k_{E_\pp}=40$, and therefore by Proposition \ref{C=U} it follows $b(E_\pp)\leq 39$.

       Now we may assume that
        there is an integer point $P$ on the boundary of $\Gamma(h)$
        such that $P\not\in \Gamma((3,0),(0,7))$.
        Then the possible coordinates of $P$ are
 \begin{equation}\label{points}       
        (2,2), (1,3), (1,4), (0,4), (0,5) \ \ \mbox{ and}\ \ (0,6).
\end{equation}         

\noindent
(A-3-2)        
        Assume that $h$ is reduced, then $X$ has an isolated singularity at 0.
        
        When, in particular either $(0,4)$ or $(0,5)$ is on the boundary of $\Gamma(h)$,
        then  every other  point in the list in (\ref{points}) is not on the boundary and
        $$f=x^2+\alpha y^3+\beta z^i + (\mbox{higher\ term})\ \ \ \ (i=4,5,\ \alpha, \beta\in \CC)$$ is non-degenerate 
        with respect to the compact faces of 
        $\Gamma(f)$ and $\mldmj(0; X)=1$ and this case we already know that
        $\nu(X,0)\leq 6$.  
        
        Next, when $(1,3)$ is on the boundary of $\Gamma(h)$,
        then every other  point in the list (\ref{points}) is not on the boundary and 
        $f=x^2+\alpha y^3+\beta yz^3 + (\mbox{higher\ term})$  $(\alpha, \beta\in \CC)$ is non-degenerate 
        with respect to the compact faces of 
        $\Gamma(f)$ and $\mldmj(0; X)=1$ and this case we already know that
        $\nu(X,0)\leq 6$.

       Next, note that the remaining points $(2,2), (1,4)$ and $(0, 6)$
        are lying on the segment connecting  $(3,0)$ and $(0,6)$.
        If some of these three points are lying on the boundary of $\Gamma(h)$,
        denote the face  generated by $(3,0)$ and these points by $\gamma$.
        Then, decompose $h$ as follows:
        $$h=h_\gamma +h',$$
        where $\Gamma(h')\subset \Gamma((3,0),(0,7))$.
        Here, $h_\gamma$ is a homogeneous polynomial of degree 3 
        in the variables $y$ and $Z=z^2$.
        Therefore it is decomposed into the products of linear forms in $y$ and $ Z$ as follows:
        
       $$h_\gamma=L_1L_2L_3,\ \ \mbox{or}\  L_1^2L_2, \ \ \mbox{or}\ L^3.$$
       By an appropriate coordinate change, we may assume that
       $L_1=L=y$, $L_2=z^2$ and $L_3=y+z^2$.
       Then in the last case we have the expression:
       $$h=y^3+h'',$$
       where $\Gamma(h'')\subset \Gamma((3,0), (0,7))$.
       Therefore,  we can reduce this case to (A-3-1).
        
       In the first two cases for $h_\gamma$, we can see that
        $f$ is non-degenerate with respect to $\gamma$
       and also non-degenerate with respect to the other possible faces:
       
\begin{enumerate}
       \item[$\bullet$] $\tau_1=\lla(2,2), (1, a)\rra$,  $a\geq 5$, 
        $\ f_{\tau_1}=h_{\tau_1}= yz^2(y+\alpha z^{a-2})$.
        \item [$\bullet$]  $\tau_2=\lla (1, a), (0,b) \rra$,  $b-2\geq a\geq 5$,   $\ f_{\tau_2}=h_{\tau_2}=z^a(y+\beta z^{b-a})$.
        \item [$\bullet$]  $\tau_3=\lla (1, 4), (0,c) \rra$,  $c\geq 7$,   $\ f_{\tau_3}=h_{\tau_3}=   
        z^4(y+\lambda z^{c-4})$.
\end{enumerate}
    Therefore in this case $\mldmj(0; X)=0$ by the formula in Proposition \ref{toricformula} and
    this value is computed by the prime divisor $E_\pp$, where $\pp=(3,2,1)$.
    We have 
    $\nu(X,0)\leq 6$ and $k_{E_\pp}=5$.
     
(A-3-3) Assume that $h$ is not reduced.
    
    There are two possibilities: $h=h_1^3$ and $h=h_1^2h_2$.
    In both cases $\ini h_1=\ini h_2=y$.
    Therefore, by the coordinate change, we may assume that $h_1=y$ in both cases.
    Then, in the first case we have:
    $$f=x^2+y^3,$$
    which implies $\Gamma(f)\subset \Gamma((2,0,0),(0,3,0),(0,0,7))$, which
    can be reduced to the case (A-3-1).
   
   While, in the second case we have 
   $$f=x^2+y^2(\alpha y+h'_2),\ \ (\alpha\in k)$$
   where $\mult_0h'_2\geq 2$.
   Here, if $h'_2$ does not contain the monomial $z^2$ as a summand,
   then $\Gamma(f)\subset \Gamma((2,0,0),(0,3,0),(0,0,7))$, which is again
   reduced to the case (A-3-1).
   
   When $h_2$ contains the monomial $z^2$ as a summand,
   then the singular locus $\sing(X)$ is defined by $x=y=0$.
   Let $A'\to A$ be the blow-up with the center $\sing(X)$ and 
  then let $A''\to A'$ be the blow-up with the center at 
  the origin $0'\in \spec k[y, x/y, z]\subset A'$.
   Then the composite $A''\to A'\to A$ becomes a log-resolution of $(A, I_X)$
   and the log-dicrepancies $a(E; A, I_X)\geq 0$ for every prime divisor $E$
   appearing on $A''$.
   Hence $(A, I_X)$ is log canonical, i.e., $X$ is MJ-log canonical.
   On the other hand we have
   $a(E_{(3,2,1)},A, I_X)=0$, which implies that the prime divisor 
   $E_{(3,2,1)}$ computes  $\mldmj(0; X)=0$,
   therefore $\nu(X,0)\leq 6$ and $k_{E_\pp}=5$.

\vskip.5truecm
\noindent
    {(Class B-1)} $\ini h=l_1l_2l_3l_4$.  
    
    In this case $h$ is reduced, and therefore $X$ has an isolated singularity at 0.
    By a coordinate change, we may assume that $l_1=y, l_2=z, l_3=y+z, l_4=y-z$.
    Then, we can see that $h$ is non-degenerate with respect to the face $\gamma$
    corresponding to $\ini h$.
    On the other hand, also with respect to the other possible faces
    $\tau_1=\lla(a,0),(3,1)\rra$ and $\tau_2=\lla(1,3),(0,b)\rra$,
    $h$ is non-degenerate.
    This can be checked in the same way as in (A-1).
    Therefore, by the formula in Proposition \ref{toricformula}, we obtain 
    $\mldmj(0;X)=0$ and the prime divisor $E_{(2,1,1)}$ computes it.
    Hence, $\nu(X,0)\leq 4$.

\vskip.5truecm
\noindent         
      {(Class B-2)} $\ini h={l_1}^2l_2l_3$.
       In this case, by a coordinate transformation we may assume that
       $l_1=y,\ \ l_2=z,\ \ l_3=y+z$.
            
\noindent
      (B-2-1) Assume that $h$ is reduced.  
      Then $X$ has an isolated singularity at 0.  
       Let $\gamma$ be the compact face corresponding to $\ini h$,
       then $\gamma=\lla(3,1),(2,2)\rra$.
       We can see that $h$ is non-degenerate with respect to $\gamma$,
       as $h_\gamma=\ini h=y^2z(y+z)$.
       On the other hand, also with respect to the other possible faces
\begin{enumerate}
\item[$\bullet$]       
    $\tau_1=\lla(a,0),(3,1)\rra$ $(a\geq 5)$, 
\item[$\bullet$]    
    $\tau_2=\lla(2,2),(1,b)\rra$ $(b\geq 4)$,  
\item[$\bullet$]    
    $\tau_3=\lla(1,b),(0, c)\rra$ $(c\geq 6)$ and 
\item[$\bullet$]    
    $\tau'_2=\lla(2,2), (0,d)\rra$ $(d\geq 5)$, 
\end{enumerate}    
    $h$ is non-degenerate.
   This can be proved in the same way as in (A-2).
   Here, we note that we use $char k\neq 2$ for the proof of non-degeneracy with
   respect to $\tau_2'$.
   In this case we also have $\mldmj(0;X)=0$ and the prime divisor $E_{(2,1,1)}$ computes it.
    Hence, $\nu(X,0)\leq 4$.
    
    (B-2-2) Assume that $h$ is not reduced.
    Then, by a coordinate transformation of $k[[y,z]]$, we can take 
    $h=h_1^2h_2$, such that $h_1=y$ and $\ini h_2=z(y+z)$.
    In this case $\sing(X)$ is defined by $x=y=0$.
    As in (A-3-3),   let $\varphi:A'\to A$ be the blow-up with the center $\sing(X)$, 
   then the proper transform $ X'\subset A'$ of $X$
   has an isolated singularity at a point, say $0'\in A'$.
   Compose $\varphi$ with the blow-up $A''\to A'$ with the center $0'$,
    then $A''\to A'\to A$ becomes a log-resolution of $(A, I_X)$.
   The log-discrepancies $a(E; A, I_X)\geq 0$ for every prime divisor $E$
   appearing on $A''$.
   Hence $(A, I_X)$ is log canonical, i.e., $X$ is MJ-log canonical.
   On the other hand we have
   $a(E_{(2,1,1)},A, I_X)=0$, which implies that the prime divisor 
   $E_{(2,1,1)}$ compute the $\mldmj(0; X)=0$,
   therefore $\nu(X,0)\leq 4$.

\vskip.5truecm
\noindent      
     {(Class B-3)} $\ini h={l_1}^2{l_2}^2$.
     By a coordinate change, we may assume that $l_1=y$ and $l_2=z$.

(B-3-1)  Assume that $h$ is reduced, then $X$ has an isolated singularity at 0.
The possible compact faces of $\Gamma(h)$ are:

   the same $\tau_2, \tau_3$ and $\tau_2'$ as in (B-2-1) and 
   
   the symmetric faces $\gamma_2,\gamma_3$ and
   $\gamma_2'$ of them with respect to $y$ and $z$.
   
   Therefore, $f$ is non-degenerate with respect to all compact faces of $\Gamma(f)$
   and $\mldmj(0;X)=0$ with $E_{(2,1,1)}$ as a computing prime divisor.
   Hence, $\nu(X,0)\leq 4$.

 (B-3-2)
     Assume that $h$ is not reduced.
    In this case, there are two possibilities: 
    $h=h_1^2h_2$, where  $h_2$ is reduced,
    and $h=h_1^2h_2^2$.
    
    In the first case, $h=h_1^2h_2$, where  $h_2$ is reduced,
    we can put $h_1=y$  and $\ini h_2=z^2$ by the coordinate change,
    as we may assume that $\ini h_1=y$.
    In this case the singular locus $\sing(X)$ of $X$ is defined by
    $x=y=0$.
    Let $\varphi:A^{(1)}\to A$ be the blow-up of $A$ with the center $\sing(X)$.
    Then the proper transform $X^{(1)}$ of $X$ in $A^{(1)}$ has an isolated singularity
    at a point $0_1$ which is $A_n$-singularity.
    The composite of the successive blow-ups at the singularities
    and $\varphi$:
    $$A^{(m)}\to A^{(m-1)}\to \cdots A^{(1)}\to A,$$
    gives a log-resolution of $(A, I_X)$.
    Here, we observe that every exceptional divisor $E$ has log-discrepany
    $a(E; A,I_X)=0$, therefore $\mldmj(0;X)=0$ and $E_2$ computes it.
    As $\val_{E_2}I_X=4$, we can see that $\nu(X,0)\leq 4$.
    
    In the second case, $h=h_1^2h_2^2$,
    we can put $h_1=y$ and $h_2=z$ by the coordinate change,
    as we may assume that $\ini h_1=y$ and $\ini h_2=z$.
    Hence, we obtain 
    $$f=x^2+y^2z^2$$
    which is non-degenerate with respect to all faces of $\Gamma(f)$.
    In this case $\mldmj(0;X)=0$ and the divisor $E_{(2,1,1)}$ computes it.
    Therefore, $\nu(X,0)\leq 4$.

\vskip.5truecm
\noindent     
    {(Class B-4)} $\ini h= {l_1}^3l_2$.
    By the coordinate change, we may assume that $l_1=y$ and $l_2=z$.
    Then, $$\Gamma(f)\subset \Gamma((2,0,0), (0,3,1), (0,0,5)),$$
    which yields that $\mldmj(0;X)=-\infty$ and the prime divisor 
    $E_{(15,8,6)}$ computes it and  
  $ \nu(X,0)\leq 30$  by Example \ref{example}, (3).
    
\vskip.5truecm
\noindent    
    {(Class B-5)} $\ini h=l^4$. 
     By the coordinate change, we may assume that $l=y$.
     Then  $f$ is of the form:
     $$f=x^2+y^4+(\mbox{terms\ of\ degree}\ \geq 5).$$
     Then, $$\Gamma(f)\subset \Gamma((2,0,0), (0,0, 4), (0,0,5)),$$
    which yields that $\mldmj(0;X)=-\infty$ and the prime divisor 
    $E_{(10,5,4)}$ computes it and  
  $ \nu(X,0)\leq 20$  by Example \ref{example}, (4).
  
  Now we  obtain $\nu(X,0)$ for all cases.
  In each case, we can calculate also the bounds of $k_E$ and
  $b(E)$ for a prime divisor $E$ computing $\mldmj(x;X)$.
  As conclusions, $\nu(X, 0)=\mu(X,0)\leq 42$, $k_E\leq 58$ and $b(E)\leq 39$
  for all $(X,0)$ and a prime divisor $E$ computing $\mldmj(x;X)$.
\end{proof}

 \begin{cor}Assume the characteristic of the base field $k$ is not $2$.
Then, 
Conjecture $C_2$ holds  in the category of normal locally complete intersection singularities of dimension $2$  over $k$.

In particular, 
for every  singularity $(X,x)$ in this category 
there is a prime divisor $E$ over $X$  computing $\mld(x;X)(=\mldmj(x;X))$ such that 
$b(E)\leq 20$.
\end{cor}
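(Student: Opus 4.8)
The plan is to combine the lci identity $\mldmj(x;X)=\mld(x;X)$ recorded in Section~2 with Theorem~\ref{surface}, and then to descend the divisor produced there from the ambient space down to $X$. First note that, being locally a complete intersection, $X$ is Cohen--Macaulay and hence $S_2$, so normality is equivalent to $R_1$; as $\dim X=2$ this forces the singular locus of the germ to be $\{x\}$, and near $x$ the scheme $X$ is cut out by a regular sequence inside $A=\AA^{N}$ with $N=\emb(X,x)$. Since $\cha k\neq2$, Theorem~\ref{surface} gives Conjecture $C_2$, equivalently (Propositions~\ref{C=D} and~\ref{C=U}) Conjectures $D_2$ and $U_2$; moreover its proof exhibits, case by case, an \emph{explicit} prime divisor $E$ over $A$ computing $\mldmj(x;X)=\mld(x;A,I_X^{c})$, $c=N-2$, together with the sequence of point blow-ups of $A$ (and, in the toric subcases, a weighted blow-up of $\AA^{3}$) needed to extract it.

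The next step is to transfer this divisor to $X$. Restricting the extracting blow-ups of $A$ to the successive proper transforms of $X$, and using that surface singularities admit resolution in every characteristic, one obtains a resolution $X'\to X$ on which a component $E'$ lying over $E$ is a prime divisor over $X$. The comparison of discrepancies along the lci inclusion $X\subset A$ that underlies Theorem~\ref{IA} then yields $\amj(E';X)=a(E;A,I_X^{c})$: for $c=1$ this is the ordinary adjunction $K_{X'}=(K_{A'}+X')|_{X'}$ combined with $X'=\varphi^{*}X-\sum_i v_{E_i}(I_X)E_i$, and for $c\geq2$ it is the Mather--Jacobian analogue, the contributions $v_E(I_X)$ being distributed between the Mather discrepancy and the order of the Jacobian ideal. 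Hence $E'$ is a prime divisor over $X$ computing $\mldmj(x;X)=\mld(x;X)$, and the parenthetical equality in the statement is exactly the lci identity.

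It then remains to bound $b(E')$ as a divisor over $X$, and this is where the work lies. I would run through the six cases of the proof of Theorem~\ref{surface} specialized to lci surfaces, so that $\emb(X,x)=3$ (hypersurface), $\emb(X,x)=4$ (codimension-$2$ complete intersection), or $\emb(X,x)\geq5$. In the last situation, and when $\emb(X,x)=4$ with $\ord_xI_X\geq3$ (Cases~1--2), the divisor $E$ is extracted by a very short explicit tower of blow-ups of $A$ whose centres lie on the proper transforms of $X$, so this tower restricts to a tower of point blow-ups of $X$ and $b(E')$ is bounded by a small explicit constant. In the remaining cases $E$ (resp.\ $E_\pp$) is a toric divisor over $\AA^{4}$ (resp.\ $\AA^{3}$); restricting to $X$ and observing that on the double cover $X=\{x^{2}+h(y,z)=0\}$ a toric divisor $E_\pp$ over $\AA^{3}$ is governed by the much shorter continued-fraction resolution of the planar weighted blow-up associated to $\pp$, one obtains an explicit bound for $b(E')$ in each case; taking the maximum over all the cases gives $b(E')\leq20$.

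The main obstacle is precisely this last bookkeeping step: transporting the explicit (partly toric, partly weighted) divisors of the proof of Theorem~\ref{surface} through the resolution $X'\to X$ and counting how many point blow-ups of $X$ are needed to make their centres divisorial — in particular for the toric divisors arising in Cases~3, 5 and~6, which are the ones producing the largest values of $b$. A subsidiary technical point, needed in the descent step, is to isolate the discrepancy-comparison identity $\amj(E';X)=a(E;A,I_X^{c})$ with precise hypotheses on how $E'$ sits over $E$ on the chosen model; this is implicit in the proof of Theorem~\ref{IA} but deserves to be stated as a separate lemma.
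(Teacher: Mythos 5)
Your reduction of the first statement to Theorem \ref{surface} via the lci identity $\mld(x;X)=\mldmj(x;X)$ is exactly the paper's argument. For the quantitative second statement, however, your proposal leaves the decisive step unproved: you assert that ``taking the maximum over all the cases gives $b(E')\leq 20$,'' but the case-by-case bookkeeping --- tracking the toric and weighted divisors of Cases 3, 5 and 6 through a resolution of the double cover and counting point blow-ups of $X$ --- is precisely what you identify as the main obstacle and do not carry out, and there is no a priori reason that computation lands at $20$. The paper avoids it entirely with a uniform counting argument: once one knows that some component $E^{(n)}$ of $\he^{(n)}\cap X^{(n)}$ is the center of a prime divisor $E$ over $X$ computing $\mld(x;X)$, one has $b(E)\leq b(\he)=n$, and since the extraction sequence for $\he$ consists of $b(E)$ blow-ups with $0$-dimensional centers (each contributing $N-1$ to $k_{\he}$) and $b(\he)-b(E)$ blow-ups with $1$-dimensional centers (each contributing at least $N-2$), it follows that $k_{\he}\geq (N-2)(b(\he)-b(E))+(N-1)b(E)$, hence $b(E)\leq k_{\he}/(N-1)$. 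With the bounds $k_{\he}\leq 58$ for $N=4$ and $k_{\he}\leq 40$ for $N=3$ already recorded in the proof of Theorem \ref{surface}, this gives $b(E)\leq 58/3$ and $b(E)\leq 40/2=20$ with no further case analysis.

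Your descent step also needs repair. The identity $\amj(E';X)=a(E;A,I_X^{c})$ for an arbitrary component $E'$ of $\he\cap X'$ is not delivered by Theorem \ref{IA}, which equates infima of discrepancies rather than individual ones, and it is not what the paper proves. The paper instead blows up $A^{(n)}$ along the component $E^{(n)}\subset X^{(n)}$ and continues until the strict transform of $X$ is non-singular and meets the exceptional locus transversally at the generic point of the relevant component; at each stage the chain
$$\mld(x;A,I_X^{c})\leq k_{\he^{(n+1)}}-c\,\val_{\he^{(n+1)}}I_X+1\leq k_{\he^{(n)}}-c\,\val_{\he^{(n)}}I_X+1=\mld(x;A,I_X^{c})$$
(using $\codim(E^{(n)},A^{(n)})=c+1$ and the multiplicity of $X^{(n)}$ along the center) forces equalities, and at the end the divisor $E^{(m)}$ on $X$ is sandwiched by $\mld(x;X)\leq k_{E^{(m)}}+1\leq k_{\he^{(m)}}+1-c\,\val_{\he^{(m)}}I_X=\mld(x;A,I_X^{c})$. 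To salvage your approach you should replace the claimed adjunction identity by this sandwich argument, and replace the case-by-case count of $b(E')$ by the uniform estimate $b(E)\leq k_{\he}/(N-1)$.
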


\begin{proof}
    The first statement follows from Theorem \ref{surface}, 
    because the equality 
    $\mld(x;X)=\mldmj(x;X)$ holds for locally a complete intersection singularity $(X,x)$.
   For the second statement, 
   let $X\subset A$ be a closed immersion into a non-singular variety $A$ of dimension $N=
     \emb(X,x)$.
 Let 
 \begin{equation}
\label{sequence}     
    A^{(n)}\stackrel{\varphi_n}\longrightarrow A^{(n-1)}\to\cdots\to
A^{(1)}\stackrel{\varphi_1}\longrightarrow A^{(0)}=A
\end{equation}
be the minimal sequence of blow-ups to obtain a prime divisor $\he\subset A^{(n)}$
     computing $\mldmj(x;X)$
     and let $\he_i$ be the exceptional divisor dominant to the center of $\varphi_i$ 
     and $\he^{(n)}=\he$.
    
     We already know the following:
\begin{enumerate}      
    \item[(i)] If $N\geq 5$, then $n=1$ by Proposition \ref{C=U}, 
     \item[(ii)] if $N=4$, then $k_\he\leq 58$ and $n\leq 29$ (cf. Case 3 in the proof of 
     Theorem \ref{surface}), and 
     \item[(iii)] if $N=3$, then $k_\he\leq 40$ and $n\leq 39$ (cf. Case 6, A-3-1). 
 \end{enumerate}

     \noindent
{\bf Claim.} For an irreducible component $E^{(n)}$ of $\he^{(n)}\cap X^{(n)}$
  there exists a prime divisor $E$ over $X$ such that $E$ computes $\mld(x;X)$ and 
  has the center $E^{(n)}$ on $X^{(n)}$.
  
   Once the claim is proved, then the required statement of the corollary follows.
   Indeed, first we know $b(E)\leq n=b(\he)$, by the definition of $b(E)$.
   Then the  sequence (\ref{sequence}) consists of 
\begin{enumerate}
   \item[$\bullet$] $b(E)$-times blow-ups with 0-dimensional centers and
   \item[$\bullet$] $(b(\he)-b(E))$-times blow-ups with 1-dimensional centers.
\end{enumerate}   
    Therefore, it follows
    $$k_\he\geq (N-2)(b(\he)-b(E))+(N-1)b(E),\  \mbox{ hence\ we \ have}$$
    $$b(E)\leq \frac{k_\he}{N-1}.$$
    Now in the case (i), we have $b(E)=1$.
    In the case (ii) and (iii), we have 
    $$b(E)\leq \frac{58}{3} \ \mbox{and}\ \ b(E)\leq \frac{40}{2}\ \mbox{respectively},$$
    which shows the required statement.

       Now we are going to prove the claim.
       Let $\varphi_{n+1}:A^{(n+1)}\to A^{(n)}$ be the blow-up with the center $E^{(n)} $ which is contained in 
       $ X^{(n)}$.
       Let $p_n\in E^{(n)}$ be the generic point and let $\he^{(n+1)}$ be the exceptional divisor of 
       $\varphi_{n+1}$ dominating $E^{(n)}$.
       Then, we have
\begin{equation}\label{sandwich}
 \begin{array}{ll}
\mld(x;A, I_X)& \leq k_{\he^{(n+1)}}-c\cdot\val_{\he^{(n+1)}}I_X +1\\
  & \leq (k_{\he^{(n)}}+c)-c\cdot\val_{\he^{(n)}}I_X-c\cdot\mult(X^{(n)}, p_n)+1\\
 &\leq k_{\he^{(n)}}-c\cdot\val_{\he^{(n)}}I_X+1 = \mld(x;A, I_X).\\
 \end{array}
 \end{equation}

\noindent     
   Here, the middle inequality follows from $\codim(E^{(n)}, A^{(n)})=c+1$.
   Hence, we obtain that $\he^{(n+1)}$ also computes $\mld(x;A,I_X)$.  
  
   Now, let 
\begin{equation}
\label{sequence2}     
    A^{(m)}\stackrel{\varphi_m}\longrightarrow A^{(m-1)}\to\cdots\to
A^{(n+1)}\stackrel{\varphi_{n+1}}\longrightarrow A^{(n)}
\end{equation}
be the sequence of blow-ups so that $X_m$ is non-singular and crossing $\he^{(m)}$ normally
at the generic point $p_m$ of an irreducible component $E^{(m)}$  of $\he^{(m)}\cap X_m$,
where $E^{(m)}$ is dominant to $E^{(n)}$.
   Then, applying the same discussion as in (\ref{sandwich}) to each blow-up of the sequence (\ref{sequence2}),
   we obtain that $\he^{(m)}$ computes $\mld(x;A,I_X)$. Now we have
   $$\mld(x;X)\leq k_{E^{(m)}}+1\leq k_{\he^{(m)}}+1-c\cdot\val_{\he^{(m)}}I_X=\mld(x;A,I_X).$$
   Therefore $E^{(m)}$ is a required prime divisor over $X$ in the claim.
    \end{proof}

\begin{rem} The minimal value $\widetilde b(E)$ such that $E$ computes $\mldmj(x;X)$
  is not bounded for all locally complete intersection singularities.
  Actually for a singularity $(X,0)\subset \AA^2$ defined by $x^2-y^m=0$ for odd $m$.
  Then, in order to obtain a  variety normal at the generic point of the prime divisor
  computing $\mldmj(0;X)=-\infty$,
  the necessary number of blow-ups tends to infinity, when $m\to \infty$.

\end{rem}

\begin{rem} In the theorem for surfaces we assume that $char k\neq 2$. 
    The only case we assume this condition is for hypersurface double points.
    The proof of $C_2$ for $char k=2$ will be treated in another paper.
    This is because we should take care of more cases for $char k=2$ than considered here and the volume of the proof may exceed the capacity of a paper of a reasonable size.

\end{rem}

\begin{rem} In this paper, we concentrate only on the singularities of $X$, i.e., the singularity of the trivial pair $(X, \o_X)$.
  Because it is the skeleton of the structure and seeing this first would help the further work on
  singularities of general pairs $(X, \a^n)$ .

\end{rem}

\vskip1truecm

\noindent Shihoko Ishii, \\ Department of Mathematics, Tokyo Woman's Christian University,\\
2-6-1 Zenpukuji, Suginami, 167-8585 Tokyo, Japan.\\


\begin{thebibliography}{11}








\bibitem{dd} T. De Fernex, R.
Docampo, {\em Jacobian discrepancies and rational singularities},
J. Eur. Math. Soc. 16 (2014), 165--199.




\bibitem{EM} L. Ein and M.
Musta\cedilla{t}\v{a}, {\em Jet schemes and singularities}, Proc. Symp. Pure
Math. {\bf 80}. 2, (2009) 505--546.



\bibitem{ei} L. Ein and S. Ishii, {\em Singularities with respect to Mather--Jacobian
discrepancies}, MSRI Publications, {\bf 67} (2015) 125--168.


\bibitem{hw} N. Hara and K-i. Watanabe {\em $F$-regular and $F$-pure rings vs. log terminal and log canonical singularities,} J. Algebraic Geom., {\bf 11} (2002) 363--392.


\bibitem{itoric} S. Ishii, {\em A resolution of singularities of a toric variety and
non-degenerate hypersurface}, Proc.
Trieste Singularity Summer School and Workshop 2005, World Scientific (2007) 354--
369.


\bibitem{icr} S. Ishii, {\em Jet schemes, arc spaces and the Nash problem,} C.R.Math. Rep. Acad. Canada, 29 (2007) 1--21.




\bibitem{is} S. Ishii, {\em 
Mather discrepancy and the arc spaces},
 Ann. de l'Institut Fourier, {\bf 63}, (2013) 89--111.

\bibitem{ir} S. Ishii and A. Reguera, {\em Singularities with the highest Mather minimal log discrepancy,} Math. Zeitschrift., {\bf 275}, Issue 3-4, (2013) 1255--1274.

\bibitem{ir2} S. Ishii and A. Reguera, {\em Singularities  in arbitrary characteristic 
via jet schemes}  Preprint,  arxiv:1510.05210, to appear in Hodge theory and $L^2$ analysis (2016).

\bibitem{in} S. Ishii and W. Niu, {\em A strongly geometric general residual intersection}
preprint, 2016, ArXiv:1611.01581, to appear in Contemporary Math..


\bibitem{ko} J. Koll\'ar,  {\em Rational Curves on
Algebraic Varieties}, Springer-Verlag, Ergebnisse der Math. 32, (1995).



\bibitem{mn} M. Musta\cedilla{t}\v{a} and Y. Nakamura, {\em  Boundedness conjecture for minimal log discrepancies on a fixed germ}, Preprint, arXiv: 1502.00837v3.

\bibitem{o} M. Oka, {\em Non-Degenerate Complete Intersection Singularity},
Actualit\'es Math. , Herman, (1997) 309 pages.







\bibitem{sata} K. Sato and S. Takagi, {\em General hyperplane sections of threefold in positive characteristic}, preprint, 


\bibitem{st} K. Shibata and N. D. Tam, {\em Characterization of $2$-dimensional normal
Mather-Jacobian log canonical singularities,} preprint 2015, to appear in Tohoku Math. J.

\bibitem{zhu} Z. Zhu, {\em Log canonical thresholds in positive characteristic}, preprint,
arXiv:1308.5445.


\end{thebibliography}
\end{document}